\newcommand{\mc}[1]{{\mathcal #1}}
\numberwithin{equation}{section}
\newtheorem{theorem}{Theorem}[section]
\newtheorem{proposition}[theorem]{Proposition}
\newtheorem{corollary}[theorem]{Corollary}
\newtheorem{lemma}[theorem]{Lemma}
\newtheorem{lemma-definition}[theorem]{Lemma-Definition}
\newtheorem{conjecture}[theorem]{Conjecture}
\newtheorem{question}[theorem]{Question}
\theoremstyle{definition}
\newtheorem{definition}[theorem]{Definition}
\newtheorem{remark}[theorem]{Remark}
\newcommand{\floor}[1]{\left\lfloor #1 \right\rfloor}
\newcommand{\ceil}[1]{\left\lceil #1 \right\rceil}
\newcommand{\C}{{\mathbb C}}
\newcommand{\R}{{\mathbb R}}
\newcommand{\Z}{{\mathbb Z}}
\newcommand{\op}{\operatorname}
\newcommand{\Ker}{\op{Ker}}
\newcommand{\Tr}{\op{Tr}}
\newcommand{\tensor}{\otimes}
\newcommand{\vu}{\nu}
\newcommand{\bpm}{\begin{pmatrix}}
\newcommand{\epm}{\end{pmatrix}}
\renewcommand{\epsilon}{\varepsilon}
\newcommand{\Sg}{\text{Sp}(2)}
\newcommand{\Sgt}{\widetilde{\text{Sp}}(2)}
\newcommand{\Df}{\text{Diff}(S^1)}
\newcommand{\Dft}{\widetilde{\text{Diff}}(S^1)}
\begin{document}

\setcounter{tocdepth}{2}

\title{Computing Reeb dynamics on 4d convex polytopes}
\author{Julian Chaidez\footnote{Partially supported by an NSF Graduate Research Fellowship.}\;  and Michael Hutchings\footnote{Partially supported by NSF grant DMS-1708899, a Simons Fellowship, and a Humboldt Research Award.}}

\maketitle

\begin{abstract} 
We study the combinatorial Reeb flow on the boundary of a four-dimensional convex polytope. We establish a correspondence between ``combinatorial Reeb orbits'' for a polytope, and ordinary Reeb orbits for a smoothing of the polytope, respecting action and Conley-Zehnder index. One can then use a computer to find all combinatorial Reeb orbits up to a given action and Conley-Zehnder index. We present some results of experiments testing Viterbo's conjecture and related conjectures. In particular, we have found some new examples of polytopes with systolic ratio $1$.
\end{abstract}

\section{Introduction And main results}
\label{sec:introduction_and_main_results}

This paper is about computational methods for testing Viterbo's conjecture and related conjectures, via combinatorial Reeb dynamics.

\subsection{Review of Viterbo's conjecture}
\label{sec:reviewviterbo}

We first recall two different versions of Viterbo's conjecture.
Consider $\R^{2n}=\C^n$ with coordinates $z_i=x_i+\sqrt{-1}y_i$ for $i=1,\ldots,n$. Define the standard Liouville form
\[
\lambda_0 = \frac{1}{2}\sum_{i=1}^n\left(x_i\,dy_i - y_i\,dx_i\right).
\]
Let $X$ be a compact domain in $\R^{2n}$ with smooth boundary $Y$. Assume that $X$ is ``star-shaped'', by which we mean that $Y$ is transverse to the radial vector field. Then the $1$-form $\lambda = \lambda_0|_Y$ is a contact form on $Y$. Associated to $\lambda$ are the contact structure $\xi=\Ker(\lambda)\subset TY$ and the Reeb vector field $R$ on $Y$, characterized by $d\lambda(R,\cdot)=0$ and $\lambda(R)=1$. A {\bf Reeb orbit\/} is a periodic orbit of $R$, i.e. a map $\gamma:\R/T\Z\to Y$ for some $T>0$ such that $\gamma'(t)=R(\gamma(t))$, modulo reparametrization. The {\bf symplectic action\/} of a Reeb orbit $\gamma$, denoted by $\mc{A}(\gamma)$, is the period of $\gamma$, or equivalently
\begin{equation}
\label{eqn:symplecticaction}
\mc{A}(\gamma) = \int_{\R/T\Z}\gamma^*\lambda_0.
\end{equation}

Reeb orbits on $Y$ always exist. This was first proved by Rabinowitz \cite{rabinowitz} and is a special case of the Weinstein conjecture; see \cite{tw} for a survey. We are interested here in the minimal period of a Reeb orbit on $Y$, which we denote by $\mc{A}_{\op{min}}(X)\in(0,\infty)$, and its relation to the volume $\op{vol}(X)$ of $X$ with respect to the Lebesgue measure. For this purpose, define the {\bf systolic ratio\/}
\[
\op{sys}(X) = \frac{\mc{A}_{\op{min}}(X)^n}{n!\op{vol}(X)}.
\]
The exponent ensures that the systolic ratio of $X$ is invariant under scaling of $X$; and the constant factor is chosen so that if $X$ is a ball then $\op{sys}(X)=1$.

\begin{conjecture}[weak Viterbo conjecture]
\label{conj:vweak}
Let $X\subset\R^{2n}$ be a compact convex domain with smooth boundary such that $0\in\op{int}(X)$. Then $\op{sys}(X)\le 1$.
\end{conjecture}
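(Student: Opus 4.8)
I should note at the outset that the weak Viterbo conjecture is a well-known open problem, so what follows is a strategy proposal rather than a finished argument, organized around the tools developed in this paper.

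\textbf{Step 1: reformulate via capacities and reduce to polytopes.} For a compact convex $X\subset\R^{2n}$ with $0\in\op{int}(X)$, a classical theorem identifies the minimal Reeb period $\mc{A}_{\op{min}}(X)$ with the Ekeland--Hofer--Zehnder capacity $c_{\op{EHZ}}(X)$, and moreover this capacity is realized by an honest closed characteristic on $\partial X$. The inequality $\op{sys}(X)\le 1$ then becomes
\[
c_{\op{EHZ}}(X)^n \le n!\,\op{vol}(X),
\]
with equality for round balls. Since $c_{\op{EHZ}}$ and $\op{vol}$ are both monotone under inclusion and continuous in the Hausdorff metric on convex bodies, it suffices to prove this inequality for convex polytopes $P$ --- which is precisely the regime in which the rest of this paper provides an algorithm computing $\mc{A}_{\op{min}}(P)=c_{\op{EHZ}}(P)$.

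\textbf{Step 2: attack the polyhedral inequality combinatorially.} I would then use the combinatorial Reeb machinery of this paper to express $\mc{A}_{\op{min}}(P)$ as the action of a \emph{combinatorial} Reeb orbit --- a finite object determined by the facet normals and the combinatorial structure of $P$ --- and seek an isoperimetric-type polyhedral inequality bounding the minimal such action in terms of $\op{vol}(P)$. Dimension $2n=4$ is the most promising case: the combinatorial Reeb flow lives on the $3$-dimensional boundary $\partial P$, combinatorial orbits are closed paths in a finite graph built from the $2$-faces, and the action is a sum of explicit local contributions, so within each fixed combinatorial type one obtains a finite-dimensional optimization over the supporting hyperplanes whose optimum one would try to show is attained by the simplex or the cube (which smooth to the ball with $\op{sys}\to 1$).

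\textbf{Main obstacle.} The hard part is exactly where every known approach stalls: there is no monotone or variational quantity, provably maximized by the ball, that globally controls the ratio $\mc{A}_{\op{min}}^n/\op{vol}$. Indeed, specializing $X$ to a Lagrangian product $K\times K^{\circ}$ already turns the inequality into Mahler's volume conjecture from convex geometry, which is itself open, so an elementary proof should not be expected. Abbondandolo and Benedetti have shown that the round ball is a \emph{local} maximizer of $\op{sys}$ among smooth convex bodies, but promoting this to a global statement requires ruling out all other critical configurations; and the combinatorial reformulation does not obviously help, since the set of combinatorial types of polytopes is discrete and enormous, and within a fixed type both action and volume are non-convex functions of the supporting hyperplanes. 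Accordingly, the realistic near-term goal --- and what this paper actually pursues --- is to verify the inequality computationally over large families of polytopes, and in particular to search for the extremal locus $\op{sys}=1$, which both stress-tests the conjecture and constrains what any eventual proof must look like.
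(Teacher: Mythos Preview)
You correctly flag that Conjecture~\ref{conj:vweak} is an open problem; the paper does not prove it, so there is no ``paper's own proof'' to compare against. Your Step~1 reduction to polytopes via $C^0$-continuity of $c_{\op{EHZ}}$ and volume is exactly the observation the paper makes at the start of \S1.5.

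Where your strategy diverges from the paper's is in Step~2. You propose to fix a combinatorial type and optimize over supporting hyperplanes, guessing the extremum lands on ``the simplex or the cube (which smooth to the ball with $\op{sys}\to 1$).'' The paper's experiments contradict this: the standard $4$-simplex has systolic ratio $3/4$, not $1$, and the paper's examples with $\op{sys}=1$ are instead certain $6$- and $7$-vertex polytopes and a rotated $24$-cell. More importantly, the paper's own strategic suggestion (the paragraph ``Towards a proof of the weak Viterbo conjecture?'') is not an isoperimetric inequality within combinatorial types but rather a \emph{Zoll} route: show that any maximizer of $\op{sys}$ among $\le k$-vertex polytopes is combinatorially Zoll (Definition~\ref{def:combinatorially_Zoll}), and separately that combinatorially Zoll implies $\op{sys}=1$. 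This is closer in spirit to the local-maximizer argument you cite, but formulated so that compactness of the space of $\le k$-vertex polytopes guarantees a maximizer exists---sidestepping the issue you raise about strict convexity. Your identification of the Mahler obstruction and the local-versus-global gap is apt, but the specific polyhedral optimization you sketch would need to be reoriented around the Zoll characterization rather than a search for a single extremal combinatorial type.
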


Conjecture~\ref{conj:vweak} asserts that among compact convex domains with the same volume, $\mc{A}_{\op{min}}$ is largest for a ball. Although the role of the convexity hypothesis is somewhat mysterious, some hypothesis beyond the star-shaped condition is necessary: it is shown in \cite{abhs} that there exist star-shaped domains in $\R^4$ with arbitrarily large systolic ratio\footnote{It is further shown in \cite{abhs2} that there are star-shaped domains in $\R^4$ which are {\em dynamically convex\/} (meaning that every Reeb orbit on the boundary has rotation number greater than $1$, see Proposition~\ref{prop:ehwz}(a) below) and have systolic ratio $2-\epsilon$ for $\epsilon>0$ arbitrarily small.}.
One motivation for studying Conjecture~\ref{conj:vweak} is that it implies the Mahler conjecture in convex geometry \cite{ako}. 

To put Conjecture~\ref{conj:vweak} in more context, recall\footnote{The precise definition of ``symplectic capacity'' varies in the literature. For an older but extensive survey of symplectic capacities see \cite{chls}.} that a {\bf symplectic capacity\/} is a function $c$ mapping some class of $2n$-dimensional symplectic manifolds to $[0,\infty]$, such that:
\begin{itemize}
\item (Monotonicity)
If there exists a symplectic embedding $\varphi:(X,\omega)\to(X',\omega')$, then $c(X,\omega)\le c(X',\omega')$.
\item (Conformality)
If $r>0$ then $c(X,r\omega)=rc(X,\omega)$.
\end{itemize}
Of course we can regard (open) domains in $\R^{2n}$ as symplectic manifolds with the restriction of the standard symplectic form $\omega=\sum_{i=1}^ndx_i\,dy_i$. Conformality for a domain $X\subset \R^{2n}$ means that $c(rX)=r^2c(X)$. 

Following the usual convention in symplectic geometry, for $r>0$ define the ball
\[
B(r)=\left\{z\in\C^n\;\big|\; \pi|z|^2\le r\right\}
\]
and the cylinder
\[
Z(r)=\left\{z\in\C^n\;\big|\; \pi|z_1|^2\le r\right\}.
\]
We say that a symplectic capacity $c$ is {\bf normalized\/} if it is defined at least for all compact convex domains in $\R^{2n}$ and if
\[
c(B(r))=c(Z(r))=r.
\]
Note that the symplectic capacity $c(Z(r))$ is defined as the limit of $c(E_i)$, where $E_i \subset \R^{2n}$ is a sequence of ellipsoids exausting $Z(r)$.

An example of a normalized symplectic capacity is the {\bf Gromov width\/} $c_{\op{Gr}}$, where $c_{\op{Gr}}(X,\omega)$ is defined to be the supremum over $r$ such that there exists a symplectic embedding $B(r)\to (X,\omega)$. It is immediate from the definition that $c_{\op{Gr}}$ is monotone and conformal. Since symplectomorphisms preserve volume, we have $c_{\op{Gr}}(B(r))=r$; and the Gromov nonsqueezing theorem asserts that $c_{\op{Gr}}(Z(r))=r$.

Another example of a normalized symplectic capacity is the {\bf Ekeland-Hofer-Zehnder capacity\/}, denoted by $c_{\op{EHZ}}$. If $X$ is a compact convex domain with smooth boundary such that $0\in\op{int}(X)$, then\footnote{Since translations act by symplectomorphism on $\R^{2n}$, the symplectic capacities of $X$ are invariant under translation. However, we will often assume that $0\in\op{int}(X)$ so that we can sensibly discuss the Reeb flow on $\partial X$.}
\begin{equation}
\label{eqn:cehzamin}
c_{\op{EHZ}}(X) = \mc{A}_{\op{min}}(X).
\end{equation}
This is explained in \cite[Thm.\ 2.2]{artsteinostrover2014}, combining results from \cite{eh1,hz}.

Any symplectic capacity which is defined for compact convex domains in $\R^{2n}$ with smooth boundary is a $C^0$ continuous function of the domain (i.e., continuous with respect to the Hausdorff distance between compact sets), and thus extends uniquely to a $C^0$ continuous function of all compact convex sets in $\R^{2n}$.

\begin{conjecture}
[strong Viterbo conjecture\footnote{The original version of Viterbo's conjecture from \cite{viterbo} asserts that a normalized symplectic capacity, restricted to convex sets in $\R^{2n}$ of a given volume, takes its maximum on a ball. (This follows from what we are calling the ``strong Viterbo conjecture'' and implies what we are calling the ``weak Viterbo conjecture''.) Viterbo further conjectured that the maximum is achieved only if the interior of the convex set is symplectomorphic to an open ball; cf.\ Question~\ref{question:zoll_ball} below.}]
\label{conj:vstrong}
All normalized symplectic capacities agree on compact convex sets in $\R^{2n}$.
\end{conjecture}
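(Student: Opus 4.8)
\noindent This is a well-known open conjecture rather than a statement to be proved here, so I will describe the natural line of attack and indicate where it stalls.

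\textbf{Reduction to two extremal capacities.} The first step is to observe that, on the class of compact convex domains, there is a smallest normalized capacity and a largest one, so that the conjecture collapses to a single equality. If a symplectic ball $B(r)$ embeds in $X$, then monotonicity and normalization give $r=c(B(r))\le c(X)$ for every normalized capacity $c$; taking the supremum over such $r$ shows $c_{\op{Gr}}(X)\le c(X)$, so the Gromov width is the smallest one. Dually, if $X$ embeds in $Z(r)$ then $c(X)\le c(Z(r))=r$, so the ``cylindrical capacity'' $c_Z(X):=\inf\{r : X\hookrightarrow Z(r)\}$ — which one checks is itself monotone, conformal, and normalized — is the largest one. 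Hence Conjecture~\ref{conj:vstrong} is equivalent to the single assertion $c_{\op{Gr}}(X)=c_Z(X)$ for all compact convex $X\subset\R^{2n}$, and since $c_{\op{Gr}}\le c_{\op{EHZ}}\le c_Z$ this is in turn equivalent, via \eqref{eqn:cehzamin}, to the full chain $c_{\op{Gr}}(X)=\mc{A}_{\op{min}}(X)=c_Z(X)$: one must show both that every compact convex domain contains a symplectic ball of radius $\mc{A}_{\op{min}}(X)$ and that it squeezes into a cylinder of the same radius.

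\textbf{Strategy via Reeb dynamics.} For the first equality one natural plan is: locate a minimal-action Reeb orbit $\gamma$ on $\partial X$, analyze the linearized Reeb flow along $\gamma$ (its Conley--Zehnder/rotation data, which convexity constrains, cf.\ Proposition~\ref{prop:ehwz}), and use this local model to exhibit a neighborhood of $\gamma$ inside $X$ symplectomorphic to $B(\mc{A}(\gamma))$; for the squeezing side one seeks a symplectic embedding $X\hookrightarrow Z(\mc{A}_{\op{min}}(X))$ built from the same orbit. The combinatorial model developed in this paper — reducing Reeb orbits on a smoothed polytope to finite combinatorial data with matching action and index — is precisely the tool that makes the orbit-finding part of this program \emph{computable}, so that the conjecture, and its corollary $\op{sys}(X)\le 1$, can at least be tested on large families of convex polytopes and potential counterexamples searched for systematically.

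\textbf{The main obstacle.} The conjecture is currently known only for special classes with extra symmetry or structure — ellipsoids and polydisks, convex toric domains, certain Lagrangian products, and domains $C^0$-close to a ball — where $c_{\op{Gr}}$ and $c_{\op{EHZ}}$ can be matched essentially by hand. The crux of the general case is that $c_{\op{Gr}}(X)$ is governed by a rigid global pseudoholomorphic-curve embedding problem, whereas $\mc{A}_{\op{min}}(X)$ and $c_Z(X)$ are of a local, dynamical nature, and there is no known general mechanism forcing the optimal ball inside a convex domain to be as large as its squeezing obstruction. Since the star-shaped examples of \cite{abhs} show the convexity hypothesis cannot simply be discarded, any proof must use convexity quantitatively — plausibly via a Brunn--Minkowski-- or billiard-type input bounding $\mc{A}_{\op{min}}$, together with a holomorphic-curve construction of the embedding — and reconciling these two very different kinds of estimate is exactly where the difficulty concentrates (and, in high dimensions, where one should remain alert to the possibility that the conjecture is actually false).
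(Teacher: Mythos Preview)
You correctly recognize that this is an open conjecture, not a result proved in the paper; the paper states it as Conjecture~\ref{conj:vstrong} and then only uses it to motivate the weak version (showing that the strong conjecture implies $\op{sys}(X)\le 1$ via $c_{\op{EHZ}}=c_{\op{Gr}}$). Your reduction to the extremal pair $c_{\op{Gr}}\le c\le c_Z$ is standard and correct, and your discussion of the dynamical strategy and the holomorphic-curve obstacle is a reasonable summary of why the problem is hard; there is nothing to compare against, and nothing to correct.
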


\begin{remark} Convexity is a key hypothesis in both the weak and strong versions of the Viterbo conjecture. For star-shaped domains that are not convex, counterexamples to the conclusion of the strong Viterbo conjecture were given in \cite[Thm.\ 1.12]{hermann}, and counterexamples to the conclusion of the weak Viterbo conjecture were given later in \cite[Thm.\ 2]{abhs}. In \cite[Cor.\ 5.2]{ghr}, it is shown exactly where the conclusions of the strong and original Viterbo conjectures start to fail in a certain family of non-convex examples.
\end{remark}

Conjecture~\ref{conj:vstrong} implies Conjecture~\ref{conj:vweak}, because if Conjecture~\ref{conj:vstrong} holds, and if $X$ is a compact convex domain with smooth boundary and $0\in\op{int}(X)$, then
\[
\mc{A}_{\op{min}}(X)^n = c_{\op{EHZ}}(X)^n = c_{\op{Gr}}(X)^n \le n!\op{vol}(X).
\]
Here the second equality holds by Conjecture~\ref{conj:vstrong}; and the inequality on the right holds because if there exists a symplectic embedding $B(r)\to X$, then $r^n/n! = \op{vol}(B(r)) \le \op{vol}(X)$.

There are also interesting families of non-normalized symplectic capacities.
For example, there are the Ekeland-Hofer capacities defined in \cite{eh}; more recently, and conjecturally equivalently, positive $S^1$-equivariant symplectic homology was used in \cite{gh} to define a symplectic capacity $c_k^{S^1}$ for each integer $k\ge 1$. Each equivariant capacity $c_k^{S^1}(X)$ is the symplectic action of some Reeb orbit, which when $X$ is generic (so that $\lambda$ is nondegenerate) has Conley-Zehnder index $n-1+2k$ (see \S\ref{sec:rotcz} below). Some other symplectic capacities give the total action of a finite set of Reeb orbits, such as the ECH capacities in the four-dimensional case \cite{qech}, or the symplectic capacities defined by Siegel using rational symplectic field theory \cite{siegel}.

Conjectures~\ref{conj:vweak} and \ref{conj:vstrong} are known for some special examples such as $S^1$-invariant convex domains \cite{ghr}, but they have not been well tested more generally. To test Conjecture~\ref{conj:vweak}, and as a first step towards computing other symplectic capacities and testing conjectures about them, we need good methods for computing Reeb orbits, their actions, and their Conley-Zehnder indices. The plan in this paper is to understand Reeb orbits on a smooth convex domain in terms of ``combinatorial Reeb orbits'' on convex polytopes approximating the domain.

\subsection{Combinatorial Reeb orbits}
\label{sec:cro}

Let $X$ be any compact convex set in $\R^{2n}$ with $0\in\op{int}(X)$, and let $y\in\partial X$. The {\bf tangent cone\/}, which we denote by $T_y^+X$, is the closure of the set of vectors $v$ such $y+\epsilon v\in X$ for some $\epsilon>0$. For example, if $\partial X$ is smooth at $y$, then $T_y^+X$ is a closed half-space whose boundary is the usual tangent space $T_y\partial X$.

Also define the {\bf positive normal cone\/}
\[
N_y^+X = \left\{v\in\R^{2n}\;\big|\;\langle x-y,v\rangle\le 0\;\;\forall x\in X\right\}.
\]
If $\partial X$ is smooth at $y$, then $N_y^+X$ is a one-dimensional ray and consists of the outward pointing normal vectors to $\partial X$ at $y$. 

Finally, define the {\bf Reeb cone\/}
\[
R_y^+X = T_y^+X\cap {\mathbf i}N_y^+X
\]
where ${\mathbf i}$ denotes the standard complex structure on $\C^n=\R^{2n}$. We show that $R^+_yX$ is nonempty in the cases of interest for this paper in Lemma \ref{lem:wp1}. If $\partial X$ is smooth near $y$, then $R_y^+X$ is the ray consisting of nonnegative multiples of the Reeb vector field on $\partial X$ at $y$. Indeed, in this case we can write
\[
T_y\partial X = \left\{v\in\R^{2n}\;\big|\;\langle \nu,v\rangle=0\right\}
\]
where $\nu$ is the outward unit normal vector to $\partial X$ at $y$; and the Reeb vector field at $y$ is given by
\begin{equation}
\label{eqn:Reebinu}
R_y = 2\frac{{\mathbf i}\nu}{\langle \nu,y\rangle}.
\end{equation}

\begin{figure}[h!]
\label{fig:cones_on_polytopes}
\begin{center}
\includegraphics[width=.6\linewidth]{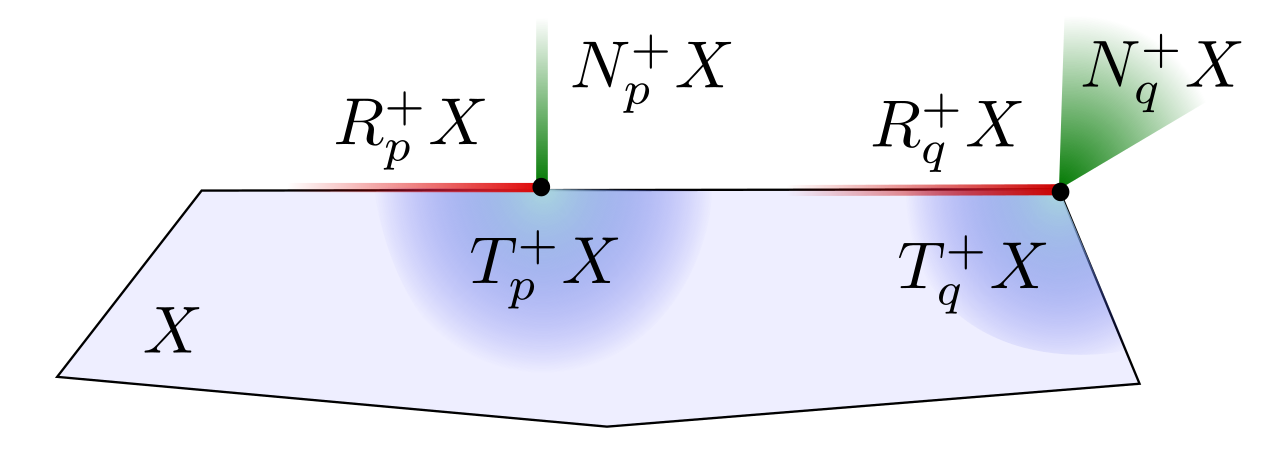}
\end{center}
\caption{We depict the tangent, normal and Reeb cones for two points $p,q \in X$ in a polytope $X \subset \R^2$.}
\end{figure}

Suppose now that $X$ is a convex polytope (i.e.\ a compact set given by the intersection of a finite set of closed half-spaces) in $\R^{2n}$ with $0\in\op{int}(X)$.
Our convention is that a {\bf $k$-face\/} of $X$ is a $k$-dimensional subset $F\subset \partial X$ which is the interior of the intersection with $\partial X$ of some set of the hyperplanes defining $X$. For a given $k$-face $F$, the tangent cone $T_y^+X$, the positive normal cone $N_y^+X$, and the Reeb cone $R_y^+X$ are the same for all $y\in F$. Thus we can denote these cones by $T_F^+X$, $N_F^+X$, and $R_F^+X$ respectively.

We will usually restrict attention to polytopes of the following type:

\begin{definition}
\label{def:symplectic_polytope}
A {\bf symplectic polytope\/} in $\R^4$ is a convex polytope $X$ in $\R^4$ such that $0\in\op{int}(X)$ and no $2$-face of $X$ is Lagrangian, i.e., the standard symplectic form $\omega_0 = \sum_{i=1}^2dx_i\,dy_i$ restricts to a nonzero $2$-form on each $2$-face.
\end{definition}

Symplectic polytopes are generic, in the sense that in the space of polytopes in $\R^4$ with a given number of $3$-faces, the set of non-symplectic polytopes is a proper subvariety. Moreover, the boundary of a symplectic polytope in $\mathbb{R}^4$ has a well-posed ``combinatorial Reeb flow'' in the following sense\footnote{There is also a more general notion of ``generalized Reeb trajectory'' on the boundary of a compact convex convex set in $\R^{2n}$ whose interior contains the origin; see Definition~\ref{def:gro} below. We do not know whether the generalized Reeb flow on the boundary of a four-dimensional symplectic polytope is well posed.}.

 \begin{proposition}[Lemma \ref{lem:wp1}]
 \label{prop:well-posed}
 If $X$ is a symplectic polytope in $\R^4$, then the Reeb cone $R_F^+X$ is one-dimensional for each face $F$.
\end{proposition}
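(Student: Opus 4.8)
Fix a face $F$ and let $\nu_1,\dots,\nu_m$ be the outward conormals of the facets of $X$ whose closures contain $F$, normalized so that each such facet lies in the hyperplane $\{\langle x,\nu_j\rangle = 1\}$. I will use the standard convex‑geometry facts that $N_F^+X = \op{cone}(\nu_1,\dots,\nu_m)$ is a \emph{pointed} cone --- pointed because $0\in\op{int}(X)$ forces $N_F^+X\cap(-N_F^+X)=\{0\}$ --- and that $T_F^+X = \{v\in\R^4 : \langle v,\nu_j\rangle\le 0 \text{ for all }j\}$, since near a relative interior point of $F$ the only active defining inequalities of $X$ are $\langle x,\nu_j\rangle\le 1$. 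Combining these with the identity $\langle\mathbf{i}u,v\rangle = \omega_0(u,v)$ gives the explicit description
\[
R_F^+X = \bigl\{\,\mathbf{i}p \;:\; p = \textstyle\sum_j a_j\nu_j \text{ with } a_j\ge 0,\ \text{and } \omega_0(p,\nu_\ell)\le 0 \text{ for all }\ell\,\bigr\},
\]
an intersection of two convex cones, hence a convex cone. The plan is to show this cone is nonzero and is contained in a line; since it lies inside the pointed cone $\mathbf{i}\,N_F^+X$, which contains no line, it is then a single ray.

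For nonemptiness, substitute $p = \sum_j a_j\nu_j$ and let $M$ be the antisymmetric matrix with $M_{j\ell} = \omega_0(\nu_j,\nu_\ell)$; the constraints become $a\ge 0$ and $Ma\ge 0$. Viewing $M$ as the payoff matrix of a symmetric zero‑sum matrix game, whose value is $0$, the minimax theorem produces a mixed strategy $a\ge 0$, $a\ne 0$ with $Ma\ge 0$; and pointedness of $N_F^+X$ forces $\sum_j a_j\nu_j\ne 0$ (a nontrivial nonnegative relation among the $\nu_j$ would put a nonzero vector in $N_F^+X\cap(-N_F^+X)$). Hence $R_F^+X\ne\{0\}$.

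The heart of the argument is the observation that if $\mathbf{i}p\in R_F^+X$ and $p = \sum_j a_j\nu_j$ with $a_j\ge 0$, then $0 = \omega_0(p,p) = \sum_j a_j\,\omega_0(p,\nu_j)$ with every summand $\le 0$ (as $\omega_0(p,\nu_j) = \langle\mathbf{i}p,\nu_j\rangle\le 0$), so $\omega_0(p,\nu_j) = 0$ whenever $a_j>0$; writing $S = \{j : a_j>0\}$ and $V_S = \op{span}(\nu_j : j\in S)$, this places $p$ in the radical $V_S\cap V_S^{\omega_0}$ of $\omega_0|_{V_S}$. Now take any $\mathbf{i}p_1,\mathbf{i}p_2\in R_F^+X$, with representations $p_\kappa = \sum_j a_j^{(\kappa)}\nu_j$ and supports $S_\kappa$. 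For $t\in(0,1)$ the point $\mathbf{i}p_t = (1-t)\mathbf{i}p_1 + t\,\mathbf{i}p_2$ lies in $R_F^+X$ by convexity and is represented by $(1-t)a^{(1)} + t\,a^{(2)}$, whose support is the fixed set $S := S_1\cup S_2$; the observation gives $p_t\in V_S\cap V_S^{\omega_0}$, and letting $t\to 0,1$ (this subspace being closed) gives $p_1,p_2\in V_S\cap V_S^{\omega_0}$. Since $\omega_0$ is nondegenerate on $\R^4$ and a symplectic $\R^4$ has no $3$‑dimensional isotropic subspace, the radical $V_S\cap V_S^{\omega_0}$ has dimension $\le 1$ \emph{unless} $V_S$ is a Lagrangian $2$‑plane. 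Granting that $V_S$ is never such a plane, we get $p_1\parallel p_2$, so $R_F^+X$ lies in a line, completing the proof.

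The step I expect to be the main obstacle is exactly this exclusion of a Lagrangian $V_S$: one must rule out that the conormals $\nu_j$, $j\in S$, of facets meeting along $F$ all lie in a common Lagrangian $2$‑plane --- for otherwise the argument above would put a $2$‑dimensional wedge into $R_F^+X$. When $S=\{i,j\}$ and $\bar G_i\cap\bar G_j$ is itself a $2$‑face (in particular whenever $F$ is a $2$‑face, and more generally whenever $X$ is simple), the span $\op{span}(\nu_i,\nu_j)$ is precisely the conormal plane of that $2$‑face and is non‑Lagrangian by Definition~\ref{def:symplectic_polytope}, so there is nothing to exclude; the facet case is trivial since $N_F^+X$ is then a ray. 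For a non‑simple face $F$ --- necessarily of dimension $\le 1$, so that $N_F^+X$ does not lie in any $2$‑plane --- one must argue separately: a $2$‑plane $\Lambda$ spanned by some of the $\nu_j$ is met transversally by the functional $\omega_0(\cdot,\nu_\ell)$ for every active conormal $\nu_\ell\notin\Lambda$, and one needs to check that these extra half‑plane constraints inside $\Lambda$ (together with $\Lambda\cap N_F^+X$) cut the feasible set down to a ray. This is the technical point I expect to require the most care; the case-by-case alternative (direct computation for $2$-faces, a tournament/sign analysis on the $\omega_0(\nu_i,\nu_j)$ for edges and vertices) could also be used here.
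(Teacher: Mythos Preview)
Your minimax argument for nonemptiness is correct and considerably slicker than the paper's approach, which proves $\dim R_F^+X \ge 1$ via a topological degree computation: the paper constructs an $S^2$-bundle over a space $B$ homotopy equivalent to $S^2$, defines two sections encoding $v \mapsto \mathbf{i}v$ and $(v,w) \mapsto w$, and shows they must meet because their mod-$2$ degrees differ. Your linear-programming observation bypasses all of this topology.

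The gap you flag in the $\dim \le 1$ half is genuine, though it arises only for non-simple polytopes (for simple $X$ your argument is complete, since any two facets through a common face then meet in a $2$-face). For a non-simple edge or vertex, your active set $S$ can have $V_S$ equal to a Lagrangian $2$-plane spanned by normals $\nu_i, \nu_j$ of two facets that do \emph{not} share a $2$-face, and then $\omega_0(\nu_i,\nu_j) = 0$ is not excluded by the hypothesis on $X$. What actually prevents a $2$-dimensional Reeb cone here is a convexity argument you have not supplied. The paper's route is: first show that $R_F^+X$ lies entirely in $T_F^+\overline{E}$ for some single face $E$ with $F \subset \overline{E}$ (this uses essentially your orthogonality observation together with the fact that a cone with nonempty interior is not a finite union of proper subspaces); then, taking $E$ to be a $3$-face without loss of generality, observe that any Reeb-cone vector lying in the \emph{interior} of $T_F^+\overline{E}$ must equal $\mathbf{i}\nu_E$ up to positive scale, so two independent Reeb-cone vectors are forced onto $\partial T_F^+\overline{E}$ and hence into $T_F^+\overline{E'}$ for an actual $2$-face $E'$ of $X$. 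At that point $\omega_0(v_0,v_1)=0$ contradicts $E'$ being non-Lagrangian. Your $V_S$-radical framework does not by itself produce this $2$-face $E'$; you would need to graft on an argument of this kind to handle the non-simple case.
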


\begin{definition}
\label{def:cro}
Let $X$ be a symplectic polytope in $\R^4$. A {\bf combinatorial Reeb orbit\/} for $X$ is a finite sequence $\gamma=(\Gamma_1,\ldots,\Gamma_k)$ of oriented line segments in $\partial X$, modulo cyclic permutations, such that for each $i=1,\ldots,k$:
\begin{itemize}
\item The final endpoint of $\Gamma_i$ agrees with the initial endpoint of $\Gamma_{i+1\mod k}$.
\item
There is a face $F$ of $X$ such that $\op{int}(\Gamma_i)\subset F$, the endpoints of $\Gamma_i$ are on the boundary of (the closure of) $F$, and $\Gamma_i$ points in the direction of $R_F^+X$.
\end{itemize}
The {\bf combinatorial symplectic action\/} of a combinatorial Reeb orbit as above is defined by
\[
\mc{A}_{\op{comb}}(\gamma)=\sum_{i=1}^k\int_{\Gamma_i}\lambda_0.
\]
\end{definition}

To give a better idea of what combinatorial Reeb orbits look like, we have the following lemma. 

\begin{lemma}
\label{lem:Reebcone}
(proved in \S\ref{sec:drc})
Let $X$ be a symplectic polytope in $\R^4$. Then the Reeb cones of the faces of $X$ satisfy the following:
\begin{itemize}
\item
If $E$ is a 3-face, then $R_E^+X$ consists of all nonnegative multiples of the Reeb vector field on $E$.
\item If $F$ is a $2$-face, then $R_F^+X$ points into a 3-face $E$ adjacent to $F$, and agrees with $R_E^+X$. 
\item If $L$ is a $1$-face, then one of the following possibilities holds:
\begin{itemize}
\item $R_L^+X$ points into a $3$-face $E$ adjacent to $L$ and agrees with $R_E^+X$. In this case we say that $L$ is a {\bf good\/} $1$-face.
\item $R_L^+X$ is tangent to $L$, and does not agree with $R_E^+X$ for any of the $3$-faces $E$ adjacent to $L$. In this case we say that $L$ is a {\bf bad\/} $1$-face.
\end{itemize}
\item If $P$ is a $0$-face, then $R_P^+X$ points into a $3$-face $E$ or bad $1$-face $L$ adjacent to $F$ and agrees with $R_E^+X$ or $R_L^+X$ respectively.
\end{itemize}
\end{lemma}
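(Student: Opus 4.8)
The whole lemma concerns a single face $F$ of dimension $0,1,2,3$ and the geometry of the Reeb cone $R_F^+X = T_F^+X \cap {\mathbf i}N_F^+X$. The plan is to work face-by-face, in increasing order of codimension, always reducing to an explicit linear-algebra computation with the normal vectors of the $3$-faces containing $F$, and using Proposition~\ref{prop:well-posed} (which is Lemma~\ref{lem:wp1}) to know in advance that $R_F^+X$ is a single ray, so the only issue is to identify that ray.

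\emph{Step 1 (3-faces).} For a $3$-face $E$ with outward unit normal $\nu$, the normal cone $N_E^+X$ is the ray $\R_{\ge 0}\nu$ and the tangent cone $T_E^+X$ is the half-space $\{v : \langle \nu, v\rangle \le 0\}$. So $R_E^+X = \{t\,{\mathbf i}\nu : t \ge 0,\ \langle \nu, t\,{\mathbf i}\nu\rangle \le 0\}$; since $\langle \nu, {\mathbf i}\nu\rangle = 0$ always, this is $\R_{\ge 0}\,{\mathbf i}\nu$, and comparing with \eqref{eqn:Reebinu} (using $\langle \nu, y\rangle > 0$ because $0 \in \op{int}(X)$) this is exactly the nonnegative multiples of the Reeb vector field on $E$. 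This is the base case that all the others refer back to.

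\emph{Step 2 (2-faces).} A $2$-face $F$ lies on exactly two $3$-faces $E_1, E_2$ with outward normals $\nu_1, \nu_2$, so $N_F^+X = \{a_1\nu_1 + a_2\nu_2 : a_1, a_2 \ge 0\}$ is a $2$-dimensional cone and $T_F^+X = \{v : \langle \nu_1, v\rangle \le 0,\ \langle \nu_2, v\rangle \le 0\}$. By Proposition~\ref{prop:well-posed}, $R_F^+X = T_F^+X \cap {\mathbf i}N_F^+X$ is a ray; write its generator as ${\mathbf i}(a_1\nu_1 + a_2\nu_2)$ with $a_1, a_2 \ge 0$. The key point is to show the ray lies on the boundary of ${\mathbf i}N_F^+X$, i.e.\ one of $a_1, a_2$ vanishes: if both were positive, then the tangency conditions $\langle \nu_j, {\mathbf i}(a_1\nu_1+a_2\nu_2)\rangle \le 0$ for $j=1,2$ would (using $\langle \nu_1,{\mathbf i}\nu_1\rangle = \langle \nu_2,{\mathbf i}\nu_2\rangle = 0$ and antisymmetry of $(u,v)\mapsto \langle u,{\mathbf i}v\rangle = \omega_0(u,v)$) force $a_1 a_2\,\omega_0(\nu_2,\nu_1) \le 0$ and $a_2 a_1\,\omega_0(\nu_1,\nu_2) \le 0$ simultaneously, hence $\omega_0(\nu_1,\nu_2) = 0$; but $\omega_0(\nu_1,\nu_2) = 0$ is precisely the condition that $F$ is Lagrangian, contradicting that $X$ is a symplectic polytope. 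So say $a_2 = 0$, giving $R_F^+X = \R_{\ge 0}\,{\mathbf i}\nu_1$; one then checks the remaining tangency inequality $\langle \nu_2, {\mathbf i}\nu_1\rangle \le 0$ says ${\mathbf i}\nu_1$ points into $T_F^+X$ so that this ray indeed points into $E_1$, and by Step 1 it agrees with $R_{E_1}^+X$.

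\emph{Step 3 (1-faces).} A $1$-face $L$ is the intersection of $\partial X$ with some hyperplanes; its normal cone $N_L^+X$ is a $3$-dimensional cone, positively spanned by the outward normals $\nu_1,\ldots,\nu_m$ of the $3$-faces adjacent to $L$. Again $R_L^+X$ is a ray by Proposition~\ref{prop:well-posed}; write its generator as ${\mathbf i}\mu$ with $\mu = \sum a_j \nu_j$, $a_j \ge 0$. One shows $\mu$ is forced to lie on the boundary of $N_L^+X$: the tangency conditions $\langle \nu_j, {\mathbf i}\mu\rangle \le 0$ for all $j$ say $\omega_0(\nu_j,\mu) \ge 0$, i.e.\ $\omega_0(\mu,\mu) = 0$ while $\sum a_j\,\omega_0(\nu_j,\mu) \le 0$; combined with each term $a_j\,\omega_0(\nu_j,\mu) \ge 0$ this forces $a_j\,\omega_0(\nu_j,\mu) = 0$ for every $j$. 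Now distinguish two cases according to whether ${\mathbf i}\mu$ lies in the interior or on the boundary of the tangent cone $T_L^+X$. If ${\mathbf i}\mu$ is tangent to $L$ (lies in $T_L\partial X$ in every adjacent $3$-face and actually along the line $L$), we are in the \emph{bad} case: one must check ${\mathbf i}\mu$ is not a nonnegative multiple of any $R_{E_j}^+X = \R_{\ge 0}{\mathbf i}\nu_j$, which would force $\mu \parallel \nu_j$, i.e.\ $\mu$ on an extreme ray; the relations $a_k\,\omega_0(\nu_k,\nu_j) = 0$ together with the symplectic (non-Lagrangian) hypothesis on the $2$-faces through $L$ rule this out. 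Otherwise ${\mathbf i}\mu$ points strictly into the interior of $T_L^+X$ relative to some adjacent $3$-face $E_j$, forcing $\mu = a_j\nu_j$ on a single extreme ray, and then $R_L^+X = \R_{\ge 0}{\mathbf i}\nu_j = R_{E_j}^+X$, the \emph{good} case.

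\emph{Step 4 (0-faces).} For a vertex $P$, $N_P^+X$ is a full-dimensional cone spanned by the adjacent $3$-face normals, and $T_P^+X$ is a pointed cone. By Proposition~\ref{prop:well-posed} the intersection $R_P^+X = T_P^+X \cap {\mathbf i}N_P^+X$ is a ray. The strategy is to show its generator lies in a face of ${\mathbf i}N_P^+X$ of the smallest possible dimension dictated by the tangency constraints — using the same $\omega_0$-antisymmetry argument as above to kill interior directions — and to match the resulting ray against the Reeb cones of the lower-dimensional faces of $X$ adjacent to $P$: $R_F^+X$ for $F$ a $3$-face, a $2$-face (which by Step 2 reduces to a $3$-face), a good $1$-face (which by Step 3 reduces to a $3$-face), or a bad $1$-face. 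By exhausting these, $R_P^+X$ agrees with $R_E^+X$ for an adjacent $3$-face or with $R_L^+X$ for an adjacent bad $1$-face.

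\emph{Main obstacle.} The routine part is the $\omega_0$-antisymmetry bookkeeping that forces the Reeb ray onto a low-dimensional boundary face of ${\mathbf i}N_F^+X$; this is the same trick at every level. The genuinely delicate part is Step 3: organizing the case analysis cleanly so that ``$R_L^+X$ tangent to $L$'' is correctly matched with ``$R_L^+X$ agrees with no adjacent $R_E^+X$'' (the definition of \emph{bad}), and in particular verifying that these two characterizations are equivalent — one direction needs the non-Lagrangian hypothesis on the $2$-faces containing $L$, and one must be careful that ${\mathbf i}\mu$ tangent to the line $L$ is not accidentally parallel to some ${\mathbf i}\nu_j$. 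Keeping track of which $2$-faces and $3$-faces are adjacent to $L$, and ensuring the dichotomy is exhaustive and exclusive, is where the real care is needed; Step 4 then follows the same pattern but with more faces to enumerate.
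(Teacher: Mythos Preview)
Your overall strategy is sound and close in spirit to the paper's, but the organization differs in a way worth noting, and your Step~3 has a genuine gap.

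\textbf{How the paper does it.} The paper first proves a general structural lemma (Lemma~\ref{lem:pp1}): for any face $F$ and any nonzero $w\in R_F^+X$, there is a face $E$ with $F\subset\overline{E}$ such that $w\in T_F^+\overline{E}$ and $-{\mathbf i}w\in N_E^+X$, hence $w\in TE\cap R_E^+X$. Then Lemma~\ref{lem:la} (which is exactly your $\omega_0$-antisymmetry trick, isolated once) says that $TE\cap R_E^+X\neq\{0\}$ with $E$ a $2$-face forces $E$ Lagrangian. So for a symplectic polytope the face $E$ must be a $3$-face, a $1$-face, or $F$ itself---never a $2$-face. The case analysis for $1$-faces and $0$-faces then falls out immediately, and a separate lemma (Lemma~\ref{lem:EinEout}) handles the ``points into'' claims. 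The advantage of this packaging is that the $2$-face obstruction is proved once and invoked uniformly, rather than re-derived at each codimension.

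\textbf{The gap in your Step~3.} You dichotomize on whether ${\mathbf i}\mu$ is tangent to $L$ or ``points strictly into the interior of $T_L^+X$ relative to some adjacent $3$-face $E_j$'', but this is not exhaustive as written. There is an intermediate possibility: ${\mathbf i}\mu$ lies on $\partial T_L^+X$ tangent to a $2$-face $F$ adjacent to $L$ (so $\langle\nu_j,{\mathbf i}\mu\rangle=0$ for the two $j$ bounding $F$) without being tangent to $L$ itself. Your relations $a_j\,\omega_0(\nu_j,\mu)=0$ then force $\mu\in N_F^+X$, so ${\mathbf i}\mu\in TF\cap R_F^+X$, and your own Step~2 argument (or the paper's Lemma~\ref{lem:la}) makes $F$ Lagrangian---contradiction. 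This is the missing piece, and it is exactly what the paper's Lemma~\ref{lem:la} handles in one stroke. Separately, your argument in the bad case that $R_L^+X$ cannot agree with any $R_{E_j}^+X$ is muddled: if $\mu\parallel\nu_j$ then the relations $a_k\,\omega_0(\nu_k,\nu_j)=0$ are vacuous (since $a_k=0$ for $k\neq j$). The correct argument is that ${\mathbf i}\nu_j$ tangent to $L$ would force $\omega_0(\nu_j,\nu_{j\pm1})=0$, making the two $2$-faces of $E_j$ through $L$ Lagrangian; the paper gets this from Lemma~\ref{lem:EinEout}.
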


\begin{remark}
The reason we assume that $X$ has no Lagrangian $2$-faces in Definition~\ref{def:symplectic_polytope} is that if $F$ is a Lagrangian 2-face, then $R_F^+X$ is two-dimensional and tangent to $F$. In fact, $\partial R_F^+X = R_{E_1}^+X\cup R_{E_2}^+X$ where $E_1$ and $E_2$ are the two $3$-faces adjacent to $F$. In this case we do not have a well-posed ``combinatorial Reeb flow'' on $\partial X$.
\end{remark}

\begin{definition}
A combinatorial Reeb orbit as above is:
\begin{itemize}
\item {\bf Type 1\/} if it does not intersect the $1$-skeleton of $X$;
\item {\bf Type 2\/} if it intersects the $1$-skeleton of $X$, but only in finitely many points which are some of the endpoints of the line segments $\Gamma_i$;
\item {\bf Type 3\/} if it contains a bad $1$-face.
\end{itemize}
\end{definition}

\begin{figure}[h!]
\label{fig:types_of_orbits}
\includegraphics[width=\linewidth]{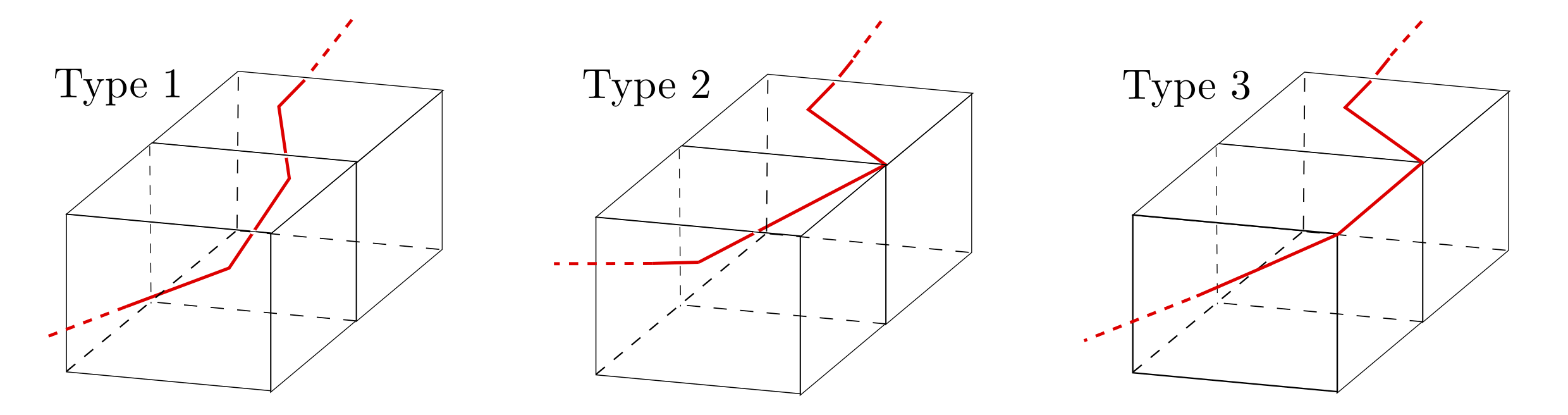}
\caption{We depict sub-trajectories of the three types of orbits, in red. Each cube above represents a 3-face of a hypothetical 4-polytope.}
\end{figure}

It follows from the definitions that each combinatorial Reeb orbit is of one of the above three types. Type 1 Reeb orbits are the most important for our computations. We expect that Type 2 combinatorial Reeb orbits do not exist for generic polytopes; see Conjecture~\ref{conj:genericity} below. Type 3 combinatorial Reeb orbits generally cannot be eliminated by perturbing the polytope; but we will see in Theorem~\ref{thm:smoothtocomb}(iii) below that they do not contribute to the symplectic capacities that we are interested in. See Remark~\ref{rem:spiral} for some intuition for this.

\subsection{Rotation numbers and the Conley-Zehnder index}
\label{sec:rotcz}

Let $X$ be a compact star-shaped domain in $\R^4$ with smooth boundary $Y$. Let $\Phi_t:Y\to Y$ denote the time $t$ flow of the Reeb vector field $R$. The derivative of $\Phi_t$ preserves the contact form $\lambda$ and so defines a map on the contact structure $\xi = \Ker(\lambda)$, namely
\[
d\Phi_t:\xi_y \longrightarrow \xi_{\Phi_t(y)}
\]
for each $y\in Y$. The map $d\Phi_t$ is symplectic with respect to the symplectic form $d\lambda|_\xi$ on $\xi$.

We say that a Reeb orbit $\gamma:\R/T\Z\to Y$ is {\bf nondegenerate\/} if the ``linearized return map''
\begin{equation}
\label{eqn:dPhiT}
d\Phi_T:\xi_{\gamma(0)}\longrightarrow \xi_{\gamma(0)}
\end{equation}
does not have $1$ as an eigenvalue. The contact form $\lambda$ is called nondegenerate if all Reeb orbits are nondegenerate.

Now fix a symplectic trivialization $\tau:\xi\to Y\times\R^2$. If $\gamma$ is a Reeb orbit as above, then the trivialization $\tau$ allows us to regard the map \eqref{eqn:dPhiT} as an element of the $2$-dimensional symplectic group $\op{Sp}(2)$. Moreover, the family of maps
\begin{equation}
\label{eqn:fom}
\left\{
\R^2 \stackrel{\tau^{-1}}{\longrightarrow} \xi_{\gamma(0)} \stackrel{d\Phi_t}{\longrightarrow} \xi_{\gamma(t)} \stackrel{\tau}{\longrightarrow} \R^2\right\}_{t\in[0,T]}
\end{equation}
defines a path $\phi_\tau$ in $\Sg$ from the identity to the map \eqref{eqn:dPhiT}, and thus an element of the universal cover $\Sgt$ of $\Sg$. As we review in Appendix~\ref{app:rotation_numbers}, any element of $\Sgt$ has a well-defined {\bf rotation number\/}. We denote the rotation number of $\phi_\tau$ by
\[
\rho(\gamma)\in\R.
\]

Note that the rotation number $\rho(\gamma)$ does not depend on the choice of symplectic trivialization $\tau$ of $\xi$. Since $Y\simeq S^3$, any two such trivializations are homotopic, giving rise to a homotopy of paths \eqref{eqn:fom} whose final endpoints are conjugate in $\op{Sp}(2)$. Invariance of the rotation number then follows from Lemma~\ref{lem:compute_rho_bar}.

If $\gamma$ is nondegenerate (which holds automatically when $\rho(\gamma)$ is not an integer), then the {\bf Conley-Zehnder index\/} of $\gamma$ is defined by
\begin{equation}
\label{eqn:CZrot}
\op{CZ}(\gamma) = \floor{\rho(\gamma)} + \ceil{\rho(\gamma)} \in \Z.
\end{equation}

\begin{proposition}
\label{prop:ehwz}
Let $X$ be a compact strictly convex domain in $\R^4$ with smooth boundary $Y$ and with $0\in\op{int}(X)$. Then:
\begin{itemize}
\item[\emph{(a)}] 
Every Reeb orbit $\gamma$ in $Y$ has $\rho(\gamma)>1$. In particular, if $\gamma$ is nondegenerate then $\op{CZ}(\gamma)\ge 3$.
\item[\emph{(b)}] 
There exists a Reeb orbit $\gamma$ which is action minimizing, i.e.\ $\mc{A}(\gamma) = \mc{A}_{\op{min}}(X)$, with
\[
\rho(\gamma) \le 2.
\]
If $\gamma$ is also nondegenerate then the inequality is strict, so that $\op{CZ}(\gamma)=3$.
\end{itemize}
\end{proposition}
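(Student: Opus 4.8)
The plan is to deduce both parts from the classical index theory of closed characteristics on strictly convex hypersurfaces, after translating between the Reeb picture used here and the Hamiltonian one. Realize $Y=\partial X$ as the level set $\{H=1\}$ of the unique $2$-homogeneous function $H\colon\R^4\to\R$ with $H<1$ on $\op{int}(X)$; strict convexity and smoothness of $Y$ make $H$ smooth on $\R^4\setminus\{0\}$ with Hessian $H''(x)$ positive definite there. The Hamiltonian flow of $H$ preserves $Y$ and equals the Reeb flow up to a positive reparametrization, so a Reeb orbit $\gamma$ corresponds to a periodic solution $x(\cdot)$ of $\dot x = J\nabla H(x)$. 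The linearized return map $d\Phi_T$ on $\xi_{\gamma(0)}$ is then, after splitting off the symplectic $2$-plane spanned by the Hamiltonian vector field and the radial (Liouville) vector field — on which the linearized flow is trivial because $H$ is homogeneous — the time-$T$ map of a planar system $\dot\zeta = JB(t)\zeta$ with $B(t)$ symmetric and positive definite; equivalently, the path $\phi_\tau\in\Sg$ of \eqref{eqn:fom} is, in an appropriate trivialization and up to this splitting, generated by a time-dependent family of positive-definite symmetric forms.

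For part (a): a path in $\Sg$ generated by positive-definite symmetric forms is a \emph{positive path}, meaning its velocity always points in the direction of increasing rotation. The monotonicity and computational properties of the rotation number established in Appendix~\ref{app:rotation_numbers} (in particular Lemma~\ref{lem:compute_rho_bar}) then bound $\rho(\gamma)$ from below; the normalization is pinned down by the fact that $Y\simeq S^3$ admits a unique homotopy class of symplectic trivialization of $\xi$, and the two inputs combine to give $\rho(\gamma)>1$. This is the dynamical convexity theorem of Hofer--Wysocki--Zehnder (anticipated by Ekeland's index theory). The implication $\rho(\gamma)>1\Rightarrow\CZ(\gamma)\ge 3$ for nondegenerate $\gamma$ is then immediate from \eqref{eqn:CZrot}, since $\floor{\rho}+\ceil{\rho}\ge 3$ whenever $\rho>1$ is not an integer.

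For part (b): by \eqref{eqn:cehzamin} we have $\mc{A}_{\op{min}}(X)=c_{\op{EHZ}}(X)$, and the minimax defining the Ekeland--Hofer--Zehnder capacity of a convex body is attained at a closed characteristic (Ekeland; Hofer--Zehnder), yielding a Reeb orbit $\gamma$ with $\mc{A}(\gamma)=\mc{A}_{\op{min}}(X)$. Ekeland's index theorem for the Clarke dual action functional bounds the Maslov-type index of this minimax characteristic from above, which gives $\CZ(\gamma)\le 3$ when $X$ is nondegenerate and $\rho(\gamma)\le 2$ in general; alternatively $\gamma$ may be taken to realize the first positive $S^1$-equivariant capacity $c_1^{S^1}$, whose orbit has Conley--Zehnder index $n+1=3$ in the nondegenerate case. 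Combining the nondegenerate bound with part (a), which gives $\CZ(\gamma)\ge 3$, forces $\CZ(\gamma)=3$, hence $\rho(\gamma)\in(1,2)$, so the inequality $\rho(\gamma)\le 2$ is indeed strict. (When $X$ is degenerate only $\rho(\gamma)\le 2$ is claimed; alternatively one approximates $X$ by nondegenerate strictly convex domains and passes to the limit, using $C^0$-continuity of $c_{\op{EHZ}}$ and Arzel\`a--Ascoli applied to the minimizers, whose periods are uniformly bounded.)

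The main obstacle is the sharp lower bound in part (a): extracting $\rho(\gamma)>1$, rather than merely $\rho(\gamma)>0$, from convexity. This is exactly where strict convexity — positive-definiteness of $H''$, not just positive semidefiniteness — is indispensable, and where the topological normalization coming from the spanning-disk trivialization of $\xi$ over $Y$ must be tracked with care; it is the substance of the Hofer--Wysocki--Zehnder theorem. The upper index bound underlying part (b) is analytically similar (positivity of the Hessian of the dual action functional) and is likewise classical; the remaining ingredients — existence of the action minimizer, the elementary passage from rotation numbers to Conley--Zehnder indices, and the limiting argument in the degenerate case — are routine.
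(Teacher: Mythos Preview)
Your proposal is correct and follows essentially the same route as the paper: part (a) is attributed to the Hofer--Wysocki--Zehnder dynamical convexity theorem, and part (b) is obtained from the Ekeland--Hofer--Zehnder capacity together with the index bound for its minimax characteristic (the paper cites Hu--Long for this and also the symplectic-homology identification of Abbondandolo--Kang and Irie, which matches your alternative via $c_1^{S^1}$). The paper's own proof is even terser than yours---just citations---so your sketch of the underlying mechanisms (positive paths from strict convexity, Clarke dual index bounds) is additional detail rather than a different argument.
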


\begin{proof}
(a) was proved by Hofer-Wysocki-Zehnder \cite{hwz}.

(b) follows from the construction of the Ekeland-Hofer-Zehnder capacity and an index calculation of Hu-Long \cite{hulong2002}. In fact, it was recently shown by Abbondandolo-Kang \cite{ak} and Irie \cite{irie} that $c_{\op{EHZ}}(X)$ agrees with a capacity defined from symplectic homology, which by construction is the action of some Reeb orbit $\gamma$ with $\rho(\gamma)\le 2$, with equality only if $\gamma$ is degenerate.
\end{proof}

Suppose now that $X$ is a symplectic polytope in $\R^4$. As we explain in Definition~\ref{def:crn}, each Type 1 combinatorial Reeb orbit $\gamma$ has a well-defined {\bf combinatorial rotation number\/}, which we denote by $\rho_{\op{comb}}(\gamma)\in\R$. There is also a combinatorial notion of nondegeneracy for $\gamma$, which automatically holds when $\rho_{\op{comb}}(\gamma)\notin\Z$. When $\gamma$ is a nondegenerate Type 1 combinatorial Reeb orbit, we can then define its {\bf combinatorial Conley-Zehnder index\/} by analogy with \eqref{eqn:CZrot} as
\begin{equation}
\label{eqn:ccz}
\op{CZ}_{\op{comb}}(\gamma) = \floor{\rho_{\op{comb}}(\gamma)} + \ceil{\rho_{\op{comb}}(\gamma)}.
\end{equation}
The combinatorial rotation number and combinatorial Conley-Zehnder index of a Type 2 combinatorial Reeb orbit are not defined; and although we do not need this, it would be natural to define the combinatorial rotation number and combinatorial Conley-Zehnder index of a Type 3 combinatorial Reeb orbit to be $+\infty$.

\subsection{Smooth-combinatorial correspondence}

Let $X$ be a convex polytope in $\R^{2n}$. If $\epsilon>0$, define the {\bf $\epsilon$-smoothing\/} of $X$ by
\begin{equation}
\label{eqn:deltasmoothing}
X_\epsilon = \left\{z\in\R^{2n} \;\big|\; \op{dist}(z,X)\le \epsilon \right\}.
\end{equation}
The domain $X_\epsilon$ is convex and has $C^1$-smooth boundary. The boundary is $C^\infty$ smooth except along strata arising from the boundaries of the faces of $X$; see \S\ref{sec:smoothings} for a detailed description.

Our main results are the following two theorems, giving a correspondence between combinatorial Reeb dynamics on a symplectic polytope in $\R^4$, and ordinary Reeb dynamics on $\epsilon$-smoothings of the polytope.

There is a slight technical issue here: since $\partial X_\epsilon$ is only $C^1$ smooth, the Reeb vector field on $\partial X_\epsilon$ is only $C^0$, so that for a Reeb orbit $\gamma$, the linearized Reeb flow \eqref{eqn:dPhiT} might not be defined. If $\gamma$ is transverse to the strata where $\partial X_\epsilon$ is not $C^\infty$ (which is presumably true for all $\gamma$ if $X$ and $\epsilon$ are generic), then the Reeb flow in a neighborhood of $\gamma$ has a well-defined linearization; we call such orbits {\bf linearizable\/}. It turns out that a non-linearizable Reeb orbit $\gamma$ on $\partial X_\epsilon$ still has a well-defined rotation number $\rho(\gamma)$, defined in \S\ref{sec:srn}.

The following theorem describes how combinatorial Reeb orbits give rise to Reeb orbits on smoothings. See Lemma~\ref{lem:combtosmooth} for a more precise statement.

\begin{theorem}
\label{thm:combtosmooth}
(proved in \S\ref{sec:combtosmooth})
Let $X$ be a symplectic polytope in $\R^4$, and let $\gamma$ be a nondegenerate Type 1 combinatorial Reeb orbit for $X$. Then for all $\epsilon>0$ sufficiently small, there is a distinguished Reeb orbit $\gamma_\epsilon$ on $\partial X_\epsilon$ such that:
\begin{itemize}
	\item[\emph{(i)}] $\gamma_\epsilon$ converges in $C^0$ to $\gamma$ as $\epsilon\to0$.
	\item[\emph{(ii)}] $\lim_{\epsilon\to 0}\mc{A}(\gamma_\epsilon) = \mc{A}_{\op{comb}}(\gamma)$.
	\item[\emph{(iii)}] $\gamma_\epsilon$ is linearizable and nondegenerate, $\rho(\gamma_\epsilon) = \rho_{\op{comb}}(\gamma)$, and $\op{CZ}(\gamma_\epsilon) = \op{CZ}_{\op{comb}}(\gamma)$.
\end{itemize}
\end{theorem}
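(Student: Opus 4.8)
The plan is to construct $\gamma_\epsilon$ directly from the combinatorial data of $\gamma = (\Gamma_1,\dots,\Gamma_k)$, using the detailed description of $\partial X_\epsilon$ from \S\ref{sec:smoothings}, and then verify (i)--(iii) by tracking how the Reeb flow on $\partial X_\epsilon$ degenerates as $\epsilon\to 0$. First I would recall the stratified picture of $\partial X_\epsilon$: over the interior of a $3$-face $E$ the boundary $\partial X_\epsilon$ is flat and parallel to $E$ (just translated outward by $\epsilon$ in the normal direction), so the Reeb vector field there is literally constant and parallel to $R_E^+X$; over the interior of a $2$-face $F$, $\partial X_\epsilon$ is a piece of a round cylinder interpolating between the two adjacent flat pieces; and over $1$-faces and $0$-faces one gets pieces of spheres. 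Since $\gamma$ is Type 1, it avoids the $1$-skeleton, so each segment $\Gamma_i$ lies in the closure of a $3$-face and crosses from one $2$-face in its boundary to another. I would then show that for small $\epsilon$ the Reeb flow on $\partial X_\epsilon$ near $\gamma$ behaves as follows: on the flat region over $E$ it translates at constant speed parallel to $R_E^+X$ (matching $\Gamma_i$), and when it reaches the cylindrical collar over a $2$-face $F$ it performs a ``turn'' that, by Lemma~\ref{lem:Reebcone}, rotates the velocity from $R_{E}^+X$ to $R_{E'}^+X$ for the next adjacent $3$-face $E'$. The width of each cylindrical transition region is $O(\epsilon)$, so in the limit $\epsilon\to 0$ the concatenation of flat segments converges in $C^0$ to the broken path $\gamma$, giving (i), and the total action, being $\int\lambda_0$ over a path that $C^0$-converges to $\gamma$ with uniformly bounded length, converges to $\mc{A}_{\op{comb}}(\gamma)$, giving (ii).

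For existence of the closed orbit $\gamma_\epsilon$ (not just an approximating path), I would set up a return-map / fixed-point argument: the combinatorial orbit $\gamma$ being nondegenerate means the linearized ``combinatorial return map'' (the composition of the shears along the flat pieces and the rotations through the cylindrical collars, as defined in Definition~\ref{def:crn}) does not have $1$ as an eigenvalue. One then defines, for each small $\epsilon$, a local Poincaré section transverse to $\gamma$ inside the flat region over some $E$, and shows the $\epsilon$-Reeb return map on that section is a $C^1$-small perturbation of the combinatorial return map; since the latter has no eigenvalue $1$, the implicit function theorem produces a unique fixed point near $\gamma(0)$, hence the orbit $\gamma_\epsilon$, which is automatically nondegenerate and linearizable (it crosses each stratum-boundary transversally by construction, since $R_E^+X$ is transverse to the codimension-$1$ cylindrical seams). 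This also immediately gives $\rho(\gamma_\epsilon) = \rho_{\op{comb}}(\gamma)$ once we know the combinatorial rotation number is defined precisely as the rotation number of this same path of symplectic matrices; then $\op{CZ}(\gamma_\epsilon) = \op{CZ}_{\op{comb}}(\gamma)$ follows from \eqref{eqn:CZrot} and \eqref{eqn:ccz}.

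The main obstacle, I expect, is the analysis of the Reeb flow through the cylindrical collar regions over $2$-faces, and especially the uniformity of all estimates as $\epsilon\to 0$. On the flat pieces everything is explicit, but in the collar over a $2$-face $F$ the contact form and Reeb field depend on $\epsilon$ in a way that does not converge uniformly (the Reeb field there is order $1/\epsilon$ in the ``fast'' direction), so one must rescale coordinates by $\epsilon$ to see a limiting model flow on a fixed cylinder, prove that in this model the velocity direction rotates exactly from $R_E^+X$ to $R_{E'}^+X$ (this is where the symplectic-polytope hypothesis and Lemma~\ref{lem:Reebcone} do the real work, guaranteeing the collar connects the correct pair of flat regions and that the turn is ``monotone'' so no extra rotation number is accumulated), and then transfer the linearized data back. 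One also has to check that $\gamma_\epsilon$ stays in the region where this local model is valid, i.e.\ that it does not wander near the $1$-skeleton; this uses the Type 1 hypothesis together with a compactness argument, since $\gamma$ keeps a definite distance from the $1$-skeleton. Once the collar analysis is packaged as a clean ``transition lemma'' with $\epsilon$-uniform $C^1$ bounds, the rest is the fixed-point argument sketched above plus bookkeeping of action and rotation number.
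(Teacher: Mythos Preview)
Your proposal is correct and follows essentially the same route as the paper: set up a Poincar\'e return map on a $2$-face, use nondegeneracy of the combinatorial return map to find a fixed point, and read off action and rotation number from the pieces.

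The one place where you anticipate more difficulty than is actually present is the collar analysis over the $2$-faces. You expect to need a rescaled limiting model flow and $\epsilon$-uniform $C^1$ estimates; in fact the paper's Lemma~\ref{lem:transinv} shows that on each smoothed face the Reeb vector field, as a map to $\R^4$, depends only on the normal direction $v$ and not on the base point $y$. Consequently (Lemma~\ref{lem:stf}) the flow map through the collar over a $2$-face $F$ is \emph{exactly} a translation by a vector $V_{F,\epsilon}$ of length $O(\epsilon)$, and the linearized flow through the collar is \emph{exactly} the transition matrix $\psi_F$, independent of $\epsilon$. So the full return map on $F_1$ is the affine map $p_1+V\mapsto p_1+AV+W_\epsilon$ with the \emph{same} linear part $A$ for every $\epsilon$ and a constant term $W_\epsilon=O(\epsilon)$; the fixed point is found by inverting $A-1$, with no implicit function theorem needed. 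Likewise the lifted linearized return map equals $\widetilde{\psi}_{F_k}\circ\cdots\circ\widetilde{\psi}_{F_1}$ on the nose, so $\rho(\gamma_\epsilon)=\rho_{\op{comb}}(\gamma)$ is an exact equality for all small $\epsilon$, not merely a limit.
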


The following theorem describes how Reeb orbits on smoothings give rise to combinatorial Reeb orbits.

\begin{theorem}
\label{thm:smoothtocomb}
(proved in \S\ref{sec:smoothtocomb})
Let $X$ be a symplectic polytope in $\R^4$. Then there are constants $c_F>0$ for each $0$-, $1$-, or $2$-face $F$ of $X$ with the following property.

Let $\{(\epsilon_i,\gamma_i)\}_{i=1,\ldots}$ be a sequence of pairs such that $\epsilon_i>0$; $\gamma_i$ is a Reeb orbit on $\partial X_{\epsilon_i}$; and $\epsilon_i\to 0$ as $i\to\infty$. Suppose that $\rho(\gamma_i)<R$ where $R$ does not depend on $i$. Then after passing to a subsequence, there is a combinatorial Reeb orbit $\gamma$ for $X$ such that:
\begin{itemize}
	\item[\emph{(i)}] $\gamma_i$ converges in $C^0$ to $\gamma$ as $i\to\infty$.
    \item[\emph{(i)}] $\lim_{i\to\infty}\mc{A}(\gamma_i) = \mc{A}_{\op{comb}}(\gamma)$.
    \item[\emph{(iii)}] $\gamma$ is either Type 1 or Type 2.
    \item[\emph{(iv)}] If $\gamma$ is Type 1, then for $i$ sufficiently large, $\gamma_i$ is linearizable and $\rho(\gamma_i) = \rho_{\op{comb}}(\gamma)$. If $\gamma$ is also nondegenerate, then for $i$ sufficiently large, $\gamma_i$ is nondegenerate and $\op{CZ}(\gamma_i) = \op{CZ}_{\op{comb}}(\gamma)$.
    \item[\emph{(v)}] Let $F_1,\ldots,F_k$ denote the faces containing the endpoints of the segments of the combinatorial Reeb orbit $\gamma$. Then
\begin{equation}
\label{eqn:segmentbound}
\sum_{i=1}^kc_{F_i}\le R.
\end{equation}
\end{itemize}
\end{theorem}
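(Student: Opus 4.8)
The plan is to follow a Reeb orbit $\gamma_i$ on $\partial X_{\epsilon_i}$ through the decomposition of $\partial X_\epsilon$ (set up in \S\ref{sec:smoothings}) into \emph{flat regions}, one near the interior of each $3$-face $E$ on which $\partial X_\epsilon$ lies in a translate of the affine hyperplane spanned by $E$, and \emph{transition regions}, one near each $0$-, $1$- or $2$-face $F$ on which $\partial X_\epsilon$ is a rounded piece modeled (for a $2$-face) on $F$ times a circular arc of radius $\epsilon$, and (for a $1$- or $0$-face) on a lower face times a spherical polygon or a spherical region of radius $\epsilon$. On a flat region near a $3$-face $E$, the outward normal is constant and $\langle\nu_E,y\rangle$ is constant, so by \eqref{eqn:Reebinu} the Reeb vector field is the constant vector $2\mathbf{i}\nu_E/\langle\nu_E,y\rangle$; hence $\gamma_i$ restricted there is a straight segment in the direction of $R_E^+X$ (Lemma~\ref{lem:Reebcone}), along which the contact planes are constant and the linearized flow is the identity, so flat regions contribute nothing to $\rho(\gamma_i)$.

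The heart of the matter is a local estimate: there is a constant $c_F>0$ for each $0$-, $1$- or $2$-face $F$ such that any sub-arc of a Reeb orbit on $\partial X_\epsilon$ that enters and later exits the transition region near $F$ accumulates rotation number at least $c_F-o(1)$ as $\epsilon\to0$. For a $2$-face $F$, crossing the transition region rotates the outward normal through the fixed positive exterior dihedral angle between the two adjacent $3$-faces, inside the normal $2$-plane to $F$; because $F$ is not Lagrangian this forces a definite positive winding of the contact planes, hence of the linearized flow, with a lower bound $c_F$ depending only on that angle and $\omega_0|_F$. Moreover, since the rotation number is scaling invariant and the entering and exiting directions of the arc are pinned down by $R_E^+X$ for the adjacent $3$-faces $E$, this accumulated winding is in fact a fixed combinatorial quantity up to $o(1)$. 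For $1$- and $0$-faces one argues similarly using the classification in Lemma~\ref{lem:Reebcone}: a sub-arc lingering near such a face either turns a definite corner, giving a bound $c_F$ as before, or spirals along the $\epsilon$-small sphere, in which case $\rho$ grows without bound as $\epsilon\to0$.

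Granting this, the theorem assembles as follows. Using the monotonicity of the rotation number under concatenation of the ``positive'' symplectic paths arising on a convex hypersurface (Appendix~\ref{app:rotation_numbers}), $\rho(\gamma_i)\ge\sum_jc_{F_j}-o(1)$, where the sum runs over the transition regions visited by $\gamma_i$; so $\rho(\gamma_i)<R$ bounds their number uniformly in $i$, and letting $i\to\infty$ gives $\sum_jc_{F_j}\le R$, which is part (v). This also excludes the spiraling alternative, so no limiting bad $1$-face can occur, which is part (iii). Between consecutive transitions $\gamma_i$ is a straight segment of length at most $\op{diam}(X)+2\epsilon$, and each transition arc has length $O(\epsilon)\to0$; so after passing to a subsequence on which the pattern of visited faces stabilizes, the straight segments converge in $C^0$ to segments $\Gamma_1,\dots,\Gamma_k$ lying in faces of $X$, pointing in Reeb-cone directions, with matching endpoints on lower faces — a combinatorial Reeb orbit $\gamma$, Type 1 or Type 2 (a segment that collapses in the limit simply slides a shared endpoint onto the $1$-skeleton). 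This is part (i); integrating $\lambda_0$ piece by piece, the transition arcs carrying vanishing action, gives part (ii), while $\mc{A}(\gamma_i)\ge\mc{A}_{\op{min}}(X_{\epsilon_i})\to\mc{A}_{\op{min}}(X)>0$ ensures $\gamma$ is not constant. Finally, when $\gamma$ is Type 1 it avoids the $1$-skeleton, so for $i$ large $\gamma_i$ visits only transition regions near $2$-faces and crosses their (merely $C^1$) boundary strata transversally, hence is linearizable; the refined local estimate, which computes the rotation accumulated at each such transition with no error in the limit, gives $\rho(\gamma_i)=\rho_{\op{comb}}(\gamma)$, and $\op{CZ}(\gamma_i)=\op{CZ}_{\op{comb}}(\gamma)$ when $\gamma$ is also nondegenerate — alternatively, one applies Theorem~\ref{thm:combtosmooth} to $\gamma$ and identifies $\gamma_i$ with its distinguished orbit $\gamma_{\epsilon_i}$ for $i$ large by a local-uniqueness argument.

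The main obstacle is the local rotation estimate: one must analyze the merely $C^0$ Reeb flow on the curved pieces of $\partial X_\epsilon$ near the $1$- and $2$-faces, show that the linearized flow there is defined (for transverse orbits) and genuinely rotates by an amount bounded below independently of $\epsilon$, quantify the non-Lagrangian condition, and — the technically delicate point — make the additivity of the rotation number over the decomposition precise enough that its error does not accumulate with the number of pieces.
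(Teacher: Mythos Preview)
Your outline is essentially the same strategy as the paper's proof: decompose $\gamma_i$ into arcs in flat (smoothed $3$-face) regions and arcs in curved regions, show each curved arc contributes a definite amount of rotation, bound the number of arcs by $R$, and extract a convergent subsequence. The paper's execution differs from yours in one crucial technical point, which is exactly the ``main obstacle'' you flag at the end.

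Rather than a face-by-face dihedral-angle argument, the paper uses the curvature identity of Hryniewicz--Salom\~ao (Proposition~\ref{prop:uj}): with respect to the quaternionic trivialization, the pointwise rotation rate of the linearized Reeb flow equals $\frac{1}{\pi\langle\nu,y\rangle}\bigl(S(\mathbf{i}\nu)+S(\cos\theta\,\mathbf{j}\nu+\sin\theta\,\mathbf{k}\nu)\bigr)$, which is nonnegative on a convex hypersurface. On the curved part of $\partial X_\epsilon$ the second fundamental form satisfies $S(V)=\epsilon^{-1}|V_N|^2$ (equation~\eqref{eqn:svepsilon}), and a short case analysis (Lemma~\ref{lem:rnlb1}) shows that away from smoothed $3$-faces the minimum rotation rate is $\ge C\epsilon^{-1}$. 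This gives the \emph{integral} lower bound $\rho(\gamma)\ge C\epsilon^{-1}(b-a)$ for any curved sub-arc, which is automatically additive over concatenations --- dissolving your additivity worry entirely. A companion estimate (Lemma~\ref{lem:rnlb2}) converts this to $\rho(\gamma)\ge C\int|v'|$, and since any curved arc begins and ends with $v$ equal to a $3$-face normal, either the normals are distinct (giving $\int|v'|\ge$ their spherical distance) or the arc must leave and re-enter a definite neighborhood of a single normal; either way $\int|v'|$ is bounded below by a constant depending only on $X$. The paper's decomposition is thus into arcs-in-$E_\epsilon$ versus arcs-avoiding-all-$E_\epsilon$, not into regions indexed by lower faces; the constants $c_F$ in part~(v) emerge a posteriori from the distances between $3$-face normals.

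Your direct dihedral-angle approach for $2$-faces is in fact carried out in the paper (Lemma~\ref{lem:stf}(c)) and gives the exact rotation contribution needed for part~(iv); but for $1$- and $0$-faces, and for the uniform arc-count bound, the curvature identity is the missing ingredient that makes the argument clean and the additivity automatic.
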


\begin{remark}
One can compute explicit constants $c_F$ -- see \S\ref{sec:smoothtocomb} for the details -- and the resulting bound \eqref{eqn:segmentbound} is crucial in enabling finite computations. For example, combinatorial Reeb orbits with a given action bound could have arbitrarily many segments winding in a ``helix'' around a bad $1$-face. However the bound \eqref{eqn:segmentbound} ensures that combinatorial Reeb orbits with too many segments will not arise as limits of sequences of smooth Reeb orbits with bounded rotation number.
\end{remark}

\begin{remark}
The methods of this paper can be used to prove a version of Theorem \ref{thm:combtosmooth} (omitting the condition (c) on the rotation number and Conley-Zehnder index) for polytopes $X \subset \R^{2n}$ for $2n > 4$, under the hypothesis that the $(2n-2)$-faces of $X$ are symplectic. Generalizing Theorem~\ref{thm:smoothtocomb} to higher dimensions would be less straightforward, as its proof in four dimensions depends crucially on estimates on the rotation number in \S\ref{sec:smoothingdynamics}. Higher dimensional analogues of these estimates are an interesting topic for future work.
\end{remark}

Theorem~\ref{thm:smoothtocomb} allows one to compute the EHZ capacity of a four-dimensional polytope as follows:

\begin{corollary}
\label{cor:computecehz}
Let $X$ be a symplectic polytope in $\R^4$. Then
\begin{equation}
\label{eqn:corcehz}
c_{\op{EHZ}}(X) = \op{min}\{\mc{A}_{\op{comb}}(\gamma)\}
\end{equation}
where the minimum is over combinatorial Reeb orbits $\gamma$ with $\sum_ic_{F_i}\le 2$ which are either Type 1 with $\rho_{\op{comb}}(\gamma)\le 2$ or Type 2.
\end{corollary}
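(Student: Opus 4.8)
The plan is to deduce the corollary from the two main theorems, together with the $C^0$ continuity of $c_{\op{EHZ}}$ and the identity $c_{\op{EHZ}}=\mc{A}_{\op{min}}$ on convex domains. First, I would observe that $c_{\op{EHZ}}$ is $C^0$ continuous on compact convex sets, that $X_\epsilon\to X$ in the Hausdorff distance, and that $c_{\op{EHZ}}(X_\epsilon)=\mc{A}_{\op{min}}(X_\epsilon)$ for the smoothing $X_\epsilon$ (the identity \eqref{eqn:cehzamin}, in the form established in \S\ref{sec:smoothings} for the $C^1$ boundary of $X_\epsilon$); this gives
\[
c_{\op{EHZ}}(X)=\lim_{\epsilon\to0}\mc{A}_{\op{min}}(X_\epsilon).
\]
Writing $m$ for the right-hand side of \eqref{eqn:corcehz}, it then suffices to prove the two inequalities $c_{\op{EHZ}}(X)\le m$ and $c_{\op{EHZ}}(X)\ge m$.

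For $c_{\op{EHZ}}(X)\le m$, I would fix a combinatorial Reeb orbit $\gamma$ in the index set of \eqref{eqn:corcehz}. If $\gamma$ is Type 1 and nondegenerate, Theorem~\ref{thm:combtosmooth} produces Reeb orbits $\gamma_\epsilon$ on $\partial X_\epsilon$ with $\mc{A}(\gamma_\epsilon)\to\mc{A}_{\op{comb}}(\gamma)$; since $\mc{A}_{\op{min}}(X_\epsilon)\le\mc{A}(\gamma_\epsilon)$, letting $\epsilon\to0$ gives $c_{\op{EHZ}}(X)\le\mc{A}_{\op{comb}}(\gamma)$. If $\gamma$ is Type 2, or Type 1 but degenerate, the same conclusion should follow from the smoothing construction underlying Theorem~\ref{thm:combtosmooth} (cf.\ Lemma~\ref{lem:combtosmooth}), which still produces a $\gamma_\epsilon$ with $\mc{A}(\gamma_\epsilon)\to\mc{A}_{\op{comb}}(\gamma)$; the nondegeneracy hypothesis is used there only to control the rotation number and Conley--Zehnder index. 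Taking the infimum over $\gamma$ in the index set then gives $c_{\op{EHZ}}(X)\le m$.

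For $c_{\op{EHZ}}(X)\ge m$, I would choose $\epsilon_i\to0$ and, for each $i$, an action-minimizing Reeb orbit $\gamma_i$ on $\partial X_{\epsilon_i}$, so that $\mc{A}(\gamma_i)=\mc{A}_{\op{min}}(X_{\epsilon_i})\to c_{\op{EHZ}}(X)$; by the analogue for $X_{\epsilon_i}$ of Proposition~\ref{prop:ehwz}(b) --- which follows from the variational construction of $c_{\op{EHZ}}$ and the index estimates of \cite{hulong2002,ak,irie} --- one may take $\gamma_i$ with $\rho(\gamma_i)\le2$. Applying Theorem~\ref{thm:smoothtocomb} (say with $R=3$) and passing to a subsequence produces a combinatorial Reeb orbit $\gamma$ with $\gamma_i\to\gamma$ in $C^0$, with $\mc{A}_{\op{comb}}(\gamma)=\lim_i\mc{A}(\gamma_i)=c_{\op{EHZ}}(X)$, with $\gamma$ of Type 1 or Type 2, and, when $\gamma$ is Type 1, with $\rho_{\op{comb}}(\gamma)=\rho(\gamma_i)\le2$ for $i$ large. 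To upgrade \eqref{eqn:segmentbound} to $\sum_jc_{F_j}\le2$, for each $\delta>0$ I would re-apply Theorem~\ref{thm:smoothtocomb} along this subsequence with $R=2+\delta$ (legitimate since $\rho(\gamma_i)\le2$); by uniqueness of $C^0$-limits the resulting combinatorial Reeb orbit is again $\gamma$, so $\sum_jc_{F_j}\le2+\delta$, and letting $\delta\to0$ gives $\sum_jc_{F_j}\le2$. Hence $\gamma$ lies in the index set of \eqref{eqn:corcehz}, so $m\le\mc{A}_{\op{comb}}(\gamma)=c_{\op{EHZ}}(X)$. Combining the two inequalities proves \eqref{eqn:corcehz}, and also shows the minimum is attained.

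The main obstacle will be making the correspondence sharp enough to reproduce the index set of \eqref{eqn:corcehz} exactly rather than with slack: this requires, on the smooth side, the versions of the facts $c_{\op{EHZ}}(X_\epsilon)=\mc{A}_{\op{min}}(X_\epsilon)$ and ``$\partial X_\epsilon$ carries an action-minimizing Reeb orbit with $\rho\le2$'' that are valid for the merely $C^1$ convex bodies $X_\epsilon$, these being classically stated only for strictly convex smooth domains; and, on the combinatorial side, an extension of the smoothing construction of Theorem~\ref{thm:combtosmooth} from nondegenerate Type 1 orbits to degenerate Type 1 and Type 2 orbits, at least to the extent of controlling the action. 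Granting these inputs, the rest is the bookkeeping above --- in particular the $\delta\to0$ argument that converts the rotation-number bound $\rho(\gamma_i)\le2$ into the constraint $\sum_jc_{F_j}\le2$.
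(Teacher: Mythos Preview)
Your ``$\ge$'' direction is essentially the paper's argument, and your $\delta\to0$ step to pass from the strict hypothesis $\rho(\gamma_i)<R$ in Theorem~\ref{thm:smoothtocomb} to the closed constraint $\sum_j c_{F_j}\le 2$ is a clean way to make precise what the paper leaves implicit.

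The ``$\le$'' direction, however, has a genuine gap, and the paper takes a different route that avoids it. You invoke Theorem~\ref{thm:combtosmooth} (via Lemma~\ref{lem:combtosmooth}) to produce $\gamma_\epsilon$ on $\partial X_\epsilon$ with $\mc{A}(\gamma_\epsilon)\to\mc{A}_{\op{comb}}(\gamma)$, claiming that ``the nondegeneracy hypothesis is used there only to control the rotation number and Conley--Zehnder index.'' This is not correct: in the proof of Lemma~\ref{lem:combtosmooth}(a), nondegeneracy is exactly what makes $A-1$ invertible, which is what guarantees the affine return map \eqref{eqn:p1V} has a fixed point at all. If $\gamma$ is a degenerate Type~1 orbit, the perturbed map $V\mapsto AV+W_\epsilon$ may have no fixed point, so no $\gamma_\epsilon$ is produced. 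For Type~2 orbits the construction does not apply even formally, since it is set up for orbits passing only through $2$-faces.

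The paper sidesteps all of this by not using Theorem~\ref{thm:combtosmooth} for the ``$\le$'' direction. Instead it invokes Proposition~\ref{prop:aao} (the K\"unzle/Artstein-Avidan--Ostrover result): every combinatorial Reeb orbit is in particular a generalized Reeb orbit for $X$, and $c_{\op{EHZ}}(X)$ is the minimum of the action over all generalized Reeb orbits. This gives $c_{\op{EHZ}}(X)\le\mc{A}_{\op{comb}}(\gamma)$ for every combinatorial Reeb orbit $\gamma$ at once, with no genericity or smoothing required. You should replace your ``$\le$'' argument with this.
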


\begin{remark}
If the coordinates of the vertices of $X$ are rational, then the combinatorial action of every combinatorial Reeb orbit is rational. It follows from Theorem~\ref{thm:smoothtocomb} that in this case, $c_{\op{EHZ}}(X)$, as well as the other symplectic capacities mentioned in \S\ref{sec:reviewviterbo} determined by actions of Reeb orbits, are all rational.
\end{remark}

To explain why Corollary~\ref{cor:computecehz} follows from Theorem~\ref{thm:smoothtocomb}, we need to recall a result of K\"unzle \cite{kunzle} as explained by Artstein-Avidan and Ostrover \cite{artsteinostrover2014}. 

\begin{definition}
\label{def:gro}
If $X$ is any compact convex set in $\R^{2n}$ with $0\in\op{int}(X)$, a {\bf generalized Reeb orbit\/} for $X$ is a map $\gamma:\R/T\Z\to\partial X$ for some $T>0$ such that $\gamma$ is continuous and has left and right derivatives at every point, which agree for almost every $t$, and the left and right derivatives at $t$ are in $R_{\gamma(t)}^+X$. If $\gamma$ is a generalized Reeb orbit, define its symplectic action by \eqref{eqn:symplecticaction}.
\end{definition}

\begin{proposition}
\cite[Prop.\ 2.7]{artsteinostrover2014}
\label{prop:aao}
If $X$ is a compact convex set in $\R^{2n}$ with $0\in\op{int}(X)$, then
\[
c_{\op{EHZ}}(X) = \op{min}\{\mc{A}(\gamma)\}
\]
where the minimum is taken over all generalized Reeb orbits.
\end{proposition}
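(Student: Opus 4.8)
The plan is to use the Clarke--Ekeland dual action principle together with an approximation argument (essentially K\"unzle's approach \cite{kunzle}). The starting point is the dual variational formula, valid for every convex body $K\subset\R^{2n}$ with $0\in\op{int}(K)$:
\begin{equation}
\label{eqn:dualproposal}
c_{\op{EHZ}}(K)=\tfrac12\,\op{min}\left\{\int_0^1 h_K\!\big(-J\dot z(t)\big)^2\,dt \;:\; z\in W^{1,2}(\R/\Z,\R^{2n}),\ \int_0^1\langle -J\dot z(t),z(t)\rangle\,dt=1\right\},
\end{equation}
where $h_K(v)=\max_{x\in K}\langle x,v\rangle$ is the support function of $K$ and $J$ is multiplication by $\sqrt{-1}$. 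When $\partial K$ is smooth and strictly convex, \eqref{eqn:dualproposal} is the Clarke dual of the Hamiltonian action principle for the degree-two Hamiltonian $H_K=\tfrac12\|\cdot\|_K^2$ (whose level set $\{H_K=\tfrac12\}$ is $\partial K$), and its identification with $c_{\op{EHZ}}(K)=\mc A_{\op{min}}(K)$ is exactly the work of Ekeland--Hofer and Hofer--Zehnder underlying \eqref{eqn:cehzamin}. For a \emph{general} convex body $K$ the support function $h_K$ is still a finite convex positively $1$-homogeneous function, so the right-hand side of \eqref{eqn:dualproposal} makes sense; both sides are $C^0$-continuous in $K$ (the left side by continuity of the EHZ capacity, the right side because $h_{K_j}\to h_K$ uniformly on the unit sphere whenever $K_j\to K$ in the Hausdorff metric), so \eqref{eqn:dualproposal} holds for all convex bodies by approximating $K$ by smooth strictly convex ones.

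It then remains to match the minimum in \eqref{eqn:dualproposal} with $\op{min}\{\mc A(\gamma)\}$ over generalized Reeb orbits. For the inequality $c_{\op{EHZ}}(K)\le\mc A(\gamma)$ for an arbitrary generalized Reeb orbit $\gamma:\R/T\Z\to\partial K$: since $\gamma$ stays on $\partial K$, the condition $\dot\gamma(t)\in R_{\gamma(t)}^+K=T_{\gamma(t)}^+K\cap J\,N_{\gamma(t)}^+K$ forces $-J\dot\gamma(t)\in N_{\gamma(t)}^+K$, so $g(t):=\langle\gamma(t),-J\dot\gamma(t)\rangle$ equals $h_K(-J\dot\gamma(t))\ge 0$ and $\mc A(\gamma)=\tfrac12\int_0^T g$. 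Reparametrizing $\gamma$ so that $g$ is constant --- which changes neither the direction of $\dot\gamma$ nor $\mc A(\gamma)$ --- and setting $z(s):=\gamma(Ts)/\sqrt{2\mc A(\gamma)}$ yields an admissible competitor in \eqref{eqn:dualproposal} whose objective, by the equality case of the Cauchy--Schwarz inequality (applicable precisely because $g$ is now constant), equals $\mc A(\gamma)$; hence $c_{\op{EHZ}}(K)\le\mc A(\gamma)$. For the reverse inequality, the direct method produces a minimizer $z^*$: the objective is convex, hence weakly lower semicontinuous on $W^{1,2}$; the constraint is preserved under weak limits (by Rellich compactness $z_j\to z^*$ in $L^2$ while $\dot z_j\rightharpoonup\dot z^*$ in $L^2$); and the minimum is strictly positive since $0\in\op{int}(K)$ gives $h_K(v)\ge r|v|$ for some $r>0$, so the objective dominates $r^2\|\dot z\|_{L^2}^2$, which is bounded below on the constraint set by the Poincar\'e inequality. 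The Euler--Lagrange equation for $z^*$ --- whose Lagrange multiplier is forced by Euler's identity for the two-homogeneous functionals to equal the critical value $c_{\op{EHZ}}(K)$ --- reads, after one integration,
\[
\nabla\!\big(\tfrac12 h_K^2\big)\!\big(-J\dot z^*(t)\big)=2\,c_{\op{EHZ}}(K)\,z^*(t)+\text{const}.
\]
Writing the left side as $h_K(-J\dot z^*(t))\,s_K(-J\dot z^*(t))$, where $s_K(v)\in\partial K$ denotes the support point of $K$ in the direction $v$, exhibits the loop $x^*(t):=2\,c_{\op{EHZ}}(K)\,z^*(t)+\text{const}$, after the rescaling forced by the equation, as a (reparametrized) generalized Reeb orbit on $\partial K$ with $\mc A(x^*)=c_{\op{EHZ}}(K)$. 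Together the two directions give the proposition and show the minimum is attained.

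The step I expect to be the main obstacle is this last one: rigorously checking that the Euler--Lagrange relation for $z^*$ is the generalized Reeb orbit equation of Definition~\ref{def:gro} --- that $x^*$, suitably rescaled, really lands on $\partial K$, is continuous with one-sided derivatives everywhere, and has those derivatives in $R_{x^*(t)}^+K$ --- the subtlety being the behavior at parameters $t$ where $-J\dot z^*(t)$ is a normal direction at a positive-codimension face of $K$, so that the subdifferential of $h_K^2$ (equivalently the support point $s_K$) is multivalued. This non-smooth convex analysis is exactly what distinguishes the general convex case from the smooth strictly convex one, and is where the notion of generalized Reeb orbit is genuinely needed. A secondary technical point is the uniform control --- uniform inner and outer radii of the approximating bodies, hence uniform Lipschitz bounds for the comparison orbits --- underlying the $C^0$-continuity in \eqref{eqn:dualproposal} and the compactness of minimizing sequences; all of this rests on $0\in\op{int}(K)$.
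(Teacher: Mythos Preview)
The paper does not prove this proposition at all: it is simply quoted from \cite[Prop.~2.7]{artsteinostrover2014} (which in turn attributes the key ideas to K\"unzle \cite{kunzle}) and then used as a black box in the proof of Corollary~\ref{cor:computecehz}. So there is no ``paper's own proof'' to compare your proposal against.

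That said, your sketch is a reasonable outline of the argument actually given in those references. The dual variational characterization \eqref{eqn:dualproposal}, the approximation from smooth strictly convex bodies, the easy direction $c_{\op{EHZ}}\le\mc A(\gamma)$ via inserting a reparametrized generalized Reeb orbit as a competitor, and the hard direction via the direct method plus a non-smooth Euler--Lagrange analysis are all the right ingredients. You have also correctly located the genuine difficulty: at the minimizer $z^*$, the subdifferential $\partial(\tfrac12 h_K^2)$ can be multivalued, and one must show that there is a \emph{measurable selection} of support points $s_K(-J\dot z^*(t))$ that, after the rescaling, yields a loop on $\partial K$ satisfying the one-sided derivative condition of Definition~\ref{def:gro}. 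Your sketch glosses over this selection and the regularity (left/right derivatives existing everywhere, not merely a.e.), which is precisely the content that makes the K\"unzle/Artstein-Avidan--Ostrover argument nontrivial; if you want a self-contained proof you will need to fill this in carefully. A minor point: the reparametrization step making $g$ constant requires $g>0$ a.e., which follows from $0\in\op{int}(K)$ and $\dot\gamma\neq 0$ a.e., but you should say why the latter holds for a generalized Reeb orbit.
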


\begin{proof}[Proof of Corollary~\ref{cor:computecehz}.]
Pick a sequence of positive numbers $\epsilon_i$ with $\lim_{i\to\infty} \epsilon_i = 0$. For each $i$, by equation \eqref{eqn:cehzamin}, we can find a Reeb orbit $\gamma_i$ on $\partial X_{\epsilon_i}$ with $\mc{A}(\gamma_i) = c_{\op{EHZ}}(X_{\epsilon_i})$. By Proposition~\ref{prop:ehwz}(b), we can assume that $\rho(\gamma_i)\le 2$. By Theorem~\ref{thm:smoothtocomb}, it follows that after passing to a subsequence, there is a combinatorial Reeb orbit $\gamma$ for $X$, satisying the conditions in Corollary~\ref{cor:computecehz}, such that
\[
\mc{A}_{\op{comb}}(\gamma) = \lim_{i\to\infty}\mc{A}(\gamma_i) = \lim_{k\to\infty}c_{\op{EHZ}}(X_{\epsilon_i}) = c_{\op{EHZ}}(X).
\]
Here the last equality holds by the $C^0$ continuity of $c_{\op{EHZ}}$. We conclude that
\[
c_{\op{EHZ}}(X)\ge \op{min}\{\mc{A}_{\op{comb}}(\gamma)\}
\]
where the minimum is over combinatorial Reeb orbits $\gamma$ satisfying the conditions in Corollary~\ref{cor:computecehz}.

The reverse inequality follows from Proposition~\ref{prop:aao}, because by Definitions~\ref{def:cro} and \ref{def:gro}, every combinatorial Reeb orbit is a generalized Reeb orbit. (For a symplectic polytope in $\R^4$, a ``generalized Reeb orbit'' is equivalent to a generalization of a ``combinatorial Reeb orbit'' in which there may be infinitely many line segments.)
\end{proof}

\begin{remark}
Haim-Kislev \cite[Thm.\ 1.1]{haim-kislev} gives a different formula for $c_{\op{EHZ}}$ of a convex polytope, which is valid in $\R^{2n}$ for all $n$. 
That formula implies that in the minimum \eqref{eqn:corcehz}, we can also assume that $\gamma$ has at most one segment in each $3$-face.
\end{remark}

\subsection{Experiments testing Viterbo's conjecture}

If $X$ is a convex polytope in $\R^{2n}$, define its systolic ratio by
\[
\op{sys}(X) = \frac{c_{\op{EHZ}}(X)^n}{n!\op{vol}(X)}.
\]
Note that $c_{\op{EHZ}}$ is translation invariant, so we can make this definition without assuming that $0\in\op{int}(X)$.

Since every compact convex domain in $\R^{2n}$ can be $C^0$ approximated by convex polytopes, it follows that the weak version of Viterbo's conjecture, namely Conjecture~\ref{conj:vweak}, is true if and only if every convex polytope $X$ has systolic ratio $\op{sys}(X)\le 1$. The combinatorial formula for the systolic ratio given by Corollary~\ref{cor:computecehz} allows us to test this conjecture by computer when $n=2$. In particular, we ran optimization algorithms over the space of $k$-vertex convex polytopes in $\R^4$ to find local maxima of the systolic ratio\footnote{This is a somewhat involved process; convergence to a local maximum becomes very slow once one is close. It helps to mod out the space of polytopes by the $15$-dimensional symmetry group generated by translations, linear symplectomorphisms, and scaling. To find exact local maxima, one can look at symplectic invariants, such as areas of $2$-faces, and guess what these are converging to.}. In the results below, when listing the vertices of specific polytopes, we use Lagrangian coordinates $(x_1,x_2,y_1,y_2)$.

\subsection*{5-vertex polytopes (4-simplices).} Experimentally\footnote{Perhaps this could be proved analytically using the formula in \cite[Thm.\ 1.1]{haim-kislev}.}, every $4$-simplex $X$ has systolic ratio
\[
\op{sys}(X) \le 3/4.
\]
The apparent maximum of $3/4$ is achieved by the ``standard simplex'' with vertices
\[
(0,0,0,0), (1,0,0,0), (0,1,0,0), (0,0,1,0), (0,0,0,1).
\]

\begin{remark}
Corollary~\ref{cor:computecehz} does not directly apply to (a translate of) this polytope because it has some Lagrangian $2$-faces. For examples like these, we find numerically that a slight perturbation of the polytope to a symplectic polytope (to which Corollary~\ref{cor:computecehz} does apply) has systolic ratio very close to the claimed value. One can compute the systolic ratio of a polytope with Lagrangian $2$-faces rigorously using a generalization of Corollary~\ref{cor:computecehz}. For the particular example above, one can also compute the systolic ratio by hand using \cite[Thm.\ 1.1]{haim-kislev}.
\end{remark}

We have found families of other examples of 4-simplices with systolic ratio $3/4$, including some with no Lagrangian $2$-faces. An example is the simplex with vertices
\[
(0,0,0,0), (1,-1/3,0,0), (0,-1/3,1,0), (-2/3,-1,2/3,0), (0,0,0,1).
\]

\subsection*{6-vertex polytopes.} We found families of 6-vertex polytopes with systolic ratio equal to $1$. An example is the polytope with vertices
\[
(0,0,0,0), (1,0,0,0), (0,0,1,0), (0,0,0,1), (0,-1,1,0), (-1,-1,0,1).
\]
(Apparently the previous minimum number of vertices of a known example with systolic ratio $1$ was 12, given by the Lagrangian product of a triangle and a square \cite[Lem.\ 5.3.1]{schlenk}. Some more examples of Lagrangian products with systolic ratio 1 are presented in \cite{balitskiy}.)

\subsection*{7-vertex polytopes.} We also found families of $7$-vertex polytopes with systolic ratio $1$. One example has vertices
\begin{gather*}
(0,0,0,0), (1,0,0,0), (0,0,1,0), (0,0,0,1),\\
 (1/3,-2/3,2/3,0), (-1,-1,0,1/2), (0,0,1/3,-1/3).
\end{gather*}
Presumably there exist $k$-vertex polytopes in $\R^4$ with systolic ratio equal to $1$ for every $k\ge 6$.

\subsection*{The 24-cell.} We also found a special example of a polytope with systolic ratio $1$: a rotation of the 24-cell (one of the six regular polytopes in four dimensions). See \S\ref{sec:24_cell} for details.

We have heavily searched the spaces of polytopes with $7$ or fewer vertices and have not found any counterexamples to Viterbo's conjecture. For polytopes with $8$ vertices, our computer program starts becoming slower (taking seconds to minutes per polytope on a standard laptop), and we have not yet searched as extensively.

\subsection*{Towards a proof of the weak Viterbo conjecture?}
Let $X$ be a star-shaped domain in $\R^4$ with smooth boundary $Y$. Following \cite{abhs}, we say that $X$ is {\bf Zoll\/} if every point on $Y$ is contained in a Reeb orbit with minimal action. Note that:
\begin{itemize}
	\item[\emph{(a)}] If $X$ is strictly convex and a local maximizer for the systolic ratio of convex domains in the $C^0$ topology, then $X$ is Zoll.
	\item[\emph{(b)}] If $X$ is Zoll, then $X$ has systolic ratio $\op{sys}(X)=1$.
\end{itemize}
Part (a) holds because if $X$ is strictly convex and if $y\in Y$ is not on an action mimizing Reeb orbit, then one can shave some volume off of $X$ near $y$ without creating any new Reeb orbits of small action. Part (b) holds by a topological argument going back to \cite{weinstein}. (In fact one can further show that $X$ is symplectomorphic to a closed ball; see \cite[Prop.\ 4.3]{abhs}.) Of course, these observations are not enough to prove Conjecture~\ref{conj:vweak}, since we do not know that the systolic ratio for convex domains takes a maximum, let alone on a strictly convex domain. But this does suggest the following strategy for proving Conjecture~\ref{conj:vweak} via convex polytopes.

\begin{definition}
\label{def:combinatorially_Zoll}
Let $X$ be a convex polytope in $\R^4$ with $0\in\op{int}(X)$. We say that $X$ is {\bf combinatorially Zoll\/} if there is an open dense subset $U$ of $\partial X$ such that every point in $U$ is contained in a combinatorial Reeb orbit (avoiding any Lagrangian $2$-faces of $X$) with combinatorial action equal to $c_{\op{EHZ}}(X)$.
\end{definition}

We have checked by hand that the above examples of polytopes with systolic ratio equal to $1$ are combinatorially Zoll. This suggests:

\begin{conjecture}
Let $X$ be a convex polytope in $\R^4$ with $0\in\op{int}(X)$. Then:
\begin{itemize}
\item[\emph{(a)}] If $X$ is combinatorially Zoll, then $\op{sys}(X)=1$.
\item[\emph{(b)}] If $k$ is sufficiently large ($k\ge 6$ might suffice) and if $X$ maximizes systolic ratio over convex polytopes with $\le k$ vertices, then $X$ is combinatorially Zoll.
\end{itemize}
\end{conjecture}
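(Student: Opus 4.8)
The plan is to prove the two parts as combinatorial analogues of the two smooth facts recalled just above: part~(a) by a piecewise-linear version of the topological (Weinstein) volume computation, and part~(b) by a shaving argument in the space of convex polytopes. Throughout set $c = c_{\op{EHZ}}(X)$.

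\medskip
\noindent\textbf{Part (a).} I would first record the volume identity: applying Stokes' theorem to $\omega_0^2 = d(\lambda_0\wedge d\lambda_0)$ on $X$, and discarding the faces of $\partial X$ of dimension less than $3$ (which have measure zero),
\[
2\op{vol}(X) \;=\; \int_{\partial X}\lambda_0\wedge d\lambda_0 \;=\; \sum_{E}\int_{E}\lambda_0\wedge d\lambda_0,
\]
the sum over the $3$-faces $E$ of $X$, where on each $E$ the form $\lambda_0|_E$ is a contact form whose Reeb vector field spans $R_E^+X$ (Lemma~\ref{lem:Reebcone}). Using that $X$ is combinatorially Zoll, let $U_1\subset\partial X$ be the set of points lying on a Type~1 combinatorial Reeb orbit of action $c$; by Definition~\ref{def:combinatorially_Zoll}, Lemma~\ref{lem:Reebcone}, and Corollary~\ref{cor:computecehz} (which lets us take the action-$c$ orbits to be Type~1 or Type~2), the set $U_1$ is open, and one must check that $\partial X\setminus U_1$ is negligible --- it is nowhere dense, and generically of measure zero. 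On $U_1$ the Type~1 orbits of action $c$ are the orbits of a \emph{free} flow of $\R/c\Z$, since a $j$-fold multiple cover with $j\ge2$ would have an underlying orbit of action $c/j<c$, contradicting Proposition~\ref{prop:aao}. Writing $\pi\colon U_1\to\Sigma = U_1/(\R/c\Z)$, the identities $d\lambda_0(R,\cdot)=0$ and $\lambda_0(R)=1$ give $d\lambda_0 = \pi^*\bar\omega$ for a $2$-form $\bar\omega$ on $\Sigma$, and in coordinates $(t,\cdot)$ with $t$ the flow parameter a short computation yields $\lambda_0\wedge d\lambda_0 = dt\wedge\pi^*\bar\omega$, hence $2\op{vol}(X) = c\int_\Sigma\bar\omega$. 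It then suffices to show $\int_\Sigma\bar\omega = c$: the free $\R/c\Z$-flow on $U_1$ should complete to a free circle action on $\partial X\cong S^3$, every such action is the Hopf fibration $S^3\to S^2$ of Euler number $\pm1$, and the orientation of $\partial X$ as the boundary of $X$ fixes the sign as $+1$, so $\int_\Sigma\bar\omega = c$. Therefore $2\op{vol}(X) = c^2$, i.e.\ $\op{sys}(X) = 1$.

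\medskip
\noindent\textbf{Part (b).} I would argue the contrapositive. Suppose $X$ maximizes $\op{sys}$ over convex polytopes with at most $k$ vertices but is not combinatorially Zoll. A compactness argument (in the spirit of Theorem~\ref{thm:smoothtocomb}, using the segment bound \eqref{eqn:segmentbound}) shows that the union $Z\subseteq\partial X$ of the combinatorial Reeb orbits of action $c$ has closure with empty interior; hence there is a nonempty open $B\subseteq\partial X$ with $\overline{B}\cap Z = \varnothing$. Now chisel a small convex piece out of $X$ near a point of $B$: intersecting $X$ with finitely many new half-spaces yields a convex polytope $X'\subseteq X$ with $\op{vol}(X')<\op{vol}(X)$ whose boundary agrees with $\partial X$ outside $B$. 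The orbits of action $c$, all avoiding $B$, survive into $X'$; and if for arbitrarily small chisels $X'$ carried a combinatorial Reeb orbit of action $<c$, these would converge to a generalized Reeb orbit of $X$ of action $\le c$ meeting $\overline{B}$ --- action below $c$ contradicts Proposition~\ref{prop:aao}, action $c$ contradicts $\overline{B}\cap Z=\varnothing$. Hence for a sufficiently small chisel $c_{\op{EHZ}}(X')\ge c$, so $\op{sys}(X') = c^2/(2\op{vol}(X')) > \op{sys}(X)$, contradicting maximality --- \emph{provided} $X'$ can be taken to have at most $k$ vertices. This last proviso is the role of the hypothesis that $k$ be sufficiently large: for $k$ in the stable range one would want the maximizer over $\le k$ vertices to also be maximal over $\le k+O(1)$ vertices, so that the extra vertices produced by the chisel cause no loss.

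\medskip
\noindent\textbf{Main obstacle.} In both parts the difficulty lies in the rigidity and non-genericity of the piecewise-linear setting. In~(a) it is the control of the bad locus $\partial X\setminus U_1$: one must show it carries no volume (which would follow from the expected absence of Type~2 orbits, cf.\ Conjecture~\ref{conj:genericity}, together with care about the density-but-not-a-priori-full-measure condition in Definition~\ref{def:combinatorially_Zoll}), and that the partial flow on $U_1$ genuinely completes to a \emph{free} circle action on $S^3$ --- if exceptional fibers appeared along the bad locus, the resulting Seifert fibration of $S^3$ would no longer be forced to have Euler number $1$. A safer route may be to transport the computation to the smoothings $X_\epsilon$ via Theorems~\ref{thm:combtosmooth} and~\ref{thm:smoothtocomb} and invoke a quantitative ``almost Zoll $\Rightarrow$ almost extremal systolic ratio'' estimate. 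In~(b) the obstacle is genuinely combinatorial: unlike for smooth domains one cannot shave an arbitrarily localized bump while staying a polytope --- any chisel reaching the interior of a face necessarily disturbs neighbouring faces --- so one must simultaneously bound how many vertices a valid chisel costs and rule out the creation of new short orbits. This is exactly the point on which the conjecture turns, and it is why the vertex threshold in~(b) is stated only tentatively.
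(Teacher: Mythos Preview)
The statement you are attempting to prove is explicitly labeled a \emph{conjecture} in the paper, and the paper provides no proof. The authors' only commentary is: ``Part~(a) of this conjecture can probably be proved following the argument in the smooth case. Part~(b) might be much harder.'' There is therefore nothing to compare your proposal against; you are not reconstructing a proof but sketching an attack on an open problem.

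That said, your sketch for part~(a) is precisely the route the authors gesture toward --- a combinatorial transplant of the Weinstein volume computation (their reference \cite{weinstein} and \cite[Prop.~4.3]{abhs}) --- and you have correctly isolated the genuine obstacles: showing that the complement of $U_1$ has measure zero (Definition~\ref{def:combinatorially_Zoll} only gives open \emph{dense}, not full measure), and showing that the partial $\R/c\Z$-action extends over the bad locus to a free circle action on $S^3$ rather than a Seifert fibration with exceptional fibers. These are not minor technicalities; they are exactly why the authors say ``probably'' rather than giving a proof. Your alternative suggestion of transporting to smoothings and invoking an ``almost Zoll $\Rightarrow$ almost extremal'' estimate is reasonable but would require such an estimate, which is not in the paper.

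For part~(b), your shaving argument is the natural analogue of the smooth argument the paper recalls just above the conjecture, and you have put your finger on exactly why the authors say part~(b) ``might be much harder'': any chisel of a polytope increases the vertex count, so the contrapositive argument does not close without an a priori reason why the $k$-vertex maximizer is also maximal among polytopes with slightly more vertices. Your proposal does not supply such a reason --- nor does the paper --- so this remains a genuine gap, not in your write-up but in the state of the problem.
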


Part (a) of this conjecture can probably be proved following the argument in the smooth case. Part (b) might be much harder. But both parts of the conjecture together would imply the weak Viterbo conjecture (using a compactness argument to show that for each $k$ the systolic ratio takes a maximum on the space of convex polytopes with $\le k$ vertices).

\begin{question}
\label{question:zoll_ball}
If a convex polytope $X$ in $\R^4$ is combinatorially Zoll, then is $\op{int}(X)$ symplectomorphic to an open ball?
\end{question}

\subsection{Experiments testing other conjectures}
\label{sec:otherexp}

One can also use Theorems~\ref{thm:combtosmooth} and \ref{thm:smoothtocomb} to test conjectures about Reeb orbits that do not have minimal action. For example, if $X$ is a convex domain with smooth boundary and $0\in\op{int}(X)$ such that ${\lambda_0}|_{\partial X}$ is nondegenerate, and if $k$ is a positive integer, define
\begin{equation}
\label{eqn:defak}
\mc{A}_k(X) = \op{min}\{\mc{A}(\gamma)\mid\op{CZ}(\gamma) = 2k+1\},
\end{equation}
where the minimum is over Reeb orbits $\gamma$ on $\partial X$. In particular $\mc{A}_1(X) = \mc{A}_{\op{min}}(X)$ by Proposition~\ref{prop:ehwz}(b).

\begin{conjecture}
\label{conj:A2}
For $X$ as above we have $\mc{A}_2(X) \le 2\mc{A}_1(X)$.
\end{conjecture}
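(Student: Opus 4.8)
\medskip
\noindent Here is a possible strategy for Conjecture~\ref{conj:A2}. The idea is to build the required index-$5$ orbit from the double cover of an action minimizer, and to cover the case where that double cover has Conley--Zehnder index $7$ rather than $5$ by using the monotonicity of the Gutt--Hutchings capacities $c_k^{S^1}$.

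By Proposition~\ref{prop:ehwz}(b), fix an action-minimizing Reeb orbit $\gamma$ on $\partial X$ with $\rho(\gamma)\le 2$; since ${\lambda_0}|_{\partial X}$ is nondegenerate this gives $\rho(\gamma)<2$, and Proposition~\ref{prop:ehwz}(a) gives $\rho(\gamma)>1$, so $\rho(\gamma)\in(1,2)$ and $\mc{A}(\gamma)=\mc{A}_1(X)$ (here $\gamma$ is automatically simple). The double cover $\gamma^2$ is then a Reeb orbit with $\mc{A}(\gamma^2)=2\mc{A}_1(X)$ and $\rho(\gamma^2)=2\rho(\gamma)\in(2,4)$, so in the generic case $\rho(\gamma)\neq 3/2$ we have $\op{CZ}(\gamma^2)=2\lfloor\rho(\gamma^2)\rfloor+1\in\{5,7\}$, equal to $5$ if $\rho(\gamma)<3/2$ and to $7$ if $\rho(\gamma)>3/2$. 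If $\op{CZ}(\gamma^2)=5$ we are done at once, since then $\mc{A}_2(X)\le\mc{A}(\gamma^2)=2\mc{A}_1(X)$ (and for thin ellipsoids $E(a,b)$ with $b>2a$ this is the tight case, $\gamma^2$ being the extremal index-$5$ orbit).

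So the essential case is $\op{CZ}(\gamma^2)=7$, in which no iterate of $\gamma$ has index $5$ and a geometrically different orbit is needed. Here I would use two properties of the capacities $c_k^{S^1}$ of \cite{gh}: first, $c_2^{S^1}(X)$ is the action of a Reeb orbit of Conley--Zehnder index $n-1+2\cdot 2=5$, so $\mc{A}_2(X)\le c_2^{S^1}(X)$; second, $c_k^{S^1}$ is nondecreasing in $k$, so $c_2^{S^1}(X)\le c_3^{S^1}(X)$. The conjecture then reduces to
\[
c_3^{S^1}(X)\ \le\ \mc{A}(\gamma^2)\ =\ 2\mc{A}_1(X).
\]
To prove this I would analyze the action-filtered chain complex computing $SH^{S^1,+}(X)$ in degrees $5,6,7$, using dynamical convexity of $\partial X$ (Abbondandolo--Kang \cite{ak}), which forces $\rho>1$ and hence $\op{CZ}\ge 3$ for every Reeb orbit, to constrain its generators; the goal would be to show that the generator of $SH^{S^1,+}_7(X)\cong\Q$ is detected in this complex at action at most $\mc{A}(\gamma^2)=2\mc{A}_1(X)$ --- ideally realized by the cycle $\gamma^2$ (or a modification of it) --- so that its spectral number $c_3^{S^1}(X)$ satisfies the required bound.

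The main obstacle is exactly this last step. For a general convex domain the low-action, low-degree part of the complex computing $SH^{S^1,+}(X)$ need not be simple --- it can contain positive hyperbolic orbits of even Conley--Zehnder index whose cancellation is subtle --- and there is no reason, beyond the heuristic ``iterates behave like powers of the $U$-map'', why the class of $\gamma^2$ must represent the degree-$7$ generator rather than being a boundary. Settling this seems to require genuine information about the $\Q[U]$-module (and product) structure of positive $S^1$-equivariant symplectic homology for convex domains, which is why Conjecture~\ref{conj:A2} is open. The borderline case $\rho(\gamma)=3/2$, where $\gamma$ is negative hyperbolic and $\gamma^2$ has even index $6$, is a further technical wrinkle, presumably absorbed by a small perturbation of $X$ together with a compactness argument for $\mc{A}_2$ using the $C^0$-continuity of $c_{\op{EHZ}}=\mc{A}_1$.
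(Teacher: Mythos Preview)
The statement you are addressing is Conjecture~\ref{conj:A2}, which the paper explicitly leaves open: there is no proof in the paper to compare against. Immediately after stating the conjecture, the authors make precisely the same observation you make for the easy case, namely that if some action-minimizing orbit has rotation number less than $3/2$ then its double cover has Conley--Zehnder index $5$ and the inequality follows. The paper then says nothing further toward a proof; instead it reformulates the conjecture combinatorially (Conjecture~\ref{conj:A2comb}) and reports on computer experiments that have failed to produce a counterexample.

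Your proposal is therefore not a proof, and to your credit you say so yourself. As a strategy it is reasonable: the chain $\mc{A}_2(X)\le c_2^{S^1}(X)\le c_3^{S^1}(X)\le \mc{A}(\gamma^2)=2\mc{A}_1(X)$ is logically sound, with the last inequality being the missing ingredient. But you correctly identify the obstruction: one would need to know that the index-$7$ iterate $\gamma^2$ actually carries the degree-$7$ generator of $SH^{S^1,+}$, and there is no mechanism in place to rule out that $\gamma^2$ is killed by a differential involving an even-index hyperbolic orbit of smaller action. This is a genuine gap, not a technicality, and it is the reason the conjecture remains open. Your discussion of the borderline case $\rho(\gamma)=3/2$ is also apt, though note that the paper's standing nondegeneracy hypothesis on $\lambda_0|_{\partial X}$ already excludes $\gamma^2$ being degenerate, so this case would have to be handled by approximation as you suggest.
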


This conjecture has nontrivial content when every action-minimizing Reeb orbit has rotation number at least $3/2$. (If an action-minimizing Reeb orbit has rotation number less than $3/2$, then its double cover has Conley-Zehnder index $5$ and thus verifies the conjectured inequality.) To explain how to test this, we need the following definitions.

\begin{definition}
Let $X$ be a symplectic polytope in $\R^4$. Let $L>0$. We say that $X$ is {\bf $L$-nondegenerate\/} if:
\begin{itemize}
	\item $X$ does not have any Type 2 combinatorial Reeb orbit $\gamma$ with $\mc{A}_{\op{comb}}(\gamma)\le L$.
	\item Every Type 1 combinatorial Reeb orbit $\gamma$ with $\mc{A}_{\op{comb}}(\gamma)\le L$ is nondegenerate, see Definition~\ref{def:crn}.
\end{itemize}
\end{definition}

It follows from Theorem~\ref{thm:smoothtocomb} that if a symplectic polytope $X$ is $L$-nondegenerate, then for all $\epsilon>0$ sufficiently small, all Reeb orbits on $\partial X_\epsilon$ with action less than $L$ are nondegenerate.

\begin{conjecture}
\label{conj:genericity}
For any integer $k$ and any real number $L$, the set of $L$-nondegenerate symplectic polytopes with $k$ vertices is dense in the set of all $k$-vertex convex polytopes containing $0$, topologized as an open subset of $\R^{4k}$.
\end{conjecture}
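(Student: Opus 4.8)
The plan is to prove the conjecture by a Baire category argument combined with the semialgebraicity of combinatorial Reeb data and a transversality computation. Write $\mathcal{P}_k$ for the space of $k$-vertex convex polytopes in $\R^4$ containing $0$; it is an open subset of $\R^{4k}$ and hence a Baire space, so it suffices to show that the set of polytopes in $\mathcal{P}_k$ failing to be $L$-nondegenerate symplectic polytopes has empty interior. This set is the union of three pieces: the non-symplectic polytopes (some $2$-face Lagrangian), the polytopes admitting a Type 2 combinatorial Reeb orbit of combinatorial action $\le L$, and the polytopes admitting a degenerate Type 1 combinatorial Reeb orbit of combinatorial action $\le L$. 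The first piece is the zero locus of $\omega_0$ restricted to some $2$-face, a finite union of proper algebraic subsets, hence closed and nowhere dense, so it remains to treat the other two.

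After an initial generic perturbation I may assume the polytope is combinatorially stable (in particular simple), and I restrict to a neighborhood $\mathcal{U}\subset\mathcal{P}_k$ on which the face lattice is constant. On $\mathcal{U}$, every Type 1 or Type 2 combinatorial Reeb orbit is encoded by a finite cyclic word $\sigma$: the sequence of $3$-faces it traverses, together with the list of $2$- and $1$-faces carrying its breakpoints, subject to the Reeb-direction consistency conditions of Lemma~\ref{lem:Reebcone}. There are countably many such $\sigma$. For each one, the Reeb direction in a traversed $3$-face is $\mathbf{i}$ times the outward normal, rescaled --- a rational function of the vertices --- and the transition map between consecutive bounding $2$-faces is affine with coefficients rational in the vertices; composing around $\sigma$ yields an affine return map $A_\sigma$, and a combinatorial Reeb orbit of type $\sigma$ is a fixed point of $A_\sigma$ lying in the prescribed faces with all transit times positive. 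Consequently the conditions ``an orbit of type $\sigma$ exists and has action $\le L$,'' ``it is Type 2'' (some breakpoint forced onto the $1$-skeleton), and ``it is Type 1 and degenerate'' (the linearized return map of Definition~\ref{def:crn} has $1$ as an eigenvalue, i.e.\ $\det(DA_\sigma-\op{Id})=0$ on the transverse symplectic plane) are all semialgebraic in the vertices. So the bad sets $\mathcal{B}_\sigma^{(2)}$ and $\mathcal{B}_\sigma^{(3)}$ they cut out are semialgebraic, and by Baire it is enough to show each is nowhere dense in $\mathcal{U}$.

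Each bad set is contained in a finite union of zero loci (inside the region where the orbit of type $\sigma$ exists with action $\le L$) of rational functions of the vertices --- the equation of the designated $1$-face evaluated at the relevant breakpoint of the fixed point for $\mathcal{B}_\sigma^{(2)}$, and $\det(DA_\sigma-\op{Id})$ for $\mathcal{B}_\sigma^{(3)}$ --- so it is nowhere dense provided the corresponding function does not vanish identically on the component of $\mathcal{U}$ in question. Proving this non-vanishing is the crux. It is a genuine issue, since the existence of combinatorially Zoll polytopes (the standard simplex, the $24$-cell) shows that on special polytopes these functions can vanish along entire families of orbits; the claim is that within each fixed simple combinatorial class this is non-generic. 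I would prove it by a transversality argument: the linearized return map $DA_\sigma$ is an explicit product of $2\times 2$ symplectic matrices, one contributed by each traversed $3$-face (a shear, reflecting the flatness of the face) and each breakpoint, and tilting and translating the supporting hyperplanes of the traversed $3$-faces perturbs these factors, the polytope carrying far more moduli than the at most five parameters needed to perturb the germ of $A_\sigma$ at the orbit. The key step is to show that the derivative of the map (polytope deformations) $\to$ (germ of $A_\sigma$ at the orbit) is surjective at a generic polytope of the class, after which standard jet-transversality makes $\det(DA_\sigma-\op{Id})$, the Type 2 defect, and the non-integrality of $\rho_{\op{comb}}$ simultaneously nonzero for generic $X$. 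Verifying this surjectivity --- and checking that the deformations of distinct traversed $3$-faces are not too entangled by the convexity and incidence constraints --- is the main obstacle; morally it is a Kupka--Smale theorem for the family of piecewise-affine combinatorial Reeb return maps.

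Granting that all the $\mathcal{B}_\sigma^{(2)}$ and $\mathcal{B}_\sigma^{(3)}$ are nowhere dense, the Baire category theorem gives a dense set of polytopes in $\mathcal{U}$ avoiding all of them; intersecting with the open dense symplectic locus yields the density of $L$-nondegenerate symplectic polytopes with $k$ vertices, as claimed. If in addition one establishes a uniform bound on the number of segments of a Type 1 or Type 2 combinatorial orbit of action $\le L$ --- a combinatorial analogue of the estimate behind Theorem~\ref{thm:smoothtocomb}(v) --- then only finitely many $\sigma$ are relevant on $\mathcal{U}$, the bad set becomes a finite union of hypersurfaces, and one obtains the stronger conclusion that the $L$-nondegenerate symplectic polytopes form an open dense subset of $\mathcal{P}_k$.
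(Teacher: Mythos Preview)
The paper does not prove this statement: it is explicitly labeled a \emph{conjecture} (Conjecture~\ref{conj:genericity}) and no proof is offered. So there is no ``paper's own proof'' to compare against; you are attempting to resolve an open problem.

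Your outline is a reasonable plan of attack, and the reductions you make (to simple polytopes with fixed face lattice, to countably many combinatorial types $\sigma$, to semialgebraic bad loci, then Baire) are sound. But the step you yourself flag as ``the main obstacle'' is a genuine gap, not a routine verification. Two concrete difficulties:

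First, a combinatorial Reeb orbit may traverse the same $3$-face many times; indeed, orbits with large action typically do. In that case, tilting or translating the supporting hyperplane of that face perturbs \emph{all} of the corresponding factors in the product $DA_\sigma$ simultaneously, in a correlated way. Your heuristic that ``the polytope carries far more moduli than the at most five parameters needed'' breaks down here: the number of independent hyperplane deformations is bounded by the number of $3$-faces of $X$, not by the length of $\sigma$, and you have given no argument that the resulting map from polytope deformations to germs of $A_\sigma$ is surjective when $\sigma$ is long relative to the face count. The combinatorially Zoll examples you cite are not merely ``special polytopes''---they show that for certain $\sigma$ the degeneracy locus can have positive codimension strictly less than one might naively expect, so a genuine computation is required.

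Second, for the Type 2 locus $\mathcal{B}_\sigma^{(2)}$, the breakpoint you are testing for membership in the $1$-skeleton is itself the image of the fixed point of $A_\sigma$ under a composition of affine maps, so its dependence on the vertices involves inverting $DA_\sigma - \op{Id}$. The function whose vanishing cuts out $\mathcal{B}_\sigma^{(2)}$ is therefore only defined on the locus where the orbit is nondegenerate, and you must show it does not vanish identically on any component of that locus. This is again a transversality statement of the same flavor as the first, and is not addressed.

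In short: your proposal correctly identifies the architecture of a proof, but the load-bearing transversality claim is precisely what makes this a conjecture rather than a lemma, and nothing in your write-up advances it.
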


\begin{definition}
Let $k$ be a positive integer and let $X$ be a symplectic polytope in $\R^4$. Suppose that $X$ is $L$-nondegenerate and has a combinatorial Reeb orbit $\gamma$ with $\mc{A}(\gamma)<L$ and $\op{CZ}_{\op{comb}}(\gamma)=2k+1$. By analogy with \eqref{eqn:defak}, define
\[
\mc{A}_k^{\op{comb}}(X)=\op{min}\left\{\mc{A}_{\op{comb}}(\gamma) \mid \op{CZ}_{\op{comb}}(\gamma) = 2k+1\right\}
\]
where the minimum is over combinatorial Reeb orbits $\gamma$ with combinatorial action less than $L$.
\end{definition}

Conjecture~\ref{conj:A2} is now equivalent\footnote{More precisely, by Theorem~\ref{thm:combtosmooth}, if $X$ is a polytope as above for which $\mc{A}_1^{\op{comb}}(X)$ and $\mc{A}_2^{\op{comb}}(X)$ are defined, and if
$\mc{A}_2^{\op{comb}}(X) > 2\mc{A}_1^{\op{comb}}(X)$,
then Conjecture~\ref{conj:A2} fails for (nondegenerate $C^\infty$ perturbations of) $\epsilon$-smoothings of $X$ for $\epsilon$ sufficiently small. Thus Conjecture~\ref{conj:A2} implies Conjecture~\ref{conj:A2comb}. If Conjecture~\ref{conj:genericity} is true, then one can conversely show, by approximating smooth domains by $L$-nondegenerate symplectic polytopes, that Conjecture~\ref{conj:A2comb} implies Conjecture~\ref{conj:A2}.} to the following:

\begin{conjecture}
\label{conj:A2comb}
Let $X$ be a symplectic polytope in $\R^4$. Assume that $\mc{A}_1^{\op{comb}}(X)$ and $\mc{A}_2^{\op{comb}}(X)$ are defined. Then
\[
\mc{A}_2^{\op{comb}}(X) \le 2\mc{A}_1^{\op{comb}}(X).
\]
\end{conjecture}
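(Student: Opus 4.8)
The plan is to prove the combinatorial statement directly, by first peeling off an easy case. Let $\gamma_1$ be a combinatorial Reeb orbit realizing $\mc{A}_1^{\op{comb}}(X)$. Since the $L$-nondegeneracy built into the hypothesis that $\mc{A}_1^{\op{comb}}(X)$ and $\mc{A}_2^{\op{comb}}(X)$ are defined rules out Type~2 orbits of small action, Corollary~\ref{cor:computecehz} gives $\mc{A}_1^{\op{comb}}(X) = c_{\op{EHZ}}(X)$, and we may take $\gamma_1$ to be Type~1 and nondegenerate. The combinatorial analogue of Proposition~\ref{prop:ehwz} then yields $1 < \rho_{\op{comb}}(\gamma_1) \le 2$, with the upper inequality strict by nondegeneracy, so $\rho_{\op{comb}}(\gamma_1) \in (1,2)$. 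If moreover $\rho_{\op{comb}}(\gamma_1) < 3/2$, the double iterate $\gamma_1^{\,2}$ is a Type~1 combinatorial Reeb orbit with $\rho_{\op{comb}}(\gamma_1^{\,2}) = 2\rho_{\op{comb}}(\gamma_1) \in (2,3)$; this is not an integer, so $\gamma_1^{\,2}$ is nondegenerate with $\op{CZ}_{\op{comb}}(\gamma_1^{\,2}) = 5$ and $\mc{A}_{\op{comb}}(\gamma_1^{\,2}) = 2\mc{A}_1^{\op{comb}}(X)$, which proves the inequality. The boundary value $\rho_{\op{comb}}(\gamma_1) = 3/2$ is excluded by $L$-nondegeneracy (again because $\gamma_1^{\,2}$ would then be a degenerate orbit), or in any case can be treated by a limiting argument. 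So it remains to handle the case in which \emph{every} action-minimizing combinatorial Reeb orbit $\gamma_1$ has $\rho_{\op{comb}}(\gamma_1) \in (3/2, 2)$, where the goal becomes: exhibit a combinatorial Reeb orbit $\gamma$ with $\op{CZ}_{\op{comb}}(\gamma) = 5$ and $\mc{A}_{\op{comb}}(\gamma) \le 2\mc{A}_1^{\op{comb}}(X)$.

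In this remaining regime no iterate of an action-minimizer has rotation number in $(2,3)$, so the orbit we seek must be a genuinely new ``second'' closed orbit, distinct from the minimizer and its iterates. One route is analytic: pass to $\partial X_\epsilon$ via Theorem~\ref{thm:combtosmooth}, invoke the existence of a second closed Reeb orbit on a convex hypersurface in $\R^4$, push it back through Theorem~\ref{thm:smoothtocomb}, and try to control both its action (by $2\mc{A}_1$) and its Conley-Zehnder index (forcing it to be $5$ rather than $3$ or $7$ in this range of $\rho$). This is most cleanly phrased via the equivariant capacities of \S\ref{sec:reviewviterbo}: since $c_1^{S^1}(X) = \mc{A}_1(X)$ for convex $X$ and $c_2^{S^1}(X)$ is the action of an index-$5$ orbit (so $\mc{A}_2(X) \le c_2^{S^1}(X)$), Conjecture~\ref{conj:A2}, and hence Conjecture~\ref{conj:A2comb} via Theorem~\ref{thm:combtosmooth}, would follow from the inequality $c_2^{S^1}(X) \le 2\,c_1^{S^1}(X)$ for convex $X$; on ellipsoids $E(a,b)$ with $a<b<2a$ one checks this directly, the index-$5$ orbit being the one over the long semi-axis, of action $b<2a = 2\mc{A}_1$. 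A second, purely combinatorial route works in the conjecturally extremal situation where $X$ is combinatorially Zoll (Definition~\ref{def:combinatorially_Zoll}): an open dense subset of $\partial X$ is then swept by action-minimizers, and one would select two such minimizers crossing along the $2$-skeleton, resolve the crossing into a single concatenated combinatorial loop of combinatorial action exactly $2\mc{A}_1^{\op{comb}}(X)$, and evaluate its combinatorial rotation number from the local data of Lemma~\ref{lem:Reebcone} and Definition~\ref{def:crn}, hoping the answer lies in $(2,3)$.

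The main obstacle is exactly this last point, in either formulation. The inequality $c_2^{S^1}(X) \le 2\,c_1^{S^1}(X)$ — a subadditivity statement for the equivariant (conjecturally Ekeland–Hofer) capacities of convex domains in $\R^4$ — does not seem to be known, and even granting a second orbit there is no a priori bound relating its action to twice the minimal action. On the combinatorial side the difficulty is visible already in the arithmetic of rotation numbers: although \eqref{eqn:segmentbound} bounds the number of segments and the corner contributions to $\rho_{\op{comb}}$ are explicit, a naive concatenation of two minimizers $\gamma_1,\gamma_1'$ has $\rho_{\op{comb}} \ge \rho_{\op{comb}}(\gamma_1) + \rho_{\op{comb}}(\gamma_1') > 3$, hence $\op{CZ}_{\op{comb}} \ge 6$, which is too large. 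Making such a construction succeed would require genuine cancellation between the positive winding of the two arcs and the corner terms created at the resolved crossing, and identifying exactly when this cancellation is available — it must fail, for instance, whenever $\op{sys}(X) < 1$ and $\gamma_1$ is the unique action-minimizer — appears to be the heart of the conjecture.
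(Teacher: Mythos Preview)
The statement you are trying to prove is labeled \emph{Conjecture} in the paper, not Theorem or Proposition, and the paper offers no proof of it. The surrounding text explains that Conjecture~\ref{conj:A2comb} is (modulo Conjecture~\ref{conj:genericity}) equivalent to the smooth Conjecture~\ref{conj:A2}, and then says only that computer searches attempting to maximize $\mc{A}_2^{\op{comb}}(X)/(2\mc{A}_1^{\op{comb}}(X))$ have not produced a counterexample. There is therefore no ``paper's own proof'' to compare your proposal against.

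Your proposal is honest about this: you correctly isolate the easy case $\rho_{\op{comb}}(\gamma_1)<3/2$, where the double iterate has rotation number in $(2,3)$ and hence $\op{CZ}_{\op{comb}}=5$; the paper makes exactly the same observation in the paragraph following Conjecture~\ref{conj:A2}. For the remaining range $\rho_{\op{comb}}(\gamma_1)\in[3/2,2)$ you sketch two strategies and then, rightly, flag that neither closes. The capacity route would need $c_2^{S^1}(X)\le 2\,c_1^{S^1}(X)$ for convex $X\subset\R^4$, which is open; and your concatenation idea fails for the reason you state, since the rotation number is superadditive in the wrong direction here and would force $\op{CZ}_{\op{comb}}\ge 7$, not $5$. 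So what you have written is not a proof but a correct diagnosis of why the conjecture is open, and it is consistent with the paper's treatment of the statement as a conjecture supported by numerical evidence rather than a theorem.

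One small caution: your identification $\mc{A}_1^{\op{comb}}(X)=c_{\op{EHZ}}(X)$ via Corollary~\ref{cor:computecehz} implicitly uses a combinatorial analogue of Proposition~\ref{prop:ehwz}(a) (that $\rho_{\op{comb}}>1$ for all Type~1 orbits), which the paper does not state directly; it follows from Theorem~\ref{thm:combtosmooth}(iii) together with Proposition~\ref{prop:ehwz}(a) applied to a further smoothing, but you should say so rather than invoke a ``combinatorial analogue'' as if it were already on the shelf.
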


One can use Theorems~\ref{thm:combtosmooth} and \ref{thm:smoothtocomb} to compute $\mc{A}_k^{\op{comb}}(X)$. One can then test Conjecture~\ref{conj:A2comb} by using optimization algorithms to try to maximize the ratio $\mc{A}_2^{\op{comb}}(X)/(2\mc{A}_1^{\op{comb}}(X))$. So far we have not found any example where this ratio is greater than $1$.

\subsection*{The rest of the paper}

In \S\ref{sec:type1}, we investigate Type 1 combinatorial Reeb orbits in detail, we define the combinatorial rotation number, and we work out the example of the 24-cell. In \S\ref{sec:rdsp}, we establish foundational facts about the combinatorial Reeb flow on a symplectic polytope. In \S\ref{sec:quaternionic} we review a symplectic trivialization of the contact structure on a star-shaped hypersurface in $\R^4$ defined using the quaternions. We explain a key curvature identity due to Hryniewicz and Salom\~ao which implies that in the convex case, the rotation number of a Reeb trajectory increases monotonically as it evolves. In \S\ref{sec:smoothingdynamics} we study the Reeb flow on a smoothing of a polytope. In \S\ref{sec:correspondence} we use this work to prove the smooth-combinatorial correspondence of Theorems~\ref{thm:combtosmooth} and \ref{thm:smoothtocomb}. In the appendix, we review basic facts about rotation numbers that we need throughout.

\subsection*{Acknowledgments.} We thank A.\ Abbondandolo, P.\ Haim-Kislev, U.\ Hryniewicz, and Y.\ Ostrover for helpful conversations, and A. Balitskiy for pointing out some additional references. JC was partially supported by an NSF Graduate Research Fellowship. MH was partially supported by NSF grant DMS-1708899, a Simons Fellowship, and a Humboldt Research Award.

\section{Type 1 combinatorial Reeb orbits}
\label{sec:type1}

Let $X$ be a symplectic polytope in $\R^4$. In this section we give what amounts to an algorithm for finding the Type 1 combinatorial Reeb orbits and their combinatorial symplectic actions, see Proposition~\ref{prop:orbitbijection}. (Our actual computer implementation uses various optimizations not discussed here.) We also define combinatorial rotation numbers and work out the example of the 24-cell.

\subsection{Symplectic flow graphs}
\label{subsec:symplectic_flow_graphs}

We start by defining ``symplectic flow graphs'' in any even dimension. In the next subsection (\S \ref{subsubsec:symplectic_polytopes}), we will specialize to certain $2$-dimensional flow graphs that keep track of the combinatorics needed to find Type 1 Reeb orbits on the boundary of a symplectic polytope in $\R^4$. 

\begin{definition}
\label{def:linear_domain}
A {\bf linear domain\/} is an intersection of a finite number of open or closed half-spaces in an affine space, or an affine space itself.
\end{definition}

\begin{definition}
\label{def:tangent_space} The {\bf tangent space} $TA$ of a linear domain $A$ is the tangent space $T_xA$ for any $x\in A$; the tangent spaces for different $x$ are canonically isomorphic to each other via translations.
\end{definition}

\begin{definition}
\label{def:affine_map_of_linear_domains}
Let $A$ and $B$ be linear domains. An {\bf affine map} $\phi:A \to B$ is the restriction of an affine map between affine spaces containing $A$ and $B$. Such a map induces a map on tangent spaces which we denote by $T\phi: TA\to TB$.
\end{definition}

\begin{definition}
\label{def:linear_flow}
Let $A$ and $B$ be linear domains. A {\bf linear flow} from $A$ to $B$ is a triple $\Phi = (D,\phi,f)$ consisting of:
\begin{itemize}
	\item the {\bf domain of definition}: a linear domain $D \subset A$.
	\item the {\bf flow map}: an affine map $\phi:D \to B$.
	\item the {\bf action function}: an affine function $f:D \to \R$.
\end{itemize}
We sometimes write $\Phi:A\to B$. In the examples of interest for us, $\phi$ is injective, and $f\ge 0$.
\end{definition}

\begin{definition}
\label{def:linear_flow_composition}
Let $\Phi = (D,\phi,f)$ be a linear flow from $A$ to $B$ and let $\Psi = (E,\psi,g)$ be a linear flow from $B$ to $C$. Their {\bf composition} is the linear flow $\Psi \circ \Phi: A \to C$ defined by
\[
\Psi \circ \Phi = (\phi^{-1}(E),\psi \circ \phi, f + g \circ \phi).
\]
\end{definition}

\begin{remark} Composition of linear flows is associative, and there is an identity linear flow $\iota_A:A \to A$ given by $\iota_A = (A,\op{id}_A,0)$. If $\Phi_i=(D_i,\phi_i,f_i)$ is a linear flow from $A_{i-1}$ to $A_i$ for $i=1,\ldots,k$, and if $\Phi=(D,\phi,f)$ is the composition $\Phi_k\circ\cdots\circ \Phi_1$, then for $x\in D$, we have
\begin{equation}
\label{eqn:actioncomposition}
f(x) = \sum_{i=1}^kf_i((\phi_{i-1}\circ\cdots\circ\phi_1)(x)).
\end{equation}
\end{remark}

\begin{definition}
\label{def:linear_flow_graph}
A {\bf linear flow graph} $G$ is a triple $G = (\Gamma,A,\Phi)$ consisting of:
\begin{itemize}
 \item A directed graph $\Gamma$ with vertex set $V(\Gamma)$ and edge set $E(\Gamma)$.
 \item For each vertex $v$ of $\Gamma$, an open linear domain $A_v$.
 \item For each edge $e$ of $\Gamma$ from $u$ to $v$, a linear flow $\Phi_e = (D_e,\phi_e,f_e):A_u \to A_v$.
\end{itemize}
\end{definition}

\begin{figure}[h!]
\label{fig:flow_graph}
\includegraphics[width=\linewidth]{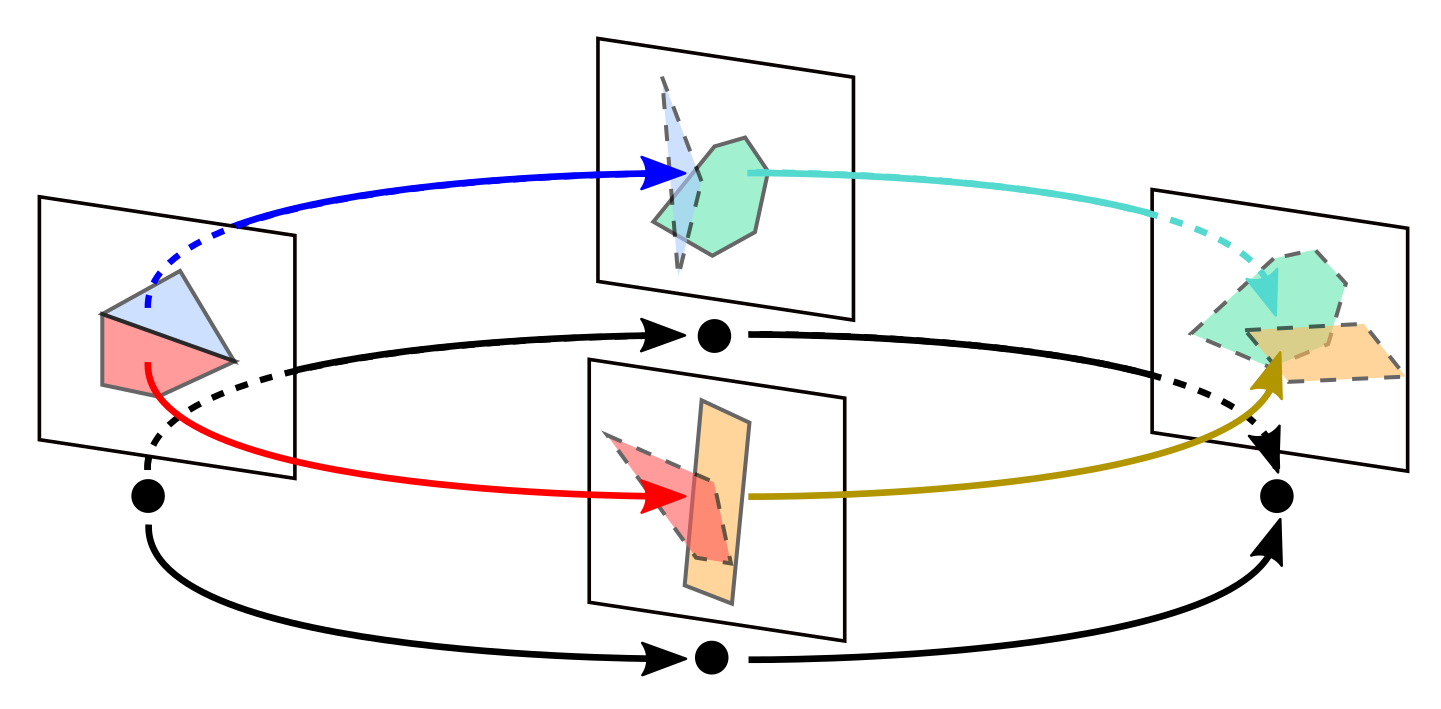}
\caption{An example of a flow graph with 4 nodes and 4 edges. The linear domains and flows are depicted above their corresponding nodes and edges.}
\end{figure}

Let $G=(\Gamma,A,\Phi)$ be a linear flow graph. If $p = e_1\dots e_k$ is a path in $\Gamma$ from $u$ to $v$, we define an associated linear flow
\[
\Phi_p = (D_p,\phi_p,f_p) : A_u \longrightarrow A_v
\]
by
\[
\Phi_p = \Phi_{e_k} \circ \dots \circ \Phi_{e_1}.
\]

\begin{definition}
\label{def:flow_graph_trajectory}
A {\bf trajectory} $\gamma$ of $G$ is a pair $\gamma = (p,x)$, where $p$ is a path in $\Gamma$ and $x \in D_p$.
\end{definition}

\begin{definition}
\label{def:flow_graph_periodic_orbit}
A {\bf periodic orbit} of $G$ is an equivalence class of trajectories $\gamma = (p,x)$ where $p$ is a cycle in $\Gamma$ and $x$ is a fixed point of $\phi_p$, i.e.\ $\phi_p(x) = x$. Two such trajectories $\gamma = (p,x)$ and $\eta = (q,y)$ are equivalent if there are paths $r$ and $s$ in $\Gamma$ such that $p = rs$, $q = sr$, and $\phi_r(x) = y$. We often abuse notation and denote the periodic orbit by $\gamma=(p,x)$, instead of by the equivalence class thereof.
\end{definition}

\begin{definition}
\label{def:action_of_periodic_orbit}
The {\bf action} of a periodic orbit $\gamma = (p,x)$ is defined by $f(\gamma) = f_p(x)$.
\end{definition}

\begin{definition} \label{def:degenerate_orbit} A periodic orbit $\gamma = (p,x)$, where $p$ is a cycle based at $u$, is {\bf degenerate} if the induced map on tangent spaces $T\phi_p:TD_u \to TD_u$ has $1$ as an eigenvalue. Otherwise we say that $\gamma$ is {\bf nondegenerate\/}.
\end{definition}

\begin{definition}
\label{def:symplectic_flow_graph}
An $2n$-dimensional {\bf symplectic flow graph\/} $G$ is a quadruple $G = (\Gamma,A,\omega,\Phi)$ where:
\begin{itemize}
	\item $(\Gamma,A,\Phi)$ is a linear flow graph in which each linear domain $A_v$ has dimension $2n$.
	\item $\omega$ assigns to each vertex $v$ of $\Gamma$ a linear symplectic form $\omega_v$ on $TA_v$.
  \end{itemize}
We require that if $e$ is an edge from $u$ to $v$, then $\phi_e^*\omega_v = \omega_u$.
\end{definition}

\subsection{The symplectic flow graph of a 4d symplectic polytope}
\label{subsubsec:symplectic_polytopes}

\begin{definition}
\label{def:sfgp}
Let $X$ be a symplectic polytope in $\R^4$. We associate to $X$ the two-dimensional symplectic flow graph $G(X)=(\Gamma,A,\omega,\Phi)$ defined as follows:
\begin{itemize}
    \item The vertex set of $\Gamma$ is the set of $2$-faces of $X$. The linear domain associated to a vertex is simply the corresponding $2$-face, regarded as a linear domain in $\R^4$. If $F$ is a $2$-face, then the symplectic form $\omega_F$ on $TF$ is the restriction of the standard symplectic form $\omega_0$ on $\R^4$.
    \item If $F_1$ and $F_2$ are $2$-faces, then there is an edge $e$ in $\Gamma$ from $F_1$ to $F_2$ if and only if there is a $3$-face $E$ adjacent to $F_1$ and $F_2$, and a trajectory of the Reeb vector field $R_E$ on $E$ from some point in $F_1$ to some point in $F_2$. In this case, the linear flow
    \[
\Phi_e = (D_e,\phi_e,f_e):F_1\longrightarrow F_2
    \]
    is defined as follows:
\begin{itemize}
  \item
  The domain $D_e$ is the set of $x\in F_1$ such that there exists a trajectory of $R_E$ from $x$ to some point $y\in F_2$.
  \item
  For $x$ as above, $\phi_e(x)=y$, and $f_e(x)$ is the time it takes to flow along the vector field $R_E$ from $x$ to $y$, or equivalently the integral of $\lambda_0$ along the line segment from $x$ to $y$.
 \end{itemize}
\end{itemize} 
\end{definition}

In the above definition, note that $\phi_e$ and $f_e$ are affine, because the vector field $R_E$ on $E$ is constant by equation \eqref{eqn:Reebinu}.  A simple calculation as in \cite[Eq.\ (5.10)]{hwz} shows that the map $\phi_e$ is symplectic.

\begin{proposition}
\label{prop:orbitbijection}
Let $X$ be a symplectic polytope in $\R^4$. Then there is a canonical bijection
\[
\{\mbox{periodic orbits of $G(X)$}\} \longleftrightarrow \{\mbox{Type $1$ combinatorial Reeb orbits of $X$}\}.
\]
If $(p,x)$ is a periodic orbit of $G(X)$, and if $\gamma$ is the corresponding combinatorial Reeb orbit, then
\begin{equation}
\label{eqn:identifyactions}
f(p,x) = \mc{A}_{\op{comb}}(\gamma).
\end{equation}
\end{proposition}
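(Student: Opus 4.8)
The plan is to write the correspondence down explicitly in both directions and then check that it descends to the relevant equivalence relations and matches actions. Suppose first that $(p,x)$ is a periodic orbit of $G(X)$, with $p = e_1\cdots e_k$ a cycle of edges based at a $2$-face $F_0$ and $x = x_0 \in D_p \subset F_0$ a fixed point of $\phi_p$. By Definition~\ref{def:sfgp}, each edge $e_i$ runs from a $2$-face $F_{i-1}$ to a $2$-face $F_i$ and carries the datum of a $3$-face $E_i$ adjacent to both; put $x_i = \phi_{e_i}(x_{i-1}) \in F_i$, so $x_k = x_0$. The definition of $\Phi_{e_i}$ furnishes a Reeb trajectory of $R_{E_i}$ in $\overline{E_i}$ from $x_{i-1}$ to $x_i$, and since $R_{E_i}$ is constant on $E_i$ by \eqref{eqn:Reebinu} and the first bullet of Lemma~\ref{lem:Reebcone}, this trajectory is the straight segment $\Gamma_i$ from $x_{i-1}$ to $x_i$, which points in the direction of $R_{E_i}^+X$. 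Because $x_{i-1}$ and $x_i$ lie in the relative interiors of facets of the convex polytope $\overline{E_i}$, and an edge of $\Gamma$ cannot be a self-loop (a Reeb chord returning to its initial $2$-face would have to exit through the $1$-skeleton), convexity forces $\op{int}(\Gamma_i)$ into the relatively open $3$-face $E_i$. Hence $\gamma := (\Gamma_1,\ldots,\Gamma_k)$ is a combinatorial Reeb orbit in the sense of Definition~\ref{def:cro}, and it is Type $1$ since it meets the $2$-skeleton of $X$ only at the points $x_0,\ldots,x_{k-1}$, each lying in an open $2$-face.

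Conversely, let $\gamma = (\Gamma_1,\ldots,\Gamma_k)$ be a Type $1$ combinatorial Reeb orbit, and for each $i$ let $F$ be the unique face with $\op{int}(\Gamma_i) \subset F$ and $\Gamma_i$ pointing along $R_F^+X$. By Lemma~\ref{lem:Reebcone}, $R_F^+X$ is tangent to $F$ only when $F$ is a $3$-face: for a $2$-face the Reeb cone points transversally into an adjacent $3$-face, while for a $1$- or $0$-face it is a ``bad'' direction or points into a higher adjacent face. Since $\op{int}(\Gamma_i)$ is one-dimensional, $F$ must be a $3$-face, which we call $E_i$. Write $\Gamma_i$ as the segment from $x_{i-1}$ to $x_i$, where the shared endpoints $x_i$ of $\Gamma_i$ and $\Gamma_{i+1}$ (indices cyclic) lie, by the Type $1$ hypothesis, in open $2$-faces $F_i$; each $F_i$ is then a facet of both $\overline{E_i}$ and $\overline{E_{i+1}}$, hence adjacent to both. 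Since $\Gamma_i$ is a Reeb trajectory of $R_{E_i}$ from $x_{i-1}\in F_{i-1}$ to $x_i\in F_i$, the triple $(F_{i-1}, E_i, F_i)$ is an edge $e_i$ of $\Gamma$ with $x_{i-1}\in D_{e_i}$ and $\phi_{e_i}(x_{i-1}) = x_i$, so $p := e_1\cdots e_k$ is a cycle with $\phi_p(x_0) = x_k = x_0$, and $(p,x_0)$ is a periodic orbit of $G(X)$.

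These two assignments are inverse to each other, and both descend to the required equivalence classes: a cyclic relabeling of the segments of $\gamma$ corresponds precisely to the relation of Definition~\ref{def:flow_graph_periodic_orbit} (with $r = e_1$, $s = e_2\cdots e_k$, so that $\phi_r(x_0) = x_1$ and $(e_1\cdots e_k, x_0)\sim(e_2\cdots e_k e_1, x_1)$, which maps to the combinatorial Reeb orbit $(\Gamma_2,\ldots,\Gamma_k,\Gamma_1) = \gamma$), so the map is well-defined and bijective on equivalence classes. Finally, the action identity \eqref{eqn:identifyactions} follows from the composition formula \eqref{eqn:actioncomposition}: $f(p,x_0) = f_p(x_0) = \sum_{i=1}^k f_{e_i}(x_{i-1})$, and by Definition~\ref{def:sfgp} each $f_{e_i}(x_{i-1})$ equals $\int_{\Gamma_i}\lambda_0$, whose sum over $i$ is $\mc{A}_{\op{comb}}(\gamma)$.

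The only genuinely non-formal step is the appeal to Lemma~\ref{lem:Reebcone} to pin down that, for a Type $1$ orbit, every segment has its interior in a $3$-face and every breakpoint lies in an open $2$-face; after that the argument is pure bookkeeping together with \eqref{eqn:actioncomposition}. One should also bear in mind the implicit convention in Definition~\ref{def:sfgp} that an edge of $\Gamma$ records the $3$-face through which its flow passes — so edges are really indexed by triples $(F_1, E, F_2)$ — so that the flow graph faithfully remembers which $3$-face each segment of a combinatorial Reeb orbit traverses.
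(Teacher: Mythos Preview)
Your proof is correct and follows essentially the same approach as the paper's: construct the bijection explicitly by sending a periodic orbit $(p,x)$ to the concatenation of Reeb segments through the $3$-faces labeled by the edges of $p$, and verify the action identity via \eqref{eqn:actioncomposition}. You have filled in details the paper leaves implicit---notably the appeal to Lemma~\ref{lem:Reebcone} to show that for a Type~1 orbit every segment must have its interior in a $3$-face and every breakpoint in an open $2$-face, and the check that the equivalence relations on each side match---but the overall strategy is identical.
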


\begin{proof}
Suppose $(p=e_1\cdots e_k,x)$ is a periodic orbit of $G(X)$. Let $E_i$ denote the $3$-face of $X$ associated to $e_i$. There is then a combinatorial Reeb orbit $\gamma=(L_1,\ldots,L_k)$, where $L_i$ is the line segment in $E_i$ from $\phi_{e-1}\circ\cdots\circ \phi_{e_1}(x)$ to $\phi_{e_i}\circ\cdots\circ \phi_{e_1}(x)$. It follows from Definitions~\ref{def:cro} and \ref{def:sfgp} that this construction defines a bijection from periodic orbits of $G(X)$ to combinatorial Reeb orbits of $X$. The identification of actions \eqref{eqn:identifyactions} follows from equation \eqref{eqn:actioncomposition}.
\end{proof}

By Proposition~\ref{prop:orbitbijection}, to find the Type 1 Reeb orbits\footnote{When testing Viterbo's conjecture and related conjectures,
although all Type 1 orbits of $X$ are detected by the flow graph $G(X)$, in view of Corollary 1.13 we must also account for Type 2 orbits. One can do this by either (1) extending $G(X)$ to a flow graph that includes the lower-dimensional faces of $X$ or (2) working with a flow graph $G(X)$ whose linear domains $A_F$  are the closures of the $2$-faces, rather than $2$-faces themselves. We use the first strategy in our computer program.} of $X$, one can compute the symplectic flow graph $G(X)=(\Gamma,A,\omega,\Phi)$, enumerate the cycles in the graph $\Gamma$, and for each cycle $p$, compute the fixed points of the map $\phi_p$ in the domain $D_p$. In order to avoid searching for arbitrarily long cycles in the graph $\Gamma$ in the cases of interest, we now need to discuss combinatorial rotation numbers.

\subsection{Combinatorial rotation numbers}

\begin{definition}
\label{def:sfg_trivialization}
A {\bf trivialization} of a $2n$-dimensional symplectic flow graph $G=(\Gamma,A,\omega,\Phi)$ is a pair $(\tau,\widetilde{\phi})$ consisting of:
\begin{itemize}
  \item For each vertex $u$ of $\Gamma$, an isomorphism of symplectic vector spaces
  \[
  \tau_u:(TA_u,\omega_u) \stackrel{\simeq}{\longrightarrow} (\R^{2n},\omega_0).
  \]
  \item For each edge $e$ in $\Gamma$ from $u$ to $v$, a lift $\widetilde{\phi}_{e,\tau} \in \widetilde{\op{Sp}}(2n)$ of the symplectic matrix
  \[
  \tau_v \circ T\phi_e \circ \tau_u^{-1}\in\op{Sp}(2n).
  \]
\end{itemize}
Here $\omega_0$ denotes the standard symplectic form on $\R^{2n}$, and $\widetilde{\op{Sp}}(2n)$ denotes the universal cover of the symplectic group $\op{Sp}(2n)$. We sometimes abuse notation and denote the trivialization $(\tau,\widetilde{\phi})$ simply by $\tau$.
\end{definition}

If $p = e_1\dots e_n$ is a path in $\Gamma$ from $u$ to $v$, we define
\[
\widetilde{\phi}_{p,\tau} = \widetilde{\phi}_{e_n,\tau} \circ\cdots\circ \widetilde{\phi}_{e_1,\tau} \in \widetilde{\op{Sp}}(2n).
\]

\begin{definition}
Let $G=(\Gamma,A,\omega,\Phi)$ be a $2$-dimensional symplectic flow graph, let $\tau$ be a trivialization of $G$, and let $p$ be a path in $\Gamma$. Define the {\bf rotation number\/} of $p$ with respect to $\tau$ by
\[
\rho_\tau(p) = \rho(\widetilde{\phi}_{p,\tau})\in\R,
\]
where the right hand side is the rotation number on $\widetilde{\op{Sp}}(2)$ reviewed in Appendix~\ref{app:rotation_numbers}. 
\end{definition}

Suppose now that $X$ is a symplectic polytope in $\R^4$. We now define a canonical trivialization $\tau$ of the symplectic flow graph $G(X)$ which has the useful property that if $(p,x)$ is a periodic orbit of $G(X)$, and if $\gamma$ is the corresponding combinatorial Reeb orbit on $X$ from Proposition~\ref{prop:orbitbijection}, then the rotation number $\rho_\tau(p)$ is the limit of the rotation numbers of Reeb orbits on smoothings of $X$ that converge to $\gamma$.

Fix matrices ${\mathbf i}, {\mathbf j}, {\mathbf k} \in \op{SO}(4)$ which represent the quaternion algebra, such that ${\mathbf i}$ is the standard almost complex structure. It follows from the formula $\omega_0(V,W)=\langle {\mathbf i}V,W\rangle$, together with the quaternion relations, that the matrices ${\mathbf i}$, ${\mathbf j}$, and ${\mathbf k}$ are symplectic. In examples below, in the coordinates $x_1,x_2,y_1,y_2$, we use the choice
\[
{\mathbf i} = \begin{pmatrix} & & -1 & \\ & & & -1 \\ 1 & & & \\ & 1 & &\end{pmatrix}, \quad {\mathbf j} = \begin{pmatrix} & -1 & & \\ 1 & & & \\ & & & 1 \\ & & -1 & \\ \end{pmatrix}, \quad {\mathbf k} = \begin{pmatrix} & & & -1 \\ & & 1 & \\ & -1 & & \\ 1 & & & \end{pmatrix}.
\]

\begin{definition}
\label{def:qtfg}
Let $X$ be a symplectic polytope in $\R^4$. We define the {\bf quaternionic trivialization\/} $(\tau,\widetilde{\phi})$ of the symplectic flow graph $G(X)$ as follows.
\begin{itemize}
  \item Let $F$ be a $2$-face of $X$. We define the isomorphism
  \[
\tau_F: TF \stackrel{\simeq}{\longrightarrow} \R^2
  \]
  as follows. By Lemma~\ref{lem:Reebcone}, there is a unique $3$-face $E$ adjacent to $F$ such that the Reeb cone $R_F^+$ consists of the nonnegative multiples of the Reeb vector field $R_E$, and the latter points into $E$ from $F$. Let $\nu$ denote the outward unit normal vector to $E$. If $V\in TF$, define
  \begin{equation}
  \label{eqn:tauF}
\tau_F(V) = (\langle V,{\mathbf j}\nu\rangle, \langle V,{\mathbf k}\nu\rangle).
  \end{equation}
  \item If $e$ is an edge from $F_1$ to $F_2$, define $\widetilde{\phi}_{e,\tau}\in\widetilde{\op{Sp}}(2)$ to be the unique lift of the symplectic matrix
  \begin{equation}
  \label{eqn:transitionmap}
\tau_{F_2} \circ T\phi_e \circ \tau_{F_1}^{-1} \in \op{Sp}(2)
  \end{equation}
  that has rotation number in the interval $(-1/2,1/2]$.
\end{itemize}
\end{definition}

The following lemma verifies that this is a legitimate trivialization.

\begin{lemma}
\label{lem:qtfg}
Let $X$ be a symplectic polytope in $\R^4$. If $F$ is a $2$-face of $X$, then the linear map $\tau_F$ in \eqref{eqn:tauF} is an isomorphism of symplectic vector spaces.
\end{lemma}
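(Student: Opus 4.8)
The goal is to show that $\tau_F\colon TF\to\R^2$ defined by $\tau_F(V)=(\langle V,{\mathbf j}\nu\rangle,\langle V,{\mathbf k}\nu\rangle)$ is a symplectic isomorphism, where $\nu$ is the outward unit normal to the distinguished adjacent $3$-face $E$ and $R_E$ points into $E$ from $F$. The plan is to exhibit an explicit orthonormal-type frame for $TF$ and compute $\tau_F$ and $\omega_0$ on it directly, using the quaternion relations.

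First I would analyze the ambient geometry at a point of $F$. The $2$-face $F$ lies in the intersection of two hyperplanes, one being $\partial E$ (with outward unit normal $\nu$) and one being $\partial E'$ for the other adjacent $3$-face $E'$ (with outward unit normal $\nu'$); thus $TF=\{V:\langle V,\nu\rangle=\langle V,\nu'\rangle=0\}$, a $2$-dimensional subspace of $\R^4$. The key observation is that $R_E$ is a nonzero multiple of ${\mathbf i}\nu$ (by \eqref{eqn:Reebinu}), and $R_E\in R_F^+X\subset T_F^+X$, so $R_E$ is tangent to $F$; hence ${\mathbf i}\nu\in TF$, i.e.\ $\langle{\mathbf i}\nu,\nu\rangle=0$ (automatic since ${\mathbf i}\in\SO(4)$) and $\langle{\mathbf i}\nu,\nu'\rangle=0$. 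Now consider the four unit vectors $\nu,{\mathbf i}\nu,{\mathbf j}\nu,{\mathbf k}\nu$: by the quaternion relations and the fact that ${\mathbf i},{\mathbf j},{\mathbf k}\in\SO(4)$ together with ${\mathbf i}{\mathbf j}={\mathbf k}$ etc., these form an orthonormal basis of $\R^4$ (this is the standard fact that the quaternions act freely and transitively on unit frames of this type). Since $TF$ is the orthogonal complement of $\Span\{\nu\}$ intersected with $\{\nu'\}^\perp$, and ${\mathbf i}\nu\in TF$, I would write $TF=\Span\{{\mathbf i}\nu, w\}$ where $w$ is the unit vector in $TF$ orthogonal to ${\mathbf i}\nu$; expanding $w$ in the basis $\{\nu,{\mathbf i}\nu,{\mathbf j}\nu,{\mathbf k}\nu\}$ and using $w\perp\nu$, $w\perp{\mathbf i}\nu$ gives $w=\cos\theta\,{\mathbf j}\nu+\sin\theta\,{\mathbf k}\nu$ for some angle $\theta$.

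Next I would compute $\tau_F$ on the frame $\{{\mathbf i}\nu,w\}$. Using orthonormality of $\{\nu,{\mathbf i}\nu,{\mathbf j}\nu,{\mathbf k}\nu\}$: $\tau_F({\mathbf i}\nu)=(\langle{\mathbf i}\nu,{\mathbf j}\nu\rangle,\langle{\mathbf i}\nu,{\mathbf k}\nu\rangle)=(0,0)$ — wait, that would be a problem, so in fact the correct statement is that $\{{\mathbf j}\nu,{\mathbf k}\nu\}$ need not be orthogonal to ${\mathbf i}\nu$ within $TF$; rather the point is that restricted to the $2$-plane $\Span\{{\mathbf j}\nu,{\mathbf k}\nu\}$ the pair $(\langle\cdot,{\mathbf j}\nu\rangle,\langle\cdot,{\mathbf k}\nu\rangle)$ is an isometry, and the claim to verify is that the orthogonal projection $TF\to\Span\{{\mathbf j}\nu,{\mathbf k}\nu\}$ is an isomorphism. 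Concretely: write any $V\in TF$ as $V=a\,{\mathbf i}\nu+b\,{\mathbf j}\nu+c\,{\mathbf k}\nu$ (the $\nu$-component vanishes since $V\perp\nu$); then $\tau_F(V)=(b,c)$. So $\tau_F$ is injective on $TF$ iff no nonzero $V\in TF$ has $b=c=0$, i.e.\ ${\mathbf i}\nu$ is the only (up to scale) vector of $TF$ lying in $\Span\{{\mathbf i}\nu\}$ — but $TF$ is $2$-dimensional and contains ${\mathbf i}\nu$, and if it also contained another vector with $b=c=0$ it would equal $\Span\{\nu,{\mathbf i}\nu\}$ (within $\{\nu\}^\perp$ it would be $\Span\{{\mathbf i}\nu\}$, too small); so injectivity holds, hence $\tau_F$ is a linear isomorphism onto $\R^2$. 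For the symplectic property, using $\omega_0(V,W)=\langle{\mathbf i}V,W\rangle$ and the quaternion relations (${\mathbf i}^2=-1$, ${\mathbf i}{\mathbf j}={\mathbf k}={-\mathbf j}{\mathbf i}$, ${\mathbf i}{\mathbf k}=-{\mathbf j}$), one computes $\omega_0({\mathbf j}\nu,{\mathbf k}\nu)=\langle{\mathbf i}{\mathbf j}\nu,{\mathbf k}\nu\rangle=\langle{\mathbf k}\nu,{\mathbf k}\nu\rangle=1$, and similarly $\omega_0({\mathbf i}\nu,{\mathbf j}\nu)=\langle{\mathbf i}{\mathbf i}\nu,{\mathbf j}\nu\rangle=-\langle\nu,{\mathbf j}\nu\rangle=0$ and $\omega_0({\mathbf i}\nu,{\mathbf k}\nu)=0$. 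Therefore, for $V=a{\mathbf i}\nu+b{\mathbf j}\nu+c{\mathbf k}\nu$ and $W=a'{\mathbf i}\nu+b'{\mathbf j}\nu+c'{\mathbf k}\nu$ in $TF$, expanding bilinearly gives $\omega_0(V,W)=bc'-cb'=\omega_0(\tau_F(V),\tau_F(W))$, which is exactly the assertion.

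The main obstacle is the first step: verifying carefully that $\{\nu,{\mathbf i}\nu,{\mathbf j}\nu,{\mathbf k}\nu\}$ is an orthonormal basis of $\R^4$ and extracting from it the precise structure of $TF$, in particular pinning down that ${\mathbf i}\nu\in TF$ (which uses Lemma~\ref{lem:Reebcone} and the non-Lagrangian hypothesis via Proposition~\ref{prop:well-posed}, ensuring $R_F^+X$ is the one-dimensional ray spanned by $R_E$ and hence tangent to $F$). Once the frame is in place, the isomorphism and symplectic-compatibility computations are routine applications of the quaternion relations and $\omega_0(V,W)=\langle{\mathbf i}V,W\rangle$. I would also remark that this lemma is precisely what makes Definition~\ref{def:qtfg} well-posed, since it is needed both for $\tau_F$ itself and, via Definition~\ref{def:sfg_trivialization}, to make sense of the lifts $\widetilde\phi_{e,\tau}$.
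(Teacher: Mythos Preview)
Your symplectic computation at the end is correct and is exactly the paper's argument: expand $V,W\in TF\subset TE=\nu^\perp$ in the orthonormal basis $\{{\mathbf i}\nu,{\mathbf j}\nu,{\mathbf k}\nu\}$, use $\omega_0({\mathbf i}\nu,{\mathbf j}\nu)=\omega_0({\mathbf i}\nu,{\mathbf k}\nu)=0$ and $\omega_0({\mathbf j}\nu,{\mathbf k}\nu)=1$, and read off $\omega_0(V,W)=bc'-cb'=\omega_0(\tau_F(V),\tau_F(W))$.

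However, your injectivity discussion contains a genuine error. You assert that $R_E\in R_F^+X\subset T_F^+X$ implies ${\mathbf i}\nu\in TF$. This confuses the tangent cone $T_F^+X$ (a full-dimensional cone in $\R^4$, consisting of directions pointing into the polytope $X$) with the $2$-dimensional linear space $TF$. In fact ${\mathbf i}\nu\notin TF$: by Lemma~\ref{lem:Reebcone} the Reeb cone $R_F^+X$ points \emph{into} the open $3$-face $E$, hence is transverse to $F$; equivalently, Lemma~\ref{lem:la} says that ${\mathbf i}\nu\in TF$ would force $F$ to be Lagrangian. Had ${\mathbf i}\nu$ actually lain in $TF$, then $\tau_F({\mathbf i}\nu)=(0,0)$ and $\tau_F$ would not be injective, so your subsequent ``injectivity'' paragraph (which still presumes ${\mathbf i}\nu\in TF$) cannot be salvaged as written.

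The fix is short, and the paper takes the cleanest route: once you have the identity $\omega_0(V,W)=\omega_0(\tau_F(V),\tau_F(W))$, injectivity is automatic from the hypothesis that $F$ is not Lagrangian, since a nonzero $V\in\ker\tau_F$ would give $\omega_0(V,W)=0$ for all $W\in TF$. Alternatively, your direct approach works once the sign of the error is reversed: $\ker(\tau_F|_{TF})=TF\cap\Span\{{\mathbf i}\nu\}=\{0\}$ precisely because ${\mathbf i}\nu\notin TF$.
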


\begin{proof}
Let $E$ and $\nu$ be as in the definition of $\tau_F$. Then $\{{\mathbf i}\nu,{\mathbf j}\nu,{\mathbf k}\nu\}$ is an orthonormal basis for $TE$. We have $\omega_0({\mathbf i}\nu,{\mathbf j}\nu)=\omega_0({\mathbf i}\nu,{\mathbf k}\nu)=0$ and $\omega_0({\mathbf j}\nu,{\mathbf k}\nu)=1$. If $V$ and $W$ are any two vectors in $TF\subset TE$, then expanding them in this basis, we find that $\omega_0(V,W) = \omega_0(\tau_F(V),\tau_F(W))$.
\end{proof}

\begin{remark}
\label{rem:altcon}
An alternate convention for the quaternionic trivialization would be to define an isomorphism
 \[
\tau'_F: TF \stackrel{\simeq}{\longrightarrow} \R^2
\]
as follows. Let $E'$ be the other $3$-face adjacent to $F$ (so that the Reeb vector field $R_{E'}$ points out of $E$ along $F$), and let $\nu'$ denote the outward unit normal vector to $E'$. Define
\[
\tau'_F(V) = (\langle V,{\mathbf j}\nu'\rangle, \langle V,{\mathbf k}\nu'\rangle).
\]
This is also an isomorphism of symplectic vector spaces by the same argument as in Lemma~\ref{lem:qtfg}.
\end{remark}

\begin{definition}
\label{def:transitionmatrix}
If $X$ is a symplectic polytope in $\R^4$ and $F$ is a $2$-face of $X$, define the {\bf transition matrix\/}
\[
\psi_F = \tau_F\circ (\tau_F')^{-1}\in\op{Sp}(2).
\]
\end{definition}

\begin{lemma}
\label{lem:transitionmatrix}
If $X$ is a symplectic polytope in $\R^4$ and $F$ is a $2$-face of $X$, then the transition matrix $\psi_F$ is positive elliptic (see Definition~\ref{def:classifySp2}).
\end{lemma}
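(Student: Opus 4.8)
The plan is to show that $\psi_F = \tau_F\circ(\tau_F')^{-1}$ is conjugate to a rotation by an angle in $(0,\pi)$, i.e.\ positive elliptic. The key point is that $\tau_F$ and $\tau_F'$ are built from the \emph{same} $2$-face $F$ but using the outward normals $\nu$ and $\nu'$ of the two distinct $3$-faces $E$, $E'$ adjacent to $F$; these normals are related by the combinatorics of the polytope, and the quaternionic structure converts that relation into an explicit element of $\op{Sp}(2)$. First I would set up coordinates: after applying a symplectomorphism of $\R^4$ commuting with $\mathbf{i},\mathbf{j},\mathbf{k}$ (i.e.\ a unit quaternion acting on the left, or rather using that $\op{Sp}(1)\subset\op{SO}(4)$ preserves the quaternion relations), I may assume $\nu$ is a fixed reference vector, say $\nu = (1,0,0,0)$, so that $\{\mathbf{i}\nu,\mathbf{j}\nu,\mathbf{k}\nu\}$ is the standard basis of $TE = \nu^\perp$. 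Then $\nu'$ is some unit vector, and since both $\nu$ and $\nu'$ are outward normals of $3$-faces meeting along the $2$-face $F$, the vector $\nu'$ lies in the half-space $\{v : \langle v,\text{(something)}\rangle \le 0\}$ determined by the fact that $X$ is on the inner side of both hyperplanes; more precisely $\langle \nu,\nu'\rangle$ can be anything in $(-1,1)$, but the orientation constraint — $R_E$ points \emph{into} $E$ along $F$ while $R_{E'}$ points \emph{out} of $E$ along $F$ — pins down a sign.

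The main computation is then to express $\tau_F(V)$ and $\tau_F'(V)$ for $V\in TF = \nu^\perp\cap(\nu')^\perp$ in terms of the fixed basis and read off $\psi_F$. Writing $\nu' = a\,\mathbf{i}\nu + b\,\mathbf{j}\nu + c\,\mathbf{k}\nu + d\,\nu$ (so $a^2+b^2+c^2+d^2 = 1$, with $d = \langle\nu,\nu'\rangle$), one uses the quaternion relations to compute $\mathbf{j}\nu'$ and $\mathbf{k}\nu'$ as explicit linear combinations of $\mathbf{i}\nu,\mathbf{j}\nu,\mathbf{k}\nu,\nu$, then projects onto $TF$ and compares with $\mathbf{j}\nu,\mathbf{k}\nu$ projected onto $TF$. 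Since $TF$ is $2$-dimensional and contained in $TE$, and $\{\mathbf{j}\nu,\mathbf{k}\nu\}$ together with $\mathbf{i}\nu$ span $TE$, the vector $\mathbf{i}\nu$ restricted to the $\tau_F$-coordinates is (up to scale) the Reeb direction $R_E$; this is what makes the computation tractable. I expect $\psi_F$ to come out as a rotation matrix $\begin{pmatrix}\cos\theta & -\sin\theta\\ \sin\theta & \cos\theta\end{pmatrix}$ (possibly after accounting for the sign of $\omega_0$) with $\theta\in(0,\pi)$, where $\cos\theta$ is an explicit expression in $d = \langle\nu,\nu'\rangle$; the strict inequalities $\theta\ne 0,\pi$ will use that $\nu\ne\pm\nu'$ (the two $3$-faces are genuinely distinct and $X$ is nondegenerate, so in particular $F$ is not Lagrangian, ensuring the configuration is nondegenerate) and the sign $\theta > 0$ (rather than $<0$) will come from the ``points into $E$'' versus ``points out of $E$'' orientation distinction between $\tau_F$ and $\tau_F'$ noted in Remark~\ref{rem:altcon}.

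The main obstacle I anticipate is bookkeeping the orientation and sign conventions: verifying that the rotation angle is genuinely positive (the ``positive'' in positive elliptic) rather than negative requires carefully tracking how the Reeb vector field $R_E$ sits relative to $F$ versus how $R_{E'}$ does, and relating that to the sign of $\omega_0$ on $TF$ and to the chosen lifts in $\widetilde{\op{Sp}}(2)$. A clean way to organize this is to compute the action of $\psi_F$ on a test vector that I can identify geometrically — for instance, take $V\in TF$ to be (a multiple of) the intersection direction of the two tangent hyperplanes other than along $F$, or simply pick a generic $V$ and compute $\psi_F V$ — and check that the pair $(V,\psi_F V)$ is positively oriented with respect to $\omega_0|_{TF}$, which together with $\psi_F$ being elliptic (a rotation, hence $|\!\op{tr}\psi_F| < 2$ unless $\psi_F = \pm I$) forces $\psi_F$ to be positive elliptic. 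Ruling out $\psi_F = \pm I$ is exactly the statement $\nu'\ne\nu$ and $\nu'\ne -\nu$; the latter cannot happen since $\nu,\nu'$ bound a nonempty polytope, and the former is the distinctness of $E,E'$.
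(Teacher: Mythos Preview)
Your strategy---compute $\psi_F$ explicitly from the two normals $\nu,\nu'$, then read off that it is elliptic from the trace and positive from an orientation condition---is exactly what the paper does. Your normalization of $\nu$ via a right quaternion multiplication is a harmless variation; the paper works coordinate-free and simply computes $(\tau_F')^{-1}$ directly.

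However, your concrete expectation that $\psi_F$ ``comes out as a rotation matrix'' is wrong, and if you build the rest of the argument on that form you will get stuck. Writing $a_1=\langle\nu',\nu\rangle$, $a_2=\langle\mathbf{i}\nu',\nu\rangle$, $a_3=\langle\mathbf{j}\nu',\nu\rangle$, $a_4=\langle\mathbf{k}\nu',\nu\rangle$, the paper obtains
\[
\psi_F=\frac{1}{a_2}\begin{pmatrix} a_1a_2-a_3a_4 & -a_2^2-a_4^2\\ a_2^2+a_3^2 & a_1a_2+a_3a_4\end{pmatrix},
\]
which is \emph{not} orthogonal unless $a_3=a_4=0$. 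What survives is exactly the part you anticipated: $\operatorname{Tr}(\psi_F)=2a_1=2\langle\nu,\nu'\rangle\in(-2,2)$, so $\psi_F$ is elliptic and your ``$\cos\theta$ is an explicit expression in $d=\langle\nu,\nu'\rangle$'' is correct. For positivity, testing $v=e_1$ gives $\det[v,\psi_Fv]=(a_2^2+a_3^2)/a_2$, whose sign is that of $a_2=\langle\mathbf{i}\nu',\nu\rangle$. The paper invokes Lemma~\ref{lem:EinEout} to get $a_2>0$; this is precisely your ``Reeb points into $E$ versus out of $E'$'' orientation distinction, made quantitative. So your outline is sound once you replace the rotation-matrix guess with the actual $2\times 2$ matrix above and argue from its trace and lower-left entry.
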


\begin{proof}
We compute that
\begin{equation}
\label{eqn:tauFprimeinverse}
(\tau'_F)^{-1} = \left( {\mathbf j}\nu' - \frac{\langle {\mathbf j}\nu',\nu\rangle}{\langle {\mathbf i}\nu',\nu\rangle}{\mathbf i}\nu', {\mathbf k}\nu' - \frac{\langle {\mathbf k}\nu',\nu\rangle}{\langle {\mathbf i}\nu',\nu\rangle}{\mathbf i}\nu'\right).
\end{equation}
To simplify notation, write $a_1 = \langle \nu',\nu\rangle$, $a_2=\langle {\mathbf i}\nu',\nu\rangle$, $a_3=\langle {\mathbf j}\nu',\nu\rangle$, and $a_4=\langle {\mathbf k}\nu',\nu\rangle$. It then follows from \eqref{eqn:tauF} and \eqref{eqn:tauFprimeinverse} that
\[
\psi_F = \frac{1}{a_2} \begin{pmatrix}
a_1a_2 - a_3a_4 & -a_2^2 -a_4^2\\ a_2^2 + a_3^2 & a_1a_2 + a_3a_4 
\end{pmatrix}
\]
Then $\op{Tr}(\psi_F) = 2\langle \nu',\nu\rangle \in (-2,2)$, so $\psi_F$ is elliptic. Moreover $a_2>0$ by Lemma~\ref{lem:EinEout} below, so $\psi_F$ is positive elliptic.
\end{proof}

\begin{corollary}
\label{cor:rotrange}
If $E$ is a $3$-face of $X$, if $F_1$ and $F_2$ are $2$-faces of $X$, and if there is a trajectory of the Reeb vector field on $E$ from some point in $F_1$ to some point in $F_2$, then $\widetilde{\phi}_{e,\tau}$ has rotation number in the interval $(0,1/2)$.
\end{corollary}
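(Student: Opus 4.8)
Here is the plan. The key observation will be that the symplectic matrix $\tau_{F_2}\circ T\phi_e\circ\tau_{F_1}^{-1}$, of which $\widetilde{\phi}_{e,\tau}$ is by definition the lift with rotation number in $(-1/2,1/2]$, is precisely the transition matrix $\psi_{F_2}$ of Definition~\ref{def:transitionmatrix}; once this is established, Lemma~\ref{lem:transitionmatrix} finishes the job.

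First I would pin down which $3$-faces enter the two relevant trivializations. Since there is a trajectory of the (constant) vector field $R_E$ on $E$ from a point of $F_1$ to a point of $F_2$, the vector $R_E$ points into $E$ along $F_1$ and out of $E$ along $F_2$. By Lemma~\ref{lem:Reebcone}, $E$ is therefore the unique $3$-face adjacent to $F_1$ whose Reeb vector field generates $R_{F_1}^+$ and points inward, so $\tau_{F_1}$ is defined (via \eqref{eqn:tauF}) using $\nu\eqdef\nu_E$, the outward unit normal to $E$. On the other hand $E$ cannot play this role for $F_2$ (that would force $R_E$ to point into $E$ along $F_2$), and since $F_2$ is a codimension-$2$ face it has exactly two adjacent $3$-faces; hence $E$ is the \emph{other} $3$-face at $F_2$, so the alternate trivialization $\tau'_{F_2}$ of Remark~\ref{rem:altcon} is also built from $\nu_E$. (In particular $F_1\neq F_2$, since $R_E$ cannot point both into and out of $E$ along a single face.)

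Next I would carry out the short linear-algebra computation of $\tau'_{F_2}\circ T\phi_e\circ\tau_{F_1}^{-1}$. Because $0\in\op{int}(X)$, the face $E$ lies in a hyperplane $\{w : \langle\nu_E,w\rangle=h\}$ with $h>0$, so \eqref{eqn:Reebinu} gives $R_E=(2/h)\,\mathbf{i}\nu_E$, a positive multiple of $\mathbf{i}\nu_E$; and $\phi_e$ is translation along $R_E$, so $T\phi_e(V)=V+c(V)\,\mathbf{i}\nu_E$ for a scalar $c(V)$ depending linearly on $V\in TF_1$. Since $\{\nu_E,\mathbf{i}\nu_E,\mathbf{j}\nu_E,\mathbf{k}\nu_E\}$ is orthonormal, $\langle\mathbf{i}\nu_E,\mathbf{j}\nu_E\rangle=\langle\mathbf{i}\nu_E,\mathbf{k}\nu_E\rangle=0$, so pairing $T\phi_e(V)$ against $\mathbf{j}\nu_E$ and $\mathbf{k}\nu_E$ kills the correction term; by \eqref{eqn:tauF} this says $\tau'_{F_2}(T\phi_e(V))=\tau_{F_1}(V)$, i.e.\ $\tau'_{F_2}\circ T\phi_e\circ\tau_{F_1}^{-1}=\op{id}_{\R^2}$. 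Composing on the left with $\tau_{F_2}\circ(\tau'_{F_2})^{-1}$ then yields $\tau_{F_2}\circ T\phi_e\circ\tau_{F_1}^{-1}=\psi_{F_2}$. Finally, Lemma~\ref{lem:transitionmatrix} shows $\psi_{F_2}$ is positive elliptic, and by Definition~\ref{def:classifySp2} together with the properties of the rotation number recalled in Appendix~\ref{app:rotation_numbers}, the unique lift of a positive elliptic matrix with rotation number in $(-1/2,1/2]$ has rotation number in $(0,1/2)$; since $\widetilde{\phi}_{e,\tau}$ is that lift, $\rho(\widetilde{\phi}_{e,\tau})\in(0,1/2)$.

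I expect the only delicate point to be the bookkeeping in the first step — correctly matching $E$ with $\tau_{F_1}$ and with $\tau'_{F_2}$ (rather than $\tau_{F_2}$), which rests on reading off the inward/outward direction of $R_E$ along $F_1$ versus $F_2$ and applying Lemma~\ref{lem:Reebcone}. Everything after that is a one-line computation plus a citation.
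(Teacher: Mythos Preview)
Your proposal is correct and takes essentially the same approach as the paper: show that the matrix $\tau_{F_2}\circ T\phi_e\circ\tau_{F_1}^{-1}$ coincides with the transition matrix $\psi_{F_2}$, then invoke Lemma~\ref{lem:transitionmatrix} and the rotation-number computation. The paper's proof simply asserts ``it follows from the definitions'' that the map equals $\psi_{F_2}$ and then cites Lemma~\ref{lem:compute_rho_bar}; you have carefully unpacked that assertion, correctly identifying that $E$ furnishes $\tau_{F_1}$ and $\tau'_{F_2}$ (not $\tau_{F_2}$), and then verifying $\tau'_{F_2}\circ T\phi_e\circ\tau_{F_1}^{-1}=\op{id}$ via orthogonality of $\mathbf{i}\nu_E$ to $\mathbf{j}\nu_E,\mathbf{k}\nu_E$.
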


\begin{proof}
It follows from the definitions that the map \eqref{eqn:transitionmap} agrees with the transition matrix $\psi_{F_2}$. By Lemma~\ref{lem:transitionmatrix}, this matrix is positive elliptic. It then follows from Lemma~\ref{lem:compute_rho_bar} that its mod $\Z$ rotation number is in the interval $(0,1/2)$.
\end{proof}

\begin{definition}
\label{def:crn}
Let $X$ be a symplectic polytope in $\R^4$. Let $\gamma$ be a Type 1 combinatorial Reeb orbit for $X$.
\begin{itemize}
\item
We define the {\bf combinatorial rotation number\/} of $\gamma$ by
\[
\rho_{\op{comb}}(\gamma) = \rho_\tau(p),
\]
where $(p,x)$ is the periodic orbit of $G(X)$ corresponding to $\gamma$ in Proposition~\ref{prop:orbitbijection}, and $\tau$ is the quaternionic trivialization of $X$.
\item We say that $\gamma$ is {\bf nondegenerate\/} if the periodic orbit $(p,x)$ is nondegenerate as in Definition~\ref{def:degenerate_orbit}. In this case we define the {\bf combinatorial Conley-Zehnder index\/} of $\gamma$ by equation \eqref{eqn:ccz}.
\end{itemize}
\end{definition}

\begin{remark}
\label{rem:ucmult}
By Corollary~\ref{cor:rotrange}, the combinatorial rotation number is the rotation number of a product of elements of $\widetilde{\op{Sp}}(2)$ each with rotation number in the interval $(0,1/2)$. A formula for computing the rotation number of such a product is given by Proposition~\ref{prop:ucmult}. 
\end{remark}

\subsection{Example: the 24-cell}
\label{sec:24_cell}

We now compute the symplectic flow graph $G(X)=(\Gamma,A,\omega,\Phi)$ and the quaternionic trivialization $\tau$ for the example where $X$ is the $24$-cell with vertices
\[
(\pm1,0,0,0),(0,\pm1,0,0),(0,0,\pm1,0),(0,0,0,\pm1),(\pm1/2,\pm1/2,\pm1/2,\pm1/2).
\]

The polytope $X$ has $24$ three-faces, each of which is an octahedron. The $3$-faces are contained in the hyperplaces
\[
\pm x_1 \pm x_2 = 1,\; \pm x_1 \pm y_1 = 1,\; \pm x_1 \pm y_2 = 1, \; \pm x_2 \pm y_1 = 1, \; \pm x_2 \pm y_2 = 1, \; \pm y_1 \pm y_2 = 1.
\]
There are $96$ two-faces, each of which is a triangle; thus the graph $\Gamma$ has $96$ vertices. It follows from the calculations below that none of the $2$-faces is Lagrangian, so that $X$ is a symplectic polytope.

To understand the edges of the graph $\Gamma$, consider for example the $3$-face $E$ contained in the hyperplane $x_1+y_1=1$. The vertices of this $3$-face are
\[
(1,0,0,0), (1/2,\pm1/2,1/2,\pm1/2), (0,0,1,0).
\]
The unit normal vector to this face is
\[
\nu = \frac{1}{\sqrt{2}}(1,0,1,0).
\]
The Reeb vector field on $E$ is
\[
R_E = 2\left(-\frac{\partial}{\partial x_1} + \frac{\partial}{\partial y_1}\right).
\]
Thus the Reeb flow on $E$ flows from the vertex $(1,0,0,0)$ to the vertex $(0,0,1,0)$ in time $1/2$. Each of the four $2$-faces of $E$ adjacent to $(1,0,0,0)$ flows to one of the four $2$-faces of $E$ adjacent to $(0,0,1,0)$, by an affine linear isomorphism.

For example, let $F_1$ be the $2$-face with vertices $(1,0,0,0)$, $(1/2,1/2,1/2,\pm 1/2)$, and let $F_2$ be the $2$-face with vertices $(0,0,1,0)$, $(1/2,1/2,1/2,\pm 1/2)$. Then $F_1$ flows to $F_2$, so there is an edge $e$ in the graph $\Gamma$ from $F_1$ to $F_2$. More explicitly, we can parametrize $F_1$ as
\[
\left(1-\frac{t_1+t_2}{2}, \frac{t_1+t_2}{2}, \frac{t_1+t_2}{2}, \frac{t_1-t_2}{2}\right), \quad t_1,t_2>0, \; t_1+t_2<1,
\]
and we can parametrize $F_2$ as
\[
\left(\frac{t_1+t_2}{2}, \frac{t_1+t_2}{2}, 1 - \frac{t_1+t_2}{2}, \frac{t_1-t_2}{2}\right), \quad t_1,t_2>0, \; t_1+t_2<1.
\]
With respect to these parametrizations, the flow map $\phi_e$ is simply
\[
\phi_e(t_1,t_2) = (t_1,t_2).
\]
The domain $D_e$ of $\phi_e$ is all of $F_1$, and the action function is
\[
f_e(t_1,t_2) = \frac{1-t_1-t_2}{2}.
\]

It turns out that for every other $3$-face $E'$, there is a linear symplectomorphism $A$ of $\R^4$ such that $AX=X$ and $AE=E'$. In fact, we can take $A$ to be right multiplication by an appropriate unit quaternion. It follows from this symplectic symmetry that the Reeb flow on each $3$-face behaves analogously. Putting these Reeb flows together, one finds that the graph $\Gamma$ consists of $8$ disjoint $12$-cycles. (This example is highly non-generic!) Further calculations show that for each $12$-cycle $p$, the map $\phi_p$ is the identity, so that every point in the interior of a $2$-face is on a Type 1 combinatorial Reeb orbit. Moreover, the action of each such orbit is equal to $2$. In particular, $X$ is ``combinatorially Zoll'' in the sense of Definition~\ref{def:combinatorially_Zoll}. Also, the volume of $X$ is $2$, so $X$ has systolic ratio $1$.

To see how the quaternionic trivialization works, let us compute $\widetilde{\phi}_{e,\tau}$ for the edge $e$ above. For the $2$-face $F_1$ above, the isomorphism $\tau_{F_1}$ is given in terms of the unit normal vector $\nu$ to $E$. We compute that
\[
{\mathbf j}\nu = \frac{1}{\sqrt{2}}(0,1,0,-1),\quad\quad
{\mathbf k}\nu = \frac{1}{\sqrt{2}}(0,1,0,1).
\]
It follows that in terms of the basis $(\partial_{t_1},\partial_{t_2})$ for $TF_1$, we have
\[
\tau_{F_1} = \frac{1}{\sqrt{2}}\begin{pmatrix} 0 & 1 \\ 1 & 0 \end{pmatrix}.
\]
For the $2$-face $F_2$ above, the isomorphism $\tau_{F_2}$ is given in terms of the unit normal vector to the {\em other\/} $3$-face adjacent to $F_2$. This other $3$-face is in the hyperplane $x_2+y_1=1$ and so has unit normal vector
\[
\nu' = \frac{1}{\sqrt{2}}(0,1,1,0).
\]
We then similarly compute that in terms of the basis $(\partial_{t_1},\partial_{t_2})$ for $TF_2$, we have
\[
\tau_{F_2} = \frac{1}{\sqrt{2}}\begin{pmatrix} -1 & 0 \\ 1 & 1 \end{pmatrix}
\]
Therefore the matrix \eqref{eqn:transitionmap} for the edge $e$ is
\[
\tau_{F_2} \circ T\phi_e\circ \tau_{F_1}^{-1} = \begin{pmatrix} -1 & 0 \\ 1 & 1 \end{pmatrix} \begin{pmatrix} 0 & 1 \\ 1 & 0 \end{pmatrix}^{-1} = \begin{pmatrix} 0 & -1 \\ 1 & 1 \end{pmatrix}.
\]
This matrix is positive elliptic and has eigenvalues $e^{\pm i\pi/3}$. It follows that its lift $\widetilde{\phi}_{e,\tau}$ in $\widetilde{\op{Sp}}(2)$ has rotation number $1/6$.

For one of the other three edges associated to $E$, the matrix \eqref{eqn:transitionmap} is the same as above, and for the other two edges associated to $E$, the matrix is $\begin{pmatrix} 1 & -1 \\ 1 & 0\end{pmatrix}$, whose lift also has rotation number $1/6$. It then follows from the quaternionic symmetry of $X$ mentioned earlier that for every edge $e'$ of the graph $\Gamma$, the lift $\widetilde{\phi}_{e',\tau}$ is one of the above two matrices with rotation number $1/6$. One can further check that for each $12$-cycle in the graph, one obtains just one of the above two matrices repeated $12$ times, so each corresponding Type 1 combinatorial Reeb orbit has rotation number equal to $2$.

\section{Reeb dynamics on symplectic polytopes}
\label{sec:rdsp} 

The goal of this section is to prove Proposition~\ref{prop:well-posed} and  Lemma~\ref{lem:Reebcone}, describing the Reeb dynamics on the boundary of a symplectic polytope in $\R^4$.

\subsection{Preliminaries on tangent and normal cones}

We now prove some lemmas about tangent and normal cones which we will need; see \S\ref{sec:cro} for the definitions.

Recall that if $C$ is a cone in $\R^m$, its {\bf polar dual\/} is defined by
\[
C^o=\{y\in\R^m\mid \langle x,y\rangle \le 0 \;\forall x\in C\}.
\]

\begin{lemma}
\label{lem:ntd}
Let $X$ be a convex set in $\R^m$ and let $y\in\partial X$. Then
\[
N_y^+X = (T_y^+X)^o, \quad\quad T_y^+X = (N_y^+X)^o.
\]
\end{lemma}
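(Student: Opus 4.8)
The plan is to establish the first identity $N_y^+X = (T_y^+X)^o$ directly from the definitions, and then deduce the second identity $T_y^+X = (N_y^+X)^o$ by combining the first with the bipolar theorem, after checking that $T_y^+X$ is a closed convex cone.

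For the first identity I would argue both inclusions by unwinding definitions. If $v \in N_y^+X$ and $w$ is any vector with $y + \epsilon w \in X$ for some $\epsilon > 0$, then $\langle (y+\epsilon w) - y, v\rangle \le 0$ forces $\langle w, v\rangle \le 0$; since the set of such $w$ is dense in $T_y^+X$ and this inequality is a closed condition, we get $v \in (T_y^+X)^o$. Conversely, if $v \in (T_y^+X)^o$, then for each $x \in X$ the vector $x - y$ lies in $T_y^+X$ (take $\epsilon = 1$), so $\langle x - y, v\rangle \le 0$, hence $v \in N_y^+X$.

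For the second identity I would first check that $T_y^+X$ is a closed convex cone. It is closed by definition. It is a cone because $y + \epsilon w \in X$ implies $y + (\epsilon/\lambda)(\lambda w) \in X$ for all $\lambda > 0$. It is convex because if $y + \epsilon_1 w_1 \in X$ and $y + \epsilon_2 w_2 \in X$, then applying convexity of $X$ with weights $\epsilon_2/(\epsilon_1+\epsilon_2)$ and $\epsilon_1/(\epsilon_1+\epsilon_2)$ gives $y + \tfrac{\epsilon_1\epsilon_2}{\epsilon_1+\epsilon_2}(w_1+w_2) \in X$, so $w_1 + w_2$ is again a tangent direction; thus the set of tangent directions, and hence its closure, is closed under addition and positive scaling. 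Since $T_y^+X$ is a closed convex cone, the bipolar theorem gives $((T_y^+X)^o)^o = T_y^+X$, and combining this with the first identity yields $(N_y^+X)^o = ((T_y^+X)^o)^o = T_y^+X$.

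Alternatively, and in a more self-contained way, one can prove $(N_y^+X)^o \subseteq T_y^+X$ (the reverse inclusion being automatic once the first identity is known, since $C \subseteq (C^o)^o$ always) using the separating hyperplane theorem: if $v \notin T_y^+X$, separate $v$ from the closed convex cone $T_y^+X$ by a functional $\ell$ satisfying $\langle w, \ell\rangle \le 0$ for all $w \in T_y^+X$ and $\langle v, \ell\rangle > 0$; then $\ell \in (T_y^+X)^o = N_y^+X$ witnesses $v \notin (N_y^+X)^o$. The only mildly delicate point in the whole argument is the verification that $T_y^+X$ is convex, where the weights in the convexity of $X$ must be chosen to balance $\epsilon_1$ and $\epsilon_2$; everything else is routine manipulation of the definitions and standard convex-duality facts.
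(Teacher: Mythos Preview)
Your proof is correct and follows essentially the same route as the paper: prove $N_y^+X = (T_y^+X)^o$ by the two obvious inclusions, then deduce the second identity from the bipolar theorem applied to $T_y^+X$. You are in fact slightly more careful than the paper, which invokes $(C^o)^o = C$ for ``closed cones'' without explicitly checking the convexity of $T_y^+X$ that you verify.
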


\begin{proof}
If $C$ is a closed cone then $(C^o)^o = C$, so it suffices to prove that $N_y^+X = (T_y^+X)^o$.

To show that $N_y^+X\subset (T_y^+X)^\circ$, let $v\in N_y^+X$ and $w\in T_y^+X$; we need to show that $\langle v,w\rangle \le 0$. By the definition of $T_y^+X$, there exist a sequence of vectors $\{w_i\}$ and a sequence of positive real numbers $\{\epsilon_i\}$ such that $y+\epsilon_iw_i\in X$ for each $i$ and $\lim_{i\to\infty}w_i=w$. By the definition of $N_y^+X$ we have $\langle v,w_i\rangle \le 0$, and so $\langle v,w\rangle \le 0$.

To prove the reverse inclusion, if $v\in (T_x^+X)^o$, then for any $x\in X$ we have $x-y\in T_y^+X$, so $\langle v,x-y\rangle \le 0$. It follows that $v\in N_y^+X$.
\end{proof}

If $X$ is a convex polytope in $\R^m$ and if $E$ is an $(m-1)$-face of $X$, let $\nu_E$ denote the outward unit normal vector to $E$.

\begin{lemma}
\label{lem:ncn}
Let $X$ be a convex polytope in $\R^m$ and let $F$ be a face of $X$. Let $E_1,\ldots, E_k$ denote the $(m-1)$-faces whose closures contain $F$. Then
\begin{align}
\label{eqn:TFcone}
T_F^+X &= \left\{w\in \R^m \mid \langle w,\nu_{E_i}\rangle \le 0 \;\; \forall i=1,\ldots,k \right\},
\\
\label{eqn:NFcone}
N_F^+X & = \op{Cone}
	\left(
			\nu_{E_1},\ldots, \nu_{E_k}
	\right).
\end{align}
\end{lemma}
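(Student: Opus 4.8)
The plan is to prove the formula \eqref{eqn:TFcone} for $T_F^+X$ directly, and then deduce \eqref{eqn:NFcone} for $N_F^+X$ by polar duality using Lemma~\ref{lem:ntd}. Write $X = \bigcap_{j=1}^N \{x \in \R^m : \langle x, \nu_j\rangle \le c_j\}$ for an irredundant collection of closed half-spaces, so that the facets of $X$ are exactly the sets $\overline{E}_j = X \cap \{\langle x,\nu_j\rangle = c_j\}$, with $\nu_j = \nu_{E_j}$. After relabeling, we may assume $E_1, \ldots, E_k$ are the facets whose closures contain $F$.

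First I would fix a point $y \in F$ (which is relatively open in its affine hull, by the convention of \S\ref{sec:cro}) and record the elementary fact that the set of constraints active at $y$, namely $\{j : \langle y, \nu_j\rangle = c_j\}$, is precisely $\{1, \ldots, k\}$: if $\overline{E}_j \supseteq F$ then $y \in \overline{E}_j$ forces $\langle y,\nu_j\rangle = c_j$, while conversely the smallest face of $X$ containing $y$ in its relative interior is $X \cap \bigcap_{j \text{ active}} \{\langle x, \nu_j\rangle = c_j\}$, which must therefore equal $\overline{F}$, so each active $j$ has $\overline{E}_j \supseteq \overline{F} \supseteq F$. Consequently, for $v \in \R^m$ and $\epsilon > 0$, the point $y + \epsilon v$ lies in $X$ if and only if $\langle v, \nu_j\rangle \le (c_j - \langle y, \nu_j\rangle)/\epsilon$ for all $j$; for the inactive $j$ the right-hand side is a positive number, so the inequality holds for all sufficiently small $\epsilon$, while for $j \le k$ it reads $\langle v, \nu_j\rangle \le 0$. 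Hence $\{v : y + \epsilon v \in X \text{ for some } \epsilon > 0\} = \{v : \langle v, \nu_j\rangle \le 0,\ j = 1, \ldots, k\}$, and since the right-hand side is already closed, this set equals $T_F^+X = T_y^+X$, proving \eqref{eqn:TFcone}.

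To obtain \eqref{eqn:NFcone}, apply Lemma~\ref{lem:ntd}: $N_F^+X = (T_F^+X)^o$. Set $C = \op{Cone}(\nu_1, \ldots, \nu_k)$. By definition of the polar dual, $C^o = \{w : \langle w, \nu_j\rangle \le 0,\ j = 1, \ldots, k\}$, which by \eqref{eqn:TFcone} is exactly $T_F^+X$. Since $C$ is a finitely generated cone it is closed and convex, so $(C^o)^o = C$; therefore $N_F^+X = (T_F^+X)^o = (C^o)^o = C = \op{Cone}(\nu_1, \ldots, \nu_k)$.

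The only nonroutine ingredients are the two standard structural facts used above without proof: that a point in the relative interior of a face of a polytope has active-constraint set equal to the set of facets whose closures contain that face, and that a finitely generated cone is closed with $(C^o)^o = C$ (the Minkowski--Weyl / bipolar theorem for polyhedral cones). Neither is difficult; the first can be made self-contained from the definition of a $k$-face in \S\ref{sec:cro} as the relative interior of an intersection of defining hyperplanes with $\partial X$, and the second can be cited. The genuine content of the lemma is just the order-of-vanishing-in-$\epsilon$ computation in the second paragraph, which I expect to be the only place requiring care (specifically, keeping the inactive facets from interfering, which is exactly where the choice of $y$ in the relative interior of $F$ is used).
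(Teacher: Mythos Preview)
Your argument is correct and follows essentially the same route as the paper: localize at a point $y\in F$, observe that only the half-spaces whose boundaries contain $F$ are active near $y$ to obtain \eqref{eqn:TFcone}, and then apply Lemma~\ref{lem:ntd} together with the bipolar identity for the finitely generated cone $\op{Cone}(\nu_{E_1},\ldots,\nu_{E_k})$ to deduce \eqref{eqn:NFcone}. The paper phrases the localization via a small ball rather than your $\epsilon$-parameter, and leaves the polar-dual step as a one-liner, but the content is the same.
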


\begin{proof}
Let $y\in F$, and let $B$ be a small ball around $y$. Then $B\cap X=\cap_i(B\cap H_i)$ where $\{H_i\}$ is the set of all defining half-spaces for $X$ whose boundaries contain $F$. The boundaries of the half-spaces $H_i$ are the hyperplanes that contain the $(m-1)$-faces $E_1,\ldots, E_k$. It follows that $B\cap X$  is the set of $x\in B$ such that $\langle x-y,\nu_{E_i}\rangle \le 0$ for each $i=1,\ldots, k$. Equation \eqref{eqn:TFcone} follows. Taking polar duals and using Lemma~\ref{lem:ntd} then proves \eqref{eqn:NFcone}.
\end{proof}

\begin{lemma}
\label{lem:pp1}
Let $X$ be a convex polytope in $\R^m$ and let $F$ be a face of $X$. Let $v\in N_F^+X\setminus\{0\}$ and let $w\in T_F^+X\setminus\{0\}$. Then $\langle v,w\rangle = 0$ if and only if there is a face $E$ of $X$ with $F\subset \overline{E}$ such that $v\in N_E^+X$ and $w\in T_F^+\overline{E}$.
\end{lemma}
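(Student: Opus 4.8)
The plan is to reduce everything to the explicit cone descriptions in Lemma~\ref{lem:ncn}. Write $E_1,\dots,E_k$ for the $(m-1)$-faces of $X$ whose closures contain $F$, set $\nu_i=\nu_{E_i}$, and let $H_i=\{x\mid\langle x,\nu_i\rangle\le c_i\}$ be the corresponding defining half-space, so that $\overline{E_i}=X\cap\partial H_i$. Then \eqref{eqn:TFcone} gives $T_F^+X=\{u\mid\langle u,\nu_i\rangle\le0\ \forall i\}$ and \eqref{eqn:NFcone} gives $N_F^+X=\op{Cone}(\nu_1,\dots,\nu_k)$; I will also apply \eqref{eqn:NFcone} to other faces $G$ with $F\subseteq\overline G$.

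For the ``if'' direction I would start from a face $E$ with $F\subseteq\overline E$, $v\in N_E^+X$, and $w\in T_F^+\overline E$, and show that $v$ is orthogonal to the linear subspace $U=\op{span}(\overline E - y')$ (for any $y'\in E$): since $\langle x-y',v\rangle\le0$ for all $x\in X$ and all $y'\in E$, applying this with $x,y'$ two points of the relatively open set $E$ and subtracting gives $\langle z_1-z_2,v\rangle=0$ for all $z_1,z_2\in E$, hence $v\perp U$. On the other hand $w\in U$, because $\overline E$ lies in the affine subspace $y'+U$ and so does its tangent cone. Therefore $\langle v,w\rangle=0$.

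For the ``only if'' direction, assume $\langle v,w\rangle=0$. By \eqref{eqn:NFcone} I can write $v=\sum_{i=1}^k a_i\nu_i$ with $a_i\ge0$, and by \eqref{eqn:TFcone} we have $\langle w,\nu_i\rangle\le0$ for all $i$; hence $0=\langle v,w\rangle=\sum_i a_i\langle\nu_i,w\rangle$ is a sum of nonpositive terms, forcing $a_i\langle\nu_i,w\rangle=0$ for every $i$. Let $I=\{i\mid a_i>0\}$, which is nonempty since $v\ne0$, and note $\langle w,\nu_i\rangle=0$ for $i\in I$. I then take $E$ to be the face of $X$ with $\overline E=X\cap\bigcap_{i\in I}\partial H_i$; this is a genuine face, being the locus in $X$ where the linear functional $x\mapsto\sum_{i\in I}\langle x,\nu_i\rangle$ is maximal, and it contains $F$ since $F\subseteq\partial H_i$ for each $i\in I\subseteq\{1,\dots,k\}$. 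Now $\overline E\subseteq X\cap\partial H_i=\overline{E_i}$ for $i\in I$, so each $\nu_i$ ($i\in I$) is the outward normal of an $(m-1)$-face whose closure contains $\overline E$; applying \eqref{eqn:NFcone} to $E$ then yields $v=\sum_{i\in I}a_i\nu_i\in N_E^+X$. To see $w\in T_F^+\overline E$, I would fix $y\in F$ and check that for $t>0$ small, $y+tw$ satisfies all the defining inequalities of $X$ --- with equality in $H_i$ for $i\in I$ (since $\langle w,\nu_i\rangle=0$), with $\langle\cdot,\nu_i\rangle\le c_i$ for the remaining $i\le k$ (since $\langle w,\nu_i\rangle\le0$), and strictly for every defining inequality not containing $F$ in its boundary (these are strict at $y$ and stay strict for small $t$). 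Hence $y+tw\in X\cap\bigcap_{i\in I}\partial H_i=\overline E$, so $w\in T_F^+\overline E$.

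The step I expect to be most delicate is this ``only if'' construction: one must produce a single face $E$ that at once absorbs $v$ into $N_E^+X$ and keeps $w$ in $T_F^+\overline E$, and since $\overline E$ is usually lower-dimensional one has to be careful applying Lemma~\ref{lem:ncn} --- either inside the affine hull of $\overline E$, or, as above, by arguing directly with the defining inequalities of $X$. The remaining ingredients (that a face of $X$ contained in a face $G$ is a face of $G$, that $T_F^+\overline E$ and $N_E^+X$ are independent of the chosen basepoint in $F$ resp.\ $E$, and that $E$ is relatively open in its affine hull) are routine facts about polytopes.
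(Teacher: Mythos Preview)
Your proof is correct and follows the same overall strategy as the paper's: reduce everything to the cone descriptions of Lemma~\ref{lem:ncn} and argue with the coefficients $a_i$ and the inner products $\langle w,\nu_i\rangle$. There is one small but genuine difference in the ``only if'' direction: you define the face $E$ via the support set $I=\{i\mid a_i>0\}$ of $v$, whereas the paper takes $E$ to be the intersection of the $E_i$ over the set $\{i\mid\langle w,\nu_i\rangle=0\}$. Since your key identity $a_i\langle\nu_i,w\rangle=0$ shows $I\subseteq\{i\mid\langle w,\nu_i\rangle=0\}$, your $E$ contains the paper's $E$, and either choice satisfies the conclusion. Your ``if'' argument, showing directly that $v$ is orthogonal to the linear span of $\overline E-y'$ and hence to $w$, is a bit more intrinsic than the paper's, which instead re-applies Lemma~\ref{lem:ncn} to $E$ to write $v\in\op{Cone}(\nu_1,\dots,\nu_l)$ and pairs against $w$.
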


Here if $E\neq F$ then $T_F^+\overline{E}$ denotes the tangent cone of the polytope $\overline{E}$ at the face $F$ of $\overline{E}$; if $E=F$, then we interpret $T_F^+\overline{E}=TF$.

\begin{proof}[Proof of Lemma~\ref{lem:pp1}.]
As in Lemma~\ref{lem:ncn}, let $E_1,\ldots, E_k$ denote the $(m-1)$-faces adjacent to $F$.

$(\Rightarrow)$
By the definitions of $N_F^+X$ and $T_F^+X$, if $v\in N_F^+X$ and $w\in T_F^+X$ then $\langle v,w\rangle \le 0$. Assume also that $v$ and $w$ are both nonzero and $\langle v,w\rangle = 0$. Then we must have $v\in\partial N_F^+X$ and $w\in\partial T_F^+X$; otherwise we could perturb $v$ or $w$ to make the inner product positive, which would be a contradiction.

Since $w\in\partial T_F^+X$, it follows from \eqref{eqn:TFcone} that $\langle w,\vu_{E_i}\rangle = 0$ for some $i$. By renumbering we can arrange that $\langle w,\nu_{E_i}\rangle = 0$ if and only if $i\le l$ where $1\le l\le k$. Let $E=\cap_{i=1}^l E_i$. Then $E$ is a face of $X$ adjacent to $F$, and $w\in T_F^+\overline{E}$.

We now want to show that $v\in N_E^+X$. By \eqref{eqn:NFcone}, we can write $v = \sum_{i=1}^k a_i\nu_{E_i}$ with $a_i\ge 0$. Since $\langle v,w\rangle = 0$ and $\langle w,\vu_{E_i}\rangle = 0$ for $i\le l$ and $\langle w,\nu_{E_i}\rangle < 0$ for $i>l$, we must have $a_i=0$ for $i>l$. Thus $v\in\op{Cone}(\vu_{E_1},\ldots,\nu_{E_l})$, so by \eqref{eqn:NFcone} again, $v\in N_F^+X$. 

$(\Leftarrow)$ Assume that there is a face $E$ adjacent to $X$ such that $v\in N_E^+X$ and $w\in T_F^+\overline{E}$. We can renumber so that $E=\cap_{i=1}^l E_i$ where $1\le l \le k$. Then $v\in\op{Cone}(\nu_{E_1},\ldots,\nu_{E_l})$, and $\langle w,\nu_{E_i}\rangle = 0$ for $i\le l$, so $\langle v,w\rangle = 0$.
\end{proof}

\subsection{The combinatorial Reeb flow is locally well-posed}
\label{sec:wp}

We now prove Proposition~\ref{prop:well-posed}, asserting that the ``combinatorial Reeb flow'' on the boundary of a symplectic polytope in $\R^4$ is locally well-posed. This is a consequence of the following two lemmas:

\begin{lemma}
\label{lem:wp1}
Let $X$ be a convex polytope in $\R^4$, and let $F$ be a face of $X$. Then the Reeb cone
\[
R_F^+X = {\mathbf i}N_F^+X\cap T_F^+X
\]
has dimension at least $1$.
\end{lemma}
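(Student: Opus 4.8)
The plan is to reformulate the nonemptiness of the Reeb cone as a question about a certain linear programming feasibility problem, and then derive a contradiction from infeasibility using the fact that $F$ is a face of a polytope containing the origin in its interior. Concretely, let $E_1, \ldots, E_k$ be the $(m-1)$-faces of $X$ whose closures contain $F$, with outward unit normals $\nu_1, \ldots, \nu_k$. By Lemma~\ref{lem:ncn}, we have $N_F^+X = \op{Cone}(\nu_1, \ldots, \nu_k)$ and $T_F^+X = \{w \mid \langle w, \nu_i\rangle \le 0 \ \forall i\}$. So a nonzero vector lies in $R_F^+X = {\mathbf i}N_F^+X \cap T_F^+X$ precisely when there exist coefficients $a_i \ge 0$, not all zero, such that $v = {\mathbf i}\sum_i a_i \nu_i$ satisfies $\langle v, \nu_j\rangle \le 0$ for all $j$; equivalently, $\langle {\mathbf i}\nu_i, \nu_j\rangle a_i$ summed over $i$ is $\le 0$ for every $j$. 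Writing $M_{ji} = \langle {\mathbf i}\nu_i, \nu_j\rangle = \omega_0(\nu_i, \nu_j)$, this is the system $Ma \le 0$, $a \ge 0$, $a \ne 0$. Note $M$ is antisymmetric since $\omega_0$ is.

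First I would observe that the trivial solution $a=0$ always satisfies $Ma \le 0$, so the real content is the existence of a \emph{nonzero} such $a$; equivalently, one wants to rule out that the only solution of $Ma \le 0$, $a \ge 0$ is $a = 0$. I would apply an alternative/duality argument (Gordan's theorem or a Farkas-type lemma) to the antisymmetric matrix $M$: the system $Ma \le 0$, $a \ge 0$ has only the zero solution if and only if there is a strictly positive vector in the row space appropriately — more precisely, one uses that for antisymmetric $M$, the LP $\max \langle \mathbf 1, a\rangle$ subject to $Ma \le 0$, $a \ge 0$ is either $0$ or unbounded, and a self-duality trick (the feasibility of $Mx \le 0, x \ge 0, \langle \mathbf 1, x\rangle > 0$ versus feasibility of its transpose) forces a nonzero solution unless some positivity obstruction holds. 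The cleaner route may be geometric: the cone ${\mathbf i}N_F^+X$ and the cone $T_F^+X = (N_F^+X)^o$ are both full-dimensional-enough that a dimension count on their intersection, combined with antisymmetry of $\omega_0$ (which says $N_F^+X$ is $\omega_0$-isotropic-related to ${\mathbf i}N_F^+X$), yields $\dim R_F^+X \ge 1$.

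The key structural input I would exploit is that $\dim N_F^+X = m - \dim F$ (the normal cone of a face spans the normal space), and ${\mathbf i}$ is an isomorphism, so $\dim({\mathbf i}N_F^+X) = m - \dim F$ as well, while $\dim T_F^+X = m$ with $T_F^+X \supset TF$ of dimension $\dim F$. Since $TF$ is $\omega_0$-related to the normal space, ${\mathbf i}N_F^+X$ meets the linear span of $T_F^+X$ in a controlled way; the inequality $\dim(A \cap B) \ge \dim A + \dim B - m$ applied to the linear spans, together with a check that the relevant intersection point can be chosen \emph{inside} the cones (not just the spans), gives a vector in $R_F^+X$. To promote a span-level intersection to a cone-level one, I would use that $0 \in \op{int}(X)$ is not needed here but convexity of $N_F^+X$ and $T_F^+X$ as cones is, invoking Lemma~\ref{lem:pp1} to identify when the inner products $\langle {\mathbf i}\nu_i, \nu_j\rangle$ vanish.

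\textbf{Main obstacle.} The hard part will be the last step: producing an intersection vector that genuinely lies in both \emph{cones} $T_F^+X$ and ${\mathbf i}N_F^+X$, rather than merely in their linear spans, since a generic element of ${\mathbf i}\op{span}(N_F^+X)$ need not have nonnegative coefficients $a_i$, and a generic element of $\op{span}(T_F^+X)$ need not satisfy the inequalities $\langle w, \nu_i\rangle \le 0$. I expect the resolution to come from the antisymmetry of $M = (\omega_0(\nu_i,\nu_j))$: the quadratic form $a \mapsto \langle Ma, a\rangle$ vanishes identically, which is exactly the kind of degeneracy that forces the feasibility LP to be unbounded whenever it is nonzero, hence forces a nonzero solution. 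I would set this up carefully as an application of the Gordan alternative to the antisymmetric system and check that the ``bad'' alternative — existence of $y > 0$ with $M^T y \ge 0$, which for antisymmetric $M$ reads $My \le 0$, $y > 0$ — is itself a certificate that $R_F^+X \ne \{0\}$, making the dichotomy vacuous and proving the lemma in all cases.
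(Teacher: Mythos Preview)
Your approach is correct and genuinely different from the paper's. The paper gives a topological argument specific to $\R^4$: it builds an $S^2$-bundle $Z$ over a space $B\simeq S^2$ together with two sections $s_0,s_1$, computes their mod~$2$ degrees to be $0$ and $1$ respectively, and concludes that the sections must coincide somewhere, producing a unit vector in $R_F^+X$. Your route is instead linear-algebraic: reduce to finding $a\ge 0$, $a\ne 0$, with $Ma\le 0$ for the antisymmetric matrix $M_{ji}=\omega_0(\nu_i,\nu_j)$, and then observe that LP duality (or the minimax theorem for symmetric games) forces such an $a$ to exist. Your method is more elementary, and---notably---works verbatim in $\R^{2n}$ for any $n$, something the paper flags as an open question in the remark immediately after its proof.

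Two comments on execution. First, your middle paragraph about dimension counts on linear spans and invoking Lemma~\ref{lem:pp1} is a detour; the span-level intersection does not promote to a cone-level one by dimension arguments alone, and you correctly abandon this for the LP route. Second, the precise alternative you quote (``$\exists\, y>0$ with $M^Ty\ge 0$'') is not quite the standard Gordan/Farkas dichotomy; the clean way to run the argument is via strong LP duality on $\max\,\mathbf{1}^Ta$ subject to $Ma\le 0$, $a\ge 0$: if this were bounded (with value $0$), the dual would produce $y\ge 0$ with $M^Ty\ge\mathbf{1}$, hence $My\le -\mathbf{1}\le 0$ by antisymmetry, and $y$ itself contradicts boundedness. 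You should also note explicitly that $N_F^+X$ is pointed (since $X$ has nonempty interior), so that $a\ne 0$ genuinely yields $\sum a_i\nu_i\ne 0$.
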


Note that there is no need to assume that $0\in\op{int}(X)$ in the above lemma, because the Reeb cone is invariant under translation of $X$.

\begin{lemma}
\label{lem:wp2}
Let $X$ be a symplectic polytope in $\R^4$ and let $F$ be a face of $X$. Then the Reeb cone $R_F^+X$ has dimension at most $1$.
\end{lemma}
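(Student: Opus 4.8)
I want to show that $R_F^+X = \mathbf{i}N_F^+X \cap T_F^+X$ has dimension at most $1$ when $X$ is a symplectic polytope in $\R^4$. The strategy is to suppose for contradiction that there are two linearly independent vectors in the Reeb cone, and extract from them a Lagrangian $2$-face, contradicting Definition~\ref{def:symplectic_polytope}. So the plan is: first reduce to understanding vectors $w \in R_F^+X$, which by definition means $w \in T_F^+X$ and $w = \mathbf{i}v$ for some $v \in N_F^+X$. I would first dispose of the case $\dim N_F^+X \ge 2$ or handle it uniformly: note that for any nonzero $w = \mathbf{i}v \in R_F^+X$ we have $\langle v, w\rangle = \langle v, \mathbf{i}v\rangle = 0$ automatically (since $\mathbf{i}$ is skew-adjoint), so Lemma~\ref{lem:pp1} applies directly: there is a face $E$ of $X$ with $F \subset \overline{E}$ such that $v \in N_E^+X$ and $w \in T_F^+\overline{E}$.

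\textbf{Main step.} Suppose $\dim R_F^+X \ge 2$, so pick linearly independent $w_1 = \mathbf{i}v_1$, $w_2 = \mathbf{i}v_2$ in $R_F^+X$ with $v_1, v_2 \in N_F^+X$. By the paragraph above, each $w_j$ lies in $T_F^+\overline{E_j}$ for a face $E_j \supset F$, with $v_j \in N_{E_j}^+X$. I expect the key case analysis to be on the dimension of the face $F$ and of the faces $E_j$. The cleanest route: since $v_1, v_2 \in N_F^+X$, the span of $v_1, v_2$ is at least one-dimensional; consider whether $v_1, v_2$ are parallel. If they are parallel, then $w_1, w_2$ are parallel, contradiction; so $v_1, v_2$ span a $2$-plane $P \subset N_F^+X$, which forces $\dim N_F^+X \ge 2$, hence $\dim F \le 2$. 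Now $\mathbf{i}P = \Span(w_1,w_2) \subset T_F^+X$. The idea is that $\mathbf{i}P$ is a symplectically orthogonal complement consideration: for any $v \in P$ and any $w = \mathbf{i}v' \in \mathbf{i}P$, $\omega_0(v', v) = \langle \mathbf{i}v', v\rangle = \langle w, v\rangle$, and one can check $\omega_0$ vanishes on $\mathbf{i}P$ iff $P$ is coisotropic-type. Concretely, $\omega_0|_{\mathbf{i}P}(\mathbf{i}v_1, \mathbf{i}v_2) = \langle \mathbf{i}\mathbf{i}v_1, \mathbf{i}v_2\rangle = -\langle v_1, \mathbf{i}v_2\rangle = -\omega_0(v_1, v_2)$; and similarly $\omega_0|_P(v_1,v_2) = \langle \mathbf{i}v_1, v_2\rangle$. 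So I need to show $\omega_0(v_1, v_2) = 0$: this should follow because $v_1 \perp w_2 = \mathbf{i}v_2$ (as $v_1 \in N_F^+X$, $w_2 \in T_F^+X$, and $\langle v_1, w_2\rangle = 0$ since $w_2$ is in the boundary configuration forced by Lemma~\ref{lem:pp1} — more directly, $\langle v_1, \mathbf{i}v_2\rangle \le 0$ and $\langle v_2, \mathbf{i}v_1\rangle = -\langle v_1,\mathbf{i}v_2\rangle \le 0$, forcing equality). Hence $\omega_0$ vanishes on the $2$-plane $\mathbf{i}P$, and also on $P$. Then $\mathbf{i}P \subset T_F^+X$, and I claim $\mathbf{i}P$ is tangent to a $2$-face: since $T_F^+X \supset \mathbf{i}P$ is a $2$-plane of tangent directions and $\dim F \le 2$, if $\dim F = 2$ then $TF \subset T_F^+X$ is already $2$-dimensional and transverse considerations pin down $TF = \mathbf{i}P$, making $F$ Lagrangian — contradiction. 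If $\dim F \le 1$, I would argue that $\mathbf{i}P$ must be contained in the tangent cone of some $2$-face $F'$ adjacent to $F$ (using that a $2$-plane of tangent directions in a polytope boundary must lie in a single face of dimension $\ge 2$), and that $2$-face is then Lagrangian.

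\textbf{Expected main obstacle.} The delicate point is the last claim: that a $2$-dimensional subspace of tangent directions $\mathbf{i}P \subset T_F^+X$ must actually be tangent to (contained in the affine span of) a single $2$-face of $X$, from which Lagrangianness follows. This requires a careful polyhedral argument — $T_F^+X$ is a $4$-dimensional cone (or lower) cut out by the half-spaces from the $3$-faces adjacent to $F$, and I need that any $2$-dimensional linear subspace inside this cone lies in the tangent space of one of the proper faces containing $F$; then that face has dimension $\ge 2$, and restricting to a $2$-face inside it whose tangent space still contains $\mathbf{i}P$ (or equals it) gives a Lagrangian $2$-face. I would handle this by intersecting $\mathbf{i}P$ with the boundary hyperplanes: each nonzero $w \in \mathbf{i}P$ lies in $\partial T_F^+X$ (by the inner-product-zero argument), so lies in $\ker\langle\cdot,\nu_{E_i}\rangle$ for some $i$; a dimension count / pigeonhole over the finitely many such hyperplanes, combined with $\mathbf{i}P$ being $2$-dimensional, should force $\mathbf{i}P$ into a common hyperplane intersection defining a face of dimension $\ge 2$. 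I'd then verify that $\omega_0$ vanishing on $\mathbf{i}P$, together with $\mathbf{i}P$ being contained in (or equal to) the tangent space of that $2$-face, yields that the $2$-form $\omega_0$ restricted to that $2$-face is zero, contradicting the symplectic polytope hypothesis. The rest — the reductions and the skew-adjointness computations — are routine.
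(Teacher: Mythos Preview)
Your skeleton is the same as the paper's: take two independent vectors $w_1=\mathbf{i}v_1,\,w_2=\mathbf{i}v_2\in R_F^+X$, use the pair of inequalities $\langle v_1,\mathbf{i}v_2\rangle\le 0$ and $\langle v_2,\mathbf{i}v_1\rangle=-\langle v_1,\mathbf{i}v_2\rangle\le 0$ to force $\omega_0(w_1,w_2)=0$, and then argue that $\Span(w_1,w_2)$ is the tangent space of a $2$-face, which is therefore Lagrangian. That last step is, as you say, the obstacle---and your sketch of it does not close.

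Two issues. First, the claim ``each nonzero $w\in\mathbf{i}P$ lies in $\partial T_F^+X$'' is false: only vectors in the \emph{positive} cone $C=\{aw_1+bw_2:a,b\ge 0\}\subset R_F^+X$ are in $T_F^+X$ at all (e.g.\ $-w_1\in\mathbf{i}P$ typically is not). This is harmless, since $C$ is relatively open in $\mathbf{i}P$ and the union-of-proper-subspaces argument still applies there; it yields $\mathbf{i}P\subset TE$ for a single $3$-face $E$. This is exactly the content of the paper's Lemma~\ref{lem:weak_well_posedness_for_polytopes}.

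Second, and more seriously, your pigeonhole only puts $\mathbf{i}P$ into \emph{one} hyperplane $\nu_E^\perp$; it does not produce a $2$-face. Knowing $\mathbf{i}P=\Span(v_1,v_2)^\perp$ does not help either, since $v_1,v_2$ need not lie in the span of two facet normals defining an actual $2$-face. The paper supplies the missing step via Lemma~\ref{lem:wwp2}: if some $w\in R_F^+X$ lay in $\op{int}(T_F^+\overline{E})$, then writing $-\mathbf{i}w=\sum_j a_j\nu_{E_j}$ with $a_j\ge 0$ and using $0=\langle w,-\mathbf{i}w\rangle=\sum_j a_j\langle w,\nu_{E_j}\rangle$ (each term $\le 0$, strictly for $j\neq E$) forces $w$ to be a positive multiple of $\mathbf{i}\nu_E$. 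A $2$-dimensional cone $C$ cannot consist of multiples of one vector, so $C\subset\partial T_F^+\overline{E}$, and a second union-of-subspaces pass lands $\mathbf{i}P$ in $TE'$ for a $2$-face $E'\subset\partial E$. Without this projective-uniqueness ingredient your argument stalls at the $3$-face.

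Your treatment of the case $\dim F=2$ (where $\Span(v_1,v_2)=(TF)^\perp$ directly gives $\mathbf{i}P=TF$) is in fact tidier than the paper's uniform argument; but for $\dim F\le 1$ you need the extra lemma above.
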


\begin{proof}[Proof of Lemma~\ref{lem:wp1}.] The proof has four steps.

{\em Step 1.\/} We need to show that there exists a unit vector in $R_F^+X$. We first rephrase this statement in a way that can be studied topologically.

Define
\[
B = \left\{(v,w)\in N_F^+X\times T_F^+X \;\big|\; \|v\|=\|w\|=1,\; \langle v,w\rangle = 0\right\}.
\]
Define a fiber bundle $\pi:Z\to B$ with fiber $S^2$ by setting
\[
Z_{(v,w)}=\left\{u\in\R^4 \;\big|\; \|u\|=1, \; \langle u,v\rangle = 0 \right\}.
\]
Define two sections
\[
s_0,s_1: B \longrightarrow Z
\]
by
\[
\begin{split}
s_0(v,w) &= {\mathbf i}v,\\
s_1(v,w) &= w.
\end{split}
\]
To show that there exists a unit vector in $R_F^+X$, we need to show that there exists a point $(v,w)\in B$ with $s_0(v,w) = s_1(v,w)$.

{\em Step 2.\/} 
Let
\[
B_0 = \left\{w\in\partial T_F^+X \; \big| \; \|w\|=1\right\}.
\]
The space $B_0$ is the set of unit vectors on the boundary of a nondegenerate cone, and thus is homeomorphic to $S^2$. Recall from the proof of Lemma~\ref{lem:pp1} that if $(v,w)\in B$ then $w\in B_0$. We now show that the projection $B\to B_0$ sending $(v,w)\mapsto w$ is a homotopy equivalence.

To do so, observe that by Lemma~\ref{lem:pp1}, we have
\begin{equation}
\label{eqn:Bunion}
B = \bigcup_{F\subset E} \left\{v\in N_E^+X\;\big|\; \|v\|=1\right\} \times \left\{w\in T_F^+\overline{E} \;\big|\; \|w\|=1\right\}.
\end{equation}

If $F$ is a $3$-face, then in the union \eqref{eqn:Bunion}, we only have $E=F$; there is a unique unit vector $v\in N_E^+X$, and so the projection $B\to B_0$ is a homeomorphism.

If $F$ is a $2$-face, then in \eqref{eqn:Bunion}, $E$ can be either $F$ itself, or one of the two three-faces adjacent to $F$, call them $E_1$ and $E_2$. The contribution from $E=F$ is a cylinder, while the contributions from $E=E_1$ and $E_2$ are disks which are glued to the cylinder along its boundary. The projection $B\to B_0$ collapses the cylinder to a circle, which again is a homotopy equivalence.

If $F$ is a $1$-face, with $k$ adjacent $3$-faces, then the contribution to \eqref{eqn:Bunion} from $E=F$ consists of two disjoint closed $k$-gons. Each $2$-face $E$ adjacent to $F$ contributes a square with opposite edges glued to one edge of each $k$-gon. Each $3$-face $E$ adjacent to $F$ contributes a bigon filling in the gap between two consecutive squares. The projection $B\to B_0$ collapses each $k$-gon to a point and each bigon to an interval, which again is a homotopy equivalence.

Finally, suppose that $F$ is a $0$-face. Then $E=F$ makes no contribution to \eqref{eqn:Bunion}, since $TF=\{0\}$ contains no unit vectors. Now $B_0$ has a cell decomposition consisting of a $k$-cell for each $(k+1)$-face adjacent to $F$. The space $B$ is obtained from $B_0$ by thickening each $0$-cell to a closed polygon, and thickening each $1$-cell to a square. Again, this is a homotopy equivalence.

{\em Step 3.\/} The $S^2$-bundle $Z\to B$ is trivial. To see this, observe that $Z$ is the pullback of a bundle over $N_F^+X\setminus\{0\}$, whose fiber over $v$ is the set of unit vectors orthogonal to $v$. Since $N_F^+X\setminus\{0\}$ is contractible, the latter bundle is trivial, and thus so is $Z$. In particular, the bundle $Z$ has two homotopy classes of trivialization, which differ only in the orientation of the fiber. We now show that, using a trivialization to regard $s_0$ and $s_1$ as maps $B\to S^2$, the mod $2$ degrees of these maps are given by $\op{deg}(s_0)=0$ and $\op{deg}(s_1)=1$.

It follows from the triviality of the bundle $Z$ that $\op{deg}(s_0)=0$.

To prove that $\op{deg}(s_1)=1$, we need to pick an explicit trivialization of $Z$. To do so, fix a vector $v_0\in\op{int}(T_F^+X)$. Let $S$ denote the set of unit vectors in the orthogonal complement $v_0^\perp$. Let $P:\R^4\to v_0^\perp$ denote the orthogonal projection. We then have a trivialization
\[
Z \stackrel{\simeq}{\longrightarrow} B\times S
\]
sending
\[
((v,w),u) \longmapsto ((v,w),Pu/\|Pu\|).
\]
Note here that for every $(v,w)\in B$, the restriction of $P$ to $v^\perp$ is an isomorphism, because otherwise $v$ would be orthogonal to $v_0$, but in fact we have $\langle v,v_0\rangle < 0$.

With respect to this trivialization, the section $s_1$ is a map $B\to S$ which is the composition of the projection $B\to B_0$ with the map $B_0\to S$ sending
\[
w \longmapsto Pw/\|Pw\|.
\]
The former map is a homotopy equivalence by Step 2, and the latter map is a homeomorphism because $v_0$ is not parallel to any vector in $\partial T_F^+X$.  Thus $\op{deg}(s_1)=1$.

{\em Step 4.\/} We now complete the proof of the lemma. Suppose to get a contradiction that there does not exist a point $p\in B$ with $s_0(p)=s_1(p)$. It follows, using a trivialization of $Z$ to regard $s_0$ and $s_1$ as maps $B\to S^2$, that $s_1$ is homotopic to the composition of $s_0$ with the antipodal map. Then $\op{deg}(s_1)=-\op{deg}(s_0)$. This contradicts Step 3.
\end{proof}

\begin{remark}
It might be possible to generalize Lemma~\ref{lem:wp1} to show that if $X$ is any convex set in $\R^{2n}$ with nonempty interior and if $z\in\partial X$, then the Reeb cone $R_z^+X$ is at least one dimensional.
\end{remark}

We now prepare for the proof of Lemma~\ref{lem:wp2}.

\begin{lemma} \label{lem:weak_well_posedness_for_polytopes} Let $X$ be a convex polytope in $\R^{2n}$. Then for every face $F$ of $X$, there exists a face $E$ with $F\subset\overline{E}$ such that
\[
R_F^+X \subset T^+_F\bar{E}.
\]
\end{lemma}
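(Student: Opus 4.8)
The plan is to deduce the statement from Lemma~\ref{lem:pp1}. The Reeb cone $R_F^+X={\mathbf i}N_F^+X\cap T_F^+X$ is a convex cone containing $0$. If $R_F^+X=\{0\}$, take $E=F$, so that $T_F^+\overline{E}=TF\ni 0$ and we are done. Otherwise, choose a nonzero vector $w_0$ in the relative interior of the convex cone $R_F^+X$, and write $w_0={\mathbf i}v_0$ with $v_0\in N_F^+X$; then $v_0\ne 0$ (since ${\mathbf i}$ is invertible) and $\langle v_0,w_0\rangle=\langle v_0,{\mathbf i}v_0\rangle=0$ (since ${\mathbf i}$ is orthogonal with ${\mathbf i}^2=-\operatorname{id}$). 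Applying Lemma~\ref{lem:pp1} to the pair $v_0\in N_F^+X\setminus\{0\}$, $w_0\in T_F^+X\setminus\{0\}$, we obtain a face $E$ of $X$ with $F\subset\overline{E}$, $v_0\in N_E^+X$, and $w_0\in T_F^+\overline{E}$. I claim that this $E$ works, i.e.\ $R_F^+X\subseteq T_F^+\overline{E}$.

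The proof of the claim rests on two facts. First, for any face $E$ with $F\subset\overline{E}$ one has
\[
T_F^+\overline{E}=\left\{u\in T_F^+X \mid \langle u,\nu\rangle=0\text{ for all }\nu\in N_E^+X\right\};
\]
this follows from the description of $\overline{E}$ as the intersection of $X$ with the supporting hyperplanes of $X$ along $E$, together with Lemma~\ref{lem:ncn}, which identifies $N_E^+X$ with the cone generated by the unit normals to those hyperplanes. In particular $w_0$ is orthogonal to $N_E^+X$, and, by Lemma~\ref{lem:ncn} again together with $F\subset\overline{E}$, we have $N_E^+X\subseteq N_F^+X$. Second, a spreading argument: fix $w\in R_F^+X$ and $\nu\in N_E^+X$. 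Since $w_0$ lies in the relative interior of the cone $R_F^+X$ and $-w$ lies in its linear span, $w_0-tw\in R_F^+X$ for all sufficiently small $t>0$; hence $\langle w_0-tw,\nu\rangle\le 0$, because $R_F^+X\subseteq T_F^+X=(N_F^+X)^o$ by Lemma~\ref{lem:ntd} and $\nu\in N_F^+X$. As $\langle w_0,\nu\rangle=0$, this forces $\langle w,\nu\rangle\ge 0$. On the other hand $\langle w,\nu\rangle\le 0$, again because $w\in T_F^+X$ and $\nu\in N_F^+X$. Thus $\langle w,\nu\rangle=0$. Since $w\in R_F^+X$ and $\nu\in N_E^+X$ were arbitrary, every vector of $R_F^+X$ is orthogonal to $N_E^+X$; combined with $R_F^+X\subseteq T_F^+X$ and the first fact, this yields $R_F^+X\subseteq T_F^+\overline{E}$.

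The step I expect to require the most care is the identification of $T_F^+\overline{E}$ above: it is elementary, but one must correctly distinguish the facets of $X$ whose closures contain $\overline{E}$ from those that only contain $F$. The conceptual heart, however, is the choice of $w_0$ in the relative interior of $R_F^+X$ rather than at an arbitrary nonzero point: such a point is orthogonal to exactly those normal directions that annihilate the entire cone $R_F^+X$, whereas an arbitrary point of $R_F^+X$ (all that Lemma~\ref{lem:pp1} directly provides) sees only a subset of them — and it is precisely this maximality that confines all of $R_F^+X$ to a single tangent cone $T_F^+\overline{E}$.
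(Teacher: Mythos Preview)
Your proof is correct, and it takes a genuinely different route from the paper's. The paper first applies Lemma~\ref{lem:pp1} to \emph{every} nonzero element of $R_F^+X$ to obtain a covering $R_F^+X\subset\bigcup_i T_F^+\overline{E}_i$, then argues that the linear span $V$ of $R_F^+X$ must lie inside some single $TE_i$: otherwise each $V\cap TE_i$ would be a proper subspace of $V$, and a finite union of proper subspaces cannot contain a set (namely $R_F^+X$) with nonempty interior in $V$. You instead apply Lemma~\ref{lem:pp1} \emph{once}, to a point $w_0$ in the relative interior of $R_F^+X$, thereby singling out a specific face $E$; the spreading argument then shows that the orthogonality $\langle w_0,\nu\rangle=0$ for $\nu\in N_E^+X$ propagates to every $w\in R_F^+X$, giving $R_F^+X\subset T_F^+\overline{E}$ via your characterization of $T_F^+\overline{E}$. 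Your approach is more constructive and sidesteps the covering/``union of proper subspaces'' step; the paper's approach is more overtly geometric but has to handle the passage from $V\subset TE_i$ back to $R_F^+X\subset T_F^+\overline{E}_i$ at the end. Both arguments are short and rest on the same auxiliary Lemmas~\ref{lem:ntd}--\ref{lem:pp1}; your insight that the relative-interior point sees \emph{exactly} the normal directions annihilating the whole cone is a nice replacement for the dimension count.
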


\noindent
{\em Proof.\/}
Let $\{E_i\}_{i=1}^N$ denote the set of faces whose closures contain $F$. By Lemma~\ref{lem:pp1}, we have
\begin{equation}
\label{eqn:wwp}
R_F^+X \subset \bigcup_{i=1}^N T_F^+\bar{E}_i.
\end{equation}

Let $V$ denote the subspace of $\R^{2n}$ spanned by $R^+_FX$. Note that since the latter set is a cone, it has a nonempty interior in $V$. We claim now that $V\subset TE_i$ for some $i$. If not, then $V\cap TE_i$ is a proper subspace of $V$ for each $i$. But by \eqref{eqn:wwp}, we have
\[
R^+_FX = \left(\cup_i T^+_F\bar{E}_i\right) \cap R^+_FX \subset \left(\cup_i TE_i\right) \cap V.
\]
This is a contradiction, since the left hand side has a nonempty interior in $V$, while the right hand side is a union of proper subspaces of $V$.

Since $V \subset TE_i$, it follows that $R^+_FX \subset T^+_F\bar{E}_i$, because by \eqref{eqn:wwp} again,
\[
R^+_FX = R^+_FX \cap V = R^+_FX \cap TE_i
\]
\[
\hspace{1.8in}
\subset  TE_i \cap \bigg(\bigcup_j T^+_F\bar{E}_j\bigg) = T_F\bar{E}_i, \hspace{1.8in}\Box
\]

\begin{lemma}
\label{lem:wwp2}
Let $X$ be a convex polytope in $\R^{2n}$, and let $F$ be a face of $X$. Let $v\in R_F^+X$. Suppose that $v\in\op{int}(T_F^+\overline{E})$ for some $(2n-1)$-face $E$ whose closure contains $F$. Then $v$ is a positive multiple of ${\mathbf i}\nu_E$.
\end{lemma}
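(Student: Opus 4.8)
The plan is to combine the orthogonality of $v$ with the normal vector $u$ for which $v={\mathbf i}u$ (which is automatic once $v$ lies in the Reeb cone) with the extra information that $v$ lies in the relative interior of $T_F^+\overline{E}$.

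Let $E=E_1,E_2,\dots,E_k$ denote the $(2n-1)$-faces of $X$ whose closures contain $F$. Since $v\in R_F^+X\subset {\mathbf i}N_F^+X$, we may write $v={\mathbf i}u$ with $u\in N_F^+X$, and by Lemma~\ref{lem:ncn}, equation~\eqref{eqn:NFcone}, we have $u=\sum_{i=1}^k a_i\nu_{E_i}$ with each $a_i\ge 0$. Since also $v\in T_F^+X$, equation~\eqref{eqn:TFcone} gives $\langle v,\nu_{E_i}\rangle\le 0$ for every $i$. The key observation is that $\langle v,u\rangle=\omega_0(u,u)=0$, using that $\omega_0(a,b)=\langle {\mathbf i}a,b\rangle$ is antisymmetric in $a$ and $b$. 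Expanding $u$ in the $\nu_{E_i}$ gives $0=\sum_{i=1}^k a_i\langle v,\nu_{E_i}\rangle$, a sum of nonpositive terms, so each term vanishes: $a_i\langle v,\nu_{E_i}\rangle=0$ for all $i$.

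It remains to show that $\langle v,\nu_{E_i}\rangle<0$ for every $i\neq 1$; this forces $a_i=0$ for $i\neq 1$, hence $u=a_1\nu_E$ and $v=a_1{\mathbf i}\nu_E$ with $a_1\ge 0$, and since $v\neq 0$ we conclude $a_1>0$. This is where the hypothesis on $v$ enters: $T_F^+\overline{E}$ is the tangent cone of the $(2n-1)$-dimensional polytope $\overline{E}$ at its face $F$, so it is a full-dimensional cone inside the hyperplane $\nu_E^{\perp}$, its span is all of $\nu_E^{\perp}$, and its relative interior lies in $\nu_E^{\perp}$ (in particular $\langle v,\nu_E\rangle=0$). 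For $i\neq 1$, the linear functional $w\mapsto\langle w,\nu_{E_i}\rangle$ is $\le 0$ on $T_F^+\overline{E}\subset T_F^+X$, and it is not identically zero there: otherwise $\nu_{E_i}$ would be orthogonal to the span $\nu_E^{\perp}$ of $T_F^+\overline{E}$, forcing $\nu_{E_i}=\pm\nu_E$, which is impossible since $E_i\neq E$ and $X$ is full-dimensional. A linear functional that is $\le 0$ on a convex set and not constant on it is strictly negative on the relative interior, since it vanishes only on a proper face and the relative interior is disjoint from every proper face; applying this to $v$ gives $\langle v,\nu_{E_i}\rangle<0$, as desired.

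The only delicate points are the interpretation of ``$\op{int}(T_F^+\overline{E})$'' as the relative interior within the hyperplane $\nu_E^{\perp}$ (the literal interior in $\R^{2n}$ is empty), and the elementary convexity fact about linear functionals on convex sets invoked in the last paragraph. Everything else is a one-line computation, so I do not expect any real obstacle.
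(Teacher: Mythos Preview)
Your proof is correct and follows essentially the same route as the paper's: write $-{\mathbf i}v=\sum_i a_i\nu_{E_i}$ with $a_i\ge 0$, use $\langle v,{\mathbf i}v\rangle=0$ to force $a_i\langle v,\nu_{E_i}\rangle=0$ for all $i$, and then use $v\in\op{int}(T_F^+\overline{E})$ to get $\langle v,\nu_{E_i}\rangle<0$ for $i\neq 1$. The paper asserts this last inequality in one line as a direct consequence of $v\in\op{int}(T_F^+\overline{E})$, whereas you spell out the reason (the functional $\langle\cdot,\nu_{E_i}\rangle$ is nonpositive on $T_F^+\overline{E}$, not identically zero since $T_F^+\overline{E}$ spans $\nu_E^\perp$ and $\nu_{E_i}\neq\pm\nu_E$, hence strictly negative on the relative interior); this is a helpful elaboration but not a different argument.
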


\begin{proof}
Let $E=E_1,\ldots,E_N$ denote the $(2n-1)$-faces whose closures contain $F$, and let $\nu_i$ denote the outward unit normal vector to $E$. Since $v\in\op{int}(T_F^+\overline{E})$, we have $\langle v,\nu_1\rangle=0$ and $\langle v,\nu_i\rangle < 0$ for $i>1$. Since $-{\mathbf i}v\in N_F^+X$, it follows from Lemma~\ref{lem:ncn} that we can write
\[
-{\mathbf i}v = \sum_{i=1}^Na_i\nu_i
\]
with $a_i\ge 0$. Since $\langle v,{\mathbf i}v\rangle = 0$, we conclude that $a_i=0$ for $i>1$. Thus $-{\mathbf i}v=a_1\nu_1$, and $a_1>0$.
\end{proof}

\begin{proof}[Proof of Lemma~\ref{lem:wp2}.]
Suppose $v_0,v_1$ are distinct unit vectors in $R_F^+X$. By Lemma~\ref{lem:weak_well_posedness_for_polytopes}, there is a $3$-face $E$ such that $v_0$ and $v_1$ are both in $T_F^+\bar{E}$. In particular, $v_1$ and $v_2$ are linearly independent.

Since $v_0$ and $v_1$ are both in the cone $R_F^+X$, it follows that if $t\in[0,1]$ then the affine linear combination $(1-t)v_0+tv_1$ is also in this cone. Since $v_0$ and $v_1$ are linearly independent, these affine linear combinations cannot be in the interior of $T_F^+\overline{E}$, or else this would contradict the projective uniqueness in Lemma~\ref{lem:wwp2}. Consequently $v_0$ and $v_1$ are both contained in $T_F^+\overline{E'}$ for some $2$-face $E'$ on the boundary of $\overline{E}$.

We now have
\[
\omega(v_0,v_1) = \langle v_0,-{\mathbf i}v_1\rangle \le 0,
\]
where the inequality holds since $v_0\in T_F^+X$ and $-{\mathbf i}v_1\in N_F^+X$. By a symmetric calculation, $\omega(v_1,v_0)\le 0$. It follows that $\omega(v_0,v_1)=0$. Since $v_0$ and $v_1$ are linearly independent vectors in $TE'$, this contradicts the hypothesis that $\omega|_{TE'}$ is nondegenerate.
\end{proof}

\subsection{Description of the Reeb cone}
\label{sec:drc}

We now prove Lemma~\ref{lem:Reebcone}, describing the possibilities for the Reeb cone of a face of a symplectic polytope in $\R^4$.

\begin{lemma}
\label{lem:EinEout}
Let $X$ be a convex polytope in $\R^4$ and let $F$ be a $2$-face of $X$. Let $E_1$ and $E_2$ denote the $3$-faces adjacent to $F$, and let $\nu_i$ denote the outward unit normal vector to $E_i$.
\begin{itemize}
	\item[\emph{(a)}] If $\langle {\mathbf i}\nu_{1},\nu_{2}\rangle < 0$, then every nonzero vector $w$ in the Reeb cone $R_{E_1}^+$ points into $E_1$ from $F$, that is $w\in \op{int}(T_F^+\overline{E_1})$.
	\item[\emph{(b)}] If $\langle {\mathbf i}\nu_{1},\nu_{2}\rangle > 0$, then every nonzero vector $w$ in the Reeb cone $R_{E_1}^+$ points out of $E_1$ from $F$, that is $w\in \op{int}(-T_F^+\overline{E_1})$.
	\item[\emph{(c)}] If $\langle {\mathbf i}\nu_{1},\nu_{2}\rangle = 0$, then $F$ is Lagrangian.
\end{itemize}
\end{lemma}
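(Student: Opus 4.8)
The plan is to first pin down the Reeb cone $R_{E_1}^+X$ exactly, and then decide on which side of the $2$-face $F$ the single ray it generates lies. Since $E_1$ is a $3$-face, Lemma~\ref{lem:ncn} gives $N_{E_1}^+X=\op{Cone}(\nu_1)$ and $T_{E_1}^+X=\{w\in\R^4\mid\langle w,\nu_1\rangle\le 0\}$, so $\mathbf{i}N_{E_1}^+X=\op{Cone}(\mathbf{i}\nu_1)$. Using the identity $\omega_0(V,W)=\langle\mathbf{i}V,W\rangle$ we have $\langle\mathbf{i}\nu_1,\nu_1\rangle=\omega_0(\nu_1,\nu_1)=0$, so the entire ray $\op{Cone}(\mathbf{i}\nu_1)$ already lies in $T_{E_1}^+X$; hence $R_{E_1}^+X=\op{Cone}(\mathbf{i}\nu_1)$, i.e.\ every nonzero vector of $R_{E_1}^+X$ is a positive multiple of $\mathbf{i}\nu_1$. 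Since the three statements are invariant under positive rescaling of $w$, it therefore suffices to locate $\mathbf{i}\nu_1$ relative to $F$.

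Next I would describe $T_F^+\overline{E_1}$ concretely. Because $F$ is a $2$-face, the only defining half-spaces of $X$ whose boundary hyperplanes contain $F$ are the ones defining $E_1$ and $E_2$; writing these as $\{x\mid\langle x-y,\nu_i\rangle\le 0\}$ for a fixed $y\in F$ (here $i=1,2$), one sees that for $w\in TE_1$ (equivalently $\langle w,\nu_1\rangle=0$) and small $\epsilon>0$ the point $y+\epsilon w$ lies in $E_1$ if and only if $\langle w,\nu_2\rangle<0$ (for every other defining half-space, $y$ lies strictly inside, so no constraint arises for small $\epsilon$). Consequently $\op{int}(T_F^+\overline{E_1})=\{w\in TE_1\mid\langle w,\nu_2\rangle<0\}$, $\op{int}(-T_F^+\overline{E_1})=\{w\in TE_1\mid\langle w,\nu_2\rangle>0\}$, and $TF=\{w\mid\langle w,\nu_1\rangle=\langle w,\nu_2\rangle=0\}$. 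Since $\langle\mathbf{i}\nu_1,\nu_1\rangle=0$ we have $\mathbf{i}\nu_1\in TE_1$, and the sign of $\langle\mathbf{i}\nu_1,\nu_2\rangle=\omega_0(\nu_1,\nu_2)$ now decides everything: if it is negative then $\mathbf{i}\nu_1\in\op{int}(T_F^+\overline{E_1})$, giving (a); if positive then $\mathbf{i}\nu_1\in\op{int}(-T_F^+\overline{E_1})$, giving (b); in each case the conclusion for all nonzero vectors of $R_{E_1}^+X$ follows because they are positive multiples of $\mathbf{i}\nu_1$.

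For part (c), the hypothesis $\langle\mathbf{i}\nu_1,\nu_2\rangle=0$ says precisely $\mathbf{i}\nu_1\in TF$, and I would deduce that $TF$ is Lagrangian by a dimension count. Since $E_1\neq E_2$, the normals $\nu_1,\nu_2$ are linearly independent, so $TF=\{\nu_1,\nu_2\}^\perp$ is $2$-dimensional and $(TF)^\perp=\op{span}(\nu_1,\nu_2)$. A $2$-plane $P$ in $(\R^4,\omega_0)$ is Lagrangian iff $\omega_0|_P=0$, i.e.\ iff $\mathbf{i}P\subseteq P^\perp$; applied to $P=TF$ and combined with $\mathbf{i}(TF)=\{\mathbf{i}\nu_1,\mathbf{i}\nu_2\}^\perp$, this reads $\{\mathbf{i}\nu_1,\mathbf{i}\nu_2\}^\perp=\op{span}(\nu_1,\nu_2)$, equivalently (taking orthogonal complements) $\op{span}(\mathbf{i}\nu_1,\mathbf{i}\nu_2)=\{\nu_1,\nu_2\}^\perp=TF$, equivalently $\mathbf{i}\nu_1,\mathbf{i}\nu_2\in TF$. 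But $\langle\mathbf{i}\nu_1,\nu_1\rangle=\langle\mathbf{i}\nu_2,\nu_2\rangle=0$ always, $\langle\mathbf{i}\nu_1,\nu_2\rangle=0$ by hypothesis, and $\langle\mathbf{i}\nu_2,\nu_1\rangle=\omega_0(\nu_2,\nu_1)=-\omega_0(\nu_1,\nu_2)=-\langle\mathbf{i}\nu_1,\nu_2\rangle=0$, so indeed $\mathbf{i}\nu_1,\mathbf{i}\nu_2\in\{\nu_1,\nu_2\}^\perp=TF$ and $F$ is Lagrangian.

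There is essentially no hard step: everything reduces to the observation $R_{E_1}^+X=\op{Cone}(\mathbf{i}\nu_1)$ together with the local convex-geometry description of $T_F^+\overline{E_1}$. The one place that requires a moment's thought is part (c), where "$\mathbf{i}\nu_1$ tangent to $F$" must be converted into "$F$ Lagrangian"; this is handled by the dimension count above using skew-symmetry of $\omega_0$ and $\omega_0(V,W)=\langle\mathbf{i}V,W\rangle$. The main point to be careful about is the claim that the only defining half-spaces of $X$ through $F$ are those of $E_1$ and $E_2$ (so that $T_F^+\overline{E_1}$ is cut out inside $TE_1$ by the single inequality $\langle\,\cdot\,,\nu_2\rangle\le 0$), which is exactly the statement that a $2$-face of a polytope in $\R^4$ lies in precisely two $3$-faces — already built into the hypotheses of the lemma.
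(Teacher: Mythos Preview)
Your proof is correct and follows essentially the same approach as the paper. For (a) and (b) you cut out $T_F^+\overline{E_1}$ directly by the inequality $\langle w,\nu_2\rangle\le 0$ on $TE_1$, whereas the paper first builds the inward normal $\eta=(-\nu_2+\langle\nu_1,\nu_2\rangle\nu_1)/\|\cdots\|$ and tests $\langle\eta,\mathbf{i}\nu_1\rangle$; since $\langle\mathbf{i}\nu_1,\nu_1\rangle=0$ these are the same criterion. For (c) both arguments come down to checking $\mathbf{i}\nu_1,\mathbf{i}\nu_2\in TF$; the paper then just notes $\omega_0(\mathbf{i}\nu_1,\mathbf{i}\nu_2)=\langle\mathbf{i}\nu_1,\nu_2\rangle=0$ on this basis of $TF$, which is a shorter finish than your equivalence chain but logically the same.
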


\begin{proof}
Let $\eta$ denote the unit normal vector to $F$ in $T\overline{E_1}$ pointing into $E_1$. The vector $\eta$ must be a linear combination of $\nu_1$ and $\nu_2$ (since it is normal to $F$), it must be orthogonal to $\nu_1$ (since it is tangent to $E_1$), and it must have negative inner product with $\nu_2$ (since it points into $E_1$). It follows that
\begin{equation}
\label{eqn:eta}
\eta = \frac{-\nu_2 + \langle\nu_1,\nu_2\rangle\nu_1}{\|-\nu_2 + \langle\nu_1,\nu_2\rangle\nu_1\|}.
\end{equation}

The vector $w$ points into $E_1$ if and only if $\langle \eta,w\rangle >0$, and the vector $w$ points out of $E_1$ if and only if $\langle \eta,w\rangle < 0$. For $w$ in the Reeb cone of $E_1$, we know that $w$ is a positive multiple of ${\mathbf i}\nu_1$. By equation \eqref{eqn:eta}, we have
\[
\langle \eta,{\mathbf i}\nu_1\rangle = \frac{-\langle{\mathbf i}\nu_1,\nu_2\rangle}{\|-\nu_2 + \langle\nu_1,\nu_2\rangle\nu_1\|}.
\]
Thus if $\langle {\mathbf i}\nu_1,\nu_2\rangle$ is nonzero, then it has opposite sign from $\langle \eta,w\rangle$. This proves (a) and (b).

If $\langle {\mathbf i}\nu_1,\nu_2\rangle = 0$, then $\omega({\mathbf i}\nu_1,{\mathbf i}\nu_2) = 0$, but ${\mathbf i}\nu_1$ and ${\mathbf i}\nu_2$ are linearly independent tangent vectors to $F$, so $F$ is Lagrangian. This proves (c).
\end{proof}

\begin{lemma}
\label{lem:la}
Let $X$ be a convex polytope in $\R^4$ and let $F$ be a 2-face of $X$. If $TF\cap R_F^+X\neq\{0\}$, then $F$ is Lagrangian.
\end{lemma}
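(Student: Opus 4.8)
The plan is to show that any nonzero vector witnessing $TF\cap R_F^+X\neq\{0\}$ forces the two outward normals of the $3$-faces adjacent to $F$ to satisfy the degeneracy condition in Lemma~\ref{lem:EinEout}(c), which yields the conclusion at once.

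First I would fix notation: let $E_1,E_2$ be the two $3$-faces adjacent to $F$, with outward unit normal vectors $\nu_1,\nu_2$. By Lemma~\ref{lem:ncn} we have $N_F^+X=\op{Cone}(\nu_1,\nu_2)$, while $TF=\nu_1^\perp\cap\nu_2^\perp$. Now take a nonzero $v\in TF\cap R_F^+X$. Since $R_F^+X={\mathbf i}N_F^+X\cap T_F^+X$, in particular $v\in{\mathbf i}N_F^+X$, so $-{\mathbf i}v\in N_F^+X$ and we may write $-{\mathbf i}v=a_1\nu_1+a_2\nu_2$ for some $a_1,a_2\ge 0$ that are not both zero (since $v\neq 0$). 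Applying ${\mathbf i}$ and using ${\mathbf i}^2=-1$ gives $v=a_1{\mathbf i}\nu_1+a_2{\mathbf i}\nu_2$.

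The main step is then a short pairing computation. Since $v\in TF$, we have $\langle v,\nu_1\rangle=\langle v,\nu_2\rangle=0$. Using the identity $\omega_0(V,W)=\langle{\mathbf i}V,W\rangle$, we get $\langle{\mathbf i}\nu_j,\nu_j\rangle=\omega_0(\nu_j,\nu_j)=0$ for $j=1,2$, and $\langle{\mathbf i}\nu_2,\nu_1\rangle=\omega_0(\nu_2,\nu_1)=-\langle{\mathbf i}\nu_1,\nu_2\rangle$. Substituting $v=a_1{\mathbf i}\nu_1+a_2{\mathbf i}\nu_2$, the equations $\langle v,\nu_1\rangle=0$ and $\langle v,\nu_2\rangle=0$ collapse to $a_2\langle{\mathbf i}\nu_1,\nu_2\rangle=0$ and $a_1\langle{\mathbf i}\nu_1,\nu_2\rangle=0$ respectively. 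As $a_1$ and $a_2$ are not both zero, this forces $\langle{\mathbf i}\nu_1,\nu_2\rangle=0$, and Lemma~\ref{lem:EinEout}(c) then says that $F$ is Lagrangian, as desired.

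I do not expect a real obstacle here; the only points requiring care are the sign conventions relating ${\mathbf i}$, $\langle\cdot,\cdot\rangle$, and $\omega_0$ — in particular the consequence $\langle{\mathbf i}V,V\rangle=0$ — and correctly reading off $-{\mathbf i}v\in N_F^+X$ from the definition of the Reeb cone together with Lemma~\ref{lem:ncn}. Everything else is a one-line linear algebra computation.
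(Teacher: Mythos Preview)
Your argument is correct. Both your proof and the paper's start from the same key observation, namely that $-{\mathbf i}v\in N_F^+X$ for $v\in R_F^+X$, and both finish with a pairing computation; but the paper takes a shorter path. Rather than expanding $v$ in the basis $\{{\mathbf i}\nu_1,{\mathbf i}\nu_2\}$ and reducing to $\langle{\mathbf i}\nu_1,\nu_2\rangle=0$ so as to invoke Lemma~\ref{lem:EinEout}(c), the paper simply notes that for any $u\in TF$ one has $\omega_0(v,u)=\langle{\mathbf i}v,u\rangle=0$, since ${\mathbf i}v$ lies in the span of $\nu_1,\nu_2=(TF)^\perp$; this shows directly that $\omega_0|_{TF}$ is degenerate and hence $F$ is Lagrangian. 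Your route works fine and has the minor virtue of isolating the scalar obstruction $\langle{\mathbf i}\nu_1,\nu_2\rangle$, but the paper's version avoids the coordinate expansion and the appeal to the earlier lemma altogether.
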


\begin{proof}
If $w\in TF\cap R_F^+X$, then for any other vector $u\in TF$, we have
\[
\omega(w,u) = \langle {\mathbf i}w,u\rangle = 0
\]
since $-{\mathbf i}w\in N_F^+X$. If we also have $w\neq 0$, then it follows that $F$ is Lagrangian.
\end{proof}

\begin{proof}[Proof of Lemma~\ref{lem:Reebcone}.]
If $F$ is a $3$-face, then by the definition of the Reeb cone, $R_F^+X$ consists of all nonnegative multiples of ${\mathbf i}\nu_F$; and ${\mathbf i}\nu_F$ is a positive multiple of the Reeb vector field on $F$ by equation \eqref{eqn:Reebinu}.

Suppose now that $F$ is a $k$-face with $k<3$, and that $w$ is a nonzero vector in the Reeb cone $R_F^+X$. Applying Lemma~\ref{lem:pp1} to $v=-{\mathbf i}w$ and $w$, we deduce that there is a face $E$ of $X$ with $F\subset \overline{E}$ such that $-{\mathbf i}w\in N_E^+X$ and $w\in T_F^+\overline{E}$. In particular,
\begin{equation}
\label{eqn:terex}
w\in TE\cap R_E^+X.
\end{equation}

By Lemma~\ref{lem:la} and our hypothesis that $X$ is a symplectic polytope, $E$ is not a $2$-face.

If $F$ is a $2$-face, we conclude that $w$ is in the Reeb cone $R_E^+X$ for one of the $3$-faces $E$ adjacent to $F$. By Lemma~\ref{lem:EinEout}, $w$ must point into $E$.

If $F$ is a $1$-face, then $E$ is either a $3$-face adjacent to $F$, or $F$ itself. In the case when $E=F$, the vector $w$ cannot be in the Reeb cone of any $3$-face $F_3$ adjacent to $F$. The reason is that if $F_2$ is one of the two $2$-faces with $F \subset \overline{F_2} \subset \overline{F_3}$, then by Lemma~\ref{lem:EinEout}, the Reeb cone of $F_3$ is not tangent to $F_2$, so it certainly cannot be tangent to $F$.

If $F$ is a $0$-face, then $E$ is adjacent to $F$ and is either a $3$-face or a $1$-face. If $E$ is a $1$-face, then it is a bad $1$-face by \eqref{eqn:terex}.
\end{proof}

\section{The quaternionic trivialization}
\label{sec:quaternionic}

In this section let $Y\subset\R^4$ be a smooth star-shaped hypersurface with the contact form $\lambda = \lambda_0|_Y$ and contact structure $\xi=\Ker(\lambda)$. We now define a special trivialization $\tau$ of the contact structure $\xi$, and we prove a key property of this trivialization.

\subsection{Definition of the quaternionic trivialization}

The following definition is a smooth analogue of Definition~\ref{def:qtfg}.

\begin{definition}
Define the {\bf quaternionic trivialization}
\begin{equation}
\label{eqn:qt}
\tau: \xi \stackrel{\simeq}{\longrightarrow} Y\times \R^2
\end{equation}
as follows. If $y\in Y$ and $V\in T_yY$, let $\nu$ denote the outward unit normal to $Y$ at $y$, and define
\[
\tau(V) = \left(y,\langle V,{\mathbf j}\nu\rangle, \langle V,{\mathbf k}\nu\rangle\right).
\]
By abuse of notation, for fixed $y\in Y$ we write $\tau:\xi_y\stackrel{\simeq}{\longrightarrow} \R^2$ to denote the restriction of \eqref{eqn:qt} to $\xi_y$ followed by projection to $\R^2$.
\end{definition}

From now on we always use the quaternionic trivialization $\tau$ for smooth star-shaped hypersurfaces in $\R^4$.

\begin{lemma}
The quaternionic trivialization $\tau$ is a symplectic trivialization of $\xi$.
\end{lemma}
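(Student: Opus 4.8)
The statement to prove is that the quaternionic trivialization $\tau:\xi \to Y \times \R^2$, sending $V \in \xi_y$ to $(\langle V,{\mathbf j}\nu\rangle, \langle V,{\mathbf k}\nu\rangle)$ where $\nu$ is the outward unit normal at $y$, is a symplectic trivialization of $\xi$ with respect to $d\lambda|_\xi$. The plan is to reduce this to a pointwise linear-algebra statement about quaternions, exactly as in the combinatorial analogue (Lemma~\ref{lem:qtfg}), and then to match up the symplectic form $d\lambda|_{\xi_y}$ with the restriction of $\omega_0$ to $\xi_y$.

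First I would establish the pointwise picture. Fix $y \in Y$ with outward unit normal $\nu$. The radial vector $y$ is transverse to $Y$ (star-shaped), and one checks that $\xi_y = \Ker(\lambda_0|_{T_yY})$ is the $\omega_0$-orthogonal complement inside $T_yY$ of the Reeb direction; concretely, since $\lambda_0(V) = \tfrac12\langle {\mathbf i}y, V\rangle$ and the Reeb vector field is a positive multiple of ${\mathbf i}\nu$ by \eqref{eqn:Reebinu}, one gets that $\xi_y$ is the orthogonal complement of $\Span\{\nu,{\mathbf i}\nu\}$ in $\R^4$, i.e.\ $\xi_y = \Span\{{\mathbf j}\nu,{\mathbf k}\nu\}$. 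This uses only the quaternion relations: $\{\nu,{\mathbf i}\nu,{\mathbf j}\nu,{\mathbf k}\nu\}$ is an orthonormal basis of $\R^4$, and ${\mathbf j}\nu,{\mathbf k}\nu \perp \nu$ so they are tangent to $Y$, while ${\mathbf j}\nu,{\mathbf k}\nu \perp {\mathbf i}\nu = $ Reeb direction so they lie in $\xi_y$. Hence $\tau|_{\xi_y}$ is just the coordinate map for this orthonormal basis, in particular a linear isomorphism onto $\R^2$, and the collection over $y \in Y$ is clearly a smooth bundle isomorphism.

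Next I would compute that $\tau$ intertwines the two symplectic forms. On $\xi_y$ we have $d\lambda|_{\xi_y} = \omega_0|_{\xi_y}$ — this is the standard fact that for a star-shaped hypersurface the contact form's differential restricted to $\xi$ agrees with the ambient symplectic form restricted to $\xi$ (indeed $\lambda = \lambda_0|_Y$ and $d\lambda_0 = \omega_0$, so $d\lambda = \omega_0|_{TY}$, and we restrict further to $\xi$). So it suffices to show $\omega_0(V,W) = \omega_0(\tau V, \tau W)$ for $V,W \in \xi_y = \Span\{{\mathbf j}\nu,{\mathbf k}\nu\}$, where on the right $\tau V, \tau W \in \R^2$ with its standard form $\omega_0 = dx\,dy$. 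Using $\omega_0(A,B) = \langle {\mathbf i}A,B\rangle$ and the quaternion relations, $\omega_0({\mathbf j}\nu,{\mathbf j}\nu) = 0$, $\omega_0({\mathbf k}\nu,{\mathbf k}\nu) = 0$, and $\omega_0({\mathbf j}\nu,{\mathbf k}\nu) = \langle {\mathbf i}{\mathbf j}\nu,{\mathbf k}\nu\rangle = \langle {\mathbf k}\nu,{\mathbf k}\nu\rangle = 1$. Expanding $V,W$ in the basis $\{{\mathbf j}\nu,{\mathbf k}\nu\}$ then gives $\omega_0(V,W) = V_1 W_2 - V_2 W_1 = \omega_0(\tau V, \tau W)$, which is exactly the claim. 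This is essentially verbatim the argument of Lemma~\ref{lem:qtfg}, now in the smooth setting.

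There is no real obstacle here — the only point needing a word of care is the identification $\xi_y = \Span\{{\mathbf j}\nu,{\mathbf k}\nu\}$, i.e.\ that ${\mathbf j}\nu$ and ${\mathbf k}\nu$ actually lie in the contact plane and not merely in $T_yY$; this is where the formula \eqref{eqn:Reebinu} for the Reeb vector field (making ${\mathbf i}\nu$ the Reeb direction up to scale) gets used, together with $\langle {\mathbf j}\nu,{\mathbf i}\nu\rangle = \langle {\mathbf k}\nu,{\mathbf i}\nu\rangle = 0$ from the quaternion relations. Once that is in place, everything else is the routine quaternionic computation above, and smoothness in $y$ is immediate since $\nu$ depends smoothly on $y$.
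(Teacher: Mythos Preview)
Your approach is the right one and matches the paper's (which simply cites Lemma~\ref{lem:qtfg}), but there is a genuine error in your setup: the identification $\xi_y = \Span\{{\mathbf j}\nu,{\mathbf k}\nu\}$ is false in general. From $\lambda_0(V) = \tfrac12\langle {\mathbf i}y, V\rangle$ you get $\xi_y = \nu^\perp \cap ({\mathbf i}y)^\perp$, not $\nu^\perp \cap ({\mathbf i}\nu)^\perp$; these coincide only when $y$ and $\nu$ are parallel (e.g.\ on a round sphere). The paper's own remark following the lemma confirms this: $\tau^{-1}(y,(1,0))$ is the projection of ${\mathbf j}\nu$ to $\xi_y$ \emph{along the Reeb direction}, not ${\mathbf j}\nu$ itself. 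Your justification (``${\mathbf j}\nu,{\mathbf k}\nu \perp {\mathbf i}\nu$ so they lie in $\xi_y$'') conflates orthogonality to the Reeb direction with membership in $\Ker\lambda$.

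The fix is exactly what the proof of Lemma~\ref{lem:qtfg} does: work in $T_yY = \Span\{{\mathbf i}\nu,{\mathbf j}\nu,{\mathbf k}\nu\}$ rather than in $\xi_y$. Write $V = c\,{\mathbf i}\nu + a\,{\mathbf j}\nu + b\,{\mathbf k}\nu$ for $V\in\xi_y\subset T_yY$; then $\tau(V)=(a,b)$ by orthonormality. Your computation $\omega_0({\mathbf i}\nu,{\mathbf j}\nu)=\omega_0({\mathbf i}\nu,{\mathbf k}\nu)=0$, $\omega_0({\mathbf j}\nu,{\mathbf k}\nu)=1$ then shows that the ${\mathbf i}\nu$-coefficients drop out and $\omega_0(V,W)=ab'-a'b=\omega_0(\tau V,\tau W)$. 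Injectivity of $\tau|_{\xi_y}$ follows because $\Ker(\tau|_{T_yY})=\R\,{\mathbf i}\nu$ is the Reeb line, which is transverse to $\xi_y$. So your final computation is correct; only the claimed basis for $\xi_y$ needs to be replaced by this expansion in $T_yY$.
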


\begin{proof}
Same calculation as the proof of Lemma~\ref{lem:qtfg}(a).
\end{proof}

\begin{remark}
The inverse
\[
\tau^{-1}: Y\times \R^2\stackrel{\simeq}{\longrightarrow} \xi
\]
is described as follows. Recall from \eqref{eqn:Reebinu} that the Reeb vector field at $y$ is a positive multiple of ${\mathbf i}\nu$. Then $\tau^{-1}(y,(1,0))$ is obtained by projecting ${\mathbf j}\nu$ to $\xi_y$ along the Reeb vector field, while $\tau^{-1}(y,(0,1))$ is obtained by projecting ${\mathbf k}\nu$ to $\xi_y$ along the Reeb vector field. 
\end{remark}

\subsection{Linearized Reeb flow}

We now make some definitions which we will need in order to bound the rotation numbers of Reeb orbits and Reeb trajectories.

\begin{definition}
\label{def:linearized}
If $y\in Y$ and $t\ge 0$, define the {\bf linearized Reeb flow\/} $\phi(y,t)\in\op{Sp}(2)$ to be the composition
\begin{equation}
\label{eqn:lrf}
\R^2 \stackrel{\tau^{-1}}{\longrightarrow} \xi_y \stackrel{d\Phi_t}{\longrightarrow} \xi_{\Phi_t(y)} \stackrel{\tau}{\longrightarrow} \R^2
\end{equation}
where $\Phi_t:Y\to Y$ denotes the time $t$ flow of the Reeb vector field, and $\tau$ is the quaternionic trivialization. Define the {\bf lifted linearized Reeb flow\/} $\widetilde{\phi}(y,t)\in\widetilde{\op{Sp}}(2)$ to be the arc
\begin{equation}
\label{eqn:llrf}
\widetilde{\phi}(y,t) = 
\{\phi(y,s)\}_{s\in [0,t]}.
\end{equation}
\end{definition}

Note that we have the composition property
\[
\widetilde{\phi}(y,t_2+t_1) = \widetilde{\phi}(\phi_{t_1}(y),t_2) \circ \widetilde{\phi}(y,t_1).
\]

Next, let ${\mathbb P}\xi$ denote the ``projectivized'' contact structure
\[
{\mathbb P}\xi = (\xi\setminus Z)/\sim
\]
where $Z$ denotes the zero section, and two vectors are declared equivalent if they differ by multiplication by a positive scalar. Thus ${\mathbb P}\xi$ is an $S^1$-bundle over $Y$. The Reeb vector field $R$ on $Y$ canonically lifts, via the linearized Reeb flow, to a vector field $\widetilde{R}$ on ${\mathbb P}\xi$.

The quaternionic trivialization $\tau$ defines a diffeomorphism
\[
\overline{\tau}: {\mathbb P}\xi \stackrel{\simeq}{\longrightarrow} Y\times S^1.
\]
Let
\[
\sigma: {\mathbb P}\xi\longrightarrow S^1
\]
denote the composition of $\overline{\tau}$ with the projection $Y\times S^1\to S^1$.

\begin{definition}
Define the {\bf rotation rate\/}
\[
r: {\mathbb P}\xi\longrightarrow \R
\]
to be the derivative of $\sigma$ with respect to the lifted linearized Reeb flow,
\[
r=\widetilde{R}\sigma.
\]
Define the {\bf minimum rotation rate\/}
\[
r_{\op{min}}:Y\longrightarrow \R
\]
by
\[
r_{\op{min}}(y) = \min_{\widetilde{y}\in{\mathbb P}\xi_y}r(\widetilde{y}).
\]
\end{definition}

It follows from \eqref{eqn:rminbound} and \eqref{eqn:rhor} that we have the following lower bound on the rotation number of the lifted linearized flow of a Reeb trajectory.

\begin{lemma}
\label{lem:minrot}
Let $y$ be a smooth star-shaped hypersurface in $\R^4$, let $y\in Y$, and let $t\ge 0$. Then
\[
\rho(\widetilde{\phi}(y,t)) \ge \int_0^tr_{\op{min}}(\Phi_s(y))ds.
\]
\end{lemma}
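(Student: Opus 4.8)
The plan is to reduce the claimed inequality on the rotation number of the lifted linearized Reeb flow to a statement about integrating the rotation rate function $r$ along a trajectory in ${\mathbb P}\xi$. Recall that the rotation number $\rho(\widetilde{\phi}(y,t))$ of an element of $\widetilde{\op{Sp}}(2)$ can be computed (this is the content of the results in Appendix~\ref{app:rotation_numbers}, referenced via equations \eqref{eqn:rminbound} and \eqref{eqn:rhor}) by tracking how the image of a single ray in $\R^2$ rotates under the path of symplectic matrices. Concretely, for a fixed initial direction $\widetilde{y}\in{\mathbb P}\xi_y$, the path $s\mapsto\widetilde\phi(y,s)$ acting on $\widetilde y$ traces out a path in ${\mathbb P}\xi$, which is exactly an integral curve of the lifted Reeb vector field $\widetilde R$, and the total angular displacement of $\sigma$ along this curve is $\int_0^t r(\Phi_s(y)\text{-lift of }\widetilde y)\,ds = \int_0^t (\widetilde R\sigma)\,ds$. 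The subtlety, which \eqref{eqn:rhor} handles, is that the rotation number of an element of $\widetilde{\op{Sp}}(2)$ is \emph{not} in general equal to the winding of one particular ray, but the winding of any ray differs from $\rho$ by less than $1/2$, and more importantly $\rho(\widetilde\phi) \ge \inf$ over initial rays of the angular winding of that ray (with equality essentially when the path is a rotation). This ``infimum over initial conditions'' is where the $r_{\op{min}}$ will enter.

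The key steps, in order, would be: (1) Fix $y\in Y$ and $t\ge 0$, and choose the initial direction $\widetilde y_0\in{\mathbb P}\xi_y$ that realizes the infimum of the total winding; let $\widetilde y(s)$ be its image under the lifted Reeb flow, so $\widetilde y(s)$ is the integral curve of $\widetilde R$ through $\widetilde y_0$. (2) Invoke \eqref{eqn:rhor} to say that $\rho(\widetilde\phi(y,t))$ is bounded below by the total angular displacement $\int_0^t \frac{d}{ds}\sigma(\widetilde y(s))\,ds$ of $\sigma$ along \emph{any} such integral curve, in particular along $\widetilde y(s)$. (3) Rewrite $\frac{d}{ds}\sigma(\widetilde y(s)) = (\widetilde R\sigma)(\widetilde y(s)) = r(\widetilde y(s))$ by the definition of the rotation rate. (4) Bound $r(\widetilde y(s)) \ge r_{\op{min}}(\Phi_s(y))$ pointwise, by the definition of $r_{\op{min}}$ as the minimum of $r$ over the fiber ${\mathbb P}\xi_{\Phi_s(y)}$, noting that $\widetilde y(s)$ lies in that fiber. (5) Integrate the pointwise bound to conclude $\rho(\widetilde\phi(y,t)) \ge \int_0^t r_{\op{min}}(\Phi_s(y))\,ds$.

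The main obstacle I expect is step (2): carefully justifying that the rotation number of the \emph{lifted} symplectic path is bounded below by the winding of $\sigma$ along a single integral curve of $\widetilde R$ — i.e.\ making precise the relationship between the abstract rotation number on $\widetilde{\op{Sp}}(2)$ and the concrete angular tracking in the quaternionic trivialization. This requires knowing that $\sigma\circ\overline\tau$ genuinely computes the angular coordinate of the relevant ray in $\R^2$ under the identification $\overline\tau$, and that the relevant lemma from the appendix (cited as \eqref{eqn:rhor}, presumably a consequence of Lemma~\ref{lem:compute_rho_bar}) gives precisely a lower bound rather than a two-sided estimate; the inequality (as opposed to equality) is exactly what allows passing to $r_{\op{min}}$ without controlling the actual initial direction of a given Reeb orbit. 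Everything else is bookkeeping with the definitions of $r$, $r_{\op{min}}$, $\widetilde R$, and the composition property of $\widetilde\phi$.
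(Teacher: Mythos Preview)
Your proposal is correct and follows exactly the paper's approach: the paper simply cites \eqref{eqn:rminbound} and \eqref{eqn:rhor}, and your steps (1)--(5) are precisely the unpacking of those two inequalities in this setting. One small correction to your phrasing in step (2): equation \eqref{eqn:rhor} asserts only that $\rho(\Phi)\ge r(\Phi)=\min_\sigma r_\sigma(\Phi)$, so $\rho$ is bounded below by the winding of the \emph{minimizing} initial direction, not by the winding along an \emph{arbitrary} one (the latter is false in general); since you already selected the minimizer in step (1), and since the pointwise bound $r(\widetilde y(s))\ge r_{\op{min}}(\Phi_s(y))$ in step (4) holds for every integral curve regardless, this slip does not affect the argument.
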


\subsection{The curvature identity}

We now prove a key identity which relates the linearized Reeb flow, with respect to the quaternionic trivialization $\tau$, to the curvature of $Y$. This identity (in different notation) is due to U.\ Hryniewicz and P.\ Salom\~ao \cite{umberto}. Below, let $S:TY\tensor TY\to\R$ denote the second fundamental form defined by
\[
S(u,w) = \langle \nabla_u\nu,w\rangle,
\]
where $\nu$ denotes the outward unit normal vector to $Y$, and $\nabla$ denotes the trivial connection on the restriction of $T\R^4$ to $Y$. Also write $S(u)=S(u,u)$.

\begin{proposition}
\label{prop:uj}
Let $Y$ be a smooth star-shaped hypersurface in $\R^4$, let $y\in Y$, let $\theta\in\R/2\pi\Z$, and write $\sigma = \theta/2\pi\in\R/\Z$. Then at the point $\overline{\tau}^{-1}(y,\sigma)\in{\mathbb P}\xi$, we have
\begin{equation}
\label{eqn:curvatureidentity}
\widetilde{R}\sigma = \frac{1}{\pi\langle\nu,y\rangle}\left(S({\mathbf i}\nu) + S(\cos(\theta){\mathbf j}\nu + \sin(\theta){\mathbf k}\nu)\right).
\end{equation}
\end{proposition}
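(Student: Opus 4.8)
The plan is to compute $\widetilde{R}\sigma$ directly by differentiating the angular coordinate $\sigma$ along the lifted linearized Reeb flow, using the explicit formula for the quaternionic trivialization. First I would fix $y\in Y$ with outward unit normal $\nu$, and recall that by \eqref{eqn:Reebinu} the Reeb vector field is $R_y = 2{\mathbf i}\nu/\langle\nu,y\rangle$. The trivialization $\tau$ identifies $\xi_y$ with $\R^2$ via $V\mapsto(\langle V,{\mathbf j}\nu\rangle,\langle V,{\mathbf k}\nu\rangle)$; dually, via the remark after the definition of $\tau$, the inverse sends $(\cos\theta,\sin\theta)$ to the projection of $\cos(\theta){\mathbf j}\nu+\sin(\theta){\mathbf k}\nu$ onto $\xi_y$ along $R_y$. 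So the strategy is: take the vector $e(\theta) = \cos(\theta){\mathbf j}\nu+\sin(\theta){\mathbf k}\nu$, project it into $\xi_y$ to get a representative $\widetilde y\in{\mathbb P}\xi$ of $(y,\sigma)$, flow it infinitesimally under $d\Phi_t$, and read off the rate of change of its angle under $\tau$ at time $0$.

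The key computation is to differentiate two things along the Reeb flow: the normal vector $\nu$ along the trajectory $\Phi_t(y)$, and the contact vector itself. Along a Reeb trajectory, $\frac{d}{dt}\nu(\Phi_t(y)) = \nabla_{R}\nu = S(R,\cdot)^\sharp$ — more precisely, since $\nabla$ is the trivial connection and $\nu$ has unit length, $\frac{d}{dt}\nu = $ (the tangential part of $\nabla_R\nu$), which is governed by the second fundamental form $S$ evaluated against $R$. Because $R$ is a positive multiple of ${\mathbf i}\nu$, this is where the $S({\mathbf i}\nu)$ term will come from, and the factor $1/\langle\nu,y\rangle$ from the normalization in \eqref{eqn:Reebinu} together with the $1/2$ there. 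The second fundamental form term $S(\cos(\theta){\mathbf j}\nu+\sin(\theta){\mathbf k}\nu) = S(e(\theta))$ will arise from how the contact vector $e(\theta)$, which is itself built out of $\nu$, evolves and how its angle in the $({\mathbf j}\nu,{\mathbf k}\nu)$-frame changes — the frame $\{{\mathbf j}\nu,{\mathbf k}\nu\}$ is itself moving, and the genuine ``rotation'' is the difference between the motion of the vector and the motion of the frame. I would set up the linearized Reeb flow equation $\dot V = $ (something) $V$ in this trivialization, where the coefficient matrix decomposes into a piece coming from $d\Phi_t$ acting on $\xi$ and a piece from the $t$-dependence of $\tau$ itself; then $\widetilde R\sigma$ is just $\frac{1}{2\pi}$ times the angular component of this matrix acting on the unit vector at angle $\theta$.

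The cleanest way to organize this is probably to introduce the quaternionic frame $\{{\mathbf i}\nu,{\mathbf j}\nu,{\mathbf k}\nu\}$ for $TY$ at each point (which is orthonormal since ${\mathbf i},{\mathbf j},{\mathbf k}\in\SO(4)$ and $\nu$ is a unit vector), write the linearized flow in this moving frame, and use the quaternion relations ${\mathbf i}{\mathbf j}={\mathbf k}$ etc.\ to see that the ``rotation'' of the frame contributes the $S({\mathbf i}\nu)$ term uniformly (independent of $\theta$), while the contact-plane dynamics contributes $S$ evaluated on the specific contact direction $e(\theta)$. I expect the main obstacle to be bookkeeping: correctly separating the rotation of the trivializing frame from the rotation of the flowed vector, and tracking the various signs and the factor of $2$ (from the Reeb field being $2{\mathbf i}\nu/\langle\nu,y\rangle$) and $\pi$ (from $\sigma=\theta/2\pi$) so that the final coefficient is exactly $\frac{1}{\pi\langle\nu,y\rangle}$. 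A useful sanity check along the way: specialize to the round sphere $Y = \partial B$, where $S$ is a multiple of the identity on $TY$, $\langle\nu,y\rangle$ is constant, and the formula should give a constant rotation rate consistent with the known fact that Reeb orbits on the boundary of a ball have rotation number $2$ (hence all $c_k^{S^1}$ etc.\ come out right); this pins down the constants. Since the statement is explicitly attributed to Hryniewicz–Salom\~ao \cite{umberto}, I would also cross-check the final formula against their version after translating notation.
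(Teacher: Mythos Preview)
Your proposal is correct and follows essentially the same route as the paper. The paper organizes the computation by directly evaluating the Lie derivatives $\langle \mc{L}_R({\mathbf j}\nu),{\mathbf k}\nu\rangle$, $\langle \mc{L}_R({\mathbf k}\nu),{\mathbf j}\nu\rangle$, and $\langle \mc{L}_R({\mathbf j}\nu),{\mathbf j}\nu\rangle = -\langle \mc{L}_R({\mathbf k}\nu),{\mathbf k}\nu\rangle$ (which packages your ``frame rotation vs.\ vector rotation'' split into a single object), expands each via $\mc{L}_R V = \nabla_R V - \nabla_V R$, and then uses the quaternion isometry relations exactly as you anticipate to rewrite everything in terms of $S({\mathbf i}\nu)$, $S({\mathbf j}\nu)$, $S({\mathbf k}\nu)$, and $S({\mathbf j}\nu,{\mathbf k}\nu)$; the trigonometric recombination then yields $S({\mathbf i}\nu)+S(e(\theta))$.
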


\begin{proof}
It follows from the definitions that
\begin{equation}
\label{eqn:ffd}
\begin{split}
2\pi\widetilde{R}\sigma =& \left\langle \mc{L}_R((\cos\theta){\mathbf j}\nu + (\sin\theta){\mathbf k}\nu), (\sin\theta){\mathbf j}\nu - (\cos\theta){\mathbf k}\nu\right\rangle\\
=& -(\cos^2\theta)\langle \mc{L}_R{\mathbf j}\nu,{\mathbf k}\nu\rangle + (\sin^2\theta)\langle \mc{L}_R{\mathbf k}\nu,{\mathbf j}\nu\rangle\\
& +(\sin\theta\cos\theta)(\langle \mc{L}_R{\mathbf j}\nu,{\mathbf j}\nu\rangle - \langle \mc{L}_R{\mathbf k}\nu,{\mathbf k}\nu\rangle).
\end{split}
\end{equation}
We compute
\begin{align}
\nonumber
\langle \mc{L}_R{\mathbf j}\nu,{\mathbf k}\nu\rangle &= \langle \nabla_R{\mathbf j}\nu - \nabla_{{\mathbf j}\nu}R, {\mathbf k}\nu\rangle\\
\nonumber
&= \frac{2}{\langle \nu,y\rangle}\left(\langle \nabla_{{\mathbf i}\nu}{\mathbf j}\nu,{\mathbf k}\nu\rangle - \langle\nabla_{{\mathbf j}\nu}{\mathbf i}\nu, {\mathbf k}\nu\rangle\right)\\
\nonumber
&= \frac{2}{\langle \nu,y\rangle}\left(-\langle \nabla_{{\mathbf i}\nu}\nu,{\mathbf i}\nu\rangle -\langle \nabla_{{\mathbf j}\nu}\nu,{\mathbf j}\nu\rangle\right)\\
\label{eqn:Ljk}
&= \frac{2}{\langle \nu,y\rangle}\left(-S({\mathbf i}\nu) - S({\mathbf j}\nu)\right).
\end{align}
Here in the second to third lines we have used the fact that multiplication on the left by a constant unit quaternion is an isometry. Similar calculations show that
\begin{align}
\label{eqn:Lkj}
\langle \mc{L}_R{\mathbf k}\nu,{\mathbf j}\nu\rangle &= \frac{2}{\langle \nu,y\rangle}\left(S({\mathbf i}\nu) + S({\mathbf k}\nu)\right),\\
\label{eqn:Ljj}
\langle \mc{L}_R{\mathbf j}\nu,{\mathbf j}\nu\rangle = - \langle \mc{L}_R{\mathbf k}\nu,{\mathbf k}\nu\rangle &= \frac{2}{\langle\nu,y\rangle} S({\mathbf j}\nu,{\mathbf k}\nu).
\end{align}
Plugging \eqref{eqn:Ljk}, \eqref{eqn:Lkj} and \eqref{eqn:Ljj} into \eqref{eqn:ffd} proves the curvature identity \eqref{eqn:ffd}.
\end{proof}

\begin{remark}
Since the second fundamental form is positive definite when $Y$ is strictly convex, and positive semidefinite when $Y$ is convex, by Lemma~\ref{lem:minrot} we obtain the following corollary:
{\em If $Y$ is a convex star-shaped hypersurface in $\R^4$ then $\widetilde{R}\sigma\ge 0$ everywhere, so $\widetilde{\phi}(y,t)$ has nonnegative rotation number for all $y\in Y$ and $t\ge 0$.
If $Y$ is a strictly convex star-shaped hypersurface in $\R^4$ then $\widetilde{R}\sigma >0$ everywhere, so $\widetilde{\phi}(y,t)$ has positive rotation number for all $y\in Y$ and $t>0$.\/}
\end{remark}

\section{Reeb dynamics on smoothings of polytopes}
\label{sec:smoothingdynamics}

In \S\ref{sec:smoothings} and \S\ref{sec:Rfssp} we study the Reeb flow on the boundary of a smoothing of a symplectic polytope in $\R^4$. In \S\ref{sec:nss} and \S\ref{sec:srn} we explain some more technical issues arising from the fact that the smoothing is only $C^1$, and in particular how to make sense of the ``rotation number'' of Reeb trajectories. In \S\ref{sec:rnlb} we derive important lower bounds on this rotation number. 

\subsection{Smoothings of polytopes}
\label{sec:smoothings}

If $X\subset\R^m$ is a compact convex set and $\epsilon>0$, define the $\epsilon$-smoothing $X_\epsilon$ of $X$ by equation \eqref{eqn:deltasmoothing}.
Observe that $X_\epsilon$ is convex. Denote its boundary by $Y_\epsilon = \partial X_\epsilon$. We now describe $Y_\epsilon$ more explicitly, in a way which mostly does not depend on $\epsilon$. We first have:

\begin{lemma}
\label{lem:Yepsilon}
If $X$ is a compact convex set then
\[
Y_\epsilon = \{y \in \R^m\mid \op{dist}(y,X) = \epsilon\}.
\]
\end{lemma}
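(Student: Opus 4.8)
The plan is to prove the two inclusions separately, using only that $z\mapsto\op{dist}(z,X)$ is $1$-Lipschitz for one direction and the nearest-point projection property of convex sets for the other. Throughout I assume $X\neq\emptyset$, the statement being vacuous otherwise, and I recall $X_\epsilon=\{z\mid\op{dist}(z,X)\le\epsilon\}$ and $Y_\epsilon=\partial X_\epsilon$.

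\textbf{The inclusion $Y_\epsilon\subseteq\{\op{dist}(y,X)=\epsilon\}$.} This direction needs only that $X$ is closed and nonempty. If $\op{dist}(y,X)=r<\epsilon$, then for every $z$ with $|z-y|<\epsilon-r$ we have $\op{dist}(z,X)\le|z-y|+\op{dist}(y,X)<\epsilon$, so the open ball of radius $\epsilon-r$ about $y$ lies in $X_\epsilon$, whence $y\in\op{int}(X_\epsilon)$. If $\op{dist}(y,X)>\epsilon$, then $y\notin X_\epsilon$. In either case $y\notin\partial X_\epsilon=Y_\epsilon$, so any point of $Y_\epsilon$ must have $\op{dist}(y,X)=\epsilon$.

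\textbf{The inclusion $\{\op{dist}(y,X)=\epsilon\}\subseteq Y_\epsilon$.} Here convexity enters. Suppose $\op{dist}(y,X)=\epsilon$. Since $X$ is compact and convex there is a unique nearest point $x_0=\pi_X(y)\in X$, characterized by $\langle y-x_0,x-x_0\rangle\le 0$ for all $x\in X$. Certainly $y\in X_\epsilon$, so it suffices to exhibit points arbitrarily close to $y$ lying outside $X_\epsilon$. For $s\ge 1$ put $y_s=x_0+s(y-x_0)$. For any $x\in X$,
\[
|y_s-x|^2=|y_s-x_0|^2+2\langle y_s-x_0,x_0-x\rangle+|x_0-x|^2\ge|y_s-x_0|^2,
\]
because $\langle y_s-x_0,x_0-x\rangle=-s\langle y-x_0,x-x_0\rangle\ge 0$. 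Hence $\op{dist}(y_s,X)=|y_s-x_0|=s\epsilon>\epsilon$ for $s>1$, so $y_s\notin X_\epsilon$; since $y_s\to y$ as $s\to 1^+$ it follows that $y\in\partial X_\epsilon=Y_\epsilon$.

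\textbf{Main obstacle.} There is essentially no obstacle: the only point requiring a little care is the second inclusion, where convexity is used through the existence and variational characterization of the metric projection $\pi_X$. An equivalent packaging is via supporting hyperplanes --- the hyperplane through $x_0$ with normal $y-x_0$ supports $X$, and its outward translate by $\epsilon$ supports $X_\epsilon$ at $y$, again forcing $y\in\partial X_\epsilon$ --- but either argument is short and elementary.
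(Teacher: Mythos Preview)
Your proof is correct and follows essentially the same approach as the paper's: both directions are handled the same way, using continuity of the distance function for $Y_\epsilon\subseteq\{\op{dist}=\epsilon\}$ and the ray from the nearest point $x_0$ through $y$ (together with the supporting half-space characterization of $x_0$) for the reverse inclusion. Your write-up is slightly more detailed, but the ideas are identical.
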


\begin{proof}
The left hand side is contained in the right hand side because distance to $X$ is a continuous function on $\R^m$. The reverse inclusion holds because given $y\in \R^m$ with $\op{dist}(y,X)=\epsilon$, since $X$ is compact and convex, there is a unique point $x\in X$ which is closest to $y$. By convexity again, $X$ is contained in the closed half-space $\{z\in \R^m\mid \langle z, y-x\rangle \le 0\}$. It follows that $\op{dist}(t(y-x),X)=\epsilon t$ for $t>0$, so that $y\in\partial X_\epsilon$.
\end{proof}

\begin{definition}
If $X\subset\R^m$ is a compact convex set, define the ``blown-up boundary''
\[
Y_0 = \left\{(y,v) \;\big|\; y\in \partial X,\; v\in N_y^+X,\; |v|=1\right\}\subset\partial X \times S^{m-1}.
\]
\end{definition}

We then have the following lemma, which is proved by similar arguments to Lemma~\ref{lem:Yepsilon}:

\begin{lemma}
\label{lem:bub}
Let $X\subset\R^m$ be a compact convex set and let $\epsilon>0$. Then:
\begin{itemize}
\item[\emph{(a)}]
There is a homeomorphism
\[
Y_0\stackrel{\simeq}{\longrightarrow} Y_\epsilon
\]
sending $(y,v)\mapsto y+\epsilon v$.
\item[\emph{(b)}]
The inverse homeomorphism sends $y\mapsto (x,\epsilon^{-1}(y-x))$ where $x$ is the unique closest point in $X$ to $y$.
\item[\emph{(c)}]
For $y\in Y_\epsilon$, if $x$ is the closest point in $X$ to $y$, then the positive normal cone $N_y^+X_\epsilon$ is the ray consisting of nonnegative multiples of $y-x$.
\end{itemize}\end{lemma}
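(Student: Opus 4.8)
The plan is to exhibit the map $\Psi\colon Y_0\to\R^m$, $(y,v)\mapsto y+\epsilon v$, show it takes values in $Y_\epsilon$, construct its inverse explicitly (which is precisely the formula in (b)), check continuity, and finally deduce (c) from the variational characterization of the projection. Throughout I will use the standard facts about nearest-point projection onto a compact convex set $X\subset\R^m$: every $w\in\R^m$ has a unique closest point $x=\pi_X(w)\in X$; the map $\pi_X$ is continuous (in fact $1$-Lipschitz); $x=\pi_X(w)$ if and only if $\langle z-x,w-x\rangle\le0$ for all $z\in X$, i.e.\ $w-x\in N_x^+X$; and $\pi_X(w)\in\partial X$ whenever $w\notin X$.

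First I would check $\Psi(Y_0)\subseteq Y_\epsilon$. For $(y,v)\in Y_0$ and $z\in X$, the condition $v\in N_y^+X$ says $\langle y-z,v\rangle\ge0$, so $|y+\epsilon v-z|^2=|y-z|^2+2\epsilon\langle y-z,v\rangle+\epsilon^2\ge\epsilon^2$, with equality exactly at $z=y$. Hence $\op{dist}(y+\epsilon v,X)=\epsilon$ and $\pi_X(y+\epsilon v)=y$, so $y+\epsilon v\in Y_\epsilon$ by Lemma~\ref{lem:Yepsilon}. Conversely, for $w\in Y_\epsilon$ put $x=\pi_X(w)$; then $|w-x|=\op{dist}(w,X)=\epsilon$ (Lemma~\ref{lem:Yepsilon}), $x\in\partial X$, and $w-x\in N_x^+X$, so $\Phi(w):=(x,\epsilon^{-1}(w-x))$ lies in $Y_0$. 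A one-line computation gives $\Psi\circ\Phi=\op{id}$, and $\Phi\circ\Psi=\op{id}$ using $\pi_X(y+\epsilon v)=y$ from the previous step. This proves (a) as a bijection of sets and simultaneously proves (b). For the topology: $Y_0$ is closed in the compact space $\partial X\times S^{m-1}$ (the constraint $v\in N_y^+X$ is closed jointly in $(y,v)$) and $Y_\epsilon$ is compact Hausdorff, so the continuous bijection $\Psi$ is automatically a homeomorphism; alternatively one can simply note that $\Phi$ is continuous since $\pi_X$ is.

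For (c), fix $y\in Y_\epsilon$ and $x=\pi_X(y)$; since $N_y^+X_\epsilon$ is closed under nonnegative scaling, it suffices to prove $N_y^+X_\epsilon=\R_{\ge0}\cdot(y-x)$. For the inclusion $\R_{\ge0}\cdot(y-x)\subseteq N_y^+X_\epsilon$: given $z\in X_\epsilon$, let $w=\pi_X(z)$, so $|z-w|\le\epsilon$; writing $z-y=(z-w)+(w-x)+(x-y)$ and using Cauchy--Schwarz, $w-x\in N_x^+X$, and $|y-x|=\epsilon$, we get $\langle z-y,y-x\rangle\le\epsilon^2+0-\epsilon^2=0$, so $y-x\in N_y^+X_\epsilon$. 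For the reverse inclusion: since $\op{dist}(z,X)\le|z-x|$, the ball $B(x,\epsilon)$ is contained in $X_\epsilon$, so any $v\in N_y^+X_\epsilon$ satisfies $\langle z-y,v\rangle\le0$ for all $z\in B(x,\epsilon)$, i.e.\ $\epsilon|v|=\sup_{|z-x|\le\epsilon}\langle z-x,v\rangle\le\langle y-x,v\rangle\le|y-x|\,|v|=\epsilon|v|$; equality in Cauchy--Schwarz forces $v\in\R_{\ge0}\cdot(y-x)$.

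I do not anticipate a serious obstacle: the whole argument is the elementary theory of metric projection onto a convex set. The only points requiring care are the bookkeeping of sign conventions — in particular $v\in N_y^+X$ enters the estimate for $\Psi$ with the \emph{opposite} sign, since $N_y^+X=\{v:\langle z-y,v\rangle\le0\ \forall z\in X\}$ — and the reverse inclusion in (c), where the correct move is to test the normal cone against the full ball $B(x,\epsilon)\subseteq X_\epsilon$ rather than against $X$. A minor stylistic choice is whether to obtain the homeomorphism property from compactness or from continuity of $\pi_X$; I would use compactness to keep the argument self-contained.
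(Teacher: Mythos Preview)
Your proof is correct and essentially follows the approach the paper has in mind: the paper does not write out a proof of this lemma but simply says it ``is proved by similar arguments to Lemma~\ref{lem:Yepsilon}'', i.e.\ by the standard properties of the nearest-point projection onto a compact convex set, which is exactly what you use. Your argument for the reverse inclusion in (c), testing against the ball $B(x,\epsilon)\subseteq X_\epsilon$ to force equality in Cauchy--Schwarz, is a clean way to finish that step.
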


Suppose now that $X\subset\R^m$ is a convex polytope and $\epsilon>0$.

\begin{definition}
If $F$ is a face of $X$, define the {\bf $\epsilon$-smoothed face\/}
\[
F_\epsilon = \{x \in Y_\epsilon \mid \op{dist}(x,F)=\epsilon\}.
\]
\end{definition}

By Lemma~\ref{lem:bub}, we have
\[
Y_\epsilon = \bigsqcup_F F_\epsilon
\]
and
\[
F_\epsilon = F + \{v\in N_F^+X\mid |v|=\epsilon\}.
\]
Note that each $F_\epsilon$ is a $C^\infty$ smooth hypersurface, and where the closure of one $F_\epsilon$ meets another, the outward unit normal vectors agree. It follows that $Y_\epsilon$ is a $C^1$ smooth hypersurface, and it is $C^\infty$ except along strata\footnote{We do not also need to mention strata of the form $F+\partial \{v\in N_F^+X\mid |v|=\epsilon\}$, because any point in $\partial N_F^+X$ is contained in $N_E^+X$ where $E$ is a face with $F\subset \partial E$.} of the form $\partial F + \{v\in N_F^+X\mid |v|=\epsilon\}$.

\subsection{The Reeb flow on a smoothed symplectic polytope}
\label{sec:Rfssp}

Suppose now that $X$ is a symplectic polytope in $\R^4$ and $\epsilon>0$. As noted above, $Y_\epsilon = \partial X_\epsilon$ is a $C^1$ convex hypersurface, and as such it has a well-defined $C^0$ Reeb vector field, which is smooth except along the strata of $Y_\epsilon$ arising from the boundaries of the faces of $X$. We now investigate the Reeb flow on $Y_\epsilon$ in more detail, as well as the lifted linearized Reeb flow $\widetilde{\phi}$ from Definition~\ref{def:linearized}.

\subsection*{General remarks.}

By Lemma~\ref{lem:bub}, a point in $Y_\epsilon$ lives in an $\epsilon$-smoothed face $F_\epsilon$ for a unique face $F$ of $X$, and thus has the form $y+\epsilon v$ where $y\in F$ and $v\in N_F^+X$ is a unit vector. By equation \eqref{eqn:Reebinu} and Lemma~\ref{lem:bub}(c), the Reeb vector field at $y+\epsilon v$ is given by
\begin{equation}
\label{eqn:Reebsmoothed}
R_{y+\epsilon v} = \frac{2{\mathbf i}v}{\langle v,y\rangle + \epsilon}.
\end{equation}

\begin{lemma}
\label{lem:transinv}
The Reeb vector field \eqref{eqn:Reebsmoothed} on the $\epsilon$-smoothed face $F_\epsilon$, regarded as a map $F_\epsilon\to \R^4$, depends only $v\in N_F^+X$ and not on the choice of $y\in F$.
\end{lemma}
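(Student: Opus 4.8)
The only way $y$ enters the formula \eqref{eqn:Reebsmoothed} is through the scalar $\langle v, y\rangle$ in the denominator, so it suffices to show that this quantity is independent of the choice of $y\in F$, once $v\in N_F^+X$ is fixed. The plan is to deduce this directly from the definition of the positive normal cone together with the fact, recalled in \S\ref{sec:cro}, that $N_y^+X$ is the same cone $N_F^+X$ for every $y\in F$.

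Concretely, I would argue as follows. Fix $v\in N_F^+X$ and let $y, y'\in F$ be arbitrary. Since $v\in N_y^+X$ and $y'\in X$, the defining inequality of the positive normal cone gives $\langle y'-y, v\rangle \le 0$. Since also $v\in N_{y'}^+X$ (because $N_{y'}^+X = N_F^+X \ni v$) and $y\in X$, the same inequality applied at $y'$ gives $\langle y-y', v\rangle \le 0$. Adding these two inequalities forces both to be equalities, so $\langle v, y\rangle = \langle v, y'\rangle$. Hence $\langle v, y\rangle$ depends only on $F$ (and $v$), not on the particular $y\in F$, and therefore the right-hand side of \eqref{eqn:Reebsmoothed} is a function of $v$ alone (with $\epsilon$ fixed). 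This proves the lemma.

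There is essentially no obstacle here: the statement is a one-line consequence of convexity and the face structure, and the ``hard part'', such as it is, is merely recording the observation that a normal vector to a face is constant on the affine hull of that face. The lemma is included because this translation-invariance is what lets one subsequently analyze the Reeb flow on $F_\epsilon$ by a normal-form reduction, parametrizing points of $F_\epsilon$ by the pair $(y,v)$ and treating $v$ as the only dynamically relevant variable transverse to $F$.
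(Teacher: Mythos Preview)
Your proof is correct and follows the same approach as the paper: both reduce to showing $\langle v,y\rangle = \langle v,y'\rangle$ for $y,y'\in F$ via the definition of the positive normal cone. Your version simply spells out the two-inequality argument that the paper compresses into the phrase ``by the definition of positive normal cone we have $\langle v,y-y'\rangle = 0$''.
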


\begin{proof}
This follows from equation \eqref{eqn:Reebsmoothed}, because for fixed $v\in N_F^+X$ and for two points $y,y'\in F$, by the definition of positive normal cone we have $\langle v,y-y'\rangle = 0$.
\end{proof}

\subsection*{Smoothed 3-faces.} The Reeb flow on a smoothed $3$-face is very simple.

\begin{lemma}
\label{lem:smoothed3face}
Let $X\subset\R^4$ be a symplectic polytope, let $\epsilon>0$, and let $E$ be a $3$-face of $X$ with outward unit normal vector $\nu$.
\begin{itemize}
\item[\emph{(a)}]
The Reeb vector field on $E_\epsilon$, regarded as a map $E_\epsilon\to\R^4$, agrees with the Reeb vector field on $E$, up to rescaling by a positive constant which limits to $1$ as $\epsilon\to0$.
\item[\emph{(b)}]
If $\gamma:[0,t]\to E_\epsilon$ is a Reeb trajectory, then $\widetilde{\phi}(\gamma(0),t)=1\in\widetilde{\op{Sp}}(2)$.
\item[\emph{(c)}]
If $y\in\partial E$, then at the point $y+\epsilon\nu\in Y_\epsilon$, the Reeb vector field on $Y_\epsilon$ is not tangent to $\partial E_\epsilon$.
\end{itemize}
\end{lemma}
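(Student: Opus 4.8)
The plan is to handle the three parts in order; the crux is the observation that the Reeb flow on a smoothed $3$-face is translation by a constant vector.

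\textbf{Part (a).} Since $E$ is a $3$-face, $N_E^+X$ is the ray $\R_{\ge 0}\nu$, so every point of $E_\epsilon$ has the form $y+\epsilon\nu$ with $y\in E$. By \eqref{eqn:Reebsmoothed} the Reeb vector field there is $\frac{2\mathbf{i}\nu}{\langle\nu,y\rangle+\epsilon}$. The number $c:=\langle\nu,y\rangle$ is the support number of $X$ in the direction $\nu$, so it is independent of $y\in E$, and it is positive because $0\in\op{int}(X)$. Comparing with \eqref{eqn:Reebinu}, under the identification $E\cong E_\epsilon$, $y\mapsto y+\epsilon\nu$, of Lemma~\ref{lem:bub}(a) the two Reeb vector fields differ by the positive constant $c/(c+\epsilon)$, which tends to $1$ as $\epsilon\to0$.

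\textbf{Part (b).} Since $\gamma([0,t])\subset E_\epsilon$, part (a) shows the Reeb vector field along $\gamma$ is a constant vector $R\in\R^4$, so $\Phi_s$ is translation by $sR$ and $d\Phi_s$ is the identity map of $\R^4$ for $s\in[0,t]$. I claim moreover that the contact plane $\xi_{\gamma(s)}$ is independent of $s$. Indeed $E_\epsilon$ is flat, so $T_pY_\epsilon$ is the constant $3$-plane $\nu^\perp=TE$ for $p\in E_\epsilon$, and since $\lambda_0|_p(v)=\tfrac12\omega_0(p,v)$ we have $\xi_p=\{v\in TE\mid\omega_0(p,v)=0\}$; for $v\in TE$ one has $\omega_0(R,v)=d\lambda(R,v)=0$, so $\omega_0(\gamma(0)+sR,v)=\omega_0(\gamma(0),v)$ and hence $\xi_{\gamma(s)}=\xi_{\gamma(0)}$. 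Finally the quaternionic trivialization $\tau$ depends on the point only through the outward unit normal, which is constantly $\nu$ on $E_\epsilon$; therefore $\phi(\gamma(0),s)=\tau\circ d\Phi_s\circ\tau^{-1}=\mathrm{id}\in\op{Sp}(2)$ for every $s\in[0,t]$, and $\widetilde\phi(\gamma(0),t)$, being the constant arc at the identity, equals $1\in\widetilde{\op{Sp}}(2)$.

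\textbf{Part (c).} Let $p=y+\epsilon\nu$ with $y\in\partial E$. By Lemma~\ref{lem:bub}(c) the outward unit normal to $Y_\epsilon$ at $p$ is $\nu$, so by \eqref{eqn:Reebsmoothed} the Reeb vector field $R_p$ is a positive multiple of $\mathbf{i}\nu$. Now $\partial E_\epsilon=\partial\overline{E}+\epsilon\nu$, and near $p$ this is contained in the union of the translates $\overline{F'}+\epsilon\nu$ over the $2$-faces $F'$ of $\overline{E}$ with $y\in\overline{F'}$; hence every tangent direction of $\partial E_\epsilon$ at $p$ lies in $\bigcup_{F'}TF'$. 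Each such $F'$ is a $2$-face of $X$ adjacent to the $3$-face $E$; letting $E'$ be the other adjacent $3$-face and using that $X$ has no Lagrangian $2$-face, Lemma~\ref{lem:EinEout}(c) gives $\langle\mathbf{i}\nu,\nu_{E'}\rangle\neq0$, whence Lemma~\ref{lem:EinEout}(a),(b) yields $\mathbf{i}\nu\in\pm\op{int}(T_{F'}^+\overline{E})$ and in particular $\mathbf{i}\nu\notin TF'$. Therefore $R_p\notin\bigcup_{F'}TF'$, i.e.\ $R_p$ is not tangent to $\partial E_\epsilon$ at $p$.

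The only place that needs real care is part (b): one must verify that the quaternionic trivialization sends the linearized return map to the honest identity matrix, not merely to a conjugate of it, and this uses the constancy of the contact plane $\xi$ --- not just of the ambient tangent plane --- along a Reeb trajectory inside $E_\epsilon$. The rest is bookkeeping with \eqref{eqn:Reebsmoothed}, \eqref{eqn:Reebinu}, and Lemmas~\ref{lem:bub} and \ref{lem:EinEout}.
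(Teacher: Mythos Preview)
Your proof is correct and follows essentially the same approach as the paper's. The only minor variation is in part (c): the paper appeals directly to Lemma~\ref{lem:la} (if the Reeb direction of $E$ were tangent to a $2$-face $F\subset\partial E$, then $\mathbf{i}\nu\in TF\cap R_F^+X$ forces $F$ Lagrangian), whereas you route through Lemma~\ref{lem:EinEout}(a)--(c) to reach the same conclusion that $\mathbf{i}\nu\notin TF'$; both arguments hinge on the no-Lagrangian-$2$-face hypothesis in the same way. Your extra verification in (b) that $\xi_{\gamma(s)}$ is constant is a detail the paper simply asserts, and your argument for it is fine (and indeed also follows immediately from the fact that $d\Phi_s=\mathrm{id}$ on $\R^4$ together with $d\Phi_s(\xi_{\gamma(0)})=\xi_{\gamma(s)}$).
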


\begin{proof}
(a)
This follows from equation \eqref{eqn:Reebsmoothed}.

(b) For $s\in[0,t]$, the Reeb flow $\Phi_s:Y_\epsilon\to Y_\epsilon$ is a translation on a neighborhood of $\gamma(0)$. Consequently the linearized Reeb flow $d\Phi_s:\xi_{\gamma(0)}\to \xi_{\gamma(s)}$ is the identity, if we regard $\xi_{\gamma(0)}$ and $\xi_{\gamma(s)}$ as (identical) two-dimensional subspaces of $\R^4$. The quaternionic trivialization $\tau:\R^2\to \xi_{\gamma(s)}$ likewise does not depend on $s\in[0,t]$. Consequently $\phi(y,s)=1$ for all $s\in[0,t]$. Thus $\widetilde{\phi}(y,t)$ is the constant path at the identity in $\op{Sp}(2)$.

(c)
It is equivalent to show that the Reeb vector field on $E$ at $y$ is not tangent to $\partial E$. If the Reeb vector field on $E$ at $y$ is tangent to $\partial E$, then it is tangent to some $2$-face $F\subset \partial E$. By Lemma~\ref{lem:la}, the face $2$-face $F$ is Lagrangian, contradicting our hypothesis that the polytope $X$ is symplectic.
\end{proof}

\subsection*{Smoothed 2-faces.}
Let $F$ be a $2$-face. Let $E_1$ and $E_2$ be the $3$-faces adjacent to $F$. By Lemma~\ref{lem:Reebcone}, we can choose these so that $R_{E_2}$ points out of $F$; and a similar argument shows that then $R_{E_1}$ points into $F$. Let $\nu_1$ and $\nu_2$ denote the outward unit normal vectors to $E_1$ and $E_2$ respectively. By Lemma~\ref{lem:ncn}, the normal cone $N_F^+$ consists of nonnegative linear combinations of $\nu_1$ and $\nu_2$. Let $\{v,w\}$ be an orthonormal basis for $F^\perp$, such that the orientation given by $(v,w)$ agrees with the orientation given by $(\nu_1,\nu_2)$. For $i=1,2$ we can write $\nu_i=(\cos\theta_i) v + (\sin\theta_i) w$ where $0<\theta_2-\theta_1 < \pi$. We then have a homeomorphism
\begin{equation}
\label{eqn:smoothed2face}
\begin{split}
F \times [\theta_1,\theta_2] &\stackrel{\simeq}{\longrightarrow} F_\epsilon,\\
(y,\theta) &\longmapsto 
y+\epsilon((\cos\theta) v + (\sin\theta) w).
\end{split}
\end{equation}

In the coordinates $(y,\theta)$, the Reeb vector field $R$ on $F_\epsilon$ depends only on $\theta$ by Lemma~\ref{lem:transinv}, and has positive $\partial_\theta$ coordinate for both $\theta=\theta_1$ and $\theta=\theta_2$ by our choice of labeling of $E_1$ and $E_2$. By equation \eqref{eqn:Reebsmoothed}, Lemma~\ref{lem:la}, and our hypothesis that the polytope $X$ is symplectic, the $\partial_\theta$ component of the Reeb vector field is positive on all of $F_\epsilon$.

Let $U_{F,\epsilon}\subset F$ denote the set of $y\in F$ such that the Reeb flow on $Y_\epsilon$ starting at $(y,\theta_1)\in F_\epsilon$ stays in $F_\epsilon$ until reaching a point in $F\times\{\theta_2\}$, which we denote by $(\phi_{F,\epsilon}(y),\theta_2)$. Thus we have a well-defined ``flow map'' $\phi_{F,\epsilon}: U_{F,\epsilon}\to F$.

\begin{lemma}
\label{lem:stf}
Let $F$ be a two-face of a symplectic polytope $X\subset \R^4$. Then:
\begin{itemize}
\item[\emph{(a)}] The flow map $\phi_{F,\epsilon}:U_{F,\epsilon}\to F$ above is translation by a vector $V_{F,\epsilon}\in TF$.
\item[\emph{(b)}]
$|V_{F,\epsilon}|=O(\epsilon)$ and $\lim_{\epsilon\to 0}U_{F,\epsilon}=F$.
\item[\emph{(c)}]
Let $y\in U_{F,\epsilon}$ and let $t$ be the Reeb flow time on $F_\epsilon$ from $y+\epsilon\nu_1$ to $\phi_{F,\epsilon}(y)+\epsilon\nu_2$. Then $\phi(y,t)\in\op{Sp}(2)$ agrees with the transition matrix $\psi_F$ in Definition~\ref{def:transitionmatrix}, and $\widetilde{\phi}(y,t)\in\widetilde{\op{Sp}}(2)$ is the unique lift of $\psi_F$ with rotation number in the interval $(0,1/2)$.
\end{itemize}
\end{lemma}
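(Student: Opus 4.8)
The plan is to work in the coordinates $(y,\theta)\in F\times[\theta_1,\theta_2]$ on $F_\epsilon$ from \eqref{eqn:smoothed2face}, writing $v'(\theta)=(\cos\theta)v+(\sin\theta)w$ and $w'(\theta)=-(\sin\theta)v+(\cos\theta)w$, so that the chart sends $\partial/\partial\theta$ to the ambient vector $\epsilon w'(\theta)$ and is the identity in the $y$-directions. By Lemma~\ref{lem:transinv} and \eqref{eqn:Reebsmoothed}, the Reeb vector field in these coordinates has the form $\dot y=g(\theta)$, $\dot\theta=h(\theta)$, where $g(\theta)\in TF$ is the $TF$-component of $2\mathbf{i}v'(\theta)/(\langle v'(\theta),y\rangle+\epsilon)$, $h(\theta)>0$ is the $\partial_\theta$-component, and --- this is the key point --- neither depends on $y$, the $y$-independence of $\langle v'(\theta),y\rangle$ for $y\in F$ being exactly Lemma~\ref{lem:transinv}. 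For part (a), since $h>0$ I reparametrize the flow by $\theta$ to get $dy/d\theta=g(\theta)/h(\theta)$, independent of $y$; integrating from $\theta_1$ to $\theta_2$ gives $\phi_{F,\epsilon}(y)=y+V_{F,\epsilon}$ with $V_{F,\epsilon}=\int_{\theta_1}^{\theta_2}\big(g(\theta)/h(\theta)\big)\,d\theta\in TF$. For part (b), since $\mathbf{i}v'(\theta)\perp v'(\theta)$ its $F^\perp$-component is $\omega_0(v'(\theta),w'(\theta))\,w'(\theta)$, and $c:=\omega_0(v'(\theta),w'(\theta))$ is a $\theta$-independent constant, nonzero because $X$ symplectic forces $\omega_0|_{F^\perp}$ to be nondegenerate; thus $g(\theta)/h(\theta)=(\epsilon/c)\,(\mathbf{i}v'(\theta))_{TF}$, so $|V_{F,\epsilon}|=O(\epsilon)$ and the whole trajectory stays within $O(\epsilon)$ of its starting point, giving $U_{F,\epsilon}\supseteq\{y\in F:\op{dist}(y,\partial F)\ge C\epsilon\}$ for a suitable constant $C$, and hence $U_{F,\epsilon}\to F$.

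Part (c) is the heart of the matter; write $p_0=y+\epsilon\nu_1$ and $p_1=\phi_{F,\epsilon}(y)+\epsilon\nu_2$. The first observation is that, because the $y$-motion just described is a translation, $d\Phi_t$ restricts to the identity on $TF$, regarded as a single subspace of $\R^4$ contained in both $T_{p_0}Y_\epsilon=\nu_1^\perp$ and $T_{p_1}Y_\epsilon=\nu_2^\perp$ (the inclusions holding since $TF\perp N_F^+X$): in the chart, perturbing $y$ at fixed $\theta=\theta_1$ reaches $\theta=\theta_2$ after the same time $t$ with the final $y$ shifted by the identical vector. The second observation identifies the quaternionic trivialization at the endpoints with $\tau'_F$ and $\tau_F$. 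Using that $\tau^{-1}$ is ``project $\mathbf{j}\nu$, $\mathbf{k}\nu$ to $\xi$ along the Reeb direction'' and that the Reeb direction at $p_0$ (resp.\ $p_1$) is a positive multiple of $\mathbf{i}\nu_1$ (resp.\ $\mathbf{i}\nu_2$), one checks $\tau|_{\xi_{p_0}}=\tau'_F\circ P_0$ and $\tau|_{\xi_{p_1}}=\tau_F\circ P_1$, where $P_i\colon\xi_{p_i}\to TF$ is the projection along $\R\mathbf{i}\nu_i$; the first identity is literally \eqref{eqn:tauFprimeinverse}, and both use the quaternion orthonormality $\langle\mathbf{i}\nu,\mathbf{j}\nu\rangle=\langle\mathbf{i}\nu,\mathbf{k}\nu\rangle=0$ as well as $\mathbf{i}\nu_i\notin TF$ (which holds since no $2$-face of $X$ is Lagrangian). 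Here I use the labeling fixed before the lemma: $\tau_F$ uses the normal $\nu_2$ of the $3$-face the trajectory flows into, $\tau'_F$ the normal $\nu_1$ of the one it flowed out of. Finally $d\Phi_t$ carries $\xi_{p_0}$ to $\xi_{p_1}$ (it preserves the contact form) and $\mathbf{i}\nu_1$ to a multiple of $\mathbf{i}\nu_2$ (it preserves the Reeb vector field), and equals the identity on $TF$; decomposing a vector of $\xi_{p_0}$ as an element of $TF$ plus a multiple of $\mathbf{i}\nu_1$ then gives $P_1\circ d\Phi_t\circ P_0^{-1}=\op{id}_{TF}$, whence $\phi(y,t)=\tau|_{\xi_{p_1}}\circ d\Phi_t\circ(\tau|_{\xi_{p_0}})^{-1}=\tau_F\circ(\tau'_F)^{-1}=\psi_F$.

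For the statement about the lift, I run the same two identifications at an arbitrary intermediate angle $\theta$: the linearized flow at intermediate times, reparametrized by the angle $\theta$ (a monotone change of parameter, so the homotopy class rel endpoints is unchanged), equals $\theta\mapsto\tau_F^{(\theta)}\circ(\tau'_F)^{-1}$ with $\tau_F^{(\theta)}(W)=(\langle W,\mathbf{j}v'(\theta)\rangle,\langle W,\mathbf{k}v'(\theta)\rangle)$, starting at $\op{id}$ when $\theta=\theta_1$ and ending at $\psi_F$ when $\theta=\theta_2$. By the computation in the proof of Lemma~\ref{lem:transitionmatrix} (with $\nu'=\nu_1$, $\nu=v'(\theta)$) this matrix has trace $2\langle\nu_1,v'(\theta)\rangle=2\cos(\theta-\theta_1)$ and is positive elliptic for $\theta\in(\theta_1,\theta_2]$, since $0<\theta_2-\theta_1<\pi$. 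Hence the rotation number of the resulting lift is continuous in $\theta$, equals $0$ at $\theta_1$, and for $\theta>\theta_1$ lies in $(0,1/2)+\Z$ (positive ellipticity together with Lemma~\ref{lem:compute_rho_bar}); by continuity it therefore stays in $(0,1/2)$, and at $\theta=\theta_2$ this says exactly that $\widetilde{\phi}(y,t)$ is the lift of $\psi_F$ with rotation number in $(0,1/2)$.

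I expect the main obstacle to be the bookkeeping in the second paragraph: tracking which adjacent $3$-face supplies the normal used in $\tau_F$ versus $\tau'_F$ (precisely the choice of $E_1$, $E_2$ fixed before the lemma) and verifying that the ``projection along the Reeb direction'' built into the quaternionic trivialization is the same affine-algebraic projection as in \eqref{eqn:tauFprimeinverse}. A minor additional point worth a sentence: although $Y_\epsilon$ is only $C^1$, the trajectory here lies in the interior of the smooth manifold-with-boundary $\overline{F_\epsilon}$, so $d\Phi_t$, and hence $\phi(y,t)$ and $\widetilde{\phi}(y,t)$, are genuinely well defined, one-sided at the endpoints $p_0$ and $p_1$.
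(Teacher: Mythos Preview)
Your proof is correct and, for parts (a), (b), and the identification $\phi(y,t)=\psi_F$ in (c), follows essentially the same route as the paper: translation invariance from Lemma~\ref{lem:transinv}, an $O(\epsilon)$ bound coming from the fact that $\partial_\theta$ has ambient length $\epsilon$, and the commutative-diagram argument matching $\tau$ at the two endpoints with $\tau'_F$ and $\tau_F$ via projection along the Reeb direction (your $P_0,P_1$).

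The one genuine difference is in the lift argument. The paper fixes $\epsilon$ and deforms the \emph{target} hyperplane $E_2$ back to $E_1$ through a family $\{E_s\}$ of hyperplanes meeting $E_1$ in $F$, invoking continuity of $\rho$ along this auxiliary family. You instead analyze the actual defining path $\{\phi(y,s)\}_{s\in[0,t]}$, reparametrized by $\theta$, and identify it as $\theta\mapsto\tau_F^{(\theta)}\circ(\tau'_F)^{-1}$; then the computation of Lemma~\ref{lem:transitionmatrix} with $\nu'=\nu_1$, $\nu=v'(\theta)$ shows each intermediate matrix is positive elliptic (the check $a_2=\langle\mathbf{i}\nu_1,v'(\theta)\rangle>0$ reduces to $\sin(\theta-\theta_1)\langle\mathbf{i}\nu_1,w'(\theta_1)\rangle>0$, which holds because the $\partial_\theta$-component of $R$ is positive at $\theta_1$). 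Your approach is arguably more direct, since it works with the path that literally defines $\widetilde{\phi}(y,t)$ rather than an auxiliary homotopy; the paper's approach has the mild advantage of packaging the intermediate computation as ``the same lemma with a varying $E_2$''. Both arrive at the same continuity-plus-$(0,1/2)$ conclusion.
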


\begin{proof}
(a) If $y,y'\in U_{F,\epsilon}$, then it follows from the translation invariance in Lemma~\ref{lem:transinv} that $\phi_{F,\epsilon}(y)-y=\phi_{F,\epsilon}(y')-y'$, so $\phi_{F,\epsilon}$ is a translation.

(b) It follows from equation \eqref{eqn:Reebsmoothed} that for each $v$, the Reeb vector field $R_{y+\epsilon v}$, regarded as a vector in $\R^4$, has a well-defined limit as $\epsilon\to 0$, which by Lemma~\ref{lem:la} is not tangent to $F$. Since $\partial_\theta$, regarded as a vector in $\R^4$, has length $\epsilon$, it follows that the flow time of the Reeb vector field on $F_\epsilon$ from $F\times\{\theta_1\}$ to $F\times\{\theta_2\}$ is $O(\epsilon)$. Consequently the translation vector $V_{F,\epsilon}$ has length $O(\epsilon)$, and the complement $F\setminus U_{F,\epsilon}$ of the domain of the flow map is contained within distance $O(\epsilon)$ of $\partial F$.

(c) Write $y_1=y+\epsilon\nu_1$ and $y_2=\phi_{F,\epsilon}(y)+\epsilon\nu_2$. By part (a) and the translation invariance in Lemma~\ref{lem:transinv}, the time $t$ Reeb flow $\Phi_t$ on $Y_\epsilon$ restricted to $U_{F,\epsilon} + \epsilon \nu_1$ is a translation in $\R^4$.
Hence the derivative of $\Phi_t$ on the full tangent space of $Y_\epsilon$, namely
\[
d\Phi_t: T_{y_1}Y_\epsilon \longrightarrow T_{y_2}Y_\epsilon,
\]
restricts to the identity on $TF$. We now have a commutative diagram
\[
\begin{CD}
\xi_{y_1} @>>> TF @>{\tau_F'}>> \R^2 \\
@V{d\Phi_t}VV @V{1}VV @VV{\psi_F}V \\
\xi_{y_2} @>>> TF @>{\tau_F}>> \R^2.
\end{CD}
\]
Here the upper left horizontal arrow is projection along the Reeb vector field in $T_{y_1}Y_\epsilon$, and the lower left horizontal arrow is projection along the Reeb vector field in $T_{y_2}Y_\epsilon$. The right horizontal arrows were defined in Definition~\ref{def:qtfg} and Remark~\ref{rem:altcon}. The left square commutes because $d\Phi_t$ preserves the Reeb vector field. The right square commutes by Definition~\ref{def:transitionmatrix}. The composition of the arrows in the top row is the quaternionic trivialization $\tau$ on $\xi_{y_1}$, and the composition of the arrows in the bottom row is the quaternionic trivialization $\tau$ on $\xi_{y_2}$. Going around the outside of the diagram then shows that $\phi(y,t)=\psi_F$.

To determine the lift $\widetilde{\phi}(y,t)$, note that this is actually defined for, and depends continuously on, any $\epsilon>0$ and any pair of hyperplanes $E_1$ and $E_2$ that do not contain the origin and that intersect in a non-Lagrangian $2$-plane $F$. Thus we can denote this lift by $\widetilde{\phi}(E_1,E_2,\epsilon)\in\widetilde{\op{Sp}}(2)$.
Now fixing $E_1$, $F$, and $\epsilon$, we can interpolate from $E_1$ and $E_2$ via a $1$-parameter family of hyperplanes $\{E_s\}_{s\in[1,2]}$ such that $0\notin E_s$ and $E_1\cap E_s=F$ for $1<s\le 2$. The rotation number $\rho:\widetilde{\op{Sp}}(2)\to\R$ then gives us a continuous map
\[
\begin{split}
f: (1,2] &\longrightarrow \R,\\
s &\longmapsto \rho\left(\widetilde{\phi}(E_1,E_s,\epsilon)\right)
\end{split}
\]
We have $\lim_{\tau\searrow 1}\widetilde{\phi}(E_1,E_s,\epsilon)=1$, so $\lim_{s\searrow 1}f(s) = 0$. On the other hand, for each $s\in(1,2]$, the fractional part of $f(s)$ is in the interval $(0,1/2)$ by Lemma~\ref{lem:transitionmatrix}. It follows by continuity that $f(s)\in(0,1/2)$ for all $s\in(1,2]$. Thus $f(2)\in(0,1/2)$, which is what we wanted to prove.
\end{proof}

\subsection*{Smoothed 1-faces.} The Reeb flow on a smoothed $1$-face is more complicated, but we will not need to analyze this in detail. We just remark that one can see the difference between good and bad $1$-faces in the Reeb dynamics on their smoothings. Namely:

\begin{remark}
\label{rem:spiral}
If $L$ is a bad $1$-face, then by definition, there is a unique unit vector $v\in N_L^+X$ such that ${\mathbf i}v$ is tangent to $L$. The line segment $L+\epsilon v\subset L_\epsilon$ is then a Reeb trajectory. On the complement of this line in $L_\epsilon$, the Reeb vector field spirals around the line, with the number of times that it spirals around going to infinity as $\epsilon\to 0$. This gives some intuition why Type 3 combinatorial Reeb orbits do not correspond to limits of sequences of Reeb orbits on smoothings with bounded rotation number.
\end{remark}

By contrast, if $L$ is a good $1$-face, then the Reeb vector field on $L_\epsilon$ always has a nonzero component in the $N_L^+X$ direction.

\subsection*{Smoothed 0-faces.} If $P$ is a $0$-face, then by Lemma~\ref{lem:bub}, $P_\epsilon$ is identified with a domain in $S^3$. By equation \eqref{eqn:Reebsmoothed}, the Reeb vector field on this domain agrees, up to reparametrization, with the standard Reeb vector field on the unit sphere in $\R^4$.

\subsection{Non-smooth strata}
\label{sec:nss}

We now investigate in more detail how Reeb trajectories on $Y_\epsilon$ intersect the strata where $Y_\epsilon$ is not $C^\infty$.

Let $\Sigma$ denote the subset of $Y_\epsilon$ where $Y_\epsilon$ is not locally $C^\infty$. By the discussion at the end of \S\ref{sec:smoothings}, we can write
\[
\Sigma = \Sigma_1 \sqcup \Sigma_2 \sqcup \Sigma_3
\]
where:

\begin{itemize}
\item
$\Sigma_1$ is the disjoint union of sets
\begin{equation}
\label{eqn:Sigma1}
P+\{v\in N_L^+X\mid |v|=\epsilon\}
\end{equation}
where $P$ is a vertex of $X$, and $L$ is a $1$-face adjacent to $P$.
\item
$\Sigma_2$ is the disjoint union of sets
\begin{equation}
\label{eqn:Sigma2}
L+\{v\in N_F^+X\mid |v|=\epsilon\}
\end{equation}
where $L$ is a $1$-face, and $F$ is a $2$-face adjacent to $L$.
\item
$\Sigma_3$ is the disjoint union of sets
\[
F+\epsilon\nu
\]
where $F$ is a $2$-face, and $\nu$ is the outward unit normal vector to one of the two $3$-faces $E$ adjacent to $F$.
\end{itemize}

\begin{lemma}
\label{lem:nsrt}
Let $X\subset\R^4$ be a symplectic polytope, let $\epsilon>0$, and let $\gamma:[a,b]\to Y_\epsilon$ be a Reeb trajectory. Then there exist a nonnegative integer $k$ and real numbers $a\le t_1 < t_2 < \cdots < t_k \le b$ with the following properties:
\begin{itemize}
\item[\emph{(a)}]
$\gamma(t_i)\in\Sigma$ for each $i$.
\item[\emph{(b)}]
For each $i=0,\ldots,k$, one of the following possibilities holds:
\begin{itemize}
\item[\emph{(i)}] $\gamma$ maps $(t_i,t_{i+1})$ to $Y_\epsilon\setminus\Sigma$. (Here we interpret $t_0=a$ and $t_{k+1}=b$.)
\item[\emph{(ii)}] $\gamma$ maps $(t_i,t_{i+1})$ to a Reeb trajectory in a component of $\Sigma_1$. (Each component of $\Sigma_1$ contains at most one Reeb trajectory of positive length.)
\item[\emph{(iii)}] $\gamma$ maps $(t_i,t_{i+1})$ to a Reeb trajectory in a component of $\Sigma_2$. (This can only happen when the corresponding $2$-face $F$ is complex linear, and in this case the component of $\Sigma_2$ is foliated by Reeb trajectories.)
\end{itemize}
\end{itemize}
\end{lemma}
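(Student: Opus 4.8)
The plan is to prove the statement in two stages: a local classification of how a Reeb trajectory can meet each component of $\Sigma$, and a global finiteness argument producing the finite list $t_1<\cdots<t_k$.

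\emph{Local classification.} Writing a point of $Y_\epsilon$ as $y+\epsilon v$ with $y$ in a face $F$ of $X$ and $v\in N_F^+X$ a unit vector, equation \eqref{eqn:Reebsmoothed} says the Reeb vector field there is a positive multiple of $\mathbf{i}v$. I would use this to decide, for each component $C$ of $\Sigma$, whether $R$ is tangent or transverse to $C$. For a component $C=P+\{v\in N_L^+X:|v|=\epsilon\}$ of $\Sigma_1$, $C$ is a geodesically convex spherical polygon inside the round $2$-sphere $\{v\in\Span(N_L^+X):|v|=\epsilon\}$ (convexity because $N_L^+X$ is a pointed cone, $X$ being full-dimensional, so that $C$ lies in an open hemisphere); since $\mathbf{i}v\perp v$, the Reeb direction is tangent to that $2$-sphere, and it is tangent to $C$ precisely when $\mathbf{i}v\in\Span(N_L^+X)$, i.e.\ when $v$ lies in the subspace $W:=\Span(N_L^+X)\cap\mathbf{i}^{-1}\big(\Span(N_L^+X)\big)$. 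One checks $W$ is $\mathbf{i}$-invariant, hence $2$-dimensional, so $W\cap C$ is at most one geodesic arc (a great circle meets an open hemisphere in one arc); along this arc $R$ is tangent and the arc is a Reeb trajectory, and off it $R$ is transverse. This yields case (ii) and the parenthetical remark. For a component of $\Sigma_3$, Lemma~\ref{lem:smoothed3face}(c) already says $R$ is not tangent to $\partial E_\epsilon$, hence transverse to the component. For a component $C=L+\{v\in N_F^+X:|v|=\epsilon\}$ of $\Sigma_2$, decompose $\R^4=TF\oplus F^\perp$ and write $\mathbf{i}v=u_1+u_2$ with $u_1\in F^\perp$, $u_2\in TF$; then $u_1\perp v$ inside $F^\perp$, so $R$ is tangent to $C$ iff $u_2\in TL$. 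The linear map $v\mapsto u_2$, $F^\perp\to TF$, has kernel $F^\perp\cap\mathbf{i}^{-1}(F^\perp)$, which is $\mathbf{i}$-invariant and so is either $0$ (when $F$ is not complex linear, in which case the map is a bijection) or all of $F^\perp$ (when $F$ is complex linear). If $F$ is complex linear, $R$ is tangent to $C$ everywhere and, the velocity then lying along the arc direction, $C$ is foliated by the Reeb trajectories $\{y\}+\{v\in N_F^+X:|v|=\epsilon\}$, $y\in L$ --- this is case (iii). If $F$ is not complex linear, $R$ is tangent to $C$ only along the $1$-dimensional curve $L+\{v_*\}$ for a single $v_*$ (if $v_*\in N_F^+X$), and there the velocity has a nonzero component in the arc direction, so a Reeb trajectory cannot stay on that curve; in particular it cannot remain in any $\Sigma_2$-component with $F$ not complex linear.

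\emph{Global finiteness.} I would then show $\gamma^{-1}(\Sigma)$ is a finite union of points and closed intervals. Granting this, the intervals map into single components of $\Sigma$ (by connectedness and the disjointness of the components), which by the local classification are forced to be $\Sigma_1$-components or complex-linear $\Sigma_2$-components, giving (ii)/(iii), while the complementary open intervals give (i); taking the $t_i$ to be the finitely many endpoints of these intervals together with the isolated points of $\gamma^{-1}(\Sigma)$ completes the proof. For the finiteness, suppose the boundary of $\gamma^{-1}(\Sigma)$ in $[a,b]$ were infinite and let $t_*$ be an accumulation point, $p=\gamma(t_*)\in\Sigma$. Near $p$, $Y_\epsilon$ is a union of finitely many closed smoothed faces on each of which $R$ is real-analytic, and the components of $\Sigma$ either pass through $p$ or stay at positive distance from $p$. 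If $R(p)$ is transverse to every component of $\Sigma$ through $p$, then near $t_*$ the curve $\gamma$ is a finite concatenation of real-analytic arcs each lying in one closed smoothed face, and on each such arc $\gamma^{-1}(\Sigma)$ is the zero set of a nonvanishing analytic function, hence discrete near $t_*$ --- a contradiction. If $R(p)$ is tangent to some component $C$ of $\Sigma$ through $p$, then by the local classification $C$ is a $\Sigma_1$-component with $p$ on its invariant arc, or a complex-linear $\Sigma_2$-component, so $\gamma$ lies in $C$ on an interval around $t_*$ and $\gamma^{-1}(\Sigma)$ contains a neighborhood of $t_*$ --- again a contradiction.

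The hard part will be the last step: ruling out an accumulation point $t_*$ at which $R(p)$ is tangent to a \emph{non-invariant} component of $\Sigma$ --- a $\Sigma_3$-component, or a $\Sigma_2$-component with $F$ not complex linear at a point of its $1$-dimensional tangency curve. Here I would use the explicit descriptions of the flow on smoothed faces: on a smoothed $3$-face the trajectory is a straight segment through a convex region (Lemma~\ref{lem:smoothed3face}(a)), on a smoothed $2$-face the coordinate $\theta$ of \eqref{eqn:smoothed2face} is strictly increasing with a uniform positive lower bound on its derivative while the flow across $F_\epsilon$ moves the $F$-coordinate by $O(\epsilon)$ (Lemma~\ref{lem:stf}(a),(b)), and $DR$ is bounded on each closed smoothed face. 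Together these should force $\gamma$ to enter and leave each closed smoothed face only finitely often on the bounded interval $[a,b]$, precluding infinite oscillation across such a wall and establishing the finiteness of $\gamma^{-1}(\Sigma)$.
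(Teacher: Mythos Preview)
Your local classification for each stratum of $\Sigma$ is correct and matches the paper's: your invariant subspace $W=L^\perp\cap\mathbf{i}L^\perp$ for $\Sigma_1$ is precisely the plane containing the unique Hopf circle in $L^\perp$ that the paper invokes, and your complex-linearity dichotomy for $\Sigma_2$ coincides with the paper's criterion on whether the projections of $\mathbf{i}v$ and $\mathbf{i}w$ to $TF$ are parallel.

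The organizational difference is that the paper handles finiteness stratum-by-stratum rather than as a separate global step. For a $\Sigma_2$ component with $F$ not complex linear, the paper simply observes that the Reeb vector field on $\bar F_\epsilon$ has everywhere-positive $\partial_\theta$ component and that tangency to the component occurs only at isolated $\theta$-values, and concludes directly that intersections are isolated --- the $\theta$-monotonicity is the whole mechanism (between consecutive tangency values of $\theta$ the normal component of $R$ along the $\Sigma_2$ wall has one sign, so the trajectory can cross at most once there), with no need for real-analyticity of the flow or bounds on $DR$. You do invoke the $\theta$-monotonicity in your ``hard part'' sketch, so you have the essential ingredient; the paper just deploys it earlier and more directly, which lets it skip your accumulation-point argument altogether. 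One small correction: $\Sigma_3$ should not appear in your ``hard part'', since you already established via Lemma~\ref{lem:smoothed3face}(c) that the Reeb field is everywhere transverse to $\Sigma_3$, so there are no tangency points along those components to worry about.
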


\begin{proof}
We need to show that a Reeb trajectory intersects $\Sigma$ in isolated points, or in Reeb trajectories of the types described in (ii) and (iii).

We have seen in \S\ref{sec:Rfssp} that the Reeb vector field is transverse to all of $\Sigma_3$. Thus the Reeb trajectory $\gamma$ intersects $\Sigma_3$ only in isolated points.

Next let us consider the Reeb vector field on a component of $\Sigma_2$ of the form \eqref{eqn:Sigma2}. As in \S\ref{sec:Rfssp}, let $E_1$ and $E_2$ denote the $3$-faces adjacent to $F$, with outward unit normal vectors $\nu_1$ and $\nu_2$ respectively. The smoothing $F_\epsilon$ is parametrized by \eqref{eqn:smoothed2face}. This parametrization extends by the same formula to a parametrization of $\overline{F_\epsilon}$ by $\overline{F}\times [\theta_1,\theta_2]$. The latter parametrization includes the component \eqref{eqn:Sigma2} of $\Sigma_2$ as the restriction to $L\times [\theta_1,\theta_2]$. By equation \eqref{eqn:Reebsmoothed}, at the point corresponding to $(y,\theta)$ in \eqref{eqn:smoothed2face}, the Reeb vector is given by
\begin{equation}
\label{eqn:RSigma2}
R = \frac{2}{\langle (\cos\theta)v+(\sin\theta)w,y\rangle + \epsilon}{\mathbf i}((\cos\theta) v + (\sin\theta) w).
\end{equation}
This vector is tangent to the component \eqref{eqn:Sigma2} if and only if the orthogonal projection of ${\mathbf i}((\cos\theta)v + (\sin\theta)w)$ to $F$ is parallel to $L$.

If the projections of ${\mathbf i}v$ and ${\mathbf i}w$ to $F$ are not parallel, then this tangency will only happen for isolated values of $\theta$, and since the Reeb vector field on $\overline{F_\epsilon}$ always has a positive $\partial_\theta$ component, a Reeb trajectory will only intersect the component \eqref{eqn:Sigma2} in isolated points.

If on the other hand the projections of ${\mathbf i}v$ and ${\mathbf i}w$ to $F$ are parallel, then there is a nontrivial linear combination of ${\mathbf i}v$ and ${\mathbf i}w$ whose projection to $F$ is zero. This means that there is a nonzero vector $\nu$ perpendicular to $F$ such that ${\mathbf i}\nu$ is also perpendicular to $F$. This means that $F^\perp$ is complex linear, and thus $F$ is also complex linear. Then ${\mathbf i}v$ and ${\mathbf i}w$ are both perpendicular to $F$, so in the parametrization \eqref{eqn:smoothed2face}, the Reeb vector field vector field \eqref{eqn:RSigma2} is a just a positive multiple of $\partial_\theta$.

The conclusion is that a Reeb trajectory will intersect each component \eqref{eqn:Sigma2} of $\Sigma_2$ either in isolated points, or (when $F$ is complex linear) in Reeb trajectories which, in the parametrization \eqref{eqn:smoothed2face}, start on $L\times \{\theta_1\}$ and end on $L\times\{\theta_2\}$, keeping the $L$ component constant.

Finally we consider the Reeb vector field on a component \eqref{eqn:Sigma1} of $\Sigma_1$. The set of vectors $v$ that arise in \eqref{eqn:Sigma1} is a domain $D$ in the intersection of the sphere $|v|=\epsilon$ with the hyperplane $L^\perp$. As we have seen at the end of \S\ref{sec:Rfssp}, the Reeb vector field on $Y_\epsilon$ at a point in \eqref{eqn:Sigma1} agrees, up to scaling, with the standard Reeb vector field on the sphere $|v|=\epsilon$, whose Reeb orbits are Hopf circles. There is a unique Hopf circle $C$ contained entirely in $L^\perp$. All other Hopf circles intersect $L^\perp$ transversely. Thus any Reeb trajectory in $Y_\epsilon$ intersects the component \eqref{eqn:Sigma1} in isolated points and/or the arc corresponding to $C\cap D$, if the latter intersection is nonempty.
\end{proof}

\subsection{Rotation number of Reeb trajectories}
\label{sec:srn}

Suppose $\gamma:[a,b]\to Y_\epsilon$ is a Reeb trajectory. Let $D\subset Y_\epsilon$ be a disk through $\gamma(a)$ tranverse to $\gamma$, and let $D'\subset Y_\epsilon$ be a disk through $\gamma(b)$ transverse to $\gamma$. We can identify $D$ with a neighborhood of $0$ in $\xi_{\gamma(a)}$, and $D'$ with a neighborhood of $0$ in $\xi_{\gamma(b)}$, via orthogonal projection in $\R^4$. If $D$ is small enough, then there is a well-defined map continuous map $\phi:D\to D'$ with $\phi(\gamma(a))=\gamma(b)$, such that for each $x\in D$, there is a unique Reeb trajectory near $\gamma$ starting at $x$ and ending at $\phi(x)$.

\begin{lemma}
\label{lem:plrm}
Let $X$ be a symplectic polytope in $\R^4$, let $\epsilon>0$, and let $\gamma:[a,b]\to Y_\epsilon$ be a Reeb trajectory. Then there is a unique (independent of the choice of $D$ and $D'$) homeomorphism
\[
P_\gamma:\xi_{\gamma(a)} \longrightarrow \xi_{\gamma(b)}
\]
such that:
\begin{itemize}
\item[\emph{(a)}]
\begin{equation}
\label{eqn:uniqueP}
\lim_{x\to 0}\frac{\phi(x)-P_\gamma(x)}{\|x\|}=0.
\end{equation}
\item[\emph{(b)}] $P_\gamma$ is linear along rays, i.e. if $x\in \xi_{\gamma(a)}$ and $c>0$ then $P_\gamma(cx) = cP_\gamma(x)$.
\end{itemize}
This map $P_\gamma$ has the following additional properties:
\begin{itemize}
\item[\emph{(c)}]
If $\gamma$ does not include any arcs as in Lemma~\ref{lem:nsrt}(ii)-(iii), and in particular if $\gamma$ does not intersect any smoothed $0$-face or smoothed $1$-face, then $P_\gamma$ is linear.
\item[\emph{(d)}]
For $t\in(a,b)$ we have the composition property
\[
P_\gamma = P_{\gamma|_{[t,b]}} \circ P_{\gamma|_{[a,t]}}.
\]
\item[\emph{(e)}]
For $t\in [a,b]$, the homeomorphism $\R^2\to\R^2$ given by the composition
\[
\R^2 \stackrel{\tau^{-1}}{\longrightarrow} \xi_{\gamma(a)} \stackrel{P_{\gamma|_{[a,b]}}}{\longrightarrow} \xi_{\gamma(t)} \stackrel{\tau}{\longrightarrow} \R^2
\]
is a continuous, piecewise smooth function of $t$.
\end{itemize}
\end{lemma}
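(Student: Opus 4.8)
The plan is to dispose of uniqueness first and then build $P_\gamma$ piece by piece. \emph{Uniqueness} given (a) and (b) is immediate: if $P$ and $P'$ both satisfy (a) and (b), then $x\mapsto P(x)-P'(x)$ is linear along rays, so along a ray $x=su$ with $\|u\|=1$, $s>0$ the quantity $(P(su)-P'(su))/\|su\| = (P(u)-P'(u))/\|u\|$ is independent of $s$; by (a) it must vanish, so $P(u)=P'(u)$ for all unit $u$ and hence $P=P'$. In particular, once a map with properties (a), (b) is produced, its independence of the auxiliary disks $D,D'$ is automatic, and it remains only to verify (c)--(e).

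For existence I would use Lemma~\ref{lem:nsrt} to cut $[a,b]$ into finitely many closed subintervals on each of which $\gamma$ is of one of three kinds --- (i) contained in $Y_\epsilon\setminus\Sigma$, and hence, since the walls $\Sigma_3$ separate the smoothed $3$- and $2$-faces, in a single open smoothed face; (ii)--(iii) running along a single component of $\Sigma_1$ or $\Sigma_2$ --- together with the finitely many instants at which $\gamma$ meets $\Sigma$. On a subinterval of kind (i): by Lemma~\ref{lem:smoothed3face}(a)--(b) the Reeb flow on a smoothed $3$-face is a translation with linearized flow the identity; by Lemma~\ref{lem:transinv} and the parametrization \eqref{eqn:smoothed2face} the Reeb flow on a smoothed $2$-face is smooth and transverse to the $\theta$-slices; and by the discussion at the end of \S\ref{sec:Rfssp} the Reeb flow on a smoothed $1$- or $0$-face coincides, up to reparametrization, with the smooth standard Reeb flow on a round sphere. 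In all cases the local flow near the subarc is smooth, so the map $\phi$ is smooth, and I would set $P_\gamma$ equal to the derivative of $\phi$ at $0$; this is linear, and when the subarc crosses an entire smoothed $2$-face its $\tau$-conjugate is the transition matrix $\psi_F$ by Lemma~\ref{lem:stf}(c). For an isolated transverse crossing of $\Sigma$ --- transversality to $\Sigma_3$ is Lemma~\ref{lem:smoothed3face}(c), and the transverse cases in general are part of Lemma~\ref{lem:nsrt} --- the map $\phi$ is the composition of the two one-sided smooth flow maps with the crossing-time map, which is smooth by the implicit function theorem, so again $\phi$ is smooth and $P_\gamma$ is its derivative at $0$, a linear map. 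This already yields (c): if $\gamma$ contains no subarc of kind (ii) or (iii), every piece of $P_\gamma$ is linear, hence so is $P_\gamma$.

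The crux, and the main obstacle, is the behavior near a subarc of $\gamma$ that runs along a component of $\Sigma_1$ or $\Sigma_2$, where $\phi$ need not be differentiable at $0$: a nearby trajectory that does not lie on the special Hopf circle $C$ (case (ii)) or along the foliation of the complex-linear stratum (case (iii)) enters and leaves the stratum at displaced points, spends a displaced amount of time on it, and --- depending on which side of the stratum it starts --- may traverse a different list of smoothed faces. I would work in local coordinates adapted to the stratum, in which the Reeb flow on each incident smoothed face is affine or linear: for a $\Sigma_2$ component via Lemma~\ref{lem:stf}(a), and for a $\Sigma_1$ component because the relevant flow is the linear Hopf flow up to a (position-dependent) reparametrization, by the end of \S\ref{sec:Rfssp} and the proof of Lemma~\ref{lem:nsrt}. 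One then has to show that the ``itinerary'' of a nearby trajectory (the ordered list of faces and strata it traverses before returning to the outgoing section) partitions a punctured neighborhood of $0$ in $D$ into finitely many convex sectors, and that the hitting and exit times along a fixed itinerary are positively homogeneous of degree $0$ in the displacement --- rescaling the displacement rescales the whole perturbed trajectory without changing its combinatorics. It follows that $\phi$ is piecewise smooth on these finitely many sectors; its sectorwise derivative at $0$ then glues (continuously, since $\phi$ is continuous) to a map $P_\gamma$ that is linear along rays and piecewise linear, approximates $\phi$ to first order by construction, and is a homeomorphism by invariance of domain together with properness (the latter because $P_\gamma$ is piecewise linear and nonvanishing off $0$, which follows from injectivity of the homeomorphism $\phi$). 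I expect the itinerary control and the degree-$0$ homogeneity to be the delicate points; this is exactly where convexity of $X_\epsilon$ (so the relevant flow domains and strata are convex) and the symplectic-polytope hypothesis on $X$ (which makes the stratum structure as tame as in Lemma~\ref{lem:nsrt}) are used.

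Finally I would assemble the pieces: define $P_\gamma$ for all of $\gamma$ as the composition, in order, of the maps just constructed for the successive subarcs. Each factor is linear along rays and piecewise linear --- hence Lipschitz near $0$ --- and approximates its own $\phi$ to first order; a routine estimate then shows the composition is linear along rays and approximates the composed map $\phi$ to first order, giving (a) and (b), and hence, by uniqueness, independence of $D$ and $D'$. Property (d) is then immediate from uniqueness: for $t\in(a,b)$ the maps $P_{\gamma|_{[a,t]}}$ and $P_{\gamma|_{[t,b]}}$ satisfy (a)--(b), hence are the unique such maps, and their composition satisfies (a)--(b) for $\gamma$, hence equals $P_\gamma$. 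Property (e) follows from the construction together with (d): on the interior of each subarc the family $\tau\circ P_{\gamma|_{[a,t]}}\circ\tau^{-1}$ is the $\tau$-conjugate of a composition of linearized Reeb flows, which depends smoothly on $t$ since the Reeb flow and $\tau$ are smooth there, and it extends continuously across the finitely many transition instants; so it is continuous and piecewise smooth in $t$.
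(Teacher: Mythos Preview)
Your overall plan matches the paper's: deduce uniqueness from (a)+(b), then build $P_\gamma$ by composing along the subdivision of Lemma~\ref{lem:nsrt}, handling type-(i) pieces via the ordinary linearized flow and isolated $\Sigma$-crossings via one-sided limits. Your arguments for uniqueness, (c), (d), and (e) are essentially those of the paper.

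The gap is in your treatment of the type-(ii)/(iii) subarcs. Your ``itinerary and sector'' argument rests on the claim that the Reeb flow on each smoothed face incident to the stratum is affine or linear, so that rescaling the initial displacement rescales the perturbed trajectory and makes hitting times homogeneous of degree~$0$. This is not correct. A $\Sigma_2$-component separates a smoothed $1$-face $L_\epsilon$ from a smoothed $2$-face $F_\epsilon$; Lemma~\ref{lem:stf}(a) says only that the \emph{wall-to-wall return map} on $F_\epsilon$ is a translation, not that the flow itself is affine (in the coordinates \eqref{eqn:smoothed2face} it is translation-invariant in $y$ but nonlinear in $\theta$), and you say nothing about the $L_\epsilon$ side. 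A $\Sigma_1$-component separates $P_\epsilon$ from $L_\epsilon$; the Hopf description applies only on the $P_\epsilon$ side, and the position-dependent reparametrization already destroys the scaling you want. Even a genuinely affine flow $\dot{x}=Ax+b$ with $b\neq 0$ is not scale-equivariant. So $\phi$ is not sectorwise homogeneous, and extracting $P_\gamma$ as a ``sectorwise derivative at $0$'' does not go through as you have written it.

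The paper avoids all of this with a much simpler observation: a type-(ii) or -(iii) subarc lies on a smooth codimension-one hypersurface in $Y_\epsilon$ separating two regions on each of which the Reeb vector field is $C^\infty$. Linearizing the vector field on each side along the arc and patching gives a continuous, piecewise linear vector field on the contact plane---written down explicitly in Remark~\ref{rem:avf}---and $P_{\gamma_i}$ is simply its flow. This map is automatically linear along rays (the vector field is homogeneous of degree~$1$), satisfies the first-order approximation (a) by the standard $C^1$ dependence on initial conditions applied on each side, and is visibly a homeomorphism. No itinerary bookkeeping is needed: the only combinatorics is which of the two sides a nearby point lies on.
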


\begin{proof}
Uniqueness of the homeomorphism $P_\gamma$ follows from properties (a) and (b). Independence of the choice of $D$ and $D'$ follows from properties (a) and (b) together with continuity of the Reeb vector field. Assuming existence of the homeomorphism $P_\gamma$, the composition property (d) follows from uniqueness.

We now need to prove existence of the homeomorphism satisfying properties (a), (b), (c), and (e). Let $a\le t_1<t_2<\cdots <t_k\le b$ be the subdivision of the inteveral $[a,b]$ given by Lemma~\ref{lem:nsrt}. For $i=0,\ldots,k$, let $\gamma_i$ denote the restriction of $\gamma$ to $[t_i,t_{i+1}]$, where we interpret $t_0=a$ and $t_k=b$. It is enough to prove existence of a homeomorphism
\[
P_{\gamma_i}: \xi_{\gamma(t_i)} \longrightarrow \xi_{\gamma(t_{i+1})}
\]
with the required properties for each $i$. The desired homeomorphism $P_\gamma$ is then given by the composition $P_k\cdots P_0$.

For case (i) in Lemma~\ref{lem:nsrt}, a homeomorphism $P_{\gamma_i}$ with properties (a), (b), and (e) is given by the usual linearized return map on the smooth hypersurface $Y_\epsilon\setminus\Sigma$ from $t_i+\delta$ to $t_{i+1}-\delta$, in the limit as $\delta\to 0$. Since $P_{\gamma_i}$ is linear, we also obtain property (c).

For case (ii) or (iii) in Lemma~\ref{lem:nsrt}, the existence of $P_{\gamma_i}$ with the desired properties follows from the fact that $\gamma_i$ is on a smooth hypersurface separating two regions of $Y_\epsilon$, on each of which the Reeb vector field is $C^\infty$.
\end{proof}

\begin{remark}
\label{rem:avf}
In case (ii) or (iii) above, the description of the Reeb flow in \S\ref{sec:Rfssp} allows us to write down the map $P_{\gamma_i}$ quite explicitly. Namely, for a suitable trivialization, $P_{\gamma_i}$ is given by the flow for some positive time of a continuous, piecewise smooth vector field $V$ on $\R^2$, which is the derivative of a shear on one half of $\R^2$, and which is the derivative of a rotation or the identity on the other half of $\R^2$. For case (ii), the vector field has the form
\begin{equation}
\label{eqn:avf2}
V(x,y) = \left\{\begin{array}{cl} -y\partial_x, & x\ge 0,\\ x\partial_y - y\partial_x, & x\le 0. \end{array}
\right.
\end{equation}
For case (iii), the vector field has the form
\begin{equation}
\label{eqn:avf3}
V(x,y) = \left\{\begin{array}{cl} x\partial_y, & x\ge 0,\\ 0, & x\le 0. \end{array}\right.
\end{equation}
\end{remark}

Since the map $P_\gamma:\xi_{\gamma(a)}\to\xi_{\gamma(b)}$ sends rays to rays, it induces a well-defined map ${\mathbb P}\xi_{\gamma(a)}\to{\mathbb P}\xi_{\gamma(b)}$. It follows from Lemma~\ref{lem:plrm}(c),(d) and equations \eqref{eqn:avf2} and \eqref{eqn:avf3} that the latter map is $C^1$. Similarly to \eqref{eqn:lrf}, we obtain a $C^1$ diffeomorphism of $S^1$ given by the composition
\[
S^1\stackrel{\tau^{-1}}{\longrightarrow} {\mathbb P}\xi_{\gamma(a)} \stackrel{P_\gamma}{\longrightarrow} {\mathbb P}\xi_{\gamma(b)} \stackrel{\tau}{\longrightarrow} S^1. 
\]
Stealing the notation from Definition~\ref{def:linearized}, let us denote this map by $\phi(y,t)$ where $y=\gamma(a)$ and $t=b-a$. By analogy with \eqref{eqn:llrf}, we define
\[
\widetilde{\phi}(y,t) = \{\phi(y,s)\}_{s\in[0,t]}\in\widetilde{\op{Diff}}(S^1).
\]
This then has a well-defined rotation number, see Appendix A, which we denote by
\[
\rho(\gamma) = \rho(\widetilde{\phi}(y,t))\in\R.
\]

\subsection{Lower bounds on the rotation number}
\label{sec:rnlb}

We now prove the following lower bound on the rotation number.

\begin{lemma}
\label{lem:rnlb1}
Let $X$ be a symplectic polytope in $\R^4$. Then there exists a constant $C>0$, depending only on $X$, such that if $\epsilon>0$ is small, then the following holds. Let $\gamma:[a,b]\to Y_\epsilon$ be a Reeb trajectory, and assume that if $t\in(a,b)$ and $E$ is a $3$-face then $\gamma(t)\notin E_\epsilon$. Then
\[
\rho(\gamma)\ge C\epsilon^{-1}(b-a).
\]
\end{lemma}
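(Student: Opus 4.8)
The plan is to reduce the lemma to a uniform pointwise lower bound on the rotation rate $\widetilde R\sigma$ of Proposition~\ref{prop:uj}, and then to establish that bound by computing the second fundamental form of $Y_\epsilon$ and invoking the symplectic-polytope hypothesis. By Lemma~\ref{lem:plrm}(d), the map $P_\gamma$ -- and hence $\rho(\gamma)$, via the fact that $\rho$ of a lift in $\widetilde{\mathrm{Diff}}(S^1)$ is at least its minimal displacement and that minimal displacement is super-additive under composition -- splits over the subdivision of $[a,b]$ given by Lemma~\ref{lem:nsrt}. On a subinterval of type (i) of Lemma~\ref{lem:nsrt} the trajectory lies on the genuinely smooth hypersurface $Y_\epsilon\setminus\Sigma$, so Lemma~\ref{lem:minrot} (applied on $Y_\epsilon\setminus\Sigma$) shows its contribution to $\rho(\gamma)$ over an interval of length $\Delta$ is at least $\int r_{\min}$. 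Since by hypothesis $\gamma$ avoids every smoothed $3$-face in $(a,b)$, on these subintervals $\gamma$ lies in $F_\epsilon$ for faces $F$ with $\dim F\le 2$, and it suffices to prove $r_{\min}(y)\ge C\epsilon^{-1}$ for all such $y$, with $C>0$ depending only on $X$ and $\epsilon$ small.

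To prove this bound, recall from \S\ref{sec:smoothings} that $F_\epsilon=F+\{v\in N_F^+X\mid |v|=\epsilon\}$ is isometrically the product of the flat $k$-dimensional domain $F$ (where $k=\dim F$) with a portion of the round $(3-k)$-sphere of radius $\epsilon$ inside the $(4-k)$-dimensional subspace $W_F:=\mathrm{span}(N_F^+X)=(TF)^\perp$. Hence at a point $y+\epsilon v\in F_\epsilon$ with $v\in N_F^+X$ a unit vector, the outward unit normal to $Y_\epsilon$ is $\nu=v$, and the second fundamental form of $Y_\epsilon$ is
\[
S(u)=\tfrac1\epsilon\,|\mathrm{proj}_{\Pi_F}u|^2,\qquad u\in T_yY_\epsilon=TF\oplus\Pi_F,\qquad \Pi_F:=W_F\cap v^\perp,
\]
so $S$ vanishes on the flat directions $TF$ and equals $\tfrac1\epsilon$ on the $(3-k)$ spherical directions $\Pi_F$. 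Substituting into the curvature identity \eqref{eqn:curvatureidentity}, at a point of ${\mathbb P}\xi$ over $y+\epsilon v$ with parameter $\theta$,
\[
\widetilde R\sigma=\frac{1}{\pi\,\langle\nu,y+\epsilon v\rangle\,\epsilon}\Big(|\mathrm{proj}_{\Pi_F}(\mathbf i\nu)|^2+|\mathrm{proj}_{\Pi_F}((\cos\theta)\mathbf j\nu+(\sin\theta)\mathbf k\nu)|^2\Big).
\]
Since $0\in\mathrm{int}(X)$, for $\epsilon$ small $\langle\nu,y+\epsilon v\rangle$ is bounded above by a constant depending only on $X$, so it remains to bound the bracket below by a positive constant, uniformly over the finitely many faces $F$ with $\dim F\le 2$, over unit $v\in N_F^+X$, and over $\theta$.

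I would split this linear-algebra step by dimension. If $k\le 1$, then $\Pi_F$ has dimension $\ge 2$ and $\Pi_F^\perp\cap T_yY_\epsilon=TF$ has dimension $\le 1$; using that $\{\mathbf i\nu,\mathbf j\nu,\mathbf k\nu\}$ is an orthonormal basis of $T_yY_\epsilon$ together with Cauchy--Schwarz on the $\theta$-term, one gets, with $\ell$ a unit vector spanning $TF$,
\[
|\mathrm{proj}_{\Pi_F}(\mathbf i\nu)|^2+|\mathrm{proj}_{\Pi_F}((\cos\theta)\mathbf j\nu+(\sin\theta)\mathbf k\nu)|^2\ge 2-\big(\langle\mathbf i\nu,\ell\rangle^2+\langle\mathbf j\nu,\ell\rangle^2+\langle\mathbf k\nu,\ell\rangle^2\big)=1
\]
when $k=1$, and the left side equals $2$ when $k=0$. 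This case covers bad $1$-faces, where $|\mathrm{proj}_{\Pi_F}(\mathbf i\nu)|^2$ alone can vanish but the $\theta$-term compensates. If $k=2$, then $\Pi_F$ is one-dimensional and the $\theta$-term may vanish, so I would bound $|\mathrm{proj}_{\Pi_F}(\mathbf i\nu)|^2$ away from $0$: since $\mathbf i\nu\perp\nu$ we have $\mathbf i\nu\in T_yY_\epsilon=TF\oplus\Pi_F$, so $|\mathrm{proj}_{\Pi_F}(\mathbf i\nu)|^2=1-|\mathrm{proj}_{TF}(\mathbf i\nu)|^2$; if this were $0$ then $\mathbf i\nu\in TF$, which (using $\omega_0(A,B)=\langle\mathbf iA,B\rangle$) forces $\nu\in W_F\cap W_F^{\omega_0}$, the radical of $\omega_0|_{W_F}$ -- but a $2$-plane and its orthogonal complement in $\R^4$ are simultaneously Lagrangian or simultaneously symplectic, so since $F$ is not Lagrangian (Definition~\ref{def:symplectic_polytope}) this radical is $\{0\}$, a contradiction. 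Hence $|\mathrm{proj}_{TF}(\mathbf iv)|<1$ for every unit $v\in N_F^+X$, and by compactness of $\{v\in N_F^+X\mid|v|=1\}$ and finiteness of the set of $2$-faces, $|\mathrm{proj}_{\Pi_F}(\mathbf i\nu)|^2\ge\delta$ for some $\delta>0$ depending only on $X$. This is the one place where the symplectic-polytope hypothesis enters. Combining the cases gives $\widetilde R\sigma\ge C\epsilon^{-1}$, hence $r_{\min}\ge C\epsilon^{-1}$, on every smoothed face of dimension $\le 2$.

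It remains to handle the non-smooth strata. Transverse crossings of $\Sigma_3$ contribute nothing (Lemma~\ref{lem:nsrt} and the transversality in \S\ref{sec:Rfssp}); on arcs of type (ii)--(iii) the linearized flow is the flow of the explicit shear-type vector field of Remark~\ref{rem:avf}, which has nonnegative rotation rate by convexity and so contributes $\ge 0$. One must still check these arcs cannot carry a definite fraction of $b-a$: here one uses the hypothesis once more, since a type-(iii) arc is bounded by points in the closure of a smoothed $3$-face (so it can only touch an endpoint of $[a,b]$, or else it is a full crossing of a smoothed $2$-face, to which Lemma~\ref{lem:stf}(c) applies and which contributes a definite amount), and a type-(ii) arc lies in a component of $\Sigma_1$ which contains at most one positive-length Reeb trajectory and has diameter $O(\epsilon)$. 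Summing the contributions via super-additivity then yields $\rho(\gamma)\ge C'\epsilon^{-1}(b-a)$ for a possibly smaller constant $C'$. I expect this last step to be the main obstacle: the shear-type vector fields of Remark~\ref{rem:avf} give no rotation-rate lower bound, so one must genuinely rule out (or absorb into adjacent smooth intervals) the possibility that $\gamma$ spends a large proportion of its time on such arcs, and it is precisely the hypothesis that $\gamma$ avoids smoothed $3$-faces that makes this possible. The curvature computation itself, including the treatment of bad $1$-faces and the use of the symplectic-polytope hypothesis for $2$-faces, is routine once set up as above.
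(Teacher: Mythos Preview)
Your approach is the paper's: reduce to the curvature identity of Proposition~\ref{prop:uj}, compute $S(V)=\epsilon^{-1}|V_N|^2$ on each smoothed face, and bound $|(\mathbf i\nu)_N|^2+|(\cos\theta\,\mathbf j\nu+\sin\theta\,\mathbf k\nu)_N|^2$ from below by a case analysis on $\dim F$. Your treatment of $k\le 1$ is in fact cleaner than the paper's: the Cauchy--Schwarz/orthonormality argument gives the sharp uniform bound $1$ in one stroke, whereas the paper splits into $v\in\partial N_L^+X$ versus $v\in\operatorname{int}N_L^+X$ and then into good versus bad $1$-faces. Your $k=2$ argument is equivalent to the paper's reference to \S\ref{sec:Rfssp}.

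Your last paragraph, however, is an unnecessary detour. The shear vector fields of Remark~\ref{rem:avf} are written in an \emph{auxiliary} trivialization, not the quaternionic one that defines $\rho(\gamma)$; the zero rotation rate you worry about is an artifact of that choice. On a type-(ii) arc one has $\mathbf i v\in L^\perp$ (the arc lies on the Hopf circle in $L^\perp$), and on a type-(iii) arc $\mathbf i v\in (TF)^\perp$ (since $F$ is complex linear); in either case $(\mathbf i v)_N=\mathbf i v$ from \emph{both} adjacent strata, so $S(\mathbf i\nu)=\epsilon^{-1}$ unambiguously and the curvature identity already gives $\widetilde R\sigma\ge \epsilon^{-1}/(\pi(\langle v,y\rangle+\epsilon))$ pointwise there. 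Thus the paper's one-line ``Lemma~\ref{lem:minrot} and Proposition~\ref{prop:uj} carry over'' is justified, and there is no need to bound separately the time spent on type-(ii)/(iii) arcs or to invoke Lemma~\ref{lem:stf}(c).
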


\begin{proof}
Define a function
\[
r^{\min}_\epsilon:Y_\epsilon\longrightarrow\R
\]
as follows. A point $Y_\epsilon$ can by uniquely written as $y+\epsilon v$ where $y\in Y$ and $v$ is a unit vector in $N_y^+X$. Then define
\begin{equation}
\label{eqn:remin}
r^{\min}_\epsilon(y+\epsilon v) = \min_{\theta\in\R/2\pi\Z}\frac{1}{\pi(\langle v,y\rangle + \epsilon)}(S({\mathbf i}v) + S(\cos(\theta){\mathbf j}v + \sin(\theta){\mathbf k}v)).
\end{equation}
Here $S:TY_\epsilon\to\R$ is the single-argument version of the second fundamental form, which is well-defined, even though along the non-smooth strata of $Y_\epsilon$ there is no corresponding bilinear form.

More explicitly, $T_{y+\epsilon v}Y_\epsilon$, regarded as a subspace of $\R^4$, does not depend on $\epsilon$. A tangent vector $V\in T_{y+\epsilon v}Y_\epsilon$ can be uniquely decomposed as
\begin{equation}
\label{eqn:vtn}
V = V_T + V_N
\end{equation}
where $V_T\in T_y\partial X$ is tangent to a face $F$ such that $y\in\overline{F}$ and $v\in N_F^+X$, and $V_N\in T_vN_y^+X$ is perpendicular to $v$. We then have
\begin{equation}
\label{eqn:svepsilon}
S(V) = \epsilon^{-1}|V_N|^2.
\end{equation}

Lemma~\ref{lem:minrot} and Proposition~\ref{prop:uj} carry over to the present situation to show that
\begin{equation}
\label{eqn:cops}
\rho(\gamma) \ge \int_a^b r_\epsilon^{\min}(\gamma(s))ds.
\end{equation}
In \eqref{eqn:remin}, by compactness, there is a uniform upper bound on $\langle v,y\rangle$ for $y\in\partial X$ and $v\in N_y^+X$ a unit vector. Thus by \eqref{eqn:svepsilon} and \eqref{eqn:cops}, to complete the proof of the lemma, it is enough to show that there is a constant $C>0$ such that
\begin{equation}
\label{eqn:annest}
\left|({\mathbf i}v)_N\right|^2 + \left|(\cos(\theta){\mathbf j}v + \sin(\theta){\mathbf k}v)_N\right|^2 \ge C
\end{equation}
whenever $y\in\partial X$, $v\in N_y^+X$ is a unit vector, $\theta\in\R/2\pi\Z$, and $y+\epsilon v$ is not in the closure of $E_\epsilon$ where $E$ is a $3$-face. To prove this, it is enough to show that for each $k$-face $F$ with $k<3$, there is a uniform positive lower bound on the left hand side of \eqref{eqn:annest} for all $y\in F$, all unit vectors $v$ in $N_F^+X$ that are not normal to a $3$-face adjacent to $F$, and all $\theta$.

If $k=2$, then we have a positive lower bound on $|({\mathbf i}v)_N|^2$ by the discussion of smoothed $2$-faces in \S\ref{sec:Rfssp}.

If $k=1$, denote the $1$-face $F$ by $L$. If $v$ is on the boundary of $N_L^+X$, then we have a positive lower bound on $|({\mathbf i}v)_N|^2$ as in the case $k=2$ above. Suppose now that $v$ is in the interior of $N_L^+X$. We have a positive lower bound on $|({\mathbf i}v)_N|^2$ when ${\mathbf i}v_N$ is away from the Reeb cone of $L$. This is sufficient when $L$ is a good $1$-face. If $L$ is a bad $1$-face, then we have to consider the case where ${\mathbf i}v$ is on or near the Reeb cone $R_L^+X$. If ${\mathbf i}v$ is in the Reeb cone, then all vectors in $V\in T_{y+\epsilon v}Y_\epsilon$ that are not in the real span of the Reeb cone $R_L^+X$ have $V_N\neq 0$. Since the vectors $\cos(\theta){\mathbf j}v + \sin(\theta){\mathbf k}v$ are all unit length and orthogonal to ${\mathbf i}v$, we get a positive lower bound on $\left|(\cos(\theta){\mathbf j}v + \sin(\theta){\mathbf k}v)_N\right|^2$ for all $\theta$ when ${\mathbf i}v$ is on or near the Reeb cone.

Suppose now that $k=0$. If $v$ is on the boundary of $N_L^+X$, then the desired lower bound follows as in the cases $k=1$ and $k=2$ above. If $v$ is in the interior of $N_F^+X$, then we have $|({\mathbf i}v)_N|^2=1$.
\end{proof}

We now deduce a related rotation number bound. Let $\gamma:[a,b]\to Y_\epsilon$ be a Reeb trajectory. By Lemma~\ref{lem:bub}, we can write
\[
\gamma(t) = y(t) + \epsilon v(t)
\]
where $y(t)\in \partial X$ and $v(t)$ is a unit vector in $N_{y(t)}^+X$ for each $t$. 

\begin{lemma}
\label{lem:rnlb2}
Let $X$ be a symplectic polytope in $\R^4$. Then there exists a constant $C>0$, depending only on $X$, such that if $\epsilon>0$ is small and $\gamma:[a,b]\to Y_\epsilon$ is a Reeb trajectory as above, then
\[
\rho(\gamma) \ge C \int_a^b|v'(s)|ds.
\]
\end{lemma}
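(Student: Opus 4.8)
The plan is to reduce this to Lemma~\ref{lem:rnlb1} by comparing the two integrands $r^{\min}_\epsilon(\gamma(s))$ and $|v'(s)|$. Recall from the proof of Lemma~\ref{lem:rnlb1} that, via \eqref{eqn:cops} and \eqref{eqn:svepsilon},
\[
\rho(\gamma) \ge \int_a^b r^{\min}_\epsilon(\gamma(s))\,ds \ge \int_a^b \frac{\epsilon^{-1}}{\pi(\langle v(s),y(s)\rangle + \epsilon)}\left(|({\mathbf i}v(s))_N|^2 + \min_\theta|(\cos\theta\,{\mathbf j}v(s) + \sin\theta\,{\mathbf k}v(s))_N|^2\right)ds,
\]
where the subscript $N$ denotes the component of a tangent vector to $Y_\epsilon$ in the normal-cone direction (perpendicular to $v$), as in \eqref{eqn:vtn}. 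By compactness there is a uniform upper bound on $\langle v,y\rangle$, so up to a constant it suffices to bound $\epsilon^{-1}|({\mathbf i}v(s))_N|^2$ below by a constant times $|v'(s)|$. I would first observe that $v(s)$ is a unit vector, so $v'(s)\perp v(s)$; and whenever $\gamma(s)$ lies on a smoothed face $F_\epsilon$ with $\dim F\ge 1$, the velocity $\gamma'(s) = R_{\gamma(s)}$ decomposes as in \eqref{eqn:vtn} into a tangent-to-$F$ part and a normal-cone part, and $\epsilon v'(s)$ is exactly $\epsilon$ times the derivative of the normal component — in fact $v'(s) = \epsilon^{-1}(\gamma'(s))_N / (\text{reparametrization factor})$, since moving within the normal cone fiber is what changes $v$. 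More precisely, from $\gamma(s)=y(s)+\epsilon v(s)$ we get $\gamma'(s) = y'(s) + \epsilon v'(s)$ with $y'(s)\in T_{y(s)}\partial X$ and $\epsilon v'(s) \in T_{v(s)}N^+_{y(s)}X$, so $(\gamma'(s))_N = \epsilon v'(s)$. Hence $|v'(s)| = \epsilon^{-1}|(\gamma'(s))_N| = \epsilon^{-1}|(R_{\gamma(s)})_N|$.

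Now by \eqref{eqn:Reebsmoothed}, $R_{\gamma(s)} = \frac{2}{\langle v(s),y(s)\rangle+\epsilon}\,{\mathbf i}v(s)$, so $(R_{\gamma(s)})_N = \frac{2}{\langle v(s),y(s)\rangle+\epsilon}({\mathbf i}v(s))_N$, and therefore
\[
|v'(s)| = \frac{2\epsilon^{-1}}{\langle v(s),y(s)\rangle+\epsilon}\,|({\mathbf i}v(s))_N|.
\]
So I need to compare $\epsilon^{-1}|({\mathbf i}v)_N|^2$ with $\epsilon^{-1}|({\mathbf i}v)_N|$; these differ by the factor $|({\mathbf i}v)_N|$, which is bounded above (by $|{\mathbf i}v|=1$), so $|({\mathbf i}v)_N|^2 \le |({\mathbf i}v)_N|$ trivially — but that goes the wrong way. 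The fix is to use the \emph{second} term in $r^{\min}_\epsilon$: the key point from the proof of Lemma~\ref{lem:rnlb1} is that when $|({\mathbf i}v)_N|$ is small (i.e.\ ${\mathbf i}v$ is close to tangent to the face, which can only happen near a bad $1$-face's Reeb cone), the quantity $\min_\theta|(\cos\theta\,{\mathbf j}v+\sin\theta\,{\mathbf k}v)_N|^2$ is bounded below by a positive constant $C'$. So I split into two regimes along the trajectory: where $|({\mathbf i}v(s))_N|\ge \delta_0$ for a fixed small $\delta_0>0$ (then $\epsilon^{-1}|({\mathbf i}v)_N|^2 \ge \delta_0\cdot\epsilon^{-1}|({\mathbf i}v)_N| \gtrsim |v'(s)|$ using the displayed formula and the bound on $\langle v,y\rangle$), and where $|({\mathbf i}v(s))_N|<\delta_0$ (then $r^{\min}_\epsilon(\gamma(s)) \ge \epsilon^{-1}C'/(\pi(\langle v,y\rangle+\epsilon)) \gtrsim \epsilon^{-1}$, while $|v'(s)| \le 2\epsilon^{-1}\delta_0/(\langle v,y\rangle + \epsilon)$, which is also $\lesssim \epsilon^{-1}$, so $r^{\min}_\epsilon(\gamma(s)) \gtrsim |v'(s)|$ with a constant depending on $\delta_0$, $C'$). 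Combining the two regimes and integrating gives $\rho(\gamma) \ge \int_a^b r^{\min}_\epsilon(\gamma(s))\,ds \ge C\int_a^b |v'(s)|\,ds$.

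The main obstacle I anticipate is handling the non-smooth strata carefully: $v(s)$ is only piecewise $C^1$ (its derivative jumps when $\gamma$ crosses $\Sigma$, and on the arcs of type (ii)/(iii) in Lemma~\ref{lem:nsrt} one must check that $|v'|$ is still integrable and the decomposition $\gamma' = y' + \epsilon v'$ still makes sense a.e.). This is essentially the same technical point already dealt with in \S\ref{sec:nss} and in the proof of Lemma~\ref{lem:rnlb1} — the bilinear second fundamental form degenerates to the single-argument version $S(V)=\epsilon^{-1}|V_N|^2$ of \eqref{eqn:svepsilon}, which is exactly what I am exploiting — so I would cite Lemma~\ref{lem:nsrt} and the structure of \eqref{eqn:remin}--\eqref{eqn:cops} to legitimize writing $(\gamma'(s))_N = \epsilon v'(s)$ for almost every $s$, and note that the finitely many crossing points and the special type-(ii)/(iii) arcs (where $v'$ may be large but $({\mathbf i}v)_N$ is controlled, placing us in the second regime) contribute consistently with the estimate. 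A secondary subtlety is the choice of $\delta_0$: it must be chosen uniformly over $X$ so that $|({\mathbf i}v)_N|<\delta_0$ forces us into the neighborhood of a bad $1$-face Reeb cone where the second term of $r^{\min}_\epsilon$ is bounded below — this uniformity comes from the same compactness/finiteness-of-faces argument used at the end of the proof of Lemma~\ref{lem:rnlb1}.
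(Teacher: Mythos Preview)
Your argument is correct in substance, but it is considerably more elaborate than necessary, and the paper's proof takes a much shorter route. You correctly derive the key identity
\[
v'(s)=\frac{2\epsilon^{-1}}{\langle v(s),y(s)\rangle+\epsilon}\,({\mathbf i}v(s))_N,
\]
which is exactly what the paper uses. But instead of your two-regime pointwise comparison of $r^{\min}_\epsilon$ with $|v'|$, the paper simply observes that $|({\mathbf i}v)_N|\le|{\mathbf i}v|=1$ and that $\langle v,y\rangle$ has a uniform positive lower bound (by convexity, $0\in\op{int}(X)$, and compactness of the set of pairs $(y,v)$). This immediately gives $|v'(s)|\le C'\epsilon^{-1}$. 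Since on smoothed $3$-faces $v$ is constant (hence $v'=0$) and the rotation number does not change there, one may restrict attention to the portion of $\gamma$ off smoothed $3$-faces, where Lemma~\ref{lem:rnlb1} gives $\rho\ge C\epsilon^{-1}\cdot(\text{time})$ while $\int|v'|\le C'\epsilon^{-1}\cdot(\text{time})$; combining these yields the lemma. No splitting into regimes, and no appeal to the second term in $r^{\min}_\epsilon$, is needed.

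One small wrinkle in your write-up: you assert that $|({\mathbf i}v)_N|<\delta_0$ ``forces us into the neighborhood of a bad $1$-face Reeb cone,'' but in fact $|({\mathbf i}v)_N|=0$ identically on smoothed $3$-faces as well. This does not break your argument, since $|v'|=0$ there and the desired inequality is $0\ge 0$; but it does mean your regime-2 justification, as written, needs an extra clause separating out the $3$-face case. Once that is added your proof goes through---it just does more work than the paper's.
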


\begin{proof}
By Lemma~\ref{lem:rnlb1}, it is enough to show that there is a constant $C$ such that
\[
|v'(s)|\le C\epsilon^{-1}.
\]
To prove this last statement, observe that by equation \eqref{eqn:Reebsmoothed}, in the notation \eqref{eqn:vtn} we have
\[
v'(s) = \frac{2\epsilon^{-1}}{\langle v(s),y(s)\rangle + \epsilon}({\mathbf i}v(s))_N.
\]
Thus
\[
|v'(s)|
\le \frac{2\epsilon^{-1}}{\langle v(s),y(s)\rangle + \epsilon}.
\]
If $y\in\partial X$ and $v\in N_y^+X$ is a unit vector, then $\langle v,y\rangle >0$ because $X$ is convex and $0\in\op{int}(X)$. By compactness, there is then a uniform lower bound on $\langle v,y\rangle$ for all such pairs $(y,v)$.
\end{proof}

\section{The smooth-combinatorial correspondence}
\label{sec:correspondence}

We now prove Theorems~\ref{thm:combtosmooth} and \ref{thm:smoothtocomb}.

\subsection{From combinatorial to smooth Reeb orbits}
\label{sec:combtosmooth}

We first prove Theorem~\ref{thm:combtosmooth}. In fact we will prove a slightly more precise statement in Lemma~\ref{lem:combtosmooth} below.

 Let $X$ be a symplectic polytope in $\R^4$ and let $\gamma=(L_1,\ldots,L_k)$ be a Type 1 combinatorial Reeb orbit. This means that there are $3$-faces $E_1,\ldots,E_k$ and $2$-faces $F_1,\ldots,F_k$ such that $F_i$ is adjacent to $E_{i-1}$ and $E_i$, and $L_i$ is an oriented line segment in $E_i$ from a point in $F_i$ to a point in $F_{i+1}$ which is parallel to the Reeb vector field on $E_i$. Here the subscripts $i-1$ and $i+1$ are understood to be mod $k$. Below we will regard $\gamma$ as a piecewise smooth parametrized loop $\gamma:\R/T\Z\to X$, where $T=\mc{A}_{\op{comb}}(\gamma)$, which traverses the successive line segments $L_i$ as Reeb trajectories.

\begin{lemma}
\label{lem:combtosmooth}
Let $X$ be a symplectic polytope in $\R^4$, and let $\gamma=(L_1,\ldots,L_k)$ be a nondegenerate Type 1 combinatorial Reeb orbit. Then there exists $\delta>0$ such that for all $\epsilon>0$ sufficiently small:
\begin{itemize}
\item[\emph{(a)}]
There is a unique Reeb orbit $\gamma_\epsilon$ on the smoothed boundary $Y_\epsilon$ such that
\[
|\gamma_\epsilon - \gamma|_{C^0} < \delta.
\]
\item[\emph{(b)}]
$\gamma_\epsilon$ converges in $C^0$ to $\gamma$ as $\epsilon\to 0$.
\item[\emph{(c)}]
$\gamma_\epsilon$ does not intersect $F_\epsilon$ where $F$ is a $0$-face or $1$-face.
\item[\emph{(d)}]
$\gamma_\epsilon$ is linearizable, i.e. has a well-defined linearized return map.
\item[\emph{(e)}]
$\mc{A}(\gamma_\epsilon) - \mc{A}_{\op{comb}}(\gamma) = O(\epsilon)$.
\item[\emph{(f)}]
$\gamma_\epsilon$ is nondegenerate, $\rho(\gamma_\epsilon)=\rho_{\op{comb}}(\gamma)$, and $\op{CZ}(\gamma_\epsilon)=\op{CZ}_{\op{comb}}(\gamma)$.
\end{itemize}
\end{lemma}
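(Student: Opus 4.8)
The plan is to reduce the whole lemma to a fixed-point problem for a (piecewise) affine return map, and then read off each assertion from the structure of that map. By Proposition~\ref{prop:orbitbijection}, the Type~$1$ combinatorial Reeb orbit $\gamma=(L_1,\dots,L_k)$ corresponds to a periodic orbit $(p=e_1\cdots e_k,x)$ of the symplectic flow graph $G(X)$, where $e_i$ runs from $F_i$ to $F_{i+1}$ and is carried by $E_i$, and $\phi_p(x)=x$; nondegeneracy of $\gamma$ means $T\phi_p-I$ is invertible. First I would pick a point $y_*$ in the interior of $L_1$ and a $2$-disk $D$ through $y_*$ transverse to the flow, and for small $\epsilon$ use the translate $D+\epsilon\nu_{E_1}\subset E_{1,\epsilon}$ as a Poincar\'e section, identified with a neighborhood of $0$ in $\R^2$ via the quaternionic trivialization $\tau$. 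The first-return map $\mc{R}_\epsilon$ along $\gamma$ is then a composition of crossings of smoothed $3$-faces $E_{i,\epsilon}$ — which by Lemma~\ref{lem:smoothed3face} are translations in $\R^4$ onto flat strata of $\Sigma_3$, with trivial lifted linearization — alternating with crossings of smoothed $2$-faces $F_{i,\epsilon}$ — which by Lemma~\ref{lem:stf} are translations by $V_{F_i,\epsilon}=O(\epsilon)$ in the $F_i$-directions, with linearization the transition matrix $\psi_{F_i}$. Since each piece is affine, the connecting strata $F+\epsilon\nu\subset\Sigma_3$ are flat and crossed transversally (Lemma~\ref{lem:smoothed3face}(c)), and — using that $\gamma$, being Type~$1$, meets each $F_i$ in its interior and that $U_{F_i,\epsilon}\to F_i$ — the successive crossings are defined on nonempty domains converging to all of $D$, $\op{int}(F_2),E_2,\dots$, the map $\mc{R}_\epsilon$ is an affine map on a fixed neighborhood of $0$, converging (linear part included) to an affine map $\mc{R}_0$ which, via the identification of $D$ with a transversal inside $E_1$, is conjugate to $\phi_p$.

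Next I would extract parts (a), (b), (c). Since $\mc{R}_0$ has the nondegenerate fixed point $x_0$ corresponding to $x$ (i.e.\ $D\mc{R}_0-I$ invertible), for $\epsilon$ small the linear equation $\mc{R}_\epsilon(z)=z$ has a unique solution $z_\epsilon$ near $x_0$ with $z_\epsilon\to x_0$; this produces the Reeb orbit $\gamma_\epsilon$, converging in $C^0$ to $\gamma$, which is (b). For (a) I would fix $\delta>0$ depending only on $X$ and $\gamma$ so that $\gamma$ crosses $D$ exactly once (transversally, near $y_*$) and does not return to the $\delta$-neighborhood of $D$; then any Reeb orbit on $Y_\epsilon$ within $C^0$-distance $\delta$ of $\gamma$ meets $D+\epsilon\nu_{E_1}$ in a single $\mc{R}_\epsilon$-fixed point, so equals $\gamma_\epsilon$. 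Part (c) follows because $\gamma$ meets only interiors of the $E_i$ and $F_i$ and $\partial F_i$ is at positive distance from $\gamma$, so for $\epsilon$ small $\gamma_\epsilon$ stays in $\bigcup_i(E_{i,\epsilon}\cup F_{i,\epsilon})$ away from $\partial F_{i,\epsilon}$, hence disjoint from all smoothed $0$- and $1$-faces and from $\Sigma_1,\Sigma_2$.

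Parts (d) and (e) are then short. By (c), $\gamma_\epsilon$ meets the non-smooth locus only in $\Sigma_3$, transversally, with no arc of type Lemma~\ref{lem:nsrt}(ii)--(iii), so Lemma~\ref{lem:plrm}(c) gives that $P_{\gamma_\epsilon}$ is linear, i.e.\ $\gamma_\epsilon$ is linearizable, which is (d). For (e), $\mc{A}(\gamma_\epsilon)$ is the period of $\gamma_\epsilon$, the sum of Reeb flow times in the smoothed faces it crosses: each $F_{i,\epsilon}$-crossing takes time $O(\epsilon)$ by Lemma~\ref{lem:stf}(b) (or since $|\partial_\theta|=\epsilon$ and the Reeb vector field has a uniform transverse component on a $2$-face), and by Lemma~\ref{lem:smoothed3face}(a) together with $z_\epsilon\to x_0$ the time in $E_{i,\epsilon}$ is $\int_{L_i}\lambda_0+O(\epsilon)$; summing gives $\mc{A}(\gamma_\epsilon)=\mc{A}_{\op{comb}}(\gamma)+O(\epsilon)$. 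For (f), the linearized return map $D_{z_\epsilon}\mc{R}_\epsilon\to D_{x_0}\mc{R}_0$ has no eigenvalue $1$, so $\gamma_\epsilon$ is nondegenerate for $\epsilon$ small. To compute $\rho(\gamma_\epsilon)$ I would decompose the lifted linearized Reeb flow over one period into the crossings above: in the quaternionic trivialization the $E_{i,\epsilon}$-crossings contribute the identity (Lemma~\ref{lem:smoothed3face}(b)), while by Lemma~\ref{lem:stf}(c) the $F_{i,\epsilon}$-crossing contributes the lift $\widetilde\psi_{F_i}$ of the transition matrix of rotation number in $(0,1/2)$, and the trivializations match at the meeting strata (a point of $F_{i+1}+\epsilon\nu_{E_i}$ has outward normal $\nu_{E_i}$, so the quaternionic trivialization there is $\tau'_{F_{i+1}}$, and at the exit of $F_{i+1,\epsilon}$ it is $\tau_{F_{i+1}}$). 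Hence the lifted return map is $\widetilde\psi_{F_1}\circ\widetilde\psi_{F_k}\circ\cdots\circ\widetilde\psi_{F_2}$, which by Corollary~\ref{cor:rotrange} is $\widetilde\phi_{p,\tau}$ up to conjugacy by the basepoint shift from $F_1$ to $y_*$; since rotation number is a conjugacy invariant, $\rho(\gamma_\epsilon)=\rho_\tau(p)=\rho_{\op{comb}}(\gamma)$, and then $\op{CZ}(\gamma_\epsilon)=\op{CZ}_{\op{comb}}(\gamma)$ by \eqref{eqn:CZrot} and \eqref{eqn:ccz}.

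The hard part will be the first step: organizing the Reeb flow on $Y_\epsilon$ near $\gamma$ into the alternating sequence of affine crossings with \emph{uniform} control as $\epsilon\to0$ — in particular verifying that $\mc{R}_\epsilon$ is genuinely (piecewise) affine, that its domain of definition does not shrink to nothing, and that its $\epsilon\to0$ limit is the combinatorial return map carrying the nondegenerate fixed point $x$. Everything else is then a quantitative implicit function theorem (here trivialized to a linear solve) plus the bookkeeping of quaternionic trivializations needed to identify $\rho(\gamma_\epsilon)$ with $\rho_{\op{comb}}(\gamma)$.
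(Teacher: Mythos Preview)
Your proposal is correct and follows essentially the same approach as the paper: reduce to an affine fixed-point problem by composing the flow across smoothed $3$-faces (translations, trivial linearization via Lemma~\ref{lem:smoothed3face}) with the flow across smoothed $2$-faces (translations by $O(\epsilon)$ vectors, linearization $\psi_{F_i}$ via Lemma~\ref{lem:stf}), then invoke nondegeneracy of the combinatorial return map to solve. The only cosmetic difference is your choice of Poincar\'e section: you take a transversal disk inside $L_1\subset E_1$, whereas the paper uses the $2$-face $F_1$ itself (identified with $F_1+\epsilon\nu_{E_1}\subset Y_\epsilon$) as the section, so that the return map is literally the composition $\phi_{F_1,\epsilon}\circ\phi_k\circ\cdots\circ\phi_{F_2,\epsilon}\circ\phi_1$ with linear part exactly the combinatorial $A=T\phi_p$ and no conjugation is needed at the end; this also makes the lifted return map come out directly as $\widetilde\psi_{F_k}\circ\cdots\circ\widetilde\psi_{F_1}$ rather than a cyclic permutation thereof.
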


\begin{proof}
{\em Setup.\/} For $i=1,\ldots,k$, let $p_i$ denote the initial point of the segment $L_i$. Using the notation $E_i$, $F_i$ above, let $D_i$ denote the set of points $y\in F_i$ such that Reeb flow along $E_i$ starting at $y$ reaches a point in $F_{i+1}$, which we denote by $\phi_i(y)$. Thus we have a well-defined affine linear map
\[
\phi_i:D_i\longrightarrow F_{i+1}.
\]
and by definition $\phi_i(p_i) = p_{i+1}$. In particular, the composition
\[
\phi_k\circ\cdots\circ \phi_1: F_1 \longrightarrow F_1
\]
is an affine linear map defined in a neighborhood of $p_1$ sending $p_1$ to itself. For $V\in TF_1$ small, this composition sends
\[
p_1 + V \longmapsto p_1 + AV,
\]
where $A$ is a linear map $TF_1\to TF_1$. Since the combinatorial Reeb orbit $\gamma$ is assumed nondegenerate, the linear map $A$ does not have $1$ as an eigenvalue.

By Lemma~\ref{lem:stf}(a), the Reeb flow along the smoothed $2$-face $(F_i)_{\epsilon}$ induces a well-defined map
\begin{equation}
\label{eqn:stf}
\phi_{F_i,\epsilon}: U_{F_i,\epsilon} \longrightarrow F_i
\end{equation}
which is translation by a vector $V_{F_i,\epsilon}$.

{\em Proof of (a).\/} If $\epsilon>0$ is sufficiently small, then $p_i$ is in the domain $U_{F_i,\epsilon}$ for each $i$, and Reeb orbits on $Y_\epsilon$ that are $C^0$ close to $\gamma$ correspond to fixed points of the composition
\begin{equation}
\label{eqn:2kcomp}
\phi_{F_1,\epsilon} \circ \phi_k \circ \cdots \circ \phi_2 \circ \phi_{F_2,\epsilon} \circ \phi_1 : F_1 \longrightarrow F_1.
\end{equation}
It follows from the above that for $V\in TF_1$ small, the composition \eqref{eqn:2kcomp} sends
\begin{equation}
\label{eqn:p1V}
p_1 + V \longmapsto p_1 + AV + W_\epsilon
\end{equation}
where $W_\epsilon\in TF_1$ has length $O(\epsilon)$. Since the linear map $A-1$ is invertible, the affine linear map \eqref{eqn:p1V} has a unique fixed point $p_1+V$ for some $V\in TF_1$. If $\epsilon$ is sufficiently small, this fixed point will also be in the domain of the composition \eqref{eqn:2kcomp}, and thus will correspond to the desired Reeb orbit $\gamma_\epsilon$.

{\em Proof of (b).\/} This holds because for the above fixed point, $V$ has length $O(\epsilon)$.

{\em Proof of (c).\/} The Reeb orbit $\gamma_\epsilon$ does not intersect $F_\epsilon$ where $F$ is a $0$-face or $1$-face, by the definition of the domain of the map \eqref{eqn:stf}.

{\em Proof of (d).\/} This follows from Lemma~\ref{lem:plrm}(c).

{\em Proof of (e).\/} The symplectic action of the Reeb orbit $\gamma_\epsilon$ is the sum of its flow times over the smoothed $2$-faces $(F_i)_\epsilon$, plus the sum of its flow times over the smoothed $3$-faces $(E_i)_\epsilon$. The former sum is $O(\epsilon)$ as explained in the proof of Lemma~\ref{lem:stf}(b). The latter sum is $(1+O(\epsilon))$ times the sum of the corresponding flow times over the $3$-faces $E_i$, and the latter differs from $\mc{A}_{\op{comb}}(\gamma)$ by $O(\epsilon)$, because the fixed point of \eqref{eqn:p1V} has distance $O(\epsilon)$ from $p_1$.

{\em Proof of (f).\/} Let $T_\epsilon$ denote the period of $\gamma_\epsilon$, and let $y_\epsilon$ be a point on the image of $\gamma_\epsilon$ in $E_k$. If $F$ is a $2$-face, let $\widetilde{\psi}_F\in\widetilde{\op{Sp}}(2)$ denote the lift of the transition matrix $\psi_F$ in Definition~\ref{def:transitionmatrix} with rotation number in the interval $(0,1/2)$. By Lemmas~\ref{lem:smoothed3face}(b) and \ref{lem:stf}(c), the lifted return map $\widetilde{\phi}(y_\epsilon,T_\epsilon)$ is given by
\begin{equation}
\label{eqn:liftedreturnmap}
\widetilde{\phi}(y_\epsilon,T_\epsilon) = \widetilde{\psi}_{F_k}\circ \cdots \circ \widetilde{\psi}_{F_1}.
\end{equation}
Nondegeneracy of the combinatorial Reeb orbit $\gamma$ means that the projection
\[
\phi(y_\epsilon,T_\epsilon) = \psi_{F_k}\circ \cdots \circ \psi_{F_1} \in \op{Sp}(2)
\]
does not have $1$ as an eigenvalue, so $\gamma_\epsilon$ is nondegenerate. Moreover, it follows from \eqref{eqn:liftedreturnmap} and the definition of combinatorial rotation number in Definition~\ref{def:crn} that $\rho_{\op{comb}}(\gamma) = \rho(\gamma_\epsilon)$. This implies that $\op{CZ}_{\op{comb}}(\gamma) = \op{CZ}(\gamma_\epsilon)$.
\end{proof}

\subsection{From smooth to combinatorial Reeb orbits}
\label{sec:smoothtocomb}

\begin{proof}[Proof of Theorem~\ref{thm:smoothtocomb}.]
We proceed in four steps.

{\em Step 1.\/}
We claim that for each $i$, the Reeb orbit $\gamma_i$ can be expressed as a concatenation of a finite number, $k_i$, of arcs such that:
\begin{itemize}
\item[(a)]
Each endpoint of an arc maps to the boundary of $E_{\epsilon_i}$ where $E$ is a $3$-face. 
\item[(b)]
For each arc, either:
\begin{itemize}
\item[(i)] There is a $3$-face $E$ such that the interior of the arc maps to $E_{\epsilon_i}$, or
\item[(ii)] No point in the interior of the arc maps to $E_{\epsilon_i}$ where $E$ is a $3$-face.
\end{itemize}
\end{itemize}

The above decomposition follows from parts (a) and (b)(i) of Lemma~\ref{lem:nsrt}, because the boundary of $E_{\epsilon_i}$ where $E$ is a $3$-face is contained in the singular set $\Sigma$. (Note that the decomposition into arcs in Lemma~\ref{lem:nsrt} is a subdivision of the above decomposition into arcs. Moreover, if $k_i>1$, then $k_i$ is even and the arcs alternate between types (i) and (ii).)

{\em Step 2.\/}
We claim now that there is a constant $C>0$, not depending on $i$, such that if $\gamma:[a.b]\to Y_{\epsilon_i}$ is an arc of type (ii) above, then if we write $\gamma(t)=y(t)+\epsilon_iv(t)$ for $y(t)\in\partial X$ and $v(t)\in N_{y(t)}^+X$ a unit vector, then we have
\begin{equation}
\label{eqn:annint}
\int_a^b|v'(s)ds|\ge C.
\end{equation}

To see this, note that by (a) above, there are 3-faces $E$ and $E'$ such that $\gamma(a)\in\overline{E_{\epsilon_i}}$ and $\gamma(b)\in\overline{E'_{\epsilon_i}}$. Then $v(a)=\vu_E$, where $\nu_E$ denotes the outward unit normal vector to $E$, and likewise $v(b)=\nu_{E'}$. If $E\neq E'$, then the integral in \eqref{eqn:annint} is bounded from below by the distance in $S^3$ between $\nu_E$ and $\nu_{E'}$, and this distance has a uniform positive lower bound because $X$ has only finitely many $3$-faces, each with distinct outward unit normal vectors.

We now consider the case where $E=E'$. The proof of Lemma~\ref{lem:rnlb2} shows that there is a neighborhood $U$ of $\nu_E$ in $S^3$, and a constant $C>0$, such that for any point $y+\epsilon_i v\in Y_{\epsilon_i}\setminus E_{\epsilon_i}$ with $v\in U$, with respect to the decomposition \eqref{eqn:vtn}, we have $|({\mathbf i}v)_N|^2\ge C$. By shrinking the the neighborhood $U$, we can replace this last inequalty with $\langle ({\mathbf i}v)_N,\nu_E\rangle > 0$. Since $v'(t)$ is a positive multiple of $({\mathbf i}v(t))_N$, it follows that the path $[a,b]\to S^3$ sending $t\mapsto v(t)$ must initially exit the neighborhood $U$ before returning to $\nu_E$. So in this case, we can take the constant $C$ in \eqref{eqn:annint} to be twice the distance in $S^3$ from $\nu_E$ to $\partial U$.

{\em Step 3.\/} We now show that we can pass to a subsequence so that the sequence of Reeb orbits $\gamma_i$ on $Y_{\epsilon_i}$ converges in $C^0$ to a Type 1 or Type 2 combinatorial Reeb orbit $\gamma$ for $X$.

By Lemma~\ref{lem:rnlb1} and our hypothesis that $\rho(\gamma_i)<R$, we must have $k_i>1$ when $i$ is sufficiently large. Then, by Lemma~\ref{lem:rnlb2} and Step 2, there is an $i$-independent upper bound on $k_i$. We can then pass to a subsequence such that $k_i$ is equal to an even constant $k$.

By compactness, we can pass to a further subsequence such that the endpoints of the $k$ arcs from Step 1 for $\gamma_i$ converge to $k$ points in the $2$-skeleton of $X$. By Lemma~\ref{lem:smoothed3face}, the $k/2$ arcs of type (i) converge to Reeb trajectories on $3$-faces of $X$. On the other hand, by Lemma~\ref{lem:rnlb1}, for each arc of type (ii), the length of its parametrizing interval converges to $0$. A compactness argument also shows that there is an upper bound on the length of the Reeb vector field on $Y_{\epsilon_i}$. It follows that each arc of type (ii) is converging in $C^0$ to a point. Then $\gamma_i$ converges in $C^0$ to a Type 1 or Type 2 combinatorial Reeb orbit consisting of the line segments on $3$-faces given by the limits of the $k/2$ arcs of type (i).

{\em Step 4.\/} To complete the proof, we now prove that the subsequence and limiting orbit constructed above satisfy all of the requirements (i)-(v) of the theorem.

We have proved assertions (i) and (iii). Assertion (ii) follows from the proof of Lemma~\ref{lem:combtosmooth}(e). Assertion (iv) follows from the proof of Lemma~\ref{lem:combtosmooth}(d),(f). Assertion (v) follows from Lemma~\ref{lem:rnlb2} and Step 2. (To get explicit constants $C_F$, one only needs to consider the case $E\neq E'$ in Step 2.)
\end{proof}

\begin{appendix}

\section{Rotation numbers}
\label{app:rotation_numbers} 

Let $\Sgt$ denote the universal cover of the group $\Sg$ of $2\times 2$ real symplectic matrices. Let $\op{Diff}(S^1)$ denote the group of orientation-preserving $C^1$ diffeomorphisms\footnote{For the most part we could work more generally with orientation-preserving homeomorphisms.} of $S^1=\R/\Z$, and let $\widetilde{\op{Diff}}(S^1)$ denote its universal cover. In this appendix, we review two invariants of elements of $\Sgt$, and more generally $\widetilde{\op{Diff}}(S^1)$: the rotation number $\rho$ and the ``minimum rotation number'' $r$. The former is a standard notion in dynamics and is a key ingredient in Theorem~\ref{thm:smoothtocomb}; and we use the latter to bound the former.  We also explain how to use rotation numbers to efficiently compute certain products in $\Sgt$, which is needed for our algorithms.

\subsection{Rotation numbers of circle diffeomorphisms} \label{subsubsec:the_dynamical_rotation_number_of_circle_diffeomorphisms}

We can identify the universal cover $\Dft$ with the group of $C^1$ diffeomorphisms $\Phi:\R\to\R$ which are $\Z$-equivariant in the sense that $\Phi(t+1)=\Phi(t)+1$ for all $t\in\R$. Such a diffeomorphism of $\R$ descends to an orientation-preserving diffeomorphism of $S^1$, and this defines the covering map $\Dft\to\Df$.

\begin{definition}
\label{def:dynamical_rotation_number_for_S1}
Given $\sigma\in S^1$, we define the {\bf rotation number with respect to $\sigma$\/}, denoted by
\[
r_\sigma:\Dft \longrightarrow \R,
\]
as follows. Let $\Phi$ be a $\Z$-equivariant diffeomorphism of $\R$ as above. Let $t\in\R$ be a lift of $\sigma\in\R/\Z$. We then define
\begin{equation}
\label{eqn:def_of_dynamical_rotation_wrts}
r_\sigma(\Phi) = \Phi(t) - t.
\end{equation}
\end{definition}

\begin{definition}
Given $\Phi\in\Dft$, we define the {\bf rotation number\/}
\begin{equation}
\label{eqn:defrhophi}
\rho(\Phi) = \lim_{n\to\infty}\frac{r_\sigma(\Phi^n)}{n} \in \R
\end{equation}
where $\sigma\in S^1$. This limit does not depend on the choice of $\sigma$. Equivalently,
\begin{equation}
\label{eqn:defrhophi2}
\rho(\Phi) = \lim_{n\to\infty}\frac{\Phi^n(t)-t}{n}
\end{equation}
where $t\in\R$.
\end{definition}

Note that we have the $\Z$-equivariance property
\begin{equation}
\label{eqn:rhoequivariant}
\rho(\Phi+1)=\rho(\Phi)+1.
\end{equation}

We can bound the rotation number as follows.

\begin{definition}
We define the {\bf minimum rotation number\/} $r:\Dft\to\R$ by
\begin{equation}
\label{eqn:def_of_dynamical_rotation}
r\left(\Phi\right) = \min_{\sigma\in S^1} r_\sigma\left(\Phi\right).
\end{equation}
\end{definition}

Alternatively, if $\Phi\in\widetilde{\op{Diff}}(S^1)$ is presented as a piecewise smooth path $\{\phi_t\}_{t\in[0,1]}$ in $\op{Diff}(S^1)$ with $\phi_0=\op{id}_{S^1}$, then
\[
r(\Phi) = \min_{\sigma\in S^1}\int_0^1\frac{d}{ds}\phi_s(\sigma)ds.
\]
In particular, it follows that
\begin{equation}
\label{eqn:rminbound}
r(\Phi) \ge \int_0^1\min_{\sigma\in S^1}\left(\frac{d}{ds}\phi_s(\sigma)\right)\,ds.
\end{equation}

It follows from the definitions that
\begin{equation}
\label{eqn:rhor}
\rho(\Phi) \ge r(\Phi).
\end{equation}

\subsection{A partial order}

\begin{definition}
\label{def:order_on_DiffS1}
We define a partial order $\ge$ on $\Dft$ as follows:
\begin{equation}
\Phi \ge \Psi \text{ if and only if } r_s(\Phi) \ge r_s(\Psi) \text{ for all }s \in S^1.
\end{equation}
Equivalently, $\Phi(t)\ge \Psi(t)$ for all $t\in\R$.
\end{definition}

\begin{lemma}
\label{lem:order_on_DiffS1_invariance}
The partial order $\ge$ on $\Dft$ is left and right invariant.
\end{lemma}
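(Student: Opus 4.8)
The plan is to work entirely in the concrete model of $\Dft$ recalled just before Definition~\ref{def:dynamical_rotation_number_for_S1}, namely the group of $\Z$-equivariant $C^1$ diffeomorphisms $\Phi\colon\R\to\R$ under composition. In this model the partial order $\ge$ is characterized, as noted right after Definition~\ref{def:order_on_DiffS1}, by: $\Phi\ge\Psi$ if and only if $\Phi(t)\ge\Psi(t)$ for every $t\in\R$. The only additional input needed is the elementary observation that each element of $\Dft$, viewed as a map $\R\to\R$, is strictly increasing, since it is an orientation-preserving diffeomorphism of the line.

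First I would prove left invariance. Suppose $\Phi\ge\Psi$ and let $\Xi\in\Dft$ be arbitrary. For each $t\in\R$ the hypothesis gives $\Phi(t)\ge\Psi(t)$, and applying the increasing map $\Xi$ to both sides yields $\Xi(\Phi(t))\ge\Xi(\Psi(t))$; that is, $(\Xi\circ\Phi)(t)\ge(\Xi\circ\Psi)(t)$. Since $t$ was arbitrary, $\Xi\Phi\ge\Xi\Psi$.

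Next I would prove right invariance. Again suppose $\Phi\ge\Psi$ and let $\Xi\in\Dft$. Fix $t\in\R$ and set $s=\Xi(t)$. Applying the hypothesis at the point $s$ gives $\Phi(\Xi(t))=\Phi(s)\ge\Psi(s)=\Psi(\Xi(t))$, i.e.\ $(\Phi\circ\Xi)(t)\ge(\Psi\circ\Xi)(t)$, so $\Phi\Xi\ge\Psi\Xi$. There is no genuine obstacle here; the only point requiring any care is to invoke the correct (pointwise on $\R$) description of the order and to remember that elements of $\Dft$ are increasing functions, so that left composition preserves pointwise inequalities by monotonicity while right composition preserves them trivially.
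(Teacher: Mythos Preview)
Your proof is correct and follows essentially the same approach as the paper: both use the pointwise characterization of the order on $\Dft$, obtaining left invariance from the fact that elements of $\Dft$ are increasing maps of $\R$, and right invariance by substituting $\Xi(t)$ (the paper's $\Theta(t)$) for $t$ in the hypothesis.
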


\begin{proof}
Let $\Phi,\Psi,\Theta \in \Dft$, and suppose that $\Phi\ge\Psi$, i.e.
\begin{equation}
\label{eqn:phigreaterthanpsi}
\Phi(t) \ge \Psi(t)
\end{equation}
for every $t\in\R$. We need to show that $\Phi\Theta\ge \Psi\Theta$ and $\Theta\Phi\ge\Theta\Psi$.

Since $\Theta:\R\to\R$ is an orientation preserving diffeomorphism, it preserves the order on $\R$, so it follows from \eqref{eqn:phigreaterthanpsi} that
\[
\Theta(\Phi(t)) \ge \Theta(\Psi(t))
\]
for every $t\in\R$, so $\Theta\Phi\ge\Theta\Psi$.

On the other hand, replacing $t$ by $\Theta(t)$ in the inequality \eqref{eqn:phigreaterthanpsi}, we deduce that
\[
\Phi(\Theta(t)) \ge \Psi(\Theta(t))
\]
for every $t\in\R$, so $\Phi\Theta\ge\Psi\Theta$.
\end{proof}

\begin{lemma}
\label{lem:rhoorder}
If $\Phi,\Psi\in\Dft$ and $\Phi\ge\Psi$, then $\rho(\Phi)\ge \rho(\Psi)$.
\end{lemma}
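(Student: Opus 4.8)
The plan is to bootstrap from the order statement $\Phi \ge \Psi$ on single diffeomorphisms to the analogous statement on all iterates, and then pass to the limit in the definition \eqref{eqn:defrhophi2} of the rotation number.

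First I would establish, by induction on $n \ge 1$, that $\Phi^n \ge \Psi^n$. The base case $n=1$ is the hypothesis. For the inductive step, assuming $\Phi^{n-1} \ge \Psi^{n-1}$, I would write $\Phi^n = \Phi \circ \Phi^{n-1}$ and apply right invariance of the partial order (Lemma~\ref{lem:order_on_DiffS1_invariance}) to $\Phi \ge \Psi$ to get $\Phi \circ \Phi^{n-1} \ge \Psi \circ \Phi^{n-1}$; then apply left invariance to the inductive hypothesis $\Phi^{n-1} \ge \Psi^{n-1}$ to get $\Psi \circ \Phi^{n-1} \ge \Psi \circ \Psi^{n-1} = \Psi^n$. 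Transitivity of $\ge$ (immediate from Definition~\ref{def:order_on_DiffS1}, since it is pointwise comparison of functions $\R \to \R$) then gives $\Phi^n \ge \Psi^n$.

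Next, unwinding the definition of the partial order, $\Phi^n \ge \Psi^n$ means $\Phi^n(t) \ge \Psi^n(t)$ for every $t \in \R$. Fixing any $t$ and dividing by $n$, we obtain
\[
\frac{\Phi^n(t) - t}{n} \ge \frac{\Psi^n(t) - t}{n}
\]
for all $n \ge 1$. Taking $n \to \infty$ and using \eqref{eqn:defrhophi2} for both sides yields $\rho(\Phi) \ge \rho(\Psi)$, as desired.

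There is no real obstacle here; the only points requiring care are that both invariance properties of the partial order are needed (one cannot get $\Phi^n \ge \Psi^n$ from left or right invariance alone), and that the limit defining $\rho$ exists independently of $t$, which is already granted by the discussion preceding the lemma.
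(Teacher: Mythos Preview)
Your proof is correct and follows essentially the same approach as the paper: prove $\Phi^n \ge \Psi^n$ by induction and then pass to the limit in \eqref{eqn:defrhophi2}. The paper phrases the inductive step slightly more tersely (``using the fact that $\Phi$ preserves the order on $\R$'') rather than invoking Lemma~\ref{lem:order_on_DiffS1_invariance} explicitly, but the content is the same.
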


\begin{proof}
By \eqref{eqn:defrhophi2}, it is enough to show that given $t\in\R$, we have $\Phi^n(t)\ge \Psi^n(t)$ for each positive integer $n$. This follows by induction on $n$, using the fact that $\Phi$ preserves the order on $\R$.
\end{proof}

\subsection{Rotation numbers of symplectic matrices}
\label{subsubsec:the_symplectic_rotation_number}

There is a natural homomorphism $\Sg\to\Df$, sending a symplectic linear map $A:\R^2\to\R^2$ to its action on the set of positive rays (identified with $\R/\Z$ by the map sending $t\in\R/\Z$ to the ray through $e^{2\pi i t}$). This lifts to a canonical homomorphism $\Sgt\to\Dft$. Under this homomorphism, the invariants $r_s$, $r$, and $\rho$ defined above pull back to functions $\Sgt\to\R$, which by abuse of notation we denote using the same symbols.

We can describe the rotation number $\rho:\Sgt\to\R$ more explicitly in terms of the following classification of elements of the symplectic group $\Sg$.

\begin{definition}
\label{def:classifySp2}
Let $A \in \Sg$. We say that $A$ is
\begin{itemize}
	\item {\bf positive hyperbolic} if $\Tr(A) > 2$ and {\bf negative hyperbolic} if $\Tr(A) < -2$.
	\item a {\bf positive shear} if $\Tr(A) = 2$ and a {\bf negative shear} if $\Tr(A) = -2$.
	\item {\bf positive elliptic} if $-2 < \Tr(A) < 2$ and $\det([v,Av]) > 0$ for all $v \in \R^2\setminus\{0\}$.
	\item {\bf negative elliptic} if $-2 < \Tr(A) < 2$ and $\det([v,Av]) < 0$ for all $v \in \R^2\setminus\{0\}$.
\end{itemize}
\end{definition}

By the equivariance property \eqref{eqn:rhoequivariant}, the rotation number $\rho:\Sgt\to\R$ descends to a ``mod $\Z$ rotation number'' $\bar{\rho}:\Sg\to\R/\Z$.

\begin{lemma}
\label{lem:compute_rho_bar}
The mod $\mathbb{Z}$ rotation number $\bar{\rho}:\Sg \to \R/\Z$ can be computed as follows:
\[
\bar{\rho}(A) = \left\{
\begin{array}{ccc}
0 & \text{ if } & A \text{ is positive hyperbolic or a positive shear,}\\
\frac{1}{2} & \text{ if } & A \text{ is negative hyperbolic or a negative shear,}\\
\theta & \text{ if } & A \text{ is positive elliptic with eigenvalues }e^{\pm 2 \pi i \theta} \text{ for } \theta \in (0,\frac{1}{2}),\\
-\theta & \text{ if } & A \text{ is negative elliptic with eigenvalues }e^{\pm 2 \pi i \theta} \text{ for } \theta \in (0,\frac{1}{2}).\\
\end{array}
\right.
\]
\end{lemma}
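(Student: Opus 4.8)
The plan is to compute $\bar\rho(A)$ by analyzing, for each of the four types, the circle homeomorphism $\phi_A\in\Df$ that $A$ induces on the set of positive rays (parametrized by $t\in\R/\Z\mapsto$ the ray through $e^{2\pi i t}$); by construction $\bar\rho(A)$ is the rotation number of $\phi_A$ in $\R/\Z$. I will use that this rotation number is invariant under conjugation by an orientation-preserving circle homeomorphism, and that it vanishes precisely when $\phi_A$ has a fixed point. I will also establish the key identity
\[
\bar\rho(-A)=\bar\rho(A)+\tfrac12 ,
\]
which holds because $-I$ induces the rigid half-rotation $T_{1/2}:t\mapsto t+\tfrac12$, and $T_{1/2}$ commutes with every $\phi_A$ (since $A(-w)=-(Aw)$): evaluating $\rho$ on even iterates and using $T_{1/2}^2=\mathrm{id}$ together with the $\Z$-equivariance \eqref{eqn:rhoequivariant} and the homogeneity of $\rho$ yields the identity.

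Given this, the hyperbolic and shear cases are short. If $A$ is positive hyperbolic it has two positive real eigenvalues, so its eigenlines are fixed rays and $\bar\rho(A)=0$; if $A$ is a positive shear then $A=I$ or $A$ has a single eigenline with eigenvalue $1$, and again $\phi_A$ has a fixed point, so $\bar\rho(A)=0$. Since $-A$ is positive hyperbolic (resp.\ a positive shear) when $A$ is negative hyperbolic (resp.\ a negative shear), the displayed identity gives $\bar\rho(A)=\tfrac12$ in those cases.

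For the elliptic cases I would produce a normal form by hand. Write $\tfrac12\Tr(A)=\cos\alpha$ with $\alpha\in(0,\pi)$ and $\theta=\alpha/(2\pi)\in(0,\tfrac12)$, so that $A$ has eigenvalues $e^{\pm i\alpha}$; choose an eigenvector $u+iv$ with $u,v\in\R^2$ for $e^{i\alpha}$. Then $u,v$ are linearly independent (else $e^{i\alpha}$ would be real), comparing real and imaginary parts of $Aw=e^{i\alpha}w$ gives $Au=\cos\alpha\,u-\sin\alpha\,v$ and $Av=\sin\alpha\,u+\cos\alpha\,v$, and hence $\det[\,u\mid Au\,]=-\sin\alpha\det[\,u\mid v\,]$ with $\sin\alpha>0$. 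In the positive elliptic case this forces $\det[\,u\mid v\,]<0$, so $(v,u)$ is a positively oriented basis in which $A$ equals the counterclockwise rotation matrix $R(\alpha)$; thus $\phi_A$ is conjugate by an orientation-preserving linear map to the rigid rotation $t\mapsto t+\theta$, so $\bar\rho(A)=\theta$. In the negative elliptic case the inequality reverses, $(u,v)$ is positively oriented, $A$ equals $R(-\alpha)$ in that basis, and $\bar\rho(A)=-\theta$ (equivalently, apply the positive case to $-A$, which is positive elliptic with eigenvalue angle $\tfrac12-\theta$, and use the displayed identity).

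I do not anticipate a real obstacle: the lemma is bookkeeping assembled from the homomorphism $\Sg\to\Df$, the classification of $\Sg$, and elementary properties of rotation numbers. The one step needing genuine care is the orientation check in the elliptic normal form — one must verify that the positive/negative elliptic distinction is exactly what determines whether $(u,v)$ or $(v,u)$ is positively oriented, via the sign of $\det[\,u\mid Au\,]=-\sin\alpha\det[\,u\mid v\,]$ — so that the $\pm\theta$ in the conclusion emerges with the correct sign.
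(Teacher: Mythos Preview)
Your proof is correct and follows essentially the same approach as the paper: fixed points (or antipodal points) of $\phi_A$ handle the hyperbolic/shear cases, and conjugation to a rigid rotation handles the elliptic cases. Your version is more detailed --- you organize the negative cases via the identity $\bar\rho(-A)=\bar\rho(A)+\tfrac12$ and carry out the orientation check in the elliptic normal form explicitly --- whereas the paper simply asserts that $A$ is conjugate to rotation by $2\pi\theta$ and bounds $|\Phi^n(t)-t-n\theta|<1$ directly, but the underlying ideas coincide.
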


\begin{proof}
In the first two cases, $A$ has $1$ or $-1$ as an eigenvalue. This means that there exists $s\in S^1$ which is fixed or sent to its antipode, and one can use this $s$ in the definition \eqref{eqn:defrhophi}.

In the third case, $A$ is conjugate to rotation by $2\pi\theta$. One can then lift $A$ to an element of $\Sgt$ whose image in $\Dft$ is a $\Z$-equivariant diffeomorphism $\Phi:\R\to\R$ such that $|\Phi^n(t)-t-n\theta|<1$ for each $t\in\R$. It then follows from \eqref{eqn:defrhophi2} that $\rho(\Phi)=\theta$. The last case is analogous.
\end{proof}

\subsection{Computing products in $\Sgt$}
\label{subsec:computing_with_Sp2}

Observe that $\Sgt$ can be identified with the set of pairs $(A,r)$, where $A\in\Sg$ and $r\in\R$ is a lift of $\overline{\rho}(A)\in\R/\Z$. The identification sends a lift $\widetilde{A}$ to the pair $(A,\rho(\widetilde{A}))$.

For computational purposes, we can keep track of the lifts of $A$ using less information, which is useful when for example we do not want to compute $\overline{\rho}(A)$ exactly. Namely, we can identify a lift $\widetilde{A}$ with a pair $(A,r)$, where $r$ is either an integer (when $A$ has positive eigenvalues), an open interval $(n,n+1/2)$ for some integer $n$ (when $A$ is positive elliptic), a half-integer (when $A$ has negative eigenvalues), or an open interval $(n-1/2,n)$ (when $A$ is negative elliptic).

The following proposition allows us to compute products in the group $\Sgt$ in terms of the above data, in the cases that we need (see Remark~\ref{rem:ucmult}).

\begin{proposition}
\label{prop:ucmult}
Let $\widetilde{A},\widetilde{B} \in \Sgt$. Suppose that $\rho(\widetilde{A})\in(0,1/2)$. Then
\[
\rho(\widetilde{B}) \le \rho(\widetilde{A}\widetilde{B}) \le \rho(\widetilde{B}) + \frac{1}{2}.
\]
\end{proposition}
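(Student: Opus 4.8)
The plan is to transport the whole statement into $\Dft$ via the rotation-number–preserving homomorphism $h\colon\Sgt\to\Dft$ recalled in \S\ref{subsubsec:the_symplectic_rotation_number}, and then deduce both inequalities from the left/right invariance of the partial order $\ge$ on $\Dft$ (Lemma~\ref{lem:order_on_DiffS1_invariance}) together with its monotonicity under $\rho$ (Lemma~\ref{lem:rhoorder}). First I would use the hypothesis $\rho(\widetilde A)\in(0,1/2)$ to pin down the conjugacy type of the underlying matrix $A$ of $\widetilde A$: running through the four cases of Lemma~\ref{lem:compute_rho_bar}, the hyperbolic and shear cases force $\rho(\widetilde A)\in\frac12\Z$ and the negative elliptic case forces the fractional part of $\rho(\widetilde A)$ to lie in $(1/2,1)$, so only the positive elliptic case (Definition~\ref{def:classifySp2}) is possible, and there $\rho(\widetilde A)=\theta$ where $e^{\pm 2\pi i\theta}$ are the eigenvalues of $A$. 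Writing $\Phi_A=h(\widetilde A)\in\Dft$ as a $\Z$-equivariant diffeomorphism of $\R$, the positive-elliptic condition $\det[v,Av]>0$ for all $v\neq 0$ means geometrically that $A$ rotates every ray strictly counterclockwise through an angle in $(0,\pi)$; by compactness of $S^1$ this angle stays bounded away from $0$ and $\pi$, so $t<\Phi_A(t)<t+1/2$ for all $t\in\R$ (this also confirms that the lift produced by $h$ is the one whose rotation number lies in $(0,1/2)$). In the language of Definition~\ref{def:order_on_DiffS1} this says $\op{id}\le\Phi_A\le T_{1/2}$, where $\op{id}=h(I)$ is the identity and $T_{1/2}\colon t\mapsto t+1/2$.

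Next I would observe that $T_{1/2}=h(\widetilde J)$, where $\widetilde J\in\Sgt$ is the lift of $-I$ with $\rho(\widetilde J)=1/2$ (here $-I$ is a negative shear, so $\bar\rho(-I)=1/2$). Since $-I$ is central in $\Sg$ and $\Sgt$ is connected, $\widetilde J$ is central in $\Sgt$, hence commutes with $\widetilde B$; therefore $(\widetilde J\widetilde B)^n=\widetilde J^{\,n}\widetilde B^{\,n}$ for every $n\ge 1$, so applying $h$ and evaluating the resulting $\Z$-equivariant diffeomorphism at $0$ gives $\Phi_B^{\,n}(0)+n/2$, where $\Phi_B=h(\widetilde B)$. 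Dividing by $n$ and letting $n\to\infty$, the definition \eqref{eqn:defrhophi2} of the rotation number yields the exact identity $\rho(\widetilde J\widetilde B)=\rho(\widetilde B)+1/2$. Now both inequalities fall out. For the lower bound, right-invariance of $\ge$ applied to $\op{id}\le\Phi_A$ gives $\Phi_B\le\Phi_A\Phi_B$, and since $h$ preserves rotation numbers, Lemma~\ref{lem:rhoorder} gives $\rho(\widetilde B)\le\rho(\widetilde A\widetilde B)$. For the upper bound, right-invariance applied to $\Phi_A\le T_{1/2}$ gives $\Phi_A\Phi_B\le T_{1/2}\Phi_B$, so $\rho(\widetilde A\widetilde B)\le\rho(\widetilde J\widetilde B)=\rho(\widetilde B)+1/2$.

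The step I expect to be the real content is the identity $\rho(\widetilde J\widetilde B)=\rho(\widetilde B)+1/2$: the temptation is to invoke additivity of $\rho$ under products, which is false, since $\rho$ is only a quasimorphism on $\Sgt$ (equivalently on $\Dft$). What saves the day is precisely that $\widetilde J$ is central, so the powers $(\widetilde J\widetilde B)^n$ split cleanly and the rotation number can be read off straight from its limit definition. The remaining ingredients — the case analysis placing $A$ in the positive elliptic class, and the elementary geometry of a positive elliptic matrix acting on the circle of rays — are routine.
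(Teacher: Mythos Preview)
Your proposal is correct and follows essentially the same route as the paper: pass to $\Dft$, use Lemma~\ref{lem:compute_rho_bar} to see that $A$ is positive elliptic, deduce $\op{id}\le\Phi_A\le T_{1/2}$, apply right-invariance of the partial order (Lemma~\ref{lem:order_on_DiffS1_invariance}) and monotonicity of $\rho$ (Lemma~\ref{lem:rhoorder}), and finish with $\rho(T_{1/2}\Psi)=\rho(\Psi)+1/2$. The only cosmetic difference is in this last identity: the paper observes directly that since $\Psi$ comes from a linear map it commutes with $T_{1/2}$ (because $B(-v)=-Bv$ forces $\Psi(t+1/2)=\Psi(t)+1/2$), whereas you phrase the same fact as centrality of the lift $\widetilde{J}$ of $-I$ in $\Sgt$ --- these are the same observation.
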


To apply this proposition, if for example $\widetilde{B}$ is described by the pair $(B,(m,m+1/2))$, then it follows that $\widetilde{A}\widetilde{B}$ is described by either $(AB,(m,m+1/2))$, $(AB,m+1/2)$, or $(AB,(m+1/2,m+1))$. To decide which of these three possibilities holds, by Lemma~\ref{lem:compute_rho_bar} it is enough to check whether $AB$ is positive elliptic, has negative eigenvalues, or is negative elliptic.

\begin{proof}[Proof of Proposition~\ref{prop:ucmult}.]
Let $\Phi$ and $\Psi$ denote the elements of $\Dft$ determined by $\widetilde{A}$ and $\widetilde{B}$ respectively.
Let $\Theta:\R\to \R$ denote translation by $1/2$. By Lemma~\ref{lem:compute_rho_bar}, $\widetilde{A}$ projects to a positive elliptic element of $\Sg$. It follows that with respect to the partial order on $\Dft$, we have
\[
\op{id}_\R \le \Phi \le \Theta.
\]
By Lemma~\ref{lem:order_on_DiffS1_invariance}, we can multiply on the right by $\Psi$ to obtain
\[
\Psi \le \Phi\Psi \le \Theta\Psi.
\]
Using Lemma~\ref{lem:rhoorder}, we deduce that
\[
\rho(\Psi) \le \rho(\Phi\Psi) \le \rho(\Theta\Psi).
\]
Since $\Psi$ comes from a linear map, it commutes with $\Theta$, so we have
\[
\rho(\Theta\Psi) = \rho(\Psi) + \frac{1}{2}.
\]
Combining the above two lines completes the proof.
\end{proof}

\end{appendix}

\end{document}